\newcommand{\bfa}{\mathbf{a}}
\newcommand{\bfb}{\mathbf{b}}
\newcommand{\bfz}{\mathbf{z}}
\newcommand{\dDelta}{\Delta} 
\newcommand{\hite}{\beta}
\renewcommand{\Im}{\mathrm{Im}}
\newcommand{\intZ}{\mathbb{Z}}
\newcommand{\Li}{\mathrm{Li}}
\newcommand{\prob}{\mathbb{P}}
\renewcommand{\Re}{\mathrm{Re}}
\newcommand{\realR}{\mathbb{R}}
\newcommand{\rmd}{\mathrm{d}}
\newcommand{\rmi}{\mathrm{i}}
\newcommand{\rmL}{\mathrm{L}}
\newcommand{\rmR}{\mathrm{R}}
\newcommand{\ut}{{\hat u}}
\newtheorem{thm}{Theorem}[section]
\newtheorem{prop}[thm]{Proposition}
\newtheorem{lemma}[thm]{Lemma}
\newtheorem{cor}[thm]{Corollary}
\newtheorem{defn}[thm]{Definition}
\newcommand{\ratio}{\mathsf{r}}  
\newcommand{\beq}{ \begin{equation} }
\newcommand{\eeq}{ \end{equation} }
\newcommand{\beqq}{ \begin{equation*} }
\newcommand{\eeqq}{ \end{equation*} }
\newcommand{\ben}{ \begin{enumerate}[(a)] }
\newcommand{\een}{ \end{enumerate} }
\newcommand{\beni}{ \begin{enumerate}[(i)] }
\newcommand{\eeni}{ \end{enumerate} }
\newcommand{\bep}{ \begin{proof} }
\newcommand{\eep}{ \end{proof} }
\newcommand{\ii}{\mathrm{i}}
\newcommand{\C}{\mathbb{C}}
\newcommand{\dd}{\mathrm{d}}
\newcommand{\bfw}{\mathbf{w}}
\newcommand{\bfk}{\mathbf{k}}
\newcommand{\bfy}{\mathbf{y}}
\newcommand{\bft}{\mathbf{t}}
\newcommand{\bfh}{\mathbf{h}}
\newcommand{\bfx}{\mathbf{x}}
\newcommand{\bfn}{\mathbf{n}}
\newcommand{\kpz}{\mathsf{H}}
\newcommand{\pkpz}{\mathcal{H}^{(per)}}
\newcommand{\ppkpz}{\mathcal{H}}
\newcommand{\equid}{\stackrel{d}{=}}
\newcommand{\tasep}{\mathbf{h}}
\newcommand{\tasepF}{\mathbf{F}}
\newcommand{\bm}{\mathsf{B}}
\newcommand{\bb}{\mathbb{B}^{\mathrm{br}}}
\newcommand{\N}{\mathbb{N}}
\newcommand{\bfone}{\mathbf{1}}
\newcommand{\bfc}{\mathbf{c}}
\newcommand{\ff}{\mathsf{G}_{\ell}}
\newcommand{\fff}{\mathsf{G}}
\newcommand{\Cd}{\mathsf{K}}  		
\newcommand{\uu}{\mathsf{u}}        
\newcommand{\UU}{\mathsf{U}}        
\newcommand{\PP}{P}        
\newcommand{\cstar}{c_{*}}        
\newcommand{\bfzero}{\mathbf{0}}
\newcommand{\bT}{T^{\bullet}}     
\newcommand{\bC}{C^{\bullet}}     
\newcommand{\bD}{D^{\bullet}}     
\newcommand{\hbD}{\hat D^{\bullet}}     
\newcommand{\hbs}{\hat s^{\bullet}}     
\newcommand{\bs}{s^{\bullet}}     
\newcommand{\pp}{q}
\newcommand{\qq}{p}
\newcommand{\cnst}{c}
\newcommand{\dist}{\mathrm{dist}}
\newcommand{\Sr}{I_\ratio}
\newcommand{\bmcir}{\mathsf{B}^{\Sr}}
\newcommand{\bbcir}{\mathbb{B}^{\mathrm{br};\Sr}}
\def\fddto{\xrightarrow{\textit{f.d.d.}}}
\def\law{{\rm Law}}
\title{Pinched-up periodic KPZ fixed point}
\author{Jinho Baik\footnote{Department of Mathematics, University of Michigan,
Ann Arbor, MI, 48109, USA, \texttt{baik@umich.edu}} \and Zhipeng Liu\footnote{Department of Mathematics, University of Kansas,
Lawrence, KS 66045, USA, \texttt{zhipeng@ku.edu}}}
\date{\today}
\begin{document}

\maketitle

\begin{abstract}
The periodic KPZ fixed point is the conjectural universal limit of the KPZ universality class models on a ring when both the period and time critically tend to infinity. For the case of the periodic narrow wedge initial condition, we consider the conditional distribution when the periodic KPZ fixed point is unusually large at a particular position and time. 
We prove a conditional limit theorem up to the ``pinch-up" time. 
When the period is large enough, the result is the same as that for the KPZ fixed point on the line obtained by Liu and Wang in 2022. 
We identify the regimes in which the result changes and find probabilistic descriptions of the limits. 

\end{abstract}



\section{Introduction and main results}

The KPZ fixed point is a universal two-dimensional random field \cite{Matetski-Quastel-Remenik21} to which the height functions of many random growth models on the line are expected to converge in the large-time limit. 
Among various properties found for the KPZ fixed point (see, for example, \cite{Ferrari-Occelli18, Johansson20, Basu-Ganguly21, Liu22c, Liu22b, Quastel-Remenik22, Baik-Prokhorov-Silva23} and references therein)  is the recent study on conditional distributions when the field is uncharacteristically large at a specific position and time \cite{Liu-Wang22}. 
This paper aims to study similar conditional distributions for the periodic KPZ fixed point, which arises as the universal limit for random growth models on a ring. The size of the ring affects the field, and the interest is to determine the effect of domain size on the conditional distribution. 
We first review a result for the ``pinched-up" KPZ fixed point and then introduce the periodic KPZ fixed point.

\subsection{KPZ fixed point when it is pinched-up} \label{sec:KPZresult}

Let $\kpz(x,t)$ denote the KPZ fixed point with the narrow wedge initial condition. Consider the situation when $\kpz(0,1)=L$ is large.
It was shown  in \cite{Nissim-Zhang22} that conditional on $\kpz(0,1)=L$, the one point distribution of $\kpz(x, t) - L$ converges to a properly scaled Tracy-Widom distribution 
for every fixed  $(x, t)\in \realR\times (1, \infty)$ as $L\to \infty$.
This is consistent with the intuition that the conditioning makes the shifted height $\kpz(x, 1) - L$ close to the narrow wedge, and thus, from the Markovian property, the pinched-up process after time $t=1$ should look again like the KPZ fixed point with the narrow wedge initial condition, starting at $t=1$. 
On the other hand, for $t\in (0,1)$, the following result was proven.

\begin{thm}[\cite{Liu-Wang22}] \label{resultLiuWang}
Let $\kpz(x,t)$ be the KPZ fixed point with the narrow wedge initial condition. Let $\mathbb{B}_1$ and $\mathbb{B}_2$ be independent Brownian bridges. Then 
\beq \label{eq:kpzLiuWang}
	\law \left(\left\{\frac{\kpz(\frac{x}{2L^{1/4}}, t)- t L}{ L^{1/4}} \right\}_{(x,t)\in\realR\times (0,1)}\, \Big|\,\kpz(0, 1)=L \right) \fddto \law \left(\left\{ \bb_2(t) - \left| \bb_1(t) -x\right| \right\}_{(x,t)\in\realR\times (0,1)}\right)
\eeq
as $L\to \infty$, where $\fddto$ denotes the convergence of finite dimensional distributions, and the conditional law should be understood as $\prob(\cdot \mid \kpz(0, 1)=L) = \lim_{\epsilon\to 0}\prob(\cdot \mid \kpz(0,1)\in (L-\epsilon,L+\epsilon))$.
\end{thm}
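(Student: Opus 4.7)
The plan is to work from an exact formula for the multi-point distribution of $\kpz$ with the narrow wedge initial condition. Such a formula, for instance the extended kernel of Matetski--Quastel--Remenik or a multi-time Fredholm-determinant representation built from the directed landscape, expresses $\prob(\kpz(x_j,t_j)\le a_j,\, j=1,\dots,k)$ and, more generally, joint probabilities that also involve $\kpz(0,1)$. The conditional law in \eqref{eq:kpzLiuWang} would then be obtained by taking the ratio
$\prob(\,\cdot\,,\kpz(0,1)\in (L-\epsilon,L+\epsilon))/\prob(\kpz(0,1)\in (L-\epsilon,L+\epsilon))$
and letting $\epsilon\to 0$, which reduces the problem to the asymptotics of a $(k+1)$-point Fredholm determinant under the critical scaling $\kpz(x_j/(2L^{1/4}),t_j)=t_jL+L^{1/4}b_j$ together with $\kpz(0,1)=L$.

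The key technical step is a steepest-descent analysis of the contour integrals building the extended kernel. Under the above substitution, the phases in the spectral integrals have critical points that drift to infinity in a manner dictated by $L$, and the Airy functions appearing in the kernel should linearize into Gaussian factors after centering and rescaling. The expected outcome is a limit kernel whose Fredholm determinant factorizes into a Gaussian-type density in one variable (producing $\bb_2$) and a Gaussian-type density with an absolute-value cost in another variable (producing $-|\bb_1(t)-x|$). The final identification with $\bb_2(t)-|\bb_1(t)-x|$ would then be done by computing the joint density of the right-hand side of \eqref{eq:kpzLiuWang} directly from the Brownian bridge transition density and matching it against the asymptotic formula. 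The shape of the right-hand side is consistent with the heuristic that, conditional on $\kpz(0,1)=L$ being large, a near-deterministic favored geodesic from $(0,0)$ to $(0,1)$ dominates: its transverse position gives $\bb_1$, its height profile along time gives $\bb_2$, and $-|\bb_1(t)-x|$ encodes the local Brownian cost of deviating from this geodesic at time $t$.

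The main obstacle I expect is the multi-time saddle-point analysis: for $k\ge 2$ one must simultaneously handle several nested spectral integrals with coupled saddles, and one must control error terms uniformly on compact sets of the variables $(x_j,b_j)$ so that pointwise convergence of the integrands can be promoted to convergence of the Fredholm determinant. A secondary but nontrivial issue is upgrading the convergence of $\epsilon$-windowed probabilities to a genuine limit of conditional distributions; this requires either uniform density estimates for $\kpz(0,1)$ near $L$ together with joint density control, or else a careful regularization of the $\delta$-conditioning so that the limits in $\epsilon$ and $L$ can be exchanged. Once these analytic ingredients are assembled, the finite-dimensional convergence claimed in \eqref{eq:kpzLiuWang} would follow by passing to the limit inside the Fredholm determinant and invoking the identification with two independent Brownian bridges.
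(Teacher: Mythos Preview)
This theorem is quoted from \cite{Liu-Wang22}; the present paper does not give an independent proof of it. What the paper does prove is the periodic analog (Theorem~\ref{thm1}), whose large-period limit reproduces exactly \eqref{eq:kpzLiuWang}, and the paper states explicitly that its method ``is similar to that of \cite{Liu-Wang22}''. So the relevant comparison is between your sketch and the method actually used in \cite{Liu-Wang22} and in Sections~\ref{sec:proof}--\ref{sec:asymp} here.

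Your proposal is in the right spirit but misses the structural reductions that make the analysis tractable. The actual argument does not work with the MQR extended kernel or Airy functions; it starts from a multi-time formula of the type \eqref{eq:jointprointe}, an $m$-fold contour integral of a Fredholm-type series $\sum_{\bfn} D_{\bfn}(\bfz)/(\bfn!)^2$. Two concrete steps replace your generic ``steepest descent on the kernel'': first, after differentiating in $\hite_m$ to form the conditional density, one shows algebraically (Lemma~\ref{nzeroiszeo}) that every term with some $n_k=0$ integrates to zero, so the series runs over $\bfn\in\N^m$ only; second, one shows that for $\bfn\neq\bfone$ the summand is exponentially smaller in $L$ (Proposition~\ref{lm:error_estimate_largeL}), so only the single term $\bfn=\bfone$ survives. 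This collapses the Fredholm determinant to an explicit $2m$-fold sum/integral, and it is this object, not a limiting kernel, whose asymptotics one computes. The result factorizes as $S_\infty(\bft,\bfh-\bfx)S_\infty(\bft,\bfh+\bfx)$ (the $S_\infty$ of Definition~\ref{defnS}), and each factor is identified directly with a Brownian-motion probability via a Gaussian integral (Proposition~\ref{propSrpr}(a)); the two bridges then arise from the orthogonal change $(\bm_1',\bm_2')\mapsto(\frac{\bm_1'-\bm_2'}{\sqrt2},\frac{\bm_1'+\bm_2'}{\sqrt2})$.

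So the gap in your plan is not the philosophy but the mechanism: you would need to explain why the full Fredholm determinant reduces to a single term, and how that term factorizes. Without the $n_k=0$ vanishing and the $\bfn\neq\bfone$ suppression, a direct saddle-point attack on the coupled multi-time kernel is exactly the obstacle you yourself flag, and there is no indication it can be carried out.
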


Here, the term ``Brownian bridges'' means standard Brownian bridges, that is, the duration of time is $1$, and they return to $0$ at time $1$. We use the same convention throughout this paper. We also use the term ``Brownian motion'' to refer to a standard Brownian motion which starts at $0$. We emphasize that our Brownian motions and Brownian bridges could be defined on two different metric spaces in this paper, the real line $\realR$ or the quotient space $\Sr=\realR/\ratio\intZ$, where $\ratio$ is a positive number. We will use $\bm$, $\bb$ to denote the Brownian motions and Brownian bridges on the real line, and $\bmcir$, $\bbcir$ the  Brownian motions and Brownian bridges on $\Sr$ respectively.

The original formula in \cite{Liu-Wang22} was given in terms of $(\bb_1(t)-x)\wedge (\bb_2(t)+x)$. Here we used the identity $a\wedge b =\frac{1}{2}(a+b) -\frac12|a-b|$, and the invariance of the law of two independent Brownian bridges under orthonormal transformations to rewrite it as the form in \eqref{eq:kpzLiuWang}.

The KPZ fixed point with the narrow wedge initial condition satisfies the invariance properties
\beq \label{eq:KPZinvariance}
	\alpha \kpz(\alpha^{-2} x, \alpha^{-3} t) \equid \kpz(x,t)
	\quad \text{and} \quad 
	 \kpz(x, t) \equid \kpz(x+ \beta t, t) + \frac1{t} \left( (x+\beta t)^2-x^2 \right)
\eeq
for every $\alpha>0$ and $\beta\in \realR$. 
Thus, \eqref{eq:kpzLiuWang} also implies a result when the conditioning is given at a general point $(X, T)$ instead of $(0,1)$ (see \cite[Remark 1.5]{Liu-Wang22}): 
\beq \label{eq:kpzXaway}
	\law \left(\left\{\frac{\kpz( tX + \frac{x T^{3/4}}{2L^{1/4}}, tT )- t L}{ T^{1/4} L^{1/4}}\right\}_{(x,t)\in\realR\times (0,1)}\, \Big|\,\kpz(X, T)=L \right) \fddto \law \left(\left\{ \bb_2(t) - \left| \bb_1(t) -x\right| \right\}_{(x,t)\in\realR\times (0,1)}\right)
\eeq 
as $L\to \infty$.

The papers \cite[Theorem 1.9]{Ganguly-Hedge22} and \cite{Lamarre-Lin-Tsai23} also considered conditional limit theorems and obtained the first-order term and concentration results. 
The result \eqref{eq:kpzLiuWang} has an implication on the geodesics in the directed landscape as well. Based on the above result, the authors of \cite{Liu-Wang22} conjectured that conditional on $\kpz(0,1)=L$ goes to infinity, the geodesic converges to the Brownian bridge. 
This conjecture was recently proved by \cite{Ganguly-Hedge-Zhang23} using geometric and probabilistic methods. 
These results show that the geodesic between $(0,0)$ to $(0,1)$ typically stays within the distance of order $L^{-1/4}$ from the straight line.

\subsection{Periodic KPZ fixed point} \label{sec:pKPZfp}

Let $\tasep(n,t)$ be the height function of the totally asymmetric simple exclusion process (TASEP) on the discrete ring of size $2a$. 
We identify the ring as the set $\{-a+1, \cdots, a\}$ and extend the TASEP periodically on the integers $\intZ$  by setting $\tasep(n\pm 2a, t)=\tasep(n, t)$. 
We may call the extended TASEP a periodic TASEP of period $2a$. 
Suppose that initially, $\tasep(n,0)= |n|$ for $-a+1\le n\le a$ and is extended periodically. 
This initial condition is called the periodic step initial condition. 

An interesting large time limit arises when the period $2a$ is proportional to $t^{2/3}$, which is called a relaxation time scale. 
In this limit, the ring size  affects the fluctuations of the height function nontrivially.
It was shown\footnote{In \cite{Baik-Liu19}, the case when $\gamma_i\neq \gamma_{i'}$, $\tau_i=\tau_{i'}$ and $\hite_i=\hite_{i'}$ for some $i\neq i'$ was not analyzed. See Appendix  \ref{sec:appendix} how we can obtain the result in this case.} in \cite{Baik-Liu19} that for every positive integer $m$,  
for every $m$ distinct points $(\gamma_i, \tau_i)\in \realR \times \realR_+$ and every $m$ real numbers $\hite_i$, $i=1, \cdots, m$, 
\beq \label{eq:taseplimit}
	\lim_{T= (2a)^{3/2} \to \infty} \prob\left(    \bigcap_{i=1}^m   \left\{ 
            \frac{\tasep(\gamma_i T^{2/3}, 2\tau_i T) - \tau_i T}{-T^{1/3}}\le \hite_i   \right\}
           \right)
	= \tasepF_m(\hite; \gamma, \tau) 
\eeq
converges, where $\hite=(\hite_1, \cdots, \hite_m)$, $\tau= (\tau_1, \cdots, \tau_m)$, and $\gamma=(\gamma_1, \cdots, \gamma_m)$. 
The function $\tasepF_m(\hite; \gamma, \tau)$ is periodic with the shift $\gamma_i\mapsto \gamma_i+1$ for any $i$, and can be extended continuously for  $(\gamma, \tau, \hite)\in \realR^m \times \realR_+^m \times \realR^m$. 
The functions $\tasepF_m$, $m=1, 2, \cdots$, form a consistent collection of multivariate cumulative distribution functions.
See Section \ref{sec:formdf} and Appendix  \ref{sec:appendix} for the formula and properties of these functions.

Let $\pkpz(\gamma, \tau)$, $(\gamma, \tau)\in \realR\times \realR_+$, be a process whose law is defined by the collection $\tasepF_m$. 
It satisfies the spatial periodicity 
\beqq
	\pkpz(\gamma+1, \tau)= \pkpz(\gamma, \tau) .
\eeqq 
Since we will only discuss finite-dimensional properties in this paper, we will simply call it the periodic KPZ fixed point with the periodic narrow wedge initial condition.  
The limit \eqref{eq:taseplimit} was also proved for the discrete-time TASEP \cite{Liao22} and the PushASEP \cite{Li-Saenz23} on a ring. 
The distribution functions $\tasepF_m$ are expected to be the universal limits for the multi-time, multi-position distributions of the KPZ universality class in the periodic domain at the relaxation time scale. 
The convergence \eqref{eq:taseplimit} was also extended to other initial conditions satisfying some technical assumptions \cite{Baik-Liu21}. 
Since we will only consider the periodic narrow wedge initial condition case in this paper and leave other initial conditions for future consideration, we will simply call $\pkpz(\gamma, \tau)$ the periodic KPZ fixed point without mentioning the initial condition.

Unlike the KPZ fixed point, the periodic KPZ fixed point does not satisfy the invariance properties \eqref{eq:KPZinvariance}.  
Instead, it was conjectured in \cite{Baik-Liu19} that 
\beq \label{eq:pkpzsmallt}
	\epsilon^{-1/3} \pkpz(2\epsilon^{2/3} \gamma, \epsilon \tau) \to \kpz(\gamma, \tau) \qquad \text{as $\epsilon\to 0$}
\eeq
and 
\beq \label{eq:pkpzlarget}
	\frac{\sqrt{2}}{\pi^{1/4}T^{1/2}}  \left( \pkpz(\gamma,  \tau T) + \tau T \right)  \to \bm(\tau) \qquad \text{as $T\to\infty$}
\eeq 
where $\bm$ is a Brownian motion. 
The limit in \eqref{eq:pkpzlarget} does not depend on $\gamma$. 
The one-point distribution case of \eqref{eq:pkpzsmallt} with $\gamma=0$ was verified in \cite[Theorem 1.6]{Baik-Liu-Silva22} and the one-point distribution case of \eqref{eq:pkpzlarget} was proved in \cite[Theorem 1.5]{Baik-Liu-Silva22}. 

\bigskip

The process $\pkpz$ has period $1$. It is illuminating to consider the general periods. 
For $p>0$, let
\beq \label{eq:pkpwithpdef}
 \ppkpz_p(\gamma, \tau) := p^{1/2} \pkpz(p^{-1} \gamma, p^{-3/2} \tau).
\eeq
Then, it satisfies 
\beq
	\ppkpz_p(\gamma+p, \tau)= \ppkpz_p(\gamma, \tau)
\eeq
We call it the 
$p$-periodic KPZ fixed point (with the periodic narrow wedge initial condition).
The processes with different parameters are related by the formula
\beq \label{eq:scalingprop}
	p^{-1/2} \ppkpz_p( p \gamma, p^{3/2} \tau) \stackrel{d}{=}   \ppkpz_1(  \gamma, \tau) 
\eeq
for all $p>0$. 
The conjectures \eqref{eq:pkpzsmallt} and \eqref{eq:pkpzlarget} are translated to the conjecture on the large period limit 
\beq \label{eq:pargeperiod}
  \ppkpz_p(2\gamma, \tau)  \to \kpz(\gamma, \tau)\qquad \text{as $p \to \infty$}
\eeq
and the conjecture on the small period limit
\beq \label{eq:smallperiod}
 \frac{\sqrt{2} p^{1/4} }{\pi^{1/4}} \left(\ppkpz_p(\gamma, \tau) + p^{-1} \tau \right)  \to \bm(\tau)\qquad \text{as $p \to 0$,}
\eeq 
both in the sense of convergence in finite-dimensional distributions. 

\subsection{Results}

The goal of this paper is to study the periodic KPZ fixed point when $\ppkpz_p(0,1)=\ell$ is unusually large.
We obtain the following results up to the ``pinch-up" time. We allow the period $p$ vary while $\ell\to\infty$. 
There are three theorems depending on the value $p\ell^{1/4}$. The critical case is when $p\ell^{1/4}=O(1)$. We call the other two cases $p\gg O(\ell^{-1/4})$ and $p\ll O(\ell^{-1/4})$ the case of large period and the case of small period, respectively.

\begin{thm}[Large period case]
	\label{thm1}
	We have
\beqq\begin{split}
	\law \left(\left\{\frac{\ppkpz_p(\frac{ x}{ \ell^{1/4}}, t)- t\ell}{ \ell^{1/4}}\right\}_{(x, t)\in \realR\times (0,1)}\,\Big|\, \ppkpz_p(0,1)=\ell \right)\fddto  \law\left( \left\{ \bb_2(t)  - \left|  \bb_1(t)  -x\right| \right\}_{(x, t)\in \realR\times (0,1)} \right)
\end{split}\eeqq
as $\ell \to \infty$ if \footnote{The notations mean that $p\ell^{1/4} \to \infty$ and $\log p/\ell^{3/2}\to 0$
as $\ell\to \infty$.} 
\beqq
	\ell^{-1/4} \ll p \qquad \text{and} \qquad \log p\ll \ell^{3/2},
\eeqq
where $\bb_1$ and $\bb_2$ are independent Brownian bridges. 
\end{thm}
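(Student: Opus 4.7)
The plan is to perform a direct asymptotic analysis of the explicit formula for the joint distribution functions $\tasepF_m$ of $\ppkpz_p$ recalled in Section~\ref{sec:formdf}, tracking the $p$-dependence throughout and reducing, in the large-period regime, to an analysis parallel to that of Liu--Wang for the KPZ fixed point on the line. First, I would express the conditional law on the left of the convergence as a ratio of densities: the joint density of $\ppkpz_p(0,1)$ and $(\ppkpz_p(x_j/\ell^{1/4},t_j))_{j=1}^m$ at the prescribed values, divided by the marginal density of $\ppkpz_p(0,1)$ at $\ell$. Both come from differentiating $\tasepF_{m+1}$ and $\tasepF_1$ in the conditioning-height argument. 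After the rescalings $x\mapsto x/\ell^{1/4}$ and $h\mapsto t\ell+\ell^{1/4}y$, the result becomes a contour integral whose asymptotic behavior depends on $\ell$ and $p$; the scaling identity \eqref{eq:scalingprop} makes clear that the effective dimensionless parameter controlling the transition between regimes is $p\ell^{1/4}$.

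Second, I would carry out a steepest-descent analysis. In the regime $p\ell^{1/4}\to\infty$, the series defining the building blocks of $\tasepF_m$ (the polylogarithms and theta-like factors that encode the periodicity) concentrate on their leading terms, and the remainder produces corrections that vanish with $p\ell^{1/4}$. The dominant part of the integrand then has the same saddle-point structure as the one analyzed in \cite{Liu-Wang22}: after rescaling the contours, Gaussian approximation around the critical points reproduces the Brownian-bridge expression $\bb_2(t)-|\bb_1(t)-x|$ on the right-hand side, via the same identity $a\wedge b=\tfrac12(a+b)-\tfrac12|a-b|$ used in the discussion after Theorem~\ref{resultLiuWang}. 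The upper bound $\log p\ll \ell^{3/2}$ enters because the marginal conditioning density has size $\exp(-\Theta(\ell^{3/2}))$, so any $p$-dependent periodic correction in the numerator that is not cancelled by the denominator must decay faster than this rate to disappear after division.

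The main obstacle is producing uniform control of the error terms across the entire range $\ell^{-1/4}\ll p$, $\log p\ll \ell^{3/2}$. The asymptotic picture interpolates between the very-large-$p$ regime (where the periodic corrections are negligible at the outset, close in spirit to the conjectural convergence \eqref{eq:pargeperiod}) and the critical regime $p\ell^{1/4}=O(1)$, which is handled by the companion theorems; the tail bounds on the periodic contributions must remain useful after being divided by the conditioning density. Establishing such estimates will require careful contour deformations and uniform decay estimates for the polylogarithmic building blocks on the steepest-descent contours, together with a check that the correction terms appearing in both the numerator and denominator cancel to leading order so that only the non-periodic Liu--Wang kernel survives in the limit.
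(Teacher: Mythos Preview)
Your outline points in the right direction---express the conditional probability as a ratio of $\partial_{\hite_m}$-derivatives of $\tasepF_m$ and $\tasepF_1$, then analyze the contour-integral formulas asymptotically---and this is indeed what the paper does. However, the proposal is too schematic to be a proof, and it misses the structural observations that organize the actual argument.

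The paper does not proceed by a generic steepest-descent/Gaussian-approximation scheme. Instead, after differentiating in $\hite_m$, it first proves an \emph{analyticity} lemma (Lemma~\ref{nzeroiszeo}): any term in the Fredholm expansion with some $n_k=0$ integrates to zero, so the sum over $\bfn\in\{0,1,\dots\}^m$ collapses to $\bfn\in\N^m$. It then splits the remaining sum into the single term $\bfn=\bfone$ and the tail $\bfn\in\N^m\setminus\{\bfone\}$, and shows by direct estimates (not saddle-point localization) that only $\bfn=\bfone$ survives. For that main term, the key step in Case~1 is to rewrite the discrete sum over the Bethe-type roots $\rmL_{z_i}$ as a Riemann sum with spacing $1/\ratio=1/(p\ell^{1/4})\to 0$, which converges to the continuous integral $S_\infty$; this is where $p\ell^{1/4}\to\infty$ enters, and it is a sum-to-integral limit rather than a steepest-descent computation. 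The polylogarithmic pieces $A_1,A_2,B,h$ do not ``concentrate on leading terms'' as you suggest; rather, with the contour choice $|z_i|\sim e^{-\ell p/2}$ they are $O(e^{-\ell p/2})$ and simply vanish, so $C^\bullet(\bfz)\to 1$ and $H_{\bfn}\to 1$ trivially. The condition $\log p\ll \ell^{3/2}$ is used not to control periodic corrections against the conditioning density, but to ensure the $\bfn\neq\bfone$ error series \eqref{eq:PPPane3} remains summable when $p$ is very large.

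In short: your plan lacks the two-layer decomposition (kill $n_k=0$ by analyticity, then isolate $\bfn=\bfone$) and the Riemann-sum mechanism that replaces steepest descent. Without these, ``uniform control of the error terms'' and ``Gaussian approximation around critical points'' are not actionable.
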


In order to introduce the limit theorem in the critical period case, we need to define a Brownian bridge on a periodic domain. 
For $\ratio>0$, let $\Sr=\realR/ \ratio \intZ$ be the quotient space of $\realR$ by $\ratio \intZ$. 
We denote by $\{x\}_\ratio$ the equivalence class of $x\in\realR$. Naturally, it satisfies the periodicity $\{x\}_\ratio =\{x+\ratio\}_\ratio$. The distance of two points $\{x\}_\ratio$ and $\{y\}_\ratio$ of $\Sr$ is defined as
\begin{equation} \label{eq:distratiodef}
  \dist_\ratio  (\{x\}_\ratio, \{y\}_\ratio) = \min_{k\in\intZ} |x-y+k\ratio|.
\end{equation}
A Brownian motion $\bmcir(t)$ on $\Sr$ can be defined from a Brownian motion $\bm(t)$  on $\realR$ by the formula 
\begin{equation}
	\bmcir(t) = \{\bm(t)\}_\ratio. 
\end{equation}
It is straightforward to see that $\bmcir(t)$ is a Markov process with the transition density 
\begin{equation*}
\lim_{\epsilon\to 0}\epsilon^{-1}\prob\left(\dist_\ratio \left(\bmcir(t) - \{x\}_\ratio\right) \le \epsilon  \mid \bmcir(s)=\{y\}_\ratio\right) = \phi^{(\ratio)}_{t-s}(\{x-y\}_\ratio)
\end{equation*}
for all $\{x\}_\ratio,\{y\}_\ratio \in \Sr$ and ordered times $s<t$, where the function $\phi^{(\ratio)}$ is defined by
\beq 
\label{eq:def_phi_ratio}
    \phi^{(\ratio)}_t(\{x\}_\ratio) = \sum_{k\in\intZ} \phi_t(x+k\ratio), \quad \text{for } \{x\}_\ratio\in\Sr,
\eeq
and $\phi_t(x):= \frac{1}{\sqrt{2\pi t}}e^{-\frac{x^2}{2t}}$ is the density function of the centered Gaussian distribution with variance $t>0$.

A Brownian bridge $\bbcir(t)$ on $\Sr$ is a Brownian motion on $\Sr$ conditional on $\bbcir(1)=\{0\}_\ratio$. Its finite-dimensional distributions are given by
\begin{equation*}
   \prob\left( \bigcap_{i=1}^m \left\{\bbcir(t_i)\in A_i \right\} \right) = \prob\left( \bigcap_{i=1}^m \left\{\bmcir(t_i)\in A_i\right\} \Big|\, \bmcir(1) = \{0\}_\ratio\right)
\end{equation*}
for any $m\ge 1$, where $t_1,\cdots, t_m$ are $m$ distinct times on $(0,1)$, and $A_1,\cdots,A_m$ are $m$ open sets in $\Sr$.

\begin{thm}[Critical period case]
	\label{thm2}
		For $\ratio>0$, we have
	\beqq\begin{split}
		\law \left(\left\{\frac{\ppkpz_p(\frac{ x}{ \ell^{1/4}}, t)- t\ell}{ \ell^{1/4}}\right\}_{(x, t)\in \realR\times (0,1)}\,\Big|\, \ppkpz_p(0,1)=\ell \right)\fddto  \law\left( \left\{ \bb_2(t) - \dist_\ratio \left(\bbcir_1(t),\{x\}_\ratio\right)\right\}_{(x, t)\in \realR\times (0,1)} \right)
	\end{split}\eeqq
	as $\ell \to \infty$ if
\beqq
        p = \ratio   \ell^{-1/4}, 
\eeqq
where $\bbcir_1(t)$ is a Brownian bridge on $\Sr$ and $\bb_2(t)$ is a Brownian bridge on $\realR$ which is independent of $\bbcir_1(t)$.
\end{thm}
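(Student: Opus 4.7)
The plan is to mirror the asymptotic analysis behind Theorem \ref{resultLiuWang} using the explicit multi-point formula for $\tasepF_m$ that underlies $\pkpz$ (and hence $\ppkpz_p$ via \eqref{eq:pkpwithpdef}), while carefully tracking the period dependence at the critical scale $p=\ratio \ell^{-1/4}$. I would first rewrite the conditional finite-dimensional distribution
\beqq
  \prob\bigl( \ppkpz_p(x_j/\ell^{1/4}, t_j) - t_j \ell \le \ell^{1/4} \hite_j,\ j=1,\dots,m \,\big|\, \ppkpz_p(0,1) = \ell \bigr)
\eeqq
as a ratio of distribution functions built from $\tasepF_{m+1}$ and $\tasepF_1$, realize the conditioning on $\{\ppkpz_p(0,1)=\ell\}$ by differentiating in the $\ell$-slot, and express the result as a Fredholm determinant whose kernel depends on $p$ through a sum over a discrete set of Bethe-type parameters arising from the periodic formula.

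Next I would carry out steepest-descent analysis as $\ell\to\infty$. The saddle geometry near the pinch-up point $(0,1)$ forces a transverse Gaussian fluctuation of scale $\ell^{-1/4}$, which is why the spatial rescaling $x\mapsto x/\ell^{1/4}$ is the correct one in both Theorem \ref{thm1} and Theorem \ref{thm2}. The $\ppkpz_p$-kernel contains sums over a discrete Bethe-type lattice whose spacing depends on $p$ in such a way that at the critical scale $p=\ratio \ell^{-1/4}$ the lattice spacing is exactly commensurate with the saddle scale. Consequently, after rescaling, these sums survive in the limit as genuine Fourier series indexed by $\tfrac{2\pi}{\ratio}\intZ$, rather than degenerating to Riemann integrals over $\realR$ (the mechanism that produces the line Brownian bridge in Theorem \ref{thm1}) or collapsing to the zero mode (the mechanism that will govern the small-period case).

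Finally I would identify the limiting Fredholm determinant probabilistically. A Brownian bridge $\bbcir_1$ on $\Sr$ admits precisely the discrete Fourier spectral representation indexed by $\tfrac{2\pi}{\ratio}\intZ$, and I expect the surviving discrete sum in the limit kernel to match the joint characteristic functional of $\dist_\ratio(\bbcir_1(t),\{x\}_\ratio)$, with an independent $\bb_2(t)$ contribution emerging from the longitudinal height fluctuation along the conditioned geodesic, exactly as in the line case. The main obstacle will be the uniform steepest-descent estimate: one must deform contours so that only the critical saddle together with the $O(1)$-index modes of the Bethe lattice contribute, while large-index modes and off-contour pieces are exponentially suppressed uniformly in the regime $p=\ratio \ell^{-1/4}$. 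The identification of the surviving discrete sum with the exact covariance structure of $\dist_\ratio(\bbcir_1(t),\{x\}_\ratio)$ is the most delicate probabilistic step, and is precisely what distinguishes the critical-period regime from the large- and small-period regimes.
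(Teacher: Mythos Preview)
Your high-level strategy matches the paper's: express the conditional probability via the explicit multi-point formula, differentiate in the last height variable, perform asymptotic analysis of the resulting series, and identify the limit probabilistically. Your description of the mechanism distinguishing the three regimes (the discrete Bethe lattice spacing being commensurate with the saddle scale exactly when $p=\ratio\ell^{-1/4}$, so that sums survive as genuine series over $\tfrac{2\pi}{\ratio}\intZ$) is exactly right.

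Two points where the paper's execution is more specific than your sketch. First, you speak of analyzing a limiting Fredholm determinant, but the paper does not work with a kernel directly: it expands $D(\bfz)$ as a series over $\bfn\in\{0,1,\dots\}^m$, shows via an analyticity argument that after differentiation in $\hite_m$ all terms with some $n_i=0$ integrate to zero, and then proves that terms with $\bfn\neq\bfone$ are exponentially suppressed (by a factor $e^{-\delta\ell^{3/2}}$). The entire limit comes from the single term $\bfn=\bfone$, which reduces the Cauchy determinants to an explicit rational product. This reduction is what makes the asymptotics tractable, and your outline does not isolate it.

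Second, your proposed identification via ``matching the joint characteristic functional of $\dist_\ratio(\bbcir_1(t),\{x\}_\ratio)$'' is not how the paper proceeds, and it is not clear it would work cleanly since $\dist_\ratio$ is a nonlinear functional of $\bbcir_1$. The paper instead converts each discrete sum $S_\ratio$ into a Laurent series in $w_i$ via residues, integrates the $w_i$ out to force the Laurent indices in the two $S_\ratio$ factors to coincide, recognizes the resulting integrals as Gaussian densities of $(\bm_1',\bm_2')$, and then applies the elementary set identity
\[
  \bigcup_{k\in\intZ}\bigl\{X-Y\ge -\ratio k,\ X+Y\in[\ratio k,\ratio(k+1))\bigr\}=\bigl\{X\ge \dist_\ratio(\{Y\}_\ratio,\{0\}_\ratio)\bigr\}
\]
followed by the orthogonal change of variables $\bm_1=(\bm_2'-\bm_1')/\sqrt2$, $\bm_2=(\bm_1'+\bm_2')/\sqrt2$. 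This combinatorial event decomposition, not a spectral or characteristic-function argument, is what produces the $\dist_\ratio$ structure and the independent line Brownian bridge $\bb_2$.
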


\begin{thm}[Small period case]
	\label{thm3}
	We have
\beqq\begin{split}
	\law \left(\left\{\frac{\ppkpz_p(\frac{  x}{ \ell^{1/4}}, t)- t\ell}{  \ell^{1/4}}\right\}_{(x, t)\in \realR\times (0,1)}\,\Big|\, \ppkpz_p(0,1)=\ell \right)\fddto  \law\left( \left\{  \bb(t)\right\}_{(x, t)\in \realR\times (0,1)} \right)
\end{split}\eeqq
as $\ell \to \infty$ if 
\beqq
	\ell^{-1} \log \ell\ll p\ll \ell^{-1/4}, 
\eeqq
where $\mathbb{B}$ is a Brownian bridge. 
\end{thm}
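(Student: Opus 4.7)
The plan is to work directly with the exact formula for the multi-time, multi-position distribution function $\tasepF_m$ of the periodic KPZ fixed point developed in \cite{Baik-Liu19} and recalled in Section \ref{sec:formdf}. Writing the conditional finite-dimensional distribution as the ratio
\[
\frac{\partial_\ell \prob\bigl(\ppkpz_p(0,1)\le \ell,\ \ppkpz_p(\gamma_i,t_i)\le h_i \text{ for all } i\bigr)}{\partial_\ell \prob\bigl(\ppkpz_p(0,1)\le \ell\bigr)},
\]
the theorem is reduced to identifying the leading asymptotics of this ratio in the small-period regime and matching it against the finite-dimensional distribution of the Brownian bridge $\bb(t)$. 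The scaling relation \eqref{eq:scalingprop} allows everything to be rewritten in terms of $\pkpz=\ppkpz_1$ at effective time $T=p^{-3/2}$ and conditioning value $L=\ell p^{-1/2}$, with the rescaled spatial positions $X_i=x_i/(p\ell^{1/4})$.

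In this reformulation the hypothesis $\ell^{-1}\log\ell\ll p\ll \ell^{-1/4}$ forces $L/T^{1/2}\to\infty$, so the conditioning lives in the large-deviation tail of the Brownian motion convergence \eqref{eq:pkpzlarget}, and simultaneously $p\ell^{1/4}\to 0$, so the spatial variables $X_i$ traverse many periods of the underlying field. The main step is a saddle-point analysis of the Fredholm-type expression for $\tasepF_m$ in this joint limit. The expectation is that the saddle lies in a regime where the $X_i$-dependence in the integrand averages out, leaving an integrand that depends only on the $t_i$, and that the remaining $t$-dependence matches the joint Laplace transform of the finite-dimensional distribution of a standard Brownian bridge returning to $0$ at $t=1$. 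A useful consistency check, and partial template, is the critical-period limit of Theorem \ref{thm2}: letting $\ratio=p\ell^{1/4}\to 0$ collapses the space $\Sr$ to a point and formally reduces $\bb_2(t)-\dist_\ratio(\bbcir_1(t),\{x\}_\ratio)$ to $\bb(t)$; however, since the orders of limit cannot be exchanged, a separate asymptotic analysis is required.

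The main technical obstacle is controlling the asymptotic expansion uniformly in the combination $p\ell^{1/4}$ across the admissible window. The lower bound $p\gg \ell^{-1}\log\ell$ should appear when bounding the error terms: I expect corrections of the form $e^{-cp\ell}$ arising from discrete sums in the Fredholm expression, which must remain negligible after being divided by the density $\partial_\ell\prob(\ppkpz_p(0,1)\le\ell)$, and the stated threshold produces exactly enough decay. A secondary but significant difficulty, absent from the non-periodic line case \cite{Liu-Wang22}, is the rigorous justification of the spatial decoupling: one must show that contributions from many fundamental domains combine coherently into a single $x$-independent quantity, which will likely require delicate estimates on the oscillatory behavior of the integrand along the deformed contours. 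I anticipate this to be the principal technical hurdle.
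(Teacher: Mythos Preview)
Your overall framework—writing the conditional law as a ratio of $\hite_m$-derivatives of \eqref{eq:jointprointe} and then analyzing the Fredholm series—matches the paper, and you correctly anticipate that the lower threshold $p\gg\ell^{-1}\log\ell$ enters through error terms of size $e^{-c\,p\ell}$ (this is precisely where it appears, in Lemma~\ref{cor:wCbound} and in the $\bfk\neq\bfzero$ estimates of Section~\ref{sec:prooflimit}). Two aspects of your plan, however, diverge from what is actually required.

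First, the detour through \eqref{eq:scalingprop} to period $1$ and the large-$T$ Brownian limit \eqref{eq:pkpzlarget} does not help: \eqref{eq:pkpzlarget} is proved only for the one-point distribution, and in any case weak convergence gives no control on the conditional law under the rare event $\ppkpz_p(0,1)=\ell$. The paper never rescales to period $1$; it works directly with the period-$p$ formula and a specific choice of contours $z_i=e^{-\ell p/2-\ratio\rho_i+\ii\theta_i}$ with $\ratio=p\ell^{1/4}$.

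Second, and more substantively, the mechanism producing $x$-independence is not oscillatory averaging over many fundamental domains; it is exponential localization to a single lattice point. After restricting to $\bfn=\bfone$ (Proposition~\ref{lm:error_estimate_largeL}), the leading object $\hbD_\bfone(\bfz)$ is a sum over $\bfk,\hat\bfk\in\intZ^m$ indexing the points of $\rmL_{z_i}$. Under the substitution $\theta_i=\ratio\varphi_i$ with $\ratio\to 0$, every summand with $(\bfk,\hat\bfk)\neq(\bfzero,\bfzero)$ carries a factor $e^{-c\sqrt{|k|/\ratio}}\to 0$, so only $\bfk=\hat\bfk=\bfzero$ survives. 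In that single term the $+\Delta x_i$ and $-\Delta x_i$ contributions of $E^{i,+}$ and $E^{i,-}$, evaluated at the \emph{same} node $\uu_i(0)$, cancel exactly (see \eqref{eq:Elimit3}), and the $x$-dependence disappears with no delicate cancellation or contour oscillation needed. The surviving $\varphi$-integral is Gaussian and equals $S_\infty(2\bft,2\bfh)$, which Proposition~\ref{propSrpr}(a) identifies with the Brownian-bridge finite-dimensional distribution. What you flagged as the principal technical hurdle is in fact automatic once the dominant term is isolated; the real work lies in the uniform bounds that kill the $\bfn\neq\bfone$ and $(\bfk,\hat\bfk)\neq(\bfzero,\bfzero)$ contributions.
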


\bigskip

We have several remarks. 
 \begin{itemize}

\item 
Let $\widetilde \ppkpz_p(x, t):= \frac{\ppkpz_p(\frac{ x}{\ell^{1/4}}, t)- t\ell}{ \ell^{1/4}} $. 
The above results should be understood as, for every positive integer $m$ and real numbers $h_1, \cdots, h_{m-1}$, the limit 
\beq \label{eq:limitinell} \begin{split}
	\lim_{\ell \to \infty} \prob\left( \widetilde \ppkpz_p(x_1, t_1) \ge h_1  ,\cdots,  \widetilde \ppkpz_p(x_{m-1}, t_{m-1}) \ge h_{m-1} \,\bigg|\, \ppkpz_p(0,1)= \ell \right)
\end{split}\eeq
exists in each case and is given by the corresponding joint probabilities of the limiting fields  in the theorems. 
The conditional probability in \eqref{eq:limitinell}  should be understood as 
\beqq\begin{split}
	\lim_{\epsilon\to 0} 
	\frac{\prob\left(  \widetilde \ppkpz_p(x_1, t_1) \ge h_1  ,\cdots,  \widetilde \ppkpz_p(x_{m-1}, t_{m-1}) \ge h_{m-1}  ,  \ppkpz_p(0,1)\in (\ell-\epsilon, \ell+ \epsilon) \right)}{\prob (\ppkpz_p(0,1)\in (\ell-\epsilon,\ell+ \epsilon) ) }.
\end{split}\eeqq

\item In all three cases, the position and the height are scaled the same way as in the KPZ fixed point case \eqref{eq:kpzLiuWang}, except for multiplicative $2$ in the spatial variable (see \eqref{eq:pargeperiod}). The factor $2$ is due to the fact that the periodic KPZ fixed point uses a different convention for the spatial variable compared to the KPZ fixed point. Similar discrepancy also appears in the equation \eqref{eq:pargeperiod}.

\item
   Consider the limiting field of Theorem \ref{thm2}. Note that 
	\begin{equation}
		\lim_{\ratio\to\infty} \dist_\ratio \left(\{x\}_\ratio, \{y\}_\ratio\right) =|x-y|,\qquad  \lim_{\ratio\to 0} \dist_\ratio \left(\{x\}_\ratio, \{y\}_\ratio\right)=0
        \qquad \text{for $x,y\in \realR$. }
	\end{equation}
    Also note that a Brownian bridge on $\Sr$ becomes a Brownian bridge on $\realR$ as $\ratio\to\infty$ and tends to $0$ as $\ratio\to0$. Thus, the limiting field in the critical period case interpolates the limiting fields in the other two cases. 
    
\item 
For fixed $t$, the limiting field of Theorems \ref{thm1} has maximum value $\bb_2(t)$ obtained at $x=\bb_1(t)$. 
Similarly, for fixed $t$, the limiting field of Theorems \ref{thm2} has maximum value $\bb_2(t)$ obtained at $x=\bbcir_1(t)$. 
On the other hand, the limiting field of Theorem \ref{thm3} does not depend on $x$.

\item  
As mentioned at the end of Section \ref{sec:KPZresult}, conditional on $\kpz(0,1)=\ell$, the geodesic from $(0,0)$ to $(0,1)$ in the directed landscape stays within a distance of order $\ell^{-1/4}$  from a straight line as $\ell\to\infty$. 
Since $\ppkpz_p$ has the period $p$, it is natural to conjecture that limit theorems for the periodic KPZ fixed point take  different forms depending on $p\gg \ell^{-1/4}$ or $p\ll \ell^{-1/4}$. 
The results above show that the critical regime is indeed when $p$ is same order as $\ell^{-1/4}$. 

\item 
In Theorem \ref{thm1}, $p$ is allowed to tend to zero, stay $O(1)$, or tend to infinity as long as it satisfies 
$p\gg \ell^{-1/4}$ and $\log p\ll \ell^{3/2}$. 
In this case, 
the limit is exactly same that of the KPZ fixed point \eqref{eq:kpzLiuWang} as the KPZ fixed point (except for the factor $2$ in the spatial scale). 
The condition $\log p\ll \ell^{3/2}$ is a technical one. 
We expect that Theorem \ref{thm1} holds true as long as $p\gg \ell^{-1/4}$, but it is not clear how to remove this condition from our proof.

\item In Theorems \ref{thm2} and \ref{thm3}, the period $p$ necessarily tends to zero. 
Theorem \ref{thm2} corresponds to the case when the period and the geodesic interact non-trivially. 
The condition $p\gg \ell^{-1} \log\ell$ in theorem \ref{thm3} is also a technical one, which may be weakened, but we do not expect that it can be completely removed.

\item From the scaling property \eqref{eq:scalingprop}, the theorems imply similar results when we condition at time $\tau$ instead of time $1$: conditional on that $\ppkpz_p(0,\tau)=\ell$, 
\beqq
     \frac{\ppkpz_p( \frac{  x \tau^{3/4}}{ \ell^{1/4}}, t \tau )- t \ell}{ \tau^{1/4} \ell^{1/4}}  
\eeqq
converges to the limit from one of the above three theorems as $\ell\to \infty$.

\item 
For the case when $p=O(1)$ or $p\to \infty$ in Theorem \ref{thm1}, it is also interesting to consider the analogous result conditional on $\ppkpz_p(x,1)=\ell$. We expect a result similar to \eqref{eq:kpzXaway}. However, unlike the KPZ fixed point situation, 
the periodic KPZ fixed point does not satisfy the invariance properties \eqref{eq:KPZinvariance}, and the result does not directly follow from Theorem \ref{thm1}. This situation will be studied in a separate paper. 

\end{itemize}


\subsection{Asymptotics of the one-point density in the right tail regime}
The analysis of the paper also yields the following  asymptotics. 
\begin{thm} \label{res:onepointtail}
Let
\begin{equation}
	f_p(\hite;\gamma,\tau)= \frac{\dd}{\dd\hite} \prob\left( \ppkpz_p (\gamma, \tau)\le \hite \right) 
\end{equation}
be the one-point density function of the periodic KPZ fixed point. 
Then, as $\ell\to\infty$, 
\begin{equation}
	f_p(\ell;0,1)
	=\begin{dcases}
	\frac{1}{8\pi \ell}e^{-\frac{4}{3}\ell^{3/2}} (1+o(1)) & \text{if } \ell^{-1/4}\ll p \text{ and } \log p\ll \ell^{3/2},\\
	\frac{\cnst(\ratio)}{8\pi \ell}e^{-\frac{4}{3}\ell^{3/2}} (1+o(1)) &\text{if } 
        p=\ratio  \ell^{-1/4},\\
	\frac{1}{4\sqrt{2\pi} \ell^{5/4}p}e^{-\frac{4}{3}\ell^{3/2}} (1+o(1)) \quad & \text{if } \ell^{-1}\log\ell \ll p\ll \ell^{-1/4},
\end{dcases}
\end{equation}
where
\begin{equation}
    \cnst(\ratio)= \sum_{k\in\intZ}e^{-\frac12\ratio^2 k^2} = \frac{\sqrt{2\pi}}{\ratio}  \sum_{k\in\intZ} e^{-\frac{2\pi^2}{\ratio^2} k^2} .
\end{equation}
\end{thm}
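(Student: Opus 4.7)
The strategy is a steepest descent analysis on the exact formula for the one-point distribution $\tasepF_1(\hite;0,1)$ recalled in Section \ref{sec:formdf} and Appendix \ref{sec:appendix}. I would first differentiate that formula in $\hite$ to obtain an explicit representation of the density $f_p(\ell;0,1)$, using the scaling relation \eqref{eq:pkpwithpdef} to reduce from the $p$-periodic KPZ fixed point to $\pkpz$. The Baik--Liu formula has the structure of a contour integral in a complex spectral-type parameter $\uu$ together with a sum over ``Bethe roots on the ring'' indexed by integers $k\in\intZ$. The exponent contains a cubic phase that produces the universal right-tail factor $e^{-\frac{4}{3}\ell^{3/2}}$, while the periodic geometry enters through $k$-dependent shifts of the saddle-point values.

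Next, I would carry out the saddle-point analysis in $\uu$ for each $k$. The saddle-point value of the phase is the same across all $k$ to leading order and gives $-\frac{4}{3}\ell^{3/2}$, but it receives a correction of the form $-\tfrac{1}{2}(p\ell^{1/4})^2 k^2$ from the quadratic-in-$k$ expansion of the periodic-pole locations around the critical $\uu$. Gaussian fluctuations in $\uu$ around each saddle produce the prefactor $\frac{1}{8\pi\ell}$ that is already familiar from the standard Tracy--Widom right-tail density. Summing over $k$ I would obtain the unified asymptotic
\beqq
f_p(\ell;0,1) = \frac{\cnst(p\ell^{1/4})}{8\pi\ell}e^{-\frac{4}{3}\ell^{3/2}}(1+o(1)),\qquad \cnst(\ratio)=\sum_{k\in\intZ} e^{-\frac{1}{2}\ratio^2 k^2},
\eeqq
which specializes immediately to the three stated regimes. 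Indeed, if $p\ell^{1/4}\to\infty$ only the $k=0$ term survives and $\cnst(p\ell^{1/4})\to 1$; if $p\ell^{1/4}=\ratio$ is fixed, the sum is exactly $\cnst(\ratio)$; and if $p\ell^{1/4}\to 0$, the Poisson summation identity gives $\cnst(p\ell^{1/4})\sim\sqrt{2\pi}/(p\ell^{1/4})$, so that $\cnst(p\ell^{1/4})/(8\pi\ell)\sim 1/(4\sqrt{2\pi}\,\ell^{5/4}p)$, matching the three asymptotic formulas in the theorem.

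The main obstacle will be to justify the saddle-point analysis uniformly across all three regimes and to control the error terms. In the small-period regime the number of significantly contributing Bethe roots grows with $\ell$, so the remainder estimates have to be compatible with the Poisson-type cancellations and cannot be done purely term by term with a crude uniform bound; the hypothesis $\ell^{-1}\log\ell \ll p$ reflects the threshold below which these cancellations become too delicate to control with the present method. In the large-period regime the condition $\log p \ll \ell^{3/2}$ arises because one must bound contributions from Bethe roots drifting far from the saddle as $p\to\infty$, and these contributions are only dominated by the universal saddle provided $p$ does not grow too fast compared to the exponential gain $e^{-\frac{4}{3}\ell^{3/2}}$. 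Finally, one must verify that the Fredholm-determinant factor appearing in the exact formula collapses at the one-point level to $1+o(1)$ in the deep right-tail regime; this should follow by bounding the trace norm of the kernel using that it is exponentially small at the relevant saddle.
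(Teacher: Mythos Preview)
Your overall strategy and the unified asymptotic $\frac{\cnst(p\ell^{1/4})}{8\pi\ell}e^{-\frac{4}{3}\ell^{3/2}}$ are correct, and the paper obtains Theorem~\ref{res:onepointtail} in essentially this way, as the $m=1$ specialization of Propositions~\ref{lm:main_contribution_largeL}--\ref{lm:error_estimate_largeL} (so no separate one-point analysis is carried out).

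There is, however, a genuine gap in your last step. The claim that ``the Fredholm-determinant factor collapses to $1+o(1)$'' is not the right mechanism and, if followed literally, would make you miss the main term. Differentiating the one-point formula in $\hite$ produces two pieces (Proposition~\ref{lm:change_low_bound_n_k}): one from $\partial_\hite C$, namely $A_1(z)C(z)D(z)$, and one from $\partial_\hite D$. In the first piece, replacing $D$ by $1$ gives an integrand $A_1(z)C(z)/z$ that is analytic at $z=0$ and hence integrates to zero (this is the $m=1$, $n=0$ case of Lemma~\ref{nzeroiszeo}); so that piece is \emph{not} where the leading contribution lives. The main term comes instead from the $n=1$ term of the Fredholm series for $\partial_\hite D$---the paper's $\hat D^\bullet_{\bfone}$ and $\hat P_{1,1}$---which is exactly the single Bethe-root sum over $k$ you analyze in your steps (2)--(3). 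The error control you actually need is that the $n\ge 2$ terms of both series are negligible (Proposition~\ref{lm:error_estimate_largeL}) together with a separate bound on the $A_1 C$-piece for $n\ge 1$ (Proposition~\ref{resultD1error}), not a trace-norm argument showing the whole determinant is $1+o(1)$.

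One minor correction: the $k$-dependent phase shift you see directly at the Bethe roots $\uu_1(k)$ scales like $-k^2/\ratio^2$, which is Poisson-dual to the $-\tfrac12\ratio^2 k^2$ you wrote. The form $\cnst(\ratio)=\sum_k e^{-\frac{1}{2}\ratio^2 k^2}$ emerges only after combining the two Bethe-root sums with the $z$-integral; in the paper this is the $m=1$ instance of Proposition~\ref{propSrpr}(b), which yields $\phi_1(0)\,\phi^{(\ratio)}_1(\{0\}_\ratio)=\cnst(\ratio)/(2\pi)$.
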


Thus, if $\ell^{-1/4}\ll p \text{ and } \log p\ll \ell^{3/2}$, the asymptotic matches the right tail behavior of the density function of the GUE Tracy-Widom distribution. 
For the KPZ fixed point on the line, which is the $p=\infty$ case of the periodic KPZ fixed point, the one-point distribution is given by the GUE Tracy-Widom distribution. 
Hence, we expect that the asympotics does not depend on $p$ as long as $p\gg \ell^{-1/4}$.

The right tail when $p=1$ was previously obtained for the  one-point distribution function. The result \cite[Theorem 1.7]{Baik-Liu-Silva22} shows that\footnote{Asymptotic result was also obtained for $\prob\left( \ppkpz_1 (\gamma, \tau)> \ell \right)$ for all $\gamma, \tau$.} 
\beqq
	\prob\left( \ppkpz_1 (0, 1)> \ell \right) 
	= \frac{1}{16\pi \ell^{3/2}} e^{-\frac43 \ell^{3/2}} (1+ O(\ell^{-3/2})) \qquad \text{as $\ell\to \infty$.}
\eeqq 
The above theorem when $p=1$ is consistent with the formal derivative of this result.

\subsection{Structure of the paper}

The proofs of Theorems \ref{thm1}--\ref{thm3} are based on an analysis of an explicit formula of the multi-time, multi-position distributions of the periodic KPZ fixed point obtained in \cite{Baik-Liu19}. 
The method is similar to that of \cite{Liu-Wang22} for the KPZ fixed point, but since the formulas for the periodic KPZ fixed point are more complicated, the analysis is more involved. 
The other difficulty is to find probabilistic descriptions of the limits of the formulas, especially for Theorem \ref{thm2}, which we first obtain in terms of complicated contour integrals. We guess the probabilistic interpretations of the formulas and check that they are correct by direct computations.

The explicit formula of the multi-time, multi-position distributions of the periodic KPZ fixed point involves an integral of a Fredholm determinant. 
In Section \ref{sec:proof}, we introduce this formula and show that upon the conditioning, the integral of some terms of the series expansion of the Fredholm determinant vanishes. 
We then state four propositions, Propositions \ref{lm:main_contribution_largeL}--\ref{propSrpr}, and prove Theorems \ref{thm1}--\ref{thm3} assuming these propositions. 
We also prove Theorem \ref{res:onepointtail} in this section. 
Section \ref{sec:asapre} is a preparatory section where we consider a function appearing in the distribution formula to compute its limit and obtain several bounds. 
Section \ref{sec:asymp} is the main analytic part of the paper. We perform asymptotic analysis and prove 
Proposition \ref{lm:main_contribution_largeL}--\ref{lm:error_estimate_largeL}. 
Proposition \ref{propSrpr}, is proved in Section \ref{sec:pflastprop}. 
There are two sections in the Appendix. 
In Appendix \ref{sec:appendix}, we prove \eqref{eq:taseplimit} for the exceptional values of parameters that were not treated in \cite{Baik-Liu19} and also prove the continuity and consistency of the distribution functions $\tasepF_m$. 
Finally, we show in Section \ref{sec:Dnformula} that the series formula of the Fredholm determinant in Section \ref{sec:proof} is the same as that of \cite{Baik-Liu19}.

\subsection*{Acknowledgments}

The authors would like to thank B\'alint Vir\'ag and an anonymous referee for their comments, which helped improve the presentation of the paper. 

The work of Baik was supported in part by NSF grants DMS-1954790, DMS-2246790, and by the Simons Fellows program. 
The work of Liu was supported by  NSF grants DMS-1953687  and DMS-2246683. 


\section{Proof of theorems}\label{sec:proof}

\subsection{Set-up}
\label{sec:set-up}

The conditions on $p$ and $\ell$ in Theorem \ref{thm1}--\ref{thm3} are 
\begin{itemize}
\item (Case 1) 
$p\gg \ell^{-1/4}$, $\log p\ll \ell^{3/2}$, and $\ell\to\infty$,
\item (Case 2) $p=\ratio  \ell^{-1/4}$  and $\ell\to \infty$,
\item (Case 3) $\ell^{-1} \log\ell\ll p\ll \ell^{-1/4}$ and $\ell\to \infty$,
\end{itemize}
respectively. 
In the rest of the paper, we will refer these limits as ``for Case 1", and so on. 
In each case, we evaluate the limit of 
\beq \label{eq:ptoper} \begin{split}
	 \prob\left(  \bigcap_{i=1}^{m-1} \left\{  \widetilde \ppkpz_p(x_i, t_i) \ge h_i  \right\} 
	 \,\bigg|\, \ppkpz_p(0,1)= \ell \right). 
\end{split} \eeq
in \eqref{eq:limitinell}. 
We will also often state that a result holds ``eventually" to mean that it holds when the appropriate parameters are large enough. 
For example, for Case 3, it means that there are positive constants $c_1, c_2, c_3>0$ such that the result holds for all $\ell$ and $p$ satisfying 
$\ell\ge c_1$, $p^{-1}\ell^{-1/4}\ge c_2$, and $\frac{p\ell}{\log \ell}\ge c_3$. 

\bigskip

The following result from \cite{Liu-Wang22} shows that when we consider the limit of \eqref{eq:ptoper} it is enough to consider the case when $t_1, \cdots, t_{m-1}$ are all distinct.  

\begin{lemma}\cite[Lemma 3.6]{Liu-Wang22}
\label{lem:equal_time} 
Let $Y$ be a random field on $\realR\times(0,T)$ with the property that 
for every positive integer $d$ and $x_1,\cdots,x_d\in \realR$, the cumulative distribution function 
$\prob\left(\cap_{i=1}^d \{ Y(x_i,t_i)\le \hite_i \} \right)$ is continuous in the variables $\hite_i$ and $t_i$ for every $1\le i\le d$. 
If a sequence of random fields $Y_n$ on $\realR\times(0,T)$ satisfies 
$(Y_n(x_i,t_i))_{i=1,\cdots,d}\to (Y(x_i,t_i))_{i=1,\cdots, d}$ 
in distribution as $n\to \infty$ for every $d$ and for every $(x_i, t_i)\in \realR \times (0,T)$, $i=1,\cdots, d$, where $t_1,\cdots,t_d$ are distinct numbers, 
then 
$Y_n(x,t)\to Y(x,t)$ in the sense of convergence of finite-dimensional distributions as $n\to \infty$.  
\end{lemma}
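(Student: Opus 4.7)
The plan is to reduce the general case to the distinct-times hypothesis via tightness, Prokhorov's theorem, and a perturbation-of-times argument. Given space-time points $(x_i,t_i)_{i=1}^d$ with possibly repeated $t_i$'s, I first observe that the $d=1$ case of the hypothesis (for which the distinctness condition is vacuous) yields $Y_n(x_i,t_i)\to Y(x_i,t_i)$ in distribution for each $i$, giving marginal and hence joint tightness of $\{(Y_n(x_i,t_i))_{i=1}^d\}_n$ in $\realR^d$. By Prokhorov's theorem, it therefore suffices to show that every subsequential weak limit $W$ satisfies $W\equid (Y(x_i,t_i))_i$.

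To this end I would fix a subsequence $n_k$ along which $(Y_{n_k}(x_i,t_i))_i\to W$ in distribution, and pick perturbations $\delta^{(m)}=(\delta_1^{(m)},\ldots,\delta_d^{(m)})$ with $\delta_i^{(m)}\to 0$ as $m\to\infty$, chosen so that for each fixed $m$ the shifted times $t_i+\delta_i^{(m)}$ are pairwise distinct and different from every $t_j$. For each fixed $m$, tightness of the $2d$-dimensional vector $(Y_{n_k}(x_i,t_i),Y_{n_k}(x_i,t_i+\delta_i^{(m)}))_{i=1}^d$ allows passing to a further sub-subsequence along which it converges to some $(W,V^{(m)})$, with the first $d$ coordinates still equal in law to $W$ because $(Y_{n_k}(x_i,t_i))_i$ already converged to $W$ along $n_k$. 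The key input is that the hypothesis, applied separately to each two-point set $\{(x_i,t_i),(x_i,t_i+\delta_i^{(m)})\}$ (which has two distinct times at the same spatial point), forces the coordinate-wise marginal to satisfy $(W_i,V_i^{(m)})\equid (Y(x_i,t_i),Y(x_i,t_i+\delta_i^{(m)}))$.

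Sending $m\to\infty$, the continuity of the joint CDF of $Y$ in its time arguments makes $(Y(x_i,t_i),Y(x_i,t_i+\delta_i^{(m)}))$ concentrate on the diagonal, so $W_i-V_i^{(m)}\to 0$ in distribution, and hence in probability since the limit is deterministic, for each $i$. A coordinate-wise union bound then gives $V^{(m)}-W\to 0$ in probability as $m\to\infty$. Independently, the hypothesis applied to the $d$ distinct times $t_i+\delta_i^{(m)}$ identifies $V^{(m)}\equid (Y(x_i,t_i+\delta_i^{(m)}))_i$, which converges in distribution to $(Y(x_i,t_i))_i$ by continuity of the full joint CDF of $Y$. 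Combining the probability and distribution convergences identifies $W\equid (Y(x_i,t_i))_i$, as desired.

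The hard part is coordinating the perturbation index $m$ with the further sub-subsequence: the joint distribution of $(W,V^{(m)})\in\realR^{2d}$ is not directly controlled by the hypothesis when the $t_i$'s repeat, because the combined $2d$ times $\{t_i,t_i+\delta_i^{(m)}\}$ are then not pairwise distinct, so the full $2d$-vector falls outside the scope of the assumption. I would circumvent this by using only the coordinate-wise pair marginals $(W_i,V_i^{(m)})$—whose laws are forced by the two-point case of the hypothesis regardless of which sub-subsequence is chosen—and then propagating the diagonal concentration from individual coordinates to the full vector via a union bound, which is what makes the argument insensitive to the joint structure of $(W,V^{(m)})$.
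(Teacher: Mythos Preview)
The paper does not give its own proof of this lemma; it merely cites \cite[Lemma 3.6]{Liu-Wang22} and uses the statement. So there is no in-paper argument to compare against.

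Your argument is correct. The key steps all go through: the $d=1$ case of the hypothesis gives marginal convergence and hence joint tightness; along any subsequence $n_k$ with limit $W$, the two-point hypothesis at $(x_i,t_i)$ and $(x_i,t_i+\delta_i^{(m)})$ pins down each pair marginal $(W_i,V_i^{(m)})$ regardless of which further sub-subsequence you take; the continuity of the two-point CDF of $Y$ in the time variable at $t_2=t_1$ forces $(Y(x_i,t_i),Y(x_i,t_i+\delta_i^{(m)}))$ to the diagonal, so $V_i^{(m)}-W_i\to 0$ in probability; and the $d$-point hypothesis at the distinct times $t_i+\delta_i^{(m)}$ identifies the law of $V^{(m)}$. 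The final Slutsky step is legitimate even though the couplings $(W,V^{(m)})$ live on $m$-dependent spaces, because the conclusion depends only on the law of $W-V^{(m)}$ and the law of $V^{(m)}$ for each $m$, together with the fact that the law of $W$ is fixed.

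One small point worth making explicit in a write-up: to pass from pointwise CDF convergence (which is what the continuity hypothesis gives as $\delta\to 0$) to weak convergence of $(Y(x_i,t_i),Y(x_i,t_i+\delta))$ onto the diagonal, you need the limiting CDF $\prob\big(Y(x_i,t_i)\le \min(h_1,h_2)\big)$ to be continuous, which follows from the $d=1$ case of the continuity assumption. You implicitly use this, but it is the place where the continuity in $\hite$ (not just in $t$) is needed.
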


Thus, the convergence for distinct times imply the convergence for arbitrary times if the limit distributions satisfy a continuity property. 
The limit fields in Theorems \ref{thm1}-\ref{thm2} clearly satisfy the continuity properties, and thus, it is enough to prove the convergence in distribution for distinct times only.

\subsection{Formula of the distribution functions} \label{sec:formdf}

Recall the relation \eqref{eq:pkpwithpdef} between $\ppkpz_p$ and $\pkpz$. 
When the times are distinct,\footnote{The result is also obtained for equal-time case when $\tau_i=\tau_{i+1}$ for some $i$ as long as $\beta_i<\beta_{i+1}$.
}  there is an exact formula for the multi-point distributions of $\pkpz$. We state the formula here. We analyze this formula to prove the theorems.  

Let $\hite_1, \cdots, \hite_m$ be real numbers. 
Set $I_i^+=[\hite_i, \infty)$ and $I_i^-=(-\infty, \hite_i]$. 
Consider $m$ points 
$(\gamma_1, \tau_1)$, $\cdots$, $(\gamma_m, \tau_m)\in \realR\times (0, \infty)$ satisfying $0<\tau_1<\cdots<\tau_m$. 
The paper \cite{Baik-Liu19} obtained formulas for the joint probabilities
\beq \label{eq:proplusminus}
	\prob\left( \pkpz(\gamma_1, \tau_1)\in I_1^{\pm}, \cdots, \pkpz(\gamma_{m-1}, \tau_{m-1})\in I_{m-1}^{\pm}, \pkpz(\gamma_m, \tau_m)\in I_m^{-}  \right)
\eeq
for arbitrary choices of $+$ and $-$ in each place.
When all but the last signs are positive, we have (see \cite[eq. (7.17)]{Baik-Liu19} and note the relation \eqref{eq:pkpwithpdef} between $\pkpz$ and $\ppkpz_p$) 
\beq \label{eq:jointprointe}
	\prob\left(\bigcap_{i=1}^{m-1} \left\{ \ppkpz_p (\gamma_i,\tau_i) \ge \hite_i \right\} \cap \left\{ \ppkpz_p(\gamma_m, \tau_m)\le \hite_m \right\}\right)
	= \frac{(-1)^{m-1}}{(2\pi \ii)^m } \oint \cdots \oint C(\bfz)D(\bfz) \prod_{i=1}^m \frac{\dd z_i}{z_i} 
\eeq
where the contours are circles centered at the origin with the radii satisfying $0<|z_1|<\cdots<|z_m|<1$. 
With $\bfz=(z_1, \cdots, z_m)$, the functions $C(\bfz)$ and $D(\bfz)$ are defined in \eqref{eq:Cdeff} and \eqref{eq:def_Dbfz} below. 
In Appendix \ref{sec:appendix}, we will use the case when all signs are negative.

To introduce the function $C(\bfz)$, let $\Li_s(z)$ denote the polylogarithm function of order $s$.  
Then,  
\beq \label{eq:Cdeff}
	C(\bfz)=\prod_{i=1}^{m-1} \frac{z_i}{z_{i}-z_{i+1}} 
	\prod_{i=1}^m \frac{e^{ \frac{\hite_i}{p^{1/2}} A_1(z_i) +\frac{\tau_i}{p^{3/2}} A_2(z_i)}}{e^{ \frac{\hite_i}{p^{1/2}}  A_1(z_{i+1}) + \frac{\tau_i}{p^{3/2}} A_2(z_{i+1})}}
	e^{2B(z_i, z_i)-2B(z_{i+1},z_i)}
\eeq
where 
\beq \label{eq:A12B} \begin{split}
	&A_1(z)=-\frac{1}{\sqrt{2\pi}} \Li_{3/2}(z), \qquad A_2(z) = -\frac{1}{\sqrt{2\pi}} \Li_{5/2}(z),
	\qquad B(z,z')=\frac{1}{4\pi}\sum_{k,k'= 1}^\infty \frac{z^k(z')^{k'}}{(k+k')\sqrt{kk'}} . 
\end{split} \eeq
Here, we set $z_{m+1}=0$ in the expressions.

The function $D(\bfz)$ is a Fredholm determinant.
The series formula of it is 
\beq \label{eq:def_Dbfz}
	D(\bfz) = \sum_{\bfn \in \{0, 1, \cdots\}^m } \frac{1}{(\bfn !)^2} D_{\bfn}(\bfz)
\eeq
where $\bfn!= n_1! n_2! \cdots n_m!$ for $\bfn=(n_1, \cdots, n_m)$ and $D_{\bfn}(\bfz)$ is given below. 
The formula of $D_{\bfn}(\bfz)$ below is slightly different from that of \cite[Lemma 2.10]{Baik-Liu19}  and we explain in Appendix \ref{sec:Dnformula} how to obtain the formula.\footnote{The paper \cite{Baik-Prokhorov-Silva23} also discusses another Fredholm determinant formula.} 

For $|z|<1$, define the discrete set 
\beq
	\rmL_z=\{w: e^{-w^2/2}=z, \, \Re(w)<0\}. 
\eeq
For $\bfn=(n_1, \cdots, n_m)$ and distinct complex numbers $z_1, \cdots, z_m$ in the punctured unit disk, let 
\beq \label{eq:D_bfz} \begin{split}
	D_{\bfn}(\bfz) 
	&=
	\prod_{i=2}^m \left(1-\frac{z_{i-1}}{z_i} \right)^{n_i}
	\left(1-\frac{z_{i}}{z_{i-1}} \right)^{n_{i-1}}
	\sum_{U, \, \hat U \in \rmL_{z_1}^{n_1}\times \cdots \times \rmL_{z_m}^{n_m}} 
	H_{\bfn}(U, \hat U) R_{\bfn}(U, \hat U) E_{\bfn} (U, \hat U) 
\end{split} \eeq
with the functions defined as follows. 
Let 
\beq \label{eq:hfuntion}
	h(w,z) = -\frac{1}{\sqrt{2\pi}} \int_{-\infty}^w \Li_{1/2}(ze^{(w^2-y^2)/2})\rmd y \qquad \text{for $\Re(w)<0$.} 
\eeq 
For $U= (U^{(1)}, \cdots, U^{(m)})$ and $\hat U= ( \hat U^{(1)}, \cdots, \hat U^{(m)})$ with $U^{(i)}, \hat U^{(i)}\in \rmL_{z_i}^{n_i}$, 
we write the components $U^{(i)}=(u_i^{(i)}, \cdots, u^{(i)}_{n_i})$ and $\hat U^{(i)}=(\hat u_i^{(i)}, \cdots, \hat u^{(i)}_{n_i})$. 
Then,  
\beq \label{eq:Hdefff}
	H_{\bfn}(U, \hat U)
	= \prod_{i=1}^{m} 
	\prod_{j=1}^{n_{i}} e^{2h_i(u_{j}^{(i)}) - h_{i+1}(u_{j}^{(i)}) - h_{i-1}(u_{j}^{(i)})  
	+ 2h_i(\hat u_{j}^{(i)}) - h_{i+1}( \hat u_{j}^{(i)}) - h_{i-1}( \hat u_{j}^{(i)})} 
\eeq
where $h_i(w):= h(w; z_i)$ and $h_0(w)=h_{m+1}(w)=0$.  
Next, for $X=(x_1, \cdots, x_a)$ and $Y=(y_1, \cdots, y_a)$, let
\beq \label{eq:Caudedef}
	\Cd(X; Y)= \det \left( \frac{1}{x_i + y_j} \right)_{i, j=1}^{a} 
	= \frac{\prod_{1\le i<j\le a} (x_j-x_i) (y_j-y_i)}{\prod_{i, j=1}^a (x_i + y_j)}
\eeq 
denote the Cauchy determinant. 
We have 
\beq \label{eq:Rdetfom}
	R_{\bfn}(U, \hat U)
	=  \left[ \prod_{i=1}^m \prod_{j_i=1}^{n_i} \frac{1}{u_{j_i}^{(i)} \hat u_{j_i}^{(i)} } \right]
	\prod_{i=0}^{m} \Cd( U^{(i)}, -\hat U^{(i+1)} ; \hat U^{(i)}, - U^{(i+1)}) 
\eeq
with the convention that $U^{(0)}=\hat U^{(0)}=U^{(m+1)}=\hat U^{(m+1)}=\emptyset$. 
Finally, 
\beq \label{eq:Edeffff}
	E_{\bfn}(U, \hat U)
	=  \prod_{i=1}^m\prod_{j_i=1}^{n_i} E^{i,+}(u_{j_i}^{(i)})E^{i,-}(\hat u_{j_i}^{(i)})  , 
	\qquad
	E^{i, \pm} (s):= e^{-\frac{\tau_i-\tau_{i-1}}{3p^{3/2}} s^3  \pm \frac{\gamma_i-\gamma_{i-1}}{2p}  s^2 + \frac{\hite_i-\hite_{i-1}}{p^{1/2}} s}.
\eeq

\subsection{Derivative of the distribution function}

The conditional probability is interpreted as (see \eqref{eq:ptoper}) 
\beq \label{eq:conditional_prob_origin} \begin{split}
        &\prob \bigg(\bigcap_{k=1}^{m-1} \big\{ \ppkpz_p (\gamma_k,\tau_k) \ge \hite_k \big\} \, \bigg|\,    \ppkpz_p(\gamma_m, \tau_m) = \hite_m \bigg) \\
      	&\quad =
       \lim_{\epsilon\to 0} 
       \frac{ \prob\left(\cap_{k=1}^{m-1} \left\{ \ppkpz_p (\gamma_k,\tau_k) \ge \hite_k\right\} 
       \cap \left\{ \ppkpz_p (\gamma_m, \tau_m)\in (\hite_m -\epsilon, \hite_m +\epsilon) \right\}\right)}
             {\prob\left( \ppkpz_p (\gamma_m, \tau_m)\in  (\hite_m-\epsilon, \hite_m+\epsilon)  \right)} \\   
       &\quad = \frac{\frac{\partial}{\partial \hite_m} \prob\left(\cap_{k=1}^{m-1} \left\{ \ppkpz_p (\gamma_k,\tau_k) \ge \hite_k\right\} \cap \left\{ \ppkpz_p (\gamma_m, \tau_m)\le \hite_m \right\}\right)}
             {\frac{\partial}{\partial \hite_m} \prob\left( \ppkpz_p (\gamma_m, \tau_m)\le \hite_m \right)}.
\end{split} \eeq
We now take a derivative of \eqref{eq:jointprointe} to find a formula for \eqref{eq:conditional_prob_origin}. 
We have the following result for the numerator. The denominator is given by the same formula with $m=1$.
In the result below, compared with \eqref{eq:def_Dbfz},  the sums are only over $\bfn\in \{1, 2, \cdots\}^m$, instead of being over $\bfn\in \{0, 1, 2, \cdots\}^m$. 
Also, $\hat D_{\bfn}(\bfz)$ is the same as $D_{\bfn}(\bfz)$, except for the extra factor $\sum_{j=1}^{n_m} (u_j^{(m)} +\hat u_j^{(m)})$ in the summand.
This proof is modeled on a computation from \cite{Liu-Wang22}.

\begin{prop} \label{lm:change_low_bound_n_k}
Let $\N=\{1, 2, \cdots\}$, the set of positive integers. Then,  
\beq \label{eq:derj_joint_CDF} \begin{split}
	&\frac{\partial}{\partial \hite_m} \prob\left(\cap_{k=1}^{m-1} \big\{ \ppkpz_p (\gamma_k,\tau_k) \ge \hite_k\big\} \cap \big\{ \ppkpz_p (\gamma_m, \tau_m)\le \hite_m \big\}\right) \\
	&= \frac{(-1)^{m-1} }{(2\pi \ii)^m p^{1/2}} \oint \cdots \oint \left( A_1(z_m)C(\bfz) \sum_{\bfn \in \N^m } \frac{D_{\bfn}(\bfz)}{(\bfn !)^2}  
	+ C(\bfz) \sum_{\bfn \in \N^m} \frac{\hat D_{\bfn}(\bfz) }{(\bfn !)^2} \right)  \prod_{i=1}^m \frac{\rmd z_i}{z_i} 
\end{split} \eeq
where the contours are the circles centered at the origin with radii satisfying $0<|z_1|<\cdots < |z_m|<1$. The terms $A_1(z)$, $C(\bfz)$, and $D_{\bfn}(\bfz)$ are defined in \eqref{eq:A12B}, \eqref{eq:Cdeff}, and \eqref{eq:def_Dbfz}, and 
for $\bfn=(n_1, \cdots, n_m)$, 
\beq\label{eq:hat_D_bfz}
	\hat D_{\bfn}(\bfz) 
	=
	\prod_{i=2}^m \left(1-\frac{z_{i-1}}{z_i} \right)^{n_i}
	\left(1-\frac{z_{i}}{z_{i-1}} \right)^{n_{i-1}}
	\sum_{U, \, \hat U \in \rmL_{z_1}^{n_1}\times \cdots \times \rmL_{z_m}^{n_m}} 
	H_{\bfn}(U, \hat U) \hat R_{\bfn}(U, \hat U) E_{\bfn} (U, \hat U) 
\eeq
where
\beq
	\hat R_{\bfn} (U, \hat U) = R_{\bfn}(U, \hat U)  \sum_{j=1}^{n_m} (u_j^{(m)} + \hat u_j^{(m)}). 
\eeq
\end{prop}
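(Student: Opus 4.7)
The plan is to differentiate \eqref{eq:jointprointe} under the integral sign term-by-term in $\bfn$ and then use Cauchy's theorem to eliminate the tuples $\bfn$ with some $n_i=0$. In $C(\bfz)$, the variable $\hite_m$ appears only through the factor $e^{\hite_m A_1(z_m)/p^{1/2}}$; the matching denominator factor would be $e^{\hite_m A_1(z_{m+1})/p^{1/2}}$, which is $1$ because $z_{m+1}=0$ and $A_1(0)=0$. In $D_{\bfn}(\bfz)$, the variable $\hite_m$ appears only inside $E_{\bfn}$, through the factor $e^{(\hite_m-\hite_{m-1})s/p^{1/2}}$ in $E^{m,\pm}(s)$ attached to each $u_j^{(m)}$ and $\hat u_j^{(m)}$. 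The product rule then produces the extra insertion $p^{-1/2}\sum_{j=1}^{n_m}(u_j^{(m)}+\hat u_j^{(m)})$ into the summand. Putting these together gives the right-hand side of \eqref{eq:derj_joint_CDF}, except that both inner sums initially range over $\bfn\in\{0,1,\dots\}^m$. The exchange of derivative, summation, and contour integration is justified by the same absolute convergence estimates used to make sense of \eqref{eq:jointprointe} in \cite{Baik-Liu19}.

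The remaining step is to show that every tuple $\bfn$ with at least one $n_i=0$ contributes zero to the contour integral. For the $\hat D_{\bfn}$-piece, the case $n_m=0$ is immediate because the inserted linear factor becomes an empty sum and $\hat D_{\bfn}\equiv 0$. In every other case I let $k^{*}=\min\{k: n_k=0\}$ and shrink the $z_{k^{*}}$-contour using Cauchy's theorem. With the remaining variables fixed on their circles, I claim the $z_{k^{*}}$-integrand is analytic in the open disk $|z_{k^{*}}|<r_{k^{*}}$. Indeed, the only candidates for singularities inside this disk are $z_{k^{*}}=0$ and (when $k^{*}\ge 2$) $z_{k^{*}}=z_{k^{*}-1}$. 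The simple pole at $z_{k^{*}}=z_{k^{*}-1}$ coming from $\frac{z_{k^{*}-1}}{z_{k^{*}-1}-z_{k^{*}}}$ in $C(\bfz)$ is cancelled by the zero of order $n_{k^{*}-1}\ge 1$ in $(1-z_{k^{*}}/z_{k^{*}-1})^{n_{k^{*}-1}}$, which is present by the minimality of $k^{*}$. The potential pole at $z_{k^{*}}=0$ is removed by the factor $\frac{z_{k^{*}}}{z_{k^{*}}-z_{k^{*}+1}}$ in $C(\bfz)$ when $k^{*}\le m-1$, and by the vanishing $A_1(z_m)=O(z_m)$ when $k^{*}=m$ in the first piece (the case $k^{*}=m$ for the second piece was already dealt with). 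All other $z_{k^{*}}$-dependent factors — the analytic exponentials in $C(\bfz)$, the polynomial $(1-z_{k^{*}}/z_{k^{*}+1})^{n_{k^{*}+1}}$ in $D_{\bfn}$, and the functions $h_{k^{*}}(w)=h(w;z_{k^{*}})$ appearing in $H_{\bfn}$ through the $i=k^{*}\pm 1$ products — are analytic on $|z_{k^{*}}|<1$. Cauchy's theorem then yields zero, with the case $k^{*}=1$ a simpler version of the same argument.

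The main obstacle is the analytic bookkeeping needed to legitimize the Cauchy-theorem step: verifying that $h_{k^{*}}(\,\cdot\,;z_{k^{*}})$ really is analytic in $z_{k^{*}}$ on $|z_{k^{*}}|<1$ (which uses the series definition in \eqref{eq:hfuntion} and the location of the branch cut of $\Li_{1/2}$), and checking uniform absolute convergence of the double series $\sum_{\bfn}\sum_{U,\hat U}$ on compact subsets of the product of disks so that the series may be exchanged with both the derivative and each $z_i$-integral. Both estimates follow the same pattern as the corresponding steps in \cite{Baik-Liu19} and \cite{Liu-Wang22}, so no genuinely new analytic machinery is needed; the real content of the proposition is the identification of the surviving index set via the pole-cancellation argument above.
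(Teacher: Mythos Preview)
Your proposal is correct and follows essentially the same approach as the paper: differentiate \eqref{eq:jointprointe} using the product rule on $C(\bfz)$ and $E_{\bfn}$, then eliminate tuples $\bfn$ with some $n_i=0$ by taking $k^{*}=\min\{k:n_k=0\}$ and applying Cauchy's theorem in $z_{k^{*}}$, with the pole cancellations you describe. The paper packages the second step as a separate lemma (Lemma~\ref{nzeroiszeo}) and writes out the $k^{*}=1$, $1<k^{*}<m$, and $k^{*}=m$ cases explicitly, but the argument is identical to yours.
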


\begin{proof}
In the formula \eqref{eq:jointprointe}, $\beta_m$ appears in two places. 
Since 
\beqq
	\frac{\dd C(\bfz)}{\dd\beta_m} = \frac1{p^{1/2}} A_1(z_m)C(\bfz)
	\quad \text{and} \quad
	\frac{\dd E_{\bfn}(U, \hat U)}{\dd\beta_m} 
	=  \frac1{p^{1/2}}  E_{\bfn}(U, \hat U)   \sum_{j=1}^{n_m} (u_j^{(m)} + \hat u_j^{(m)})  , 
\eeqq
we find that
\beq \label{eq:derj_joint_CDF0} \begin{split}
	& \frac{\partial}{\partial \hite_m} \prob\left(\cap_{k=1}^{m-1} \big\{ \ppkpz_p (\gamma_k,\tau_k) \ge \hite_k\big\} \cap \big\{ \ppkpz_p (\gamma_m, \tau_m)\le \hite_m \big\}\right)  \\
	&=  \frac{(-1)^{m-1}}{(2\pi \ii)^m p^{1/2}} \oint \cdots \oint \bigg( A_1(z_m)C(\bfz) \sum_{\bfn \in \{0, 1, \cdots\} ^m } \frac{D_{\bfn}(\bfz)}{(\bfn !)^2}  
	+ C(\bfz) \sum_{\bfn \in \{0, 1, \cdots\}^m } \frac{\hat D_{\bfn}(\bfz) }{(\bfn !)^2}  \bigg) \prod_{i=1}^m \frac{\rmd z_i}{z_i}  
\end{split} \eeq
where the sums are over $\bfn\in \{0, 1, \cdots\}^m$. 
Note the fact that $E(U, \hat U)$ decays super-exponentially fast as a variable tends to $\infty$ in the sets $\rmL_z$  where the rate of decay depends only on $|z|\in(0,1)$. Hence the summation of $D_{\bfn}$ and $\hat D_{\bfn}$ are uniformly convergent. Thus, we can change the order of the integral with respect to $\bfz$ and summation over $\bfn$. 
Now Lemma \ref{nzeroiszeo} below shows that the integral is zero if one of the components of $\bfn$ is zero. 
Thus, we obtain the result.  
\end{proof}

Recall that the contours for the integral are circles satisfying $0<|z_1|<\cdots < |z_m|<1$. 

\begin{lemma}\label{nzeroiszeo}
If one of the components of $\bfn=(n_1, \cdots, n_m)$ is zero, then  
\beq \label{eq:vanished_integral_n_k=0}
	\oint \cdots \oint A_1(z_m)C(\bfz) D_{\bfn}(\bfz) \prod_{i=1}^m \frac{\rmd z_i}{z_i} 
	=0
\eeq
and
\begin{equation}\label{eq:vanished_integral2_n_k=0}
\oint \cdots \oint C(\bfz) \hat D_{\bfn}(\bfz) \prod_{i=1}^m \frac{\rmd z_i}{z_i}  =0.
\end{equation}
\end{lemma}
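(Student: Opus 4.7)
The plan is to perform the $z_k$-contour integral for a well-chosen index $k$ with $n_k = 0$ and show the integrand is analytic in $z_k$ inside the circle $\{|z_k| = r_k\}$, so that Cauchy's theorem forces the integral to vanish. One easy subcase is immediate: for \eqref{eq:vanished_integral2_n_k=0} with $n_m = 0$, the sum $\sum_{j=1}^{n_m}(u_j^{(m)} + \hat u_j^{(m)})$ appearing in $\hat R_{\bfn}$ is empty, so $\hat D_{\bfn} \equiv 0$. In all other cases, I would let $k$ be the \emph{smallest} index with $n_k = 0$, so that $n_1, \dots, n_{k-1} \ge 1$ whenever $k \ge 2$.

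The next step is to identify the $z_k$-dependence of the integrand. In $C(\bfz)$, the only non-entire pieces involving $z_k$ are the rational factors $\frac{z_{k-1}}{z_{k-1} - z_k}$ (present when $k \ge 2$) and $\frac{z_k}{z_k - z_{k+1}}$ (present when $k \le m-1$); the exponential factors built from $A_1(z_k), A_2(z_k), B(z_k, \cdot), B(\cdot, z_k)$ are analytic on $|z_k| < 1$ via the polylogarithm. In $D_{\bfn}(\bfz)$, the hypothesis $n_k = 0$ trivializes $(1 - z_{k-1}/z_k)^{n_k}$ and $(1 - z_{k+1}/z_k)^{n_k}$, so no singularity at $z_k = 0$ is introduced from $D_{\bfn}$; the remaining $z_k$-dependence lives in the polynomials $(1 - z_k/z_{k-1})^{n_{k-1}}$ and $(1 - z_k/z_{k+1})^{n_{k+1}}$, together with the analytic dependence of $H_{\bfn}$ on $z_k$ through $h_k(\cdot) = h(\cdot; z_k)$ applied to the $z_k$-independent arguments $u_j^{(k \pm 1)}, \hat u_j^{(k \pm 1)}$. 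The remaining pieces $R_{\bfn}$, $E_{\bfn}$, and (when $k < m$) $\hat R_{\bfn}$ are all independent of $z_k$ under this hypothesis.

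It then suffices to check the three boundary cases. For $k = 1$ the combination $\frac{1}{z_1}\cdot \frac{z_1}{z_1 - z_2} = \frac{1}{z_1 - z_2}$ has its only singularity at $z_1 = z_2$, which lies outside $|z_1| < |z_2|$. For $1 < k < m$, combining the measure and $C$-factors gives $\frac{z_{k-1}}{(z_{k-1} - z_k)(z_k - z_{k+1})}$; the pole at $z_k = z_{k+1}$ is outside the $z_k$-contour, and the pole at $z_k = z_{k-1}$ is cancelled by the order-$n_{k-1} \ge 1$ zero of $(1 - z_k/z_{k-1})^{n_{k-1}}$. For $k = m$ (relevant only for \eqref{eq:vanished_integral_n_k=0} after the first-step reduction), the combination is $\frac{A_1(z_m)}{z_m} \cdot \frac{z_{m-1}}{z_{m-1} - z_m}$; the ratio $A_1(z_m)/z_m$ is analytic at $z_m = 0$ since $A_1(z) = -\Li_{3/2}(z)/\sqrt{2\pi}$ vanishes at the origin, and the pole at $z_m = z_{m-1}$ is again killed by $(1 - z_m/z_{m-1})^{n_{m-1}}$. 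In every case the $z_k$-integrand is analytic inside the closed $z_k$-contour, so Cauchy's theorem yields zero; the uniform convergence of the $U, \hat U$ sums noted in the proof of Proposition \ref{lm:change_low_bound_n_k} justifies interchanging sum and integral. The main point to get right is the choice of $k$ as the \emph{smallest} zero index, which is precisely what supplies $n_{k-1} \ge 1$ and hence the cancellation of the $z_k = z_{k-1}$ pole inside the contour.
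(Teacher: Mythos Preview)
Your proposal is correct and follows essentially the same argument as the paper: pick the smallest index $k$ with $n_k=0$, exhibit the $z_k$-dependence of the integrand, and verify analyticity inside the $z_k$-contour in the three cases $k=1$, $1<k<m$, $k=m$, with the key cancellation of the $z_k=z_{k-1}$ pole coming from $(1-z_k/z_{k-1})^{n_{k-1}}$ and $n_{k-1}\ge 1$. The only cosmetic omission is that your case split tacitly assumes $m\ge 2$ (both your $k=1$ and $k=m$ paragraphs reference a neighboring variable); the paper treats $m=1$ in one line, noting $D_{\bfn}=1$, $\hat D_{\bfn}=0$, and that $A_1(z_1)/z_1\cdot C(z_1)$ is analytic at the origin.
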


\begin{proof}
The case when $m=1$ can be checked directly. Note that in this case $D_\bfn(\bfz)=1$, $\hat D_{\bfn}(\bfz)=0$. And the functions $C(\bfz)$, $A_1(z_1)/z_1$ are both analytic at $z_1=0$. These imply the two identities \eqref{eq:vanished_integral_n_k=0} and \eqref{eq:vanished_integral2_n_k=0}. Below we assume that $m\ge 2$.

Let $\bfn=(n_1, \cdots, n_m)$ be given and one of the components is zero. 
Let $k$ be the smallest integer such that $n_k=0$. 
We will show the integrands of both integrals are analytic as a function of $z_k$ in the integration domain, and thus the integrals are zero.

We first focus on the integral in~\eqref{eq:vanished_integral_n_k=0}.
Since $n_k=0$, the set $\rmL_{z_k}^{n_k}$ is empty. Thus, the integrand does not contain any factor involving $U^{(k)}$ and $\hat U^{(k)}$, which depend on $z_k$. 
If $k=1$, the only term that depends on $z_1$ in $A_1(z_m)C(\bfz)D_{\bfn}(\bfz) \prod_{i=1}^m \frac{1}{z_i} $ 
is the factor 
\beqq
	\frac{1}{z_1-z_2} e^{-\sum_{j=1}^{n_2}(h_1(u_{j}^{(2)})+h_1(\hat u_{j}^{(2)}))} \left(1-\frac{z_1}{z_2}\right)^{n_2}e^{\hite_1 A_1(z_1) + \tau_1 A_2(z_1) +2 B(z_1) -2B(z_2,z_1)}. 
\eeqq
Since $|z_1|<|z_2|$, this function is analytic at $z_1=0$. This implies the integrand in~\eqref{eq:vanished_integral_n_k=0} is analytic in $z_1$ around the origin. Hence \eqref{eq:vanished_integral_n_k=0} holds when $k=1$.
  
If $1<k<m$, the only term that depends on $z_k$ in $A_1(z_m)C(\bfz)D_{\bfn}(\bfz) \prod_{i=1}^m \frac{1}{z_i} $ 
is  
\beqq \begin{split}
	&\frac{1}{(z_{k-1}-z_k)(z_k-z_{k+1})} e^{-\sum_{i=1}^{n_{k-1}}(h_k(u_{i}^{(k-1)})+h_k(\hat u_i^{(k-1)}))-\sum_{j=1}^{n_{k+1}}(h_k(u_{j}^{(k+1)})+h_k(\hat u_j^{(k+1)}))} \\
	&\times \left(1-\frac{z_k}{z_{k-1}}\right)^{n_{k-1}} \left(1-\frac{z_k}{z_{k+1}}\right)^{n_{k+1}}  e^{(\hite_k-\hite_{k-1})A_1(z_k) + (\tau_k-\tau_{k-1})A_2(z_k) + 2B(z_k) -2B(z_{k},z_{k-1}) -2B(z_{k+1},z_k)}. 
\end{split} \eeqq
As a function of $z_k$, it is of the form
\beqq
	(z_k - z_{k-1} )^{n_{k-1}-1} (z_k-z_{k+1})^{n_{k+1}-1} \times \text{(a term analytic in  $|z_k|<1$)}
\eeqq
Since $n_{k-1}\ge 1$, the first factor is analytic in $z_k$. 
On the other hand, due to the contour conditions, the second factor is analytic in $|z_k|<|z_{k+1}|$. 
Thus, the whole term is analytic at $z_k=0$, and we obtain \eqref{eq:vanished_integral_n_k=0} when $1<k<m$.

If $k=m$, 
the only term that depends on $z_m$ in $A_1(z_m) C(\bfz)D_{\bfn}(\bfz) \prod_{i=1}^m \frac{1}{z_i} $ is 
\beqq \begin{split}
	&\frac{A_1(z_m)}{z_m ( z_{m-1}-z_m) } e^{-\sum_{j=1}^{n_{m-1}}(h_m(u_j^{(m-1)})+h_m(\hat u_j^{(m-1)}))} \\
 & \times \left(1-\frac{z_m}{z_{m-1}} \right)^{n_{m-1}} e^{(\hite_m-\hite_{m-1})A_1(z_m) + (\tau_m-\tau_{m-1})A_2(z_m) + 2B(z_m) -2B(z_{m},z_{m-1})}. 
\end{split} \eeqq
As a function of $z_m$, it is of the form 
\beqq
	 (z_m - z_{m-1} )^{n_{m-1}-1} \frac{A_1(z_m)}{z_m}  \times \text{(a term analytic in  $|z_m|<1$)}
\eeqq
Since $n_{m-1}\ge 1$, the above is analytic at $z_m=z_{m-1}$. On the other hand, since $A_1(0)=0$, the term $\frac{A_1(z_m)}{z_m}$ is analytic at $z_m=0$. 
Thus, the integrand in~\eqref{eq:vanished_integral_n_k=0} is analytic in $z_m$ within the integration contour. We obtain~\eqref{eq:vanished_integral_n_k=0}. 

The proof of~\eqref{eq:vanished_integral2_n_k=0} is exactly the same as that of~\eqref{eq:vanished_integral_n_k=0} when $k<m$ since $\hat D_{\bfn}(\bfz)$ is the same as $D_{\bfn}(\bfz)$ except an extra factor $\sum_{j=1}^{n_m} (u_j^{(m)}+\hat u_j^{(m)})$ which does not depend on $z_k$. When $k=m$, we have $n_m=0$. This factor $\sum_{j=1}^{n_m} (u_j^{(m)}+\hat u_j^{(m)})=0$ hence the integrand is zero. We still have~\eqref{eq:vanished_integral2_n_k=0}.
\end{proof}

\bigskip

From the above results, we can write the probability in \eqref{eq:ptoper} as \eqref{eq:PreadyPP} below. 

\begin{defn}
For $\bfz=(z_1, \cdots, z_m)$ with $0<|z_1|< \cdots<|z_m|<1$, 
define 
\beq \label{eq:bulleCdeff}
	C^{\bullet}(\bfz) =C(\bfz) \prod_{i=1}^{m-1} \frac{z_{i}-z_{i+1}} {z_i}
	= \prod_{i=1}^m \frac{e^{ \frac{\hite_i}{p^{1/2}} A_1(z_i) +\frac{\tau_i}{p^{3/2}} A_2(z_i)}}{e^{ \frac{\hite_i}{p^{1/2}}  A_1(z_{i+1}) + \frac{\tau_i}{p^{3/2}} A_2(z_{i+1})}} e^{2B(z_i, z_i)-2B(z_{i+1},z_i)}
\eeq
where $A_1, A_2, B$ are given in \eqref{eq:A12B}, and we set $z_{m+1}=0$. 
Define 
\beq \label{eq:bulleD_bfz} \begin{split}
	D^{\bullet}_{\bfn}(\bfz) 
	&=
	\sum_{U, \, \hat U \in \rmL_{z_1}^{n_1}\times \cdots \times \rmL_{z_m}^{n_m}} 
	H_{\bfn}(U, \hat U) R_{\bfn}(U, \hat U) E_{\bfn} (U, \hat U) , \\
	\hat D^{\bullet}_{\bfn}(\bfz) 
	&=
	\sum_{U, \, \hat U \in \rmL_{z_1}^{n_1}\times \cdots \times \rmL_{z_m}^{n_m}} 
	H_{\bfn}(U, \hat U) \hat R_{\bfn}(U, \hat U) E_{\bfn} (U, \hat U) 
\end{split} \eeq
where the functions $H_{\bfn}(U, \hat U)$, $R_{\bfn}(U, \hat U)$, and $E_{\bfn} (U, \hat U)$ are defined in \eqref{eq:Hdefff}, \eqref{eq:Rdetfom}, and 
\eqref{eq:Edeffff}, while the function $\hat R_{\bfn}(U, \hat U)$ is defined in \eqref{eq:hat_D_bfz}. 
Also define 
\beq \label{eq:bfTdefff}
	\bT_{\bfn}(\bfz)= 
	\prod_{i=2}^m \left(1-\frac{z_{i-1}}{z_i} \right)^{n_i}
	\left(1-\frac{z_{i}}{z_{i-1}} \right)^{n_{i-1}-1}.
\eeq
\end{defn}

\begin{cor} \label{cor:Pready}
Let $\N=\{1, 2, \cdots\}$ and $\bfone=(\underbrace{1, \cdots, 1}_{m})$. Let 
$C^{\bullet}(\bfz)$,  $D^{\bullet}_{\bfn}(\bfz)$, and  $\hat D^{\bullet}_{\bfn}(\bfz)$ be given above with the parameters
\beq \label{eq:tagahi00}
	\tau_i = t_i , \qquad 
	\gamma_i =  \frac{  x_i}{\ell^{1/4}}, 
	\qquad \hite_i = t_i \ell + h_i  \ell^{1/4}
	\qquad \text{for $i=1,\cdots,m$,}
\eeq
where 
$0<t_1<\cdots< t_{m-1}<1$, $x_1, \cdots, x_{m-1}\in \realR$,  $h_1, \cdots, h_{m-1}\in \realR$, and 
$t_m=1$, $x_m=0$, $h_m=0.$
Then, 
\beq \label{eq:PreadyPP} \begin{split}
	\prob\left( \bigcap_{i=1}^{m-1} \bigg\{  \frac{\ppkpz( \frac{  x_i}{\ell^{1/4}}, t_i)- t_i \ell}{  \ell^{1/4}} \ge h_i \bigg\}  \,\bigg|\, \ppkpz_p(0,1)= \ell \right) 
	= \frac{\PP_{m,1}+ \PP_{m,2}+ \hat \PP_{m,1}+ \hat \PP_{m,2}}{\PP_{1,1}+\PP_{1,2}+ \hat \PP_{1,1}+ \hat \PP_{1,2} }
\end{split} \eeq
where
\beq \label{eq:PPdefns} \begin{split}
	\PP_{m,1} &= \frac{(-1)^{m-1}}{(2\pi \ii)^m}  \oint \cdots \oint  A_1(z_m) \bC(\bfz) \bD_{\bfone}(\bfz) \bT_{\bfone}(\bfz) \prod_{i=1}^m \frac{\rmd z_i}{z_i}, \\
	\PP_{m,2} &= \frac{(-1)^{m-1}}{(2\pi \ii)^m} \sum_{\bfn \in \N^m\setminus\{\bfone\} }  \frac1{(\bfn !)^2} \oint \cdots \oint  A_1(z_m)\bC(\bfz) \bD_{\bfn}(\bfz)  \bT_{\bfn}(\bfz)  \prod_{i=1}^m \frac{\rmd z_i}{z_i}, \\
	\hat \PP_{m,1} &= \frac{(-1)^{m-1}}{(2\pi \ii)^m} \oint \cdots \oint  \bC(\bfz) \hat D^{\bullet}_{\bfone}(\bfz) \bT_{\bfone}(\bfz) \prod_{i=1}^m \frac{\rmd z_i}{z_i}, \\
	\hat \PP_{m,2} &= \frac{(-1)^{m-1}}{(2\pi \ii)^m} \sum_{\bfn \in \N^m\setminus\{\bfone\} }  \frac1{(\bfn !)^2} 
	\oint \cdots \oint  \bC(\bfz) \hat D^{\bullet}_{\bfn}(\bfz)  \bT_{\bfn}(\bfz)  \prod_{i=1}^m \frac{\rmd z_i}{z_i}. 
\end{split}\eeq
\end{cor}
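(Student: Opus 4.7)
The plan is to derive Corollary \ref{cor:Pready} as a bookkeeping consequence of Proposition \ref{lm:change_low_bound_n_k}, the conditional probability identity \eqref{eq:conditional_prob_origin}, and an algebraic rearrangement that groups the prefactors of $D_\bfn$ and $\hat D_\bfn$ together with the ``combinatorial'' factor coming from $C(\bfz)$.

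First I would apply \eqref{eq:conditional_prob_origin} to write the desired conditional probability as the ratio $N/D$, where $N$ is $\partial_{\hite_m} \prob\bigl(\cap_{i=1}^{m-1}\{\ppkpz_p(\gamma_i,\tau_i)\ge \hite_i\} \cap \{\ppkpz_p(\gamma_m,\tau_m)\le \hite_m\}\bigr)$ evaluated with the $m$-point parameters from \eqref{eq:tagahi00}, and $D$ is the same quantity in the $m=1$ instance, i.e. $\partial_{\hite_m}\prob(\ppkpz_p(0,1)\le \hite_m)$ at $\hite_m=\ell$. Proposition \ref{lm:change_low_bound_n_k} already provides a formula for $N$ (and, by specialization to $m=1$, for $D$). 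The explicit $1/p^{1/2}$ factor in \eqref{eq:derj_joint_CDF} appears identically in $N$ and $D$ and therefore cancels in the ratio, which is why it is absent from \eqref{eq:PreadyPP}.

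Next, I would reconcile the $C,D_\bfn,\hat D_\bfn$ of Proposition \ref{lm:change_low_bound_n_k} with the bulleted quantities $\bC,\bD_\bfn,\hat\bD_\bfn,\bT_\bfn$. From \eqref{eq:bulleCdeff} one has
\[
C(\bfz) = \bC(\bfz)\prod_{i=1}^{m-1}\frac{z_i}{z_i-z_{i+1}} = \bC(\bfz)\prod_{j=2}^{m}\frac{1}{1-z_j/z_{j-1}},
\]
while from \eqref{eq:D_bfz} and \eqref{eq:bulleD_bfz},
\[
D_\bfn(\bfz) = \bD_\bfn(\bfz)\prod_{i=2}^{m}\bigl(1-z_{i-1}/z_i\bigr)^{n_i}\bigl(1-z_i/z_{i-1}\bigr)^{n_{i-1}},
\]
and the analogous identity holds with $D,\bD$ replaced by $\hat D,\hat\bD$. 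Multiplying and using \eqref{eq:bfTdefff} yields the clean identities
\[
C(\bfz)\,D_\bfn(\bfz) = \bC(\bfz)\,\bD_\bfn(\bfz)\,\bT_\bfn(\bfz),\qquad C(\bfz)\,\hat D_\bfn(\bfz) = \bC(\bfz)\,\hat\bD_\bfn(\bfz)\,\bT_\bfn(\bfz).
\]
Substituting these into \eqref{eq:derj_joint_CDF} converts every integrand to the bulleted form.

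Finally, I would split the $\bfn$-summation in Proposition \ref{lm:change_low_bound_n_k} (which already ranges over $\N^m$ thanks to Lemma \ref{nzeroiszeo}) into the single term $\bfn=\bfone$ and the remainder $\bfn\in\N^m\setminus\{\bfone\}$. The resulting four pieces coming from (a) the $A_1(z_m)\bC\bD_\bfn$ contribution with $\bfn=\bfone$, (b) the same with $\bfn\ne\bfone$, (c) the $\bC\hat\bD_\bfn$ contribution with $\bfn=\bfone$, and (d) the same with $\bfn\ne\bfone$, are exactly the definitions \eqref{eq:PPdefns} of $\PP_{m,1}$, $\PP_{m,2}$, $\hat\PP_{m,1}$, $\hat\PP_{m,2}$ once the parameters \eqref{eq:tagahi00} are inserted. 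The denominator is obtained by the identical procedure in the degenerate case $m=1$, where the product defining $\bT_\bfn$ is empty (so $\bT_\bfn\equiv 1$), yielding $\PP_{1,1}+\PP_{1,2}+\hat\PP_{1,1}+\hat\PP_{1,2}$. This is a purely algebraic manipulation; there is no real obstacle beyond verifying that the exponents of $(1-z_i/z_{i-1})$ coming from $C$ and from $D_\bfn$ combine to give $n_{i-1}-1$ as recorded in \eqref{eq:bfTdefff}, which is the only nontrivial bookkeeping point.
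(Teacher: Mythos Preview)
Your proposal is correct and follows exactly the argument implicit in the paper: the corollary is stated without a separate proof because it is precisely the bookkeeping you describe, namely combining \eqref{eq:conditional_prob_origin} with Proposition~\ref{lm:change_low_bound_n_k}, using the algebraic identity $C(\bfz)D_\bfn(\bfz)=\bC(\bfz)\bD_\bfn(\bfz)\bT_\bfn(\bfz)$ (and its hatted analogue), and splitting off the $\bfn=\bfone$ term. The cancellation of $p^{-1/2}$ and the verification that the exponents of $(1-z_i/z_{i-1})$ combine to $n_{i-1}-1$ are exactly the points the paper leaves to the reader.
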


\subsection{Four propositions}

We analyze the equation \eqref{eq:PreadyPP} to prove Theorems \ref{thm1}--\ref{thm3}. 
We will see that the main contributions to the limit comes from $\frac{\hat\PP_{m,1}}{\hat\PP_{1,1}}$ for all three Cases. 
There are four propositions in this subsection. 
Proposition \ref{lm:main_contribution_largeL} computes the limit of $\hat \PP_{m,1}$. 
Proposition \ref{resultD1error} shows that $\PP_{m,1}$ is of a smaller order. 
Similarly, Proposition \ref{lm:error_estimate_largeL} shows that $\PP_{m,2}$ and $\hat\PP_{m,2}$ are also of smaller orders. 
Probabilistic interpretations of the limits from Proposition \ref{lm:main_contribution_largeL} are obtained in Proposition \ref{propSrpr}. 
In the next subsection, we prove the main theorems assuming these four propositions. 
The proofs of these propositions are the main analysis of this paper and they are given in Section \ref{sec:asymp} and \ref{sec:pflastprop}.

All results in this subsection hold uniformly for 
the parameters in compact subsets of 
$0<t_1<\cdots< t_{m-1}<1$, $(x_1, \cdots, x_{m-1})\in \realR^{m-1} \in \realR$,  $(h_1, \cdots, h_{m-1})\in \realR^{m-1}$ although we do not state this fact explicitly.

\medskip 

We first need some definitions. 

\begin{defn}
For every vector $\bfa=(a_1,\cdots,a_m)$ of real numbers, we denote  
\beq
	\dDelta a_i = \begin{dcases}
                 a_1, \qquad & i=1,\\
                 a_i -a_{i-1},  \qquad & 2\le i \le m.
                \end{dcases}   
\eeq 
\end{defn} 

\begin{defn} \label{defnS}
For $\bfa=(a_1,\cdots,a_m)\in\realR^m$ satisfying $0<a_1<\cdots<a_m$ and $\bfb=(b_1,\cdots,b_m)\in\realR^m$, define
\beq \label{eq:Sinfde}
    S_\infty (\bfa ,\bfb) =  \frac{(-1)^{m-1}\sqrt{2}}{(2\pi \ii)^m} \int\cdots \int \prod_{i=2}^m \frac{1}{\xi_i-\xi_{i-1}} \prod_{i=1}^m e^{ \dDelta a_i \xi_i^2 -  \dDelta b_i \xi_i} \dd \xi_1 \cdots \dd \xi_m 
\eeq
where the contours are vertical lines, oriented upward, satisfying $\Re (\xi_1) >\cdots > \Re (\xi_m)$. 
For $\bfw=(w_1, \cdots,w_m)\in \C^m$ satisfying $0< |w_1|< \cdots <|w_{m}|$, define 
\beq \label{eq:Srdefnt}
	S_\ratio (\bfa,\bfb; \bfw) = \frac{(-1)^{m-1}\sqrt{2}}{\ratio^m } \sum_{\xi_1,\cdots,\xi_m} 
	\prod_{i=2}^m \frac{1}{\xi_i-\xi_{i-1}} \prod_{i=1}^m e^{ \dDelta a_i \xi_i^2 - \dDelta b_i \xi_i} \qquad \text{for $\ratio>0$}
\eeq
where the sum is over the roots $\xi_i$ of the equations 
\beq
    {e^{-\ratio \xi_i}}=w_i \qquad \text{for $i=1, \cdots, m$.} 
\eeq
\end{defn}

Let $\bft=(t_1,\cdots,t_m)=(t_1,\cdots,t_{m-1},1)$, $\bfx=(x_1,\cdots,x_{m})=(x_1,\cdots,x_{m-1},0)$, and $\bfh=(h_1,\cdots,h_m) =(h_1,\cdots,h_{m-1},0)$. 
The first proposition is about $\hat \PP_{m,1}$.

\begin{prop} \label{lm:main_contribution_largeL}
We have  
\begin{equation}
\begin{split}
	& \frac{4\ell}{p^{1/2}} e^{\frac{4}{3} \ell^{\frac32}} \hat \PP_{m,1} \to 
\begin{dcases}
	S_\infty(\bft, \bfh-\bfx ) S_\infty(\bft,\bfh+\bfx) \quad &\text{for Case 1,} \\
	\oint\cdots \oint S_{\ratio}(\bft, \bfh-\bfx; \bfw) S_{\ratio}(\bft,\bfh+\bfx;  \bfw) \prod_{i=2}^{m} (1-\frac{w_{i-1}}{w_i})
	\prod_{i=1}^m \frac{\rmd w_i}{2\pi\rmi w_i} 
	\quad & \text{for Case 2,} 
\end{dcases}
\end{split}
\end{equation}
and 
\beq
\begin{split}
	& 2^{3/2} \ell^{5/4} p^{1/2} e^{\frac{4}{3} \ell^{\frac32}}
	\hat \PP_{m,1}
	\to  S_\infty (2\bft,2\bfh)\qquad \text{for Case 3}. 
\end{split}
\eeq
The integral contours for Case 2 are counterclockwise circles satisfying $0<|w_1|<\cdots<|w_m|$. 
\end{prop}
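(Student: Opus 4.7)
I would prove the proposition by steepest-descent analysis of the multi-contour integral defining $\hat\PP_{m,1}$, in which $\bfn=\bfone$ forces each summation set $\rmL_{z_i}$ to contribute exactly one element for $u^{(i)}$ and one for $\hat u^{(i)}$. Writing $z_i=e^{-\xi_i^2/2}$ with $\Re\xi_i>0$, the elements of $\rmL_{z_i}$ are enumerated by integers $k\in\intZ$ via $u=-\sqrt{\xi_i^2+4\pi\ii k}$. The combined $\ell$-dominant exponent in $\bC(\bfz)$, $E_\bfone(U,\hat U)$ and $H_\bfone(U,\hat U)$ has a common stationary point at $\xi_i^\ast=\sqrt{\ell p}$ and $u^{(i)}=\hat u^{(i)}=-\sqrt{\ell p}$; at this point, the three contributions combine to $-\frac{4}{3}\ell^{3/2}$ after summing over $i$, producing the $e^{\frac{4}{3}\ell^{3/2}}$ prefactor asserted in the proposition. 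The Gaussian fluctuation scale is $p^{1/2}/\ell^{1/4}$, and the $k$-spacing of $\rmL_{z_i}$ near the saddle is $2\pi/\sqrt{\ell p}$, so their ratio equals $p\ell^{1/4}/(2\pi)$ --- the very parameter distinguishing the three cases.

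\textbf{The three regimes.} After rescaling $\xi_i=\sqrt{\ell p}+(p^{1/2}/\ell^{1/4})\tilde\xi_i$ (and similarly for $u^{(i)}$, $\hat u^{(i)}$), I would expand every factor --- $\bC$, $\bT_\bfone$, $E_\bfone$, $H_\bfone$, $R_\bfone$ --- to leading order. In Case 1 ($p\ell^{1/4}\to\infty$), the $k$-sums become Riemann sums converging to continuous integrals along vertical lines; the $u$- and $\hat u$-integrals decouple, and the opposite $\pm$ in $E^{i,\pm}$ produce the shifts $\bfh-\bfx$ and $\bfh+\bfx$, yielding the factorized $S_\infty(\bft,\bfh-\bfx)S_\infty(\bft,\bfh+\bfx)$. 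In Case 2 ($p\ell^{1/4}=\ratio$), the sums remain discrete with spacing $2\pi/\ratio$, matching the definition of $S_\ratio$; the residual $\tilde\xi$-integrations become the $\bfw$-contour integrals, and the prefactor $\prod_{i=2}^m(1-w_{i-1}/w_i)$ arises from $\bT_\bfone$ under the change of variables $w_i\sim e^{-\ratio\tilde\xi_i}$ (note that $z_{i-1}/z_i \sim e^{-\ratio(\tilde\xi_{i-1}-\tilde\xi_i)}$ at leading order). In Case 3 ($p\ell^{1/4}\to 0$), the $k$-spacing exceeds the saddle width so only $k=0$ survives; this forces $u^{(i)}=\hat u^{(i)}=-\xi_i$, and the $\pm x_i$ quadratic terms in $E^{i,+}E^{i,-}$ cancel exactly --- eliminating $\bfx$ from the limit --- while the two independent integrals collapse into a single one with the parameters $\bft,\bfh$ doubled, yielding $S_\infty(2\bft,2\bfh)$. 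The different polynomial prefactor $2^{3/2}\ell^{5/4}p^{1/2}$ in Case 3 reflects this collapse of two Gaussian integrations into one, combined with the factor $(p\ell^{1/4})^{m-1}$ generated by $\bT_\bfone$ when all $z_i$'s coalesce.

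\textbf{Main obstacle.} The chief technical challenge is the asymptotic analysis of the function $h(w,z)$ defined in \eqref{eq:hfuntion}, which enters $H_\bfone$ through the exponentials in \eqref{eq:Hdefff}; its saddle contribution must combine precisely with the $B(z_i,z_j)$ terms in $\bC$ to yield the stated $-\frac{4}{3}\ell^{3/2}$ leading exponent, after which the remaining exponential bookkeeping is algebraic. This is the purpose of the preparatory Section \ref{sec:asapre}. Secondary difficulties are the uniform control of the tails of the $\rmL_{z_i}$-sums (via super-exponential decay of $E^{i,\pm}$ as $\Re u\to-\infty$) --- essential in all three regimes but especially delicate in Case 1 where many $k$-elements lie inside the saddle window --- and verifying that the Cauchy structure of $R_\bfone$ and the factor $\bT_\bfone$ do not degenerate as the $z_i$'s collapse onto a common saddle, particularly in Case 3 where the $k=0$ constraint reduces the effective dimension of the sum.
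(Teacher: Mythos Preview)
Your overall strategy --- place the contours at $|z_i|\sim e^{-\ell p/2}$, distinguish the three regimes by $\ratio=p\ell^{1/4}$, and let the $k$-sums over $\rmL_{z_i}$ become respectively Riemann integrals, discrete sums with spacing $2\pi/\ratio$, or the single term $k=0$ --- is exactly the paper's approach (Section~\ref{sec:prooflimit}). Your picture of why $\bfx$ drops out in Case~3 and why the polynomial prefactor changes is also correct.

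However, you have misidentified the main obstacle. Once the contours sit at $|z_i|=e^{-\ell p/2-\ratio\rho_i}$, every polylogarithm-type quantity --- $A_1(z_i)$, $A_2(z_i)$, $B(z_i,z_j)$ in $\bC$, and $h(w,z_i)$ in $H_\bfone$ --- is $O(|z_i|)\le e^{-\ell p/2}$ by the crude bounds \eqref{eq:bound_A}, \eqref{eq:bound_B}, \eqref{eq:bound_h}. Hence $\bC(\bfz)\to1$ and $H_\bfone\to1$ uniformly and trivially (Lemmas~\ref{cor:wCbound} and~\ref{cor:qHbound}); there is no saddle contribution from $h$ that needs to ``combine precisely'' with $B$. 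The entire leading exponent $-\tfrac{4}{3}\ell^{3/2}$ comes from $E_\bfone$ alone, as is explicit in \eqref{eq:Ebtiwhtid}. Section~\ref{sec:asapre} is not about $h$ or $B$: the function $\ff$ analyzed there is the second-order expansion of $\widetilde E^{i,\pm}(\uu_i(k))$ about the saddle, via the identity \eqref{eq:EiinG}. The genuine technical work is (a) the uniform tail bound \eqref{eq:uppesall} on $\ff$, which dominates the $k$-sums in every regime, and (b) the polynomial bounds and pointwise limits of $\hat R_\bfone$ (Lemmas~\ref{lem:Rest} and~\ref{resultRlimit}); Case~3 additionally requires rescaling $\theta_i=\ratio\varphi_i$ and controlling $\bT_\bfone(\bfz)/\ratio^{m-1}$ as in \eqref{eq:bT_limit_0}--\eqref{eq:Tbqqq}. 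Redirecting your effort from $h$ to $\ff$ and $\hat R_\bfone$ will make the proof go through along the lines you already sketched.
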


The formula of  $\hat \PP_{m,1}$ in \eqref{eq:PPdefns} contains $\hat D_{\bfone}(\bfz)$, which, from \eqref{eq:hat_D_bfz}, is a series. 
The above result is obtained by showing that after scaling $\bfz$ appropriately, the series converges to an integral for Case 1 and to a series for Case 2. 
Note that $S_\infty$ is an integral while $S_\ratio$ is a series. 
For Case 3, only one term dominates the series $\hat D_{\bfone}(\bfz)$.

The second proposition shows that $\PP_{m,1}$ is smaller than $\hat \PP_{m,1}$. 
Note from our assumptions in section~\ref{sec:set-up}, $p\ell\to\infty$ for all three Cases.

\begin{prop} \label{resultD1error}
There is a constant $C>0$ 
such that 
\begin{equation}
\begin{split}
	& \left|  \frac{\ell}{p^{1/2}} e^{\frac{4}{3} \ell^{\frac32}} \PP_{m,1}  \right| 
	\le \frac{C}{\sqrt{p\ell}} e^{-\frac{{p\ell }}{2}} 
	\qquad \text{for Case 1 and 2}
\end{split}
\end{equation}
and
\begin{equation}
\begin{split}
	& \left| \ell^{5/4}p^{1/2} e^{\frac{4}{3} \ell^{\frac32}} \PP_{m,1} \right| 
	\le \frac{C}{\sqrt{p\ell}} e^{-\frac{{p\ell }}{2}} 
	\qquad \text{for Case 3}
\end{split}
\end{equation}
eventually. 
\end{prop}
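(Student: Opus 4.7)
The plan is to adapt the saddle-point analysis used to prove Proposition \ref{lm:main_contribution_largeL} (for $\hat\PP_{m,1}$) to the closely related quantity $\PP_{m,1}$. The integrands of $\PP_{m,1}$ and $\hat\PP_{m,1}$ have the same structure apart from two changes: $\hat R_\bfone$ contains the extra multiplicative factor $u_1^{(m)}+\hat u_1^{(m)}$ that is absent from $R_\bfone$, while $\PP_{m,1}$ carries the additional prefactor $A_1(z_m) = -\Li_{3/2}(z_m)/\sqrt{2\pi}$. Thus the comparison reduces, schematically, to the single replacement $u_1^{(m)}+\hat u_1^{(m)} \mapsto A_1(z_m)$ inside the integrand already treated for $\hat\PP_{m,1}$.

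First I would reuse the deformed $\bfz$-contour, the substitution $z_i = e^{-w_i^2/2}$, and the parametrization of $\mathrm{L}_{z_i}$ near the relevant saddles from the proof of Proposition \ref{lm:main_contribution_largeL}. That analysis shows the main contribution to $\hat\PP_{m,1}$ comes from a neighborhood in which $|u_1^{(m)}+\hat u_1^{(m)}| \asymp \sqrt{p\ell}$; correspondingly, $|z_m|$ on this portion of the contour is at most of order $e^{-p\ell/2}$. Using the elementary bound $|A_1(z)| \le \frac{|z|}{\sqrt{2\pi}(1-|z|)}$ valid for $|z|<1$, one obtains $|A_1(z_m)| \le C e^{-p\ell/2}$ uniformly on the effective contour. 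Dividing by the size $\sqrt{p\ell}$ of the dropped factor, the replacement inserts an extra multiplicative factor of order $\frac{e^{-p\ell/2}}{\sqrt{p\ell}}$ into the asymptotic of $\hat\PP_{m,1}$.

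Combining this with the leading asymptotics from Proposition \ref{lm:main_contribution_largeL} then yields the stated bounds: in Cases 1--2, one multiplies the order $\frac{p^{1/2}}{\ell} e^{-\frac{4}{3}\ell^{3/2}}$ of $\hat\PP_{m,1}$ by $\frac{e^{-p\ell/2}}{\sqrt{p\ell}}$ to get the claimed bound on $\frac{\ell}{p^{1/2}} e^{\frac{4}{3}\ell^{3/2}}\PP_{m,1}$; Case 3 is identical with the different normalizing prefactor $\ell^{5/4}p^{1/2}$ and the same extra factor $\frac{e^{-p\ell/2}}{\sqrt{p\ell}}$.

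The main obstacle is making the estimate $|A_1(z_m)| \le Ce^{-p\ell/2}$ uniform along the entire deformed contour, not only near the saddle. Near the saddle it follows immediately from the scaling. For pieces of the contour away from the saddle, one either adjusts the contour so that $|z_m|$ stays at most of order $e^{-p\ell/2}$, or exploits the super-exponential decay of the rest of the integrand (already controlled in the proof of Proposition \ref{lm:main_contribution_largeL}) to absorb any polynomially larger values of $|A_1(z_m)|$. Once this uniformity is in place, the remainder of the argument is a direct transcription of the analysis used for $\hat\PP_{m,1}$, with the new scalar prefactor tracked through.
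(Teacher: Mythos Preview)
Your approach is essentially the same as the paper's: compare $\PP_{m,1}$ to $\hat\PP_{m,1}$ by noting that the only differences are the missing factor $u_1^{(m)}+\hat u_1^{(m)}$ (of size $\sqrt{\ell p}$, via Lemma~\ref{lem:Rest}) and the extra factor $A_1(z_m)$ (of size $e^{-\ell p/2}$), so that $|\PP_{m,1}|$ picks up an additional $\frac{e^{-\ell p/2}}{\sqrt{\ell p}}$ relative to the already-controlled $|\hat\PP_{m,1}|$. The ``main obstacle'' you raise is not actually present: the paper's contours are circles with $|z_i|=e^{-\frac{\ell p}{2}-\ratio\rho_i}$ (see \eqref{eq:zellrho}), so $|A_1(z_m)|\le |z_m|\le e^{-\ell p/2}$ holds identically along the entire contour, not just near the saddle, and no separate tail argument is needed.
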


The third proposition shows that $\PP_{m,2}$ and $\hat \PP_{m,2}$ are small. 

\begin{prop} \label{lm:error_estimate_largeL}
There are positive constants $\delta$ and  
$C$ such that 
\begin{equation}
	\left| e^{\frac{4}{3} \ell^{\frac32}} \PP_{m,2}  \right|  \le C e^{-\delta \ell^{3/2}} 
	\quad \text{and} \quad 
	\left| e^{\frac{4}{3} \ell^{\frac32}}   \hat \PP_{m,2}   \right| \le C e^{-\delta \ell^{3/2}}.
\end{equation}
eventually for all three Cases. 
\end{prop}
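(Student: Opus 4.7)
My plan is to show that each term in the sums defining $\PP_{m,2}$ and $\hat\PP_{m,2}$ carries an additional exponential factor $e^{-\delta_0(|\bfn|-m)\ell^{3/2}}$ beyond the scale $e^{-\frac{4}{3}\ell^{3/2}}$ set by the $\bfn=\bfone$ contribution analyzed in Proposition~\ref{lm:main_contribution_largeL}, with the remaining prefactor summable against $(\bfn!)^{-2}$. Since $|\bfn|-m\ge 1$ for every $\bfn\in\N^m\setminus\{\bfone\}$, this will give the required global decay rate $\delta\in(0,\delta_0]$.

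First I would fix the contours $|z_i|=r_i$ with $r_1<\cdots<r_m<1$ to coincide with the contours used in Proposition~\ref{lm:main_contribution_largeL}, so that the prefactor $A_1(z_m)\bC(\bfz)$ already supplies the factor $e^{-\frac{4}{3}\ell^{3/2}}$ in the saddle-point scaling common to all three cases. The key mechanism is that each additional element of $U^{(i)}$ or $\hat U^{(i)}$ beyond the single one required in the $\bfone$-configuration produces an independent copy of $E^{i,\pm}(u)$, whose dominant piece $e^{(\hite_i-\hite_{i-1})u/p^{1/2}}$ evaluates, on the rescaled saddle contour with $u=O(\ell^{1/2})$, to size $e^{-\frac{4}{3}\ell^{3/2}}$, while the cubic $-(\tau_i-\tau_{i-1})u^3/(3p^{3/2})$ provides super-exponential decay that makes the sum over $\rmL_{z_i}^{n_i}$ converge uniformly in $n_i$. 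To convert this heuristic into a bound I would use: (i) the asymptotics of $h(w,z)$ developed in Section~\ref{sec:asapre} to control $H_\bfn$ uniformly in $\bfn$; (ii) a Hadamard-type bound on the Cauchy determinant $R_\bfn$, together with an additional $|\sum_j(u_j^{(m)}+\hat u_j^{(m)})|$ factor for $\hat R_\bfn$, giving growth at most polynomial in $|\bfn|$ on the chosen contours; and (iii) a uniform geometric bound on $\bT_\bfn(\bfz)$, which follows from $|1-z_{i-1}/z_i|,|1-z_i/z_{i-1}|\le C$ once the radii $r_i$ are kept separated on the saddle scale.

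The main obstacle is to make step (ii) uniform across the three period regimes: in Case~3 the saddle contours approach $|z|=1$ slowly, and the $p$-dependence in $E^{i,\pm}(s)=\exp(\ldots+(\hite_i-\hite_{i-1})s/p^{1/2})$ must be tracked carefully, with the hypotheses $\log p\ll\ell^{3/2}$ (Case~1) and $p\ell\gg\log\ell$ (Case~3) being exactly what is needed to absorb $p$-dependent factors into the exponential gap. A secondary technical point is the $\sum(u+\hat u)$ factor in $\hat R_\bfn$, of order $\ell^{1/2}$ per term, which must be dominated by the per-point exponential decay rather than swamping it; I would handle this by extracting a single copy of the cubic decay in $E^{m,\pm}$ to beat the $\ell^{1/2}$ polynomial blow-up. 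Once a per-$\bfn$ bound of the schematic form $C^{|\bfn|}\, e^{-\frac{4}{3}\ell^{3/2}}\prod_i e^{-\delta_0(n_i-1)\ell^{3/2}}$ is established, dividing by $(\bfn!)^2$ and summing over $\bfn\in\N^m\setminus\{\bfone\}$ yields the stated bound $Ce^{-\delta\ell^{3/2}}$ after the common leading $e^{-\frac{4}{3}\ell^{3/2}}$ is absorbed into the left-hand side.
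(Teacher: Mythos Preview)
Your overall strategy is essentially the paper's: per-point exponential decay from $E_\bfn$, Hadamard bounds on $R_\bfn$, uniform control of $H_\bfn$, then sum against $(\bfn!)^{-2}$. Two concrete issues need to be flagged.

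First, the factor $e^{-\frac{4}{3}\ell^{3/2}}$ does \emph{not} come from $A_1(z_m)\bC(\bfz)$. On the chosen contours $\bC(\bfz)\to 1$ (Lemma~\ref{cor:wCbound}) and $|A_1(z_m)|\le |z_m|\le e^{-\ell p/2}$; neither is of size $e^{-\frac{4}{3}\ell^{3/2}}$, and $A_1$ is absent from $\hat\PP_{m,2}$ anyway. The leading decay sits entirely in $E_\bfn$: at the saddle $u\approx -\sqrt{\ell p}$ the cubic and linear pieces of $E^{i,\pm}$ combine to give $e^{-\frac{2}{3}\dDelta t_i\,\ell^{3/2}}$ per variable, so $E_\bfn\sim e^{-\frac{4}{3}\ell^{3/2}\sum_i n_i\dDelta t_i}$ (see \eqref{eq:Ebtiwhtidgn}). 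The extra decay for $\bfn\neq\bfone$ then follows from $\sum_i n_i\dDelta t_i\ge 1+\min_i\dDelta t_i$, which yields a gap $e^{-\frac{4}{3}\dDelta t_{i_0}\ell^{3/2}}$ per extra point in block $i_0$, not the uniform $e^{-\frac{4}{3}\ell^{3/2}}$ you claim. This does not change the conclusion but it is the correct mechanism.

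Second, and more seriously, your uniform bound on $\bT_\bfn$ hides a genuine difficulty in Case~1 when $p$ is large. With fixed $\rho_1>\cdots>\rho_m$ one has $|z_i/z_{i-1}|=e^{\ratio(\rho_{i-1}-\rho_i)}$, so $|\bT_\bfn(\bfz)|\le 2^{2|\bfn|}e^{c'\ratio|\bfn|}$; since $\ratio=p\ell^{1/4}\to\infty$ this overwhelms the $e^{-c_2|\bfn|\ell^{3/2}}$ gap unless $p\ll\ell^{5/4}$. The paper handles the remaining range by moving to $\rho_i=\rho_1-(i-1)/\ratio$, which restores $|\bT_\bfn|\le C^{|\bfn|}$ but forces $|\rho_i-\rho_{i'}|=O(1/\ratio)\to 0$, and then the Hadamard-type bound on the Cauchy determinants in $R_\bfn$ (which uses $|\rho_i-\rho_{i'}|$ in a denominator, cf.\ \eqref{eq:Jan17_02}) must be redone with an entirely different argument. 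Your phrase ``radii kept separated on the saddle scale'' is exactly this contour modification, but you do not acknowledge that it breaks the $R_\bfn$ estimate; reconciling these two bounds is where the work lies and where $\log p\ll\ell^{3/2}$ is actually used.
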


The fourth and final proposition is a probabilistic interpretation of the limits in Proposition \ref{lm:main_contribution_largeL}.  
The result \eqref{eq:Sinftypb} was obtained\footnote{We need to set $\xi_i=-u_i$ in \eqref{eq:Sinfde} to find the formula (3.6) of \cite{Liu-Wang22}.} in \cite{Liu-Wang22}. 
Recall the definition of the quotient space $\Sr= \realR/\ratio \intZ$ and Brownian motions on it, discussed before Theorem \ref{thm2}.

\begin{prop} \label{propSrpr}
Let $\bfa, \bfb, \bfc \in \realR^m$ satisfying $0<a_1<\cdots<a_{m-1}<a_m$. Recall that $\phi_t(x) =\frac{1}{\sqrt{2\pi t}}e^{-\frac{x^2}{2t}}$ is the density function of the centered Gaussian distribution with variance $t>0$ 
and $\phi^{(\ratio)}_t(\{x\}_\ratio) = \sum_{k\in\intZ} \phi_t(x+k\ratio)$ is
the transition density function of a Brownian motion on $\Sr$ at time $t$, as defined in \eqref{eq:def_phi_ratio}. 
\begin{enumerate}[(a)]
\item (\cite[Lemma 3.4]{Liu-Wang22}) We have
\beq \label{eq:Sinftypb}
    S_\infty (\bfa ,\bfb) 
	= 
	  \prob\left( \bm(a_1) \ge \frac{b_1}{\sqrt{2}}, \cdots, \bm(a_{m-1}) \ge \frac{b_{m-1}}{\sqrt{2}} \mid  \bm(a_m)=\frac{b_{m}}{\sqrt{2}} \right)  \phi_{a_m}(\frac{b_{m}}{\sqrt{2}})
\end{equation}
where $\bm$ is a Brownian motion.

\item For every $\ratio \in (0,\infty)$, 
\beq \label{eq:Srpb}
  \begin{split}
	&\oint\cdots \oint S_{\ratio}(\bfa,  \bfc-\bfb ; \bfw) S_{\ratio}(\bfa,  \bfc+\bfb ; \bfw) 
	\prod_{i=2}^{m} \left( 1-\frac{w_{i-1}}{w_i} \right)
	\prod_{i=1}^m \frac{\rmd w_i}{2\pi \ii w_i} \\
	& =  \prob\left( \bigcap_{i=1}^{m-1} \bigg\{ \bm_2(a_i) -\dist_\ratio \left(\bmcir_1(a_i), \left\{ b_i\right\}_\ratio \right) \ge c_i \bigg\} 
	\,\Big|\, \bm_2(a_m) =c_m, \bmcir_1(a_m) = \left\{b_m\right\}_\ratio  \right) \phi_{a_m}\left(c_m\right) \phi^{(\ratio)}_{a_m} (\{b_m\}_\ratio)
\end{split} \eeq 
where the contours are circles satisfying $0<|w_1|<\cdots <|w_m|<1$, and $\bmcir_1$ and $\bm_2$ are independent Brownian motions on $\Sr$ and $\realR$, respectively. 
\end{enumerate} 
\end{prop}

\subsection{Proof of Theorems \ref{thm1}, \ref{thm2}, \ref{thm3}, and \ref{res:onepointtail}}

We now prove the main theorems assuming Proposition \ref{lm:main_contribution_largeL}--\ref{propSrpr}. 
In \eqref{eq:PreadyPP}, denote $\PP_{m,1}+ \PP_{m,2}+ \hat \PP_{m,1}+ \hat \PP_{m,2}= \PP_{m}$ .

\begin{proof}[Proof of Theorems \ref{thm1}, \ref{thm2}, and \ref{thm3}]
For Case 1, 
Proposition \ref{lm:main_contribution_largeL}, \ref{resultD1error}, and \ref{lm:error_estimate_largeL} imply that 
\beqq \begin{split}
	&\frac{4\ell}{p^{1/2}} e^{\frac{4}{3} \ell^{\frac32}}  \PP_{m} \to 
	S_\infty(\bft,\bfh + \bfx) S_\infty(\bft,\bfh - \bfx) . 
\end{split} \eeqq
By Proposition \ref{propSrpr} (a), recalling that $t_m=1$ and $x_m=h_m=0$, we find that 
\beqq 
\begin{split}
	\frac{ \PP_{m}}{\PP_1}
	&\to \prob\left( \bigcap_{i=1}^{m-1} \left\{ \bm'_1(t_i)\ge \frac{h_i+ x_i}{\sqrt{2}}, \, \bm'_2(t_i) \ge \frac{h_i-x_i}{\sqrt{2}}  \right\}\, \bigg| \,   \bm'_1(1)=\bm'_2(1)=0 \right)	\\
    &\quad=\prob\left( \bigcap_{i=1}^{m-1} \left\{ \sqrt{2}\min\left\{\bm'_1(t_i) -\frac{x_i}{\sqrt{2}}, \bm'_2(t_i) +\frac{x_i}{\sqrt{2}} \right\} \ge h_i  \right\} \, \bigg| \,   \bm'_1(1)=\bm'_2(1)=0 \right)	
\end{split} \eeqq 
for independent Brownian motions $\bm'_1$ and $\bm'_2$. Using the simple identity $\min\{a,b\}=\frac{a+b}{2} -\frac{|a-b|}{2}$ and noting that $\bm_1 :=(\bm'_1-\bm'_2)/\sqrt{2}$ and $\bm_2 :=(\bm'_1+\bm'_2)/\sqrt{2}$ are two independent Brownian motions, the above limit is equal to 
\begin{equation}
\prob\left(\bigcap_{i=1}^{m-1} \left\{ \bm_2(t_i) -|\bm_1(t_i)-x_i| \ge h_i  \right\}\mid   \bm_1(1)=\bm_2(1)=0\right)=\prob\left(\bigcap_{i=1}^{m-1} \left\{ \bb_2(t_i) -|\bb_1(t_i)-x_i| \ge h_i  \right\} \right)
\end{equation}
for two independent Brownian bridges $\bb_1$ and $\bb_2$. Theorem \ref{thm1} then follows from \eqref{eq:ptoper}, \eqref{eq:PreadyPP}, and Lemma~\ref{lem:equal_time}.

Similarly, for Case 2, Proposition \ref{lm:main_contribution_largeL}, \ref{resultD1error}, \ref{lm:error_estimate_largeL} and Proposition \ref{propSrpr} (b) imply that $\frac{P_m}{P_1}$ converges to 
\beq  
   \begin{split}
&	\prob\left( \bigcap_{i=1}^{m-1} \bigg\{ \bm_2(t_i) -\dist_\ratio \left(\bmcir_1(t_i), \left\{ x_i\right\}_\ratio \right) \ge h_i \bigg\} 
	\,\Big|\, \bm_2(1) =0, \bmcir_1(1) = \left\{0\right\}_\ratio  \right) \\
    &=	\prob\left( \bigcap_{i=1}^{m-1} \bigg\{ \bb_2(t_i) -\dist_\ratio \left(\bbcir_1(t_i), \left\{ x_i\right\}_\ratio \right) \ge h_i \bigg\}   \right)
\end{split} 
\eeq 
where $\bbcir_1$ is a Brownian bridge on $\Sr$, and $\bb_2$ is a Brownian bridge on $\realR$ that is independent of $\bbcir_1$. Theorem \ref{thm2} follows.

Finally, for Case 3, Proposition \ref{lm:main_contribution_largeL}, \ref{resultD1error}, \ref{lm:error_estimate_largeL} and Proposition \ref{propSrpr} (a) again show that $\frac{P_m}{P_1}$ converges to 
\beqq
\begin{split}
	\prob\left( \bigcap_{i=1}^{m-1} \left\{ \bm(2t_i)  \ge \sqrt{2} h_i\right\}   \, \bigg|   \bm(2)=0  \right)  
	=
	\prob\left( \bigcap_{i=1}^{m-1} \left\{ \bb(t_i)  \ge   h_i\right\}   \right)  
\end{split}\eeqq
where $\bm$ is a Brownian motion and $\bb$ is a Brownian bridge on $[0,1]$. 
Thus, we obtain Theorem \ref{thm3}.
\end{proof}

\begin{proof}[Proof of Theorem \ref{res:onepointtail}]
Proposition~\ref{lm:change_low_bound_n_k}, Lemma~\ref{nzeroiszeo}, and Corollary~\ref{cor:Pready} show that 
\beqq
	f_p(\ell; 0, 1)= \frac1{p^{1/2}} ( \PP_{1,1}+\PP_{1,2}+ \hat \PP_{1,1}+ \hat \PP_{1,2}) 
\eeqq
with $t_1=1$, $x_1=0$, and $\ell_1=\ell$. 
Propositions~\ref{lm:main_contribution_largeL}--\ref{lm:error_estimate_largeL} thus imply the result. 
The equality of the two formula of $\cnst(\ratio)$ is due to the Poisson summation formula, $\sum_{k\in \intZ} g(k) = \sum_{k\in \intZ} \hat{g}(k)$ with $\hat{g}(t)= \int_{-\infty}^\infty  g(x) e^{-2\pi \ii t x} \dd x$, for suitable functions $g$. 
\end{proof}

\medskip

We prove Proposition \ref{lm:main_contribution_largeL}, \ref{resultD1error}, \ref{lm:error_estimate_largeL} and  \ref{propSrpr} in Section \ref{sec:asymp} and \ref{sec:pflastprop}. 
In the next section, we prove a limit and estimates for a key function that appear in the proofs.

\section{Preparations} \label{sec:asapre}

Let $a>0$, $b\in \realR$, $c\in \mathbb{R}$, and $d\ge 0$. 
For $\ell>0$, consider the function from $\ff: \mathbb{R}\to \mathbb{C}$ defined by 
\beq \label{eq:expfunt}
	\ff(x)= 3a  \xi(x)^2 +(c-2b) \xi(x) +  \frac{1}{\ell^{3/4}}  \left(  b \xi(x)^2 - a\xi(x)^3\right) 
	\quad \text{where} \quad 
	\xi(x)= -  \frac{2(d+ \ii x)}{1+\sqrt{1+ \frac{2(d+ \ii x)}{\ell^{3/4}}}} . 
\eeq
While proving Proposition  \ref{lm:main_contribution_largeL}, \ref{resultD1error}, and \ref{lm:error_estimate_largeL}, we need to analyze the functions $E^{i, \pm}(s)$ in \eqref{eq:Edeffff}. 
In the appropriate choice of the variable $s$, $E^{i, \pm}(s)$ are related to the function $\ff$ with particular values of $a, b, c$, and $d$: see \eqref{eq:EiinG}. 
We compute a pointwise limit and uniform bounds of  $\ff(x)$ in this section.

\subsection{Pointwise limit}

\begin{lemma}[Pointwise limit] \label{lem:ptwiselm}
For every $x\in\mathbb{R}$,   
\beq \label{eq:ptl1}  
	\ff ( x_{\ell} ) \to 3a (d+ \ii x)^2 -(c-2b) (d+\ii x) 
\eeq
if  $\ell \to \infty$ and $x_{\ell}\to x$. 
The convergence is uniform for $x$ in a compact subset of $\realR$ 
and for $(a, b, c, d)$ in a compact subset of $(0,\infty)\times \realR\times \realR\times [0, \infty)$. 
\end{lemma}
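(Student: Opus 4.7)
My plan is to track the limit through the two layers of the definition: first I would compute $\lim_\ell \xi(x_\ell)$, then substitute into the three-term formula for $\ff$.

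Writing $\epsilon = \ell^{-3/4}$, the definition becomes
\begin{equation*}
  \xi(x_\ell) \;=\; -\frac{2(d+\ii x_\ell)}{1 + \sqrt{1 + 2\epsilon(d+\ii x_\ell)}}.
\end{equation*}
Since $d\ge 0$, $x_\ell \to x$, and $\epsilon \to 0$, the argument $1+2\epsilon(d+\ii x_\ell)$ of the square root converges to $1$ and, for all sufficiently large $\ell$, lies in the open right half-plane where the principal branch of $\sqrt{\cdot}$ is analytic. Hence $\sqrt{1+2\epsilon(d+\ii x_\ell)}\to 1$, the denominator tends to $2$, and $\xi(x_\ell)\to -(d+\ii x)$.

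Substituting into $\ff$, continuity immediately gives $3a\,\xi(x_\ell)^2 \to 3a(d+\ii x)^2$ and $(c-2b)\xi(x_\ell)\to -(c-2b)(d+\ii x)$. The remaining summand $\ell^{-3/4}\bigl(b\xi(x_\ell)^2 - a\xi(x_\ell)^3\bigr)$ is a bounded quantity times $\ell^{-3/4}\to 0$, so it drops out. Adding the three contributions yields the target $3a(d+\ii x)^2 - (c-2b)(d+\ii x)$.

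For the uniformity claim I would note that each step above is uniformly stable over compact subsets of $(0,\infty)\times\realR\times\realR\times[0,\infty)\times\realR$: the principal square root is uniformly continuous on a compact neighborhood of $1$, so the rate at which $\xi(x_\ell)\to -(d+\ii x)$ is controlled uniformly in the parameters, and the polynomial expressions in $\xi,a,b,c$ preserve this uniformity. Compactness also yields a uniform bound on $|\xi(x_\ell)|$, so the $\ell^{-3/4}$ correction is $O(\ell^{-3/4})$ with a constant depending only on the parameter set. I do not anticipate any real obstacle; the only point requiring a little care is to justify that the principal branch is the natural choice, and this is handled by the right-half-plane location of the square-root argument for large $\ell$.
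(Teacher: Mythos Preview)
Your argument is correct and is essentially the same as the paper's: the paper simply observes that $\xi(x_\ell)\to -(d+\ii x)$ and declares the result clear, while you spell out the details of that substitution and the uniformity.
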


\begin{proof}
It is clear since $\xi ( x_{\ell} ) \to -(d+ \ii x)$. 
\end{proof}

\subsection{A lemma}

The following simple lemma will be used in the next subsection. 

\begin{lemma} \label{lem:quadeq}
Let $A\ge 0$. 
Let $r$ be a solution to the equation $r^2- \frac{A}{r^2} =1$. 
Then, $|r| \ge  1+ \frac{2A}{3(\sqrt{13}+1)}$ if $0\le A\le 3$ and $|r| \ge  1+   \frac{A^{1/4}}{3\sqrt{2}}$ if $A\ge 1$.
\end{lemma}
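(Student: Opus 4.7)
The plan is to solve the equation explicitly and then bound the relevant denominator. Setting $s = r^2$ reduces $r^2 - A/r^2 = 1$ to the quadratic $s^2 - s - A = 0$, whose unique nonnegative root is $s_+ = (1+\sqrt{1+4A})/2$; any real solution therefore satisfies $|r|^2 = s_+$. (The other root $s_- = (1-\sqrt{1+4A})/2 \le 0$ corresponds to purely imaginary solutions with $|r|^2 \le 1$, so the bounds claimed in the lemma implicitly concern the real branch, which is all that is needed later.) Using $\sqrt{1+4A}-1 = 4A/(\sqrt{1+4A}+1)$ I obtain $|r|^2 - 1 = 2A/(\sqrt{1+4A}+1)$, and dividing by $|r|+1$ gives the key identity
\[
|r|-1 \;=\; \frac{2A}{(|r|+1)\bigl(\sqrt{1+4A}+1\bigr)}.
\]
Both inequalities in the lemma thus reduce to upper bounds on this denominator.

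For $0\le A \le 3$, both $|r|+1$ and $\sqrt{1+4A}+1$ are increasing in $A$, so the denominator is maximized at $A=3$, where $\sqrt{1+4A}+1 = \sqrt{13}+1$ and $|r|+1 = 1+\sqrt{(1+\sqrt{13})/2}$. It then suffices to check $1+\sqrt{(1+\sqrt{13})/2} \le 3$, equivalently $\sqrt{13} \le 7$, which is trivial. This yields $|r|-1 \ge 2A/(3(\sqrt{13}+1))$.

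For $A \ge 1$, I would use the elementary estimate $\sqrt{1+4A} \le 1+2\sqrt{A}$ (valid for all $A \ge 0$) combined with $A\ge 1$ to get $|r|^2 \le 1 + \sqrt{A} \le 2\sqrt{A}$, whence $|r| \le \sqrt{2}\,A^{1/4}$ and $|r|+1 \le (\sqrt{2}+1)A^{1/4}$. Similarly, $\sqrt{1+4A} \le \sqrt{5A}$ (valid for $A \ge 1$) yields $\sqrt{1+4A}+1 \le (\sqrt{5}+1)\sqrt{A}$. Plugging these into the key identity gives $|r|-1 \ge 2A^{1/4}/[(\sqrt{2}+1)(\sqrt{5}+1)]$, and the claim reduces to the numerical inequality $(\sqrt{2}+1)(\sqrt{5}+1) = 1+\sqrt{2}+\sqrt{5}+\sqrt{10} \le 6\sqrt{2}$, which is easily verified by squaring. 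The argument is elementary algebra; the only delicate point is choosing the estimates in this second case sharply enough so that the constant $3\sqrt{2}$ in the claim, rather than something slightly larger, falls out — cruder substitutes such as $|r|+1\le 2\sqrt{2}\,A^{1/4}$ would give the correct scaling but a weaker constant.
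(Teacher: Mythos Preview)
Your proof is correct and takes a slightly different route from the paper's. Both start from the explicit formula $|r|^2 = (1+\sqrt{1+4A})/2$ for the real branch. The paper then writes $|r| = \sqrt{1+y}$ with $y = 2A/(\sqrt{1+4A}+1)$ and invokes the elementary lower bounds $\sqrt{1+y} \ge 1+y/3$ (for $0\le y\le 3$) and $\sqrt{1+y} \ge 1+\sqrt{y}/3$ (for $y\ge 9/16$), finishing the second case with $\sqrt{1+4A}+1 \le 4\sqrt{A}$. You instead rationalize via $|r|-1 = (|r|^2-1)/(|r|+1)$ and bound both factors of the denominator directly, which requires in addition an upper bound on $|r|$ that you supply cleanly. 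The two arguments are of comparable length and equally elementary; yours trades the paper's somewhat ad hoc square-root inequalities for a systematic ``rationalize and estimate'' scheme, at the cost of the final numerical check $(\sqrt{2}+1)(\sqrt{5}+1)\le 6\sqrt{2}$, which needs a little more than a single squaring (e.g.\ after squaring one still has $6\sqrt{5}+12\sqrt{2}+4\sqrt{10}\le 54$ to dispatch by crude numerics). One small inaccuracy: your parenthetical claim that the imaginary branch has $|r|^2\le 1$ holds only for $A\le 2$; this is harmless, since as you correctly note only the real branch is used downstream, and indeed the paper's own ``all solutions satisfy'' is equally informal on this point.
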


\begin{proof}
Solving a quadratic equation, all solutions satisfy
\beqq
	|r| = \left(\frac{1+ \sqrt{1+4A}}{2} \right)^{1/2}
	= \left(1+ \frac{2A}{\sqrt{1+4A}+1} \right)^{1/2}.
\eeqq
Note that 
\beqq
	\sqrt{1+y} \ge \begin{dcases}
	1+\frac{y}{3} \quad & \text{for $0\le y\le 3$,}\\
	1+\frac{\sqrt{y}}{3} \quad & \text{for $y\ge 9/16$.}
\end{dcases}
\eeqq
If $0\le A\le 3$, then $\frac{2A}{\sqrt{1+4A}+1} \le A\le 3$, while if $A\ge 1$, then $\frac{\sqrt{1+4A}-1}{2} \ge \frac{\sqrt{5}-1}{2} > \frac{9}{16}$. 
Hence,
\beqq
	|r|\ge 1+ \frac{2A}{3(\sqrt{1+4A}+1)} \qquad \text{if $0\le A\le 3$}
\eeqq
and
\beqq
	|r|\ge  1+ \frac13 \left( \frac{2A}{\sqrt{1+4A}+1} \right)^{1/2} \qquad \text{if $A\ge 1$}.
\eeqq
The result follows by noting that 
\beqq
	\sqrt{1+4A}+1 \le \begin{dcases}
	\sqrt{13}+1 \quad & \text{for $0\le A\le 3$},\\
	1+ \sqrt{4A}+1 \le 4\sqrt{A} \quad & \text{for $A\ge 1$.}
\end{dcases}
\eeqq
\end{proof}

\subsection{Uniform estimates}

We find a uniform upper bound of $|e^{\ff(x)}|= e^{\Re \ff(x)}$. 
From its definition, $\xi(x)$ satisfies the equation 
\beq \label{eq:xiquadeq}
	\frac{\xi(x)^2}{\ell^{3/4}} = 2 \xi(x)+ 2(d+\ii x).
\eeq 
Thus, 
\beq \label{eq:alpha2es}
	\ff(x)= \fff_1(x)+ 2b(d+\ii x) \qquad \text{where} \quad 
	\fff_1(x) = 3a  \xi(x)^2 + c \xi(x) -  \frac{a}{\ell^{3/4}}  \xi(x)^3 . 
\eeq

Consider $\fff_1(x)$. Note that 
$\fff_1(x)= - \frac{a}{\ell^{3/4}}( \xi(x)- \ell^{3/4})^3+ 3a \xi(x) \ell^{3/4} - a \ell^{3/2} + c \xi(x)$. 
Write 
\beqq
	\frac{\xi(x)}{\ell^{3/4}} -1 = - v_x+\ii w_x. 
\eeqq
where $v_x>0$ and $w_x\in \realR$. 
The quadratic equation \eqref{eq:xiquadeq} for $\xi(x)$ implies that $v_x$ and $w_x$ satisfy 
\beq \label{eq:pqeqs}
	v_x^2-w_x^2 = 1+2d \ell^{-3/4} \quad \text{and} \quad v_x w_x= - x \ell^{-3/4} . 
\eeq 
Using the first equation, we see that 
$\Re((\xi(x)-\ell^{3/4})^3)= (-v_x^3 +3 v_x w_x^2) \ell^{9/4} = (2v_x^3 - 3 v_x) \ell^{9/4} -6dv_x \ell^{3/2}$. 
Hence, 
\beq \label{eq:Reff1bd1}
	\Re \left( \fff_1(x) \right) 
	= - 2a (v_x^3-1) \ell^{3/2}  -  ( c   - 6 a d) v_x \ell^{3/4} +c \ell^{3/4}. 
\eeq
In order to obtain an upper bound of $\Re \left( \fff_1(x) \right)$, we need an estimate of $v_x$.

\begin{lemma}\label{lem:deltes}
Define
\beq \label{eq:deltap}
	\delta_x=  \frac{v_x}{(1+ 2d \ell^{-3/4})^{1/2}} -1.
\eeq
Then, there is a constant $c_0>0$ such that if $\ell\ge c_0$, 
\beqq
	\delta_x \ge  \frac{x^2}{10 \ell^{3/2}}  \qquad \text{for $|x|\le \sqrt{3} \ell^{3/4}$}
\eeqq
and
\beqq
	\delta_x \ge  \frac{|x|^{1/2}}{5 \ell^{3/8}} \qquad \text{for $|x|\ge \frac65 \ell^{3/4}$}. 
\eeqq
\end{lemma}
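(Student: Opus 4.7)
The plan is to reduce the estimate to Lemma \ref{lem:quadeq} by a rescaling. Set $A = 1 + 2d\ell^{-3/4} \ge 1$ and introduce $r = v_x/\sqrt{A}$ and $s = w_x/\sqrt{A}$. The two defining relations \eqref{eq:pqeqs} then become $r^2 - s^2 = 1$ and $rs = -x/(A\ell^{3/4})$. Eliminating $s$ gives the scalar equation
\[
r^2 - \frac{B}{r^2} = 1, \qquad B := \frac{x^2}{A^2 \ell^{3/2}},
\]
and since $v_x > 0$ we have $r > 0$ and $\delta_x = r - 1$. Note that, under the implicit assumption that $d$ stays in a compact set, $A \to 1$ as $\ell \to \infty$, which is what will make all numerical constants below work out.

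For the first range $|x| \le \sqrt{3}\,\ell^{3/4}$, we have $B \le 3/A^2 \le 3$ eventually, so the first case of Lemma \ref{lem:quadeq} applies and yields
\[
\delta_x \ge \frac{2B}{3(\sqrt{13}+1)} = \frac{2x^2}{3(\sqrt{13}+1)\, A^2 \ell^{3/2}}.
\]
Since $2/(3(\sqrt{13}+1)) \approx 0.145 > 1/10$ and $A \to 1$, once $\ell$ is large enough the prefactor exceeds $1/10$, giving the first claimed bound.

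For the second range $|x| \ge \tfrac{6}{5}\ell^{3/4}$, we have $B \ge (6/5)^2/A^2$, which exceeds $1$ once $A$ is close enough to $1$, i.e., for $\ell$ large. The second case of Lemma \ref{lem:quadeq} then gives
\[
\delta_x \ge \frac{B^{1/4}}{3\sqrt{2}} = \frac{|x|^{1/2}}{3\sqrt{2}\, A^{1/2}\, \ell^{3/8}}.
\]
Since $1/(3\sqrt{2}) \approx 0.236 > 1/5$ and $A \to 1$, this is at least $|x|^{1/2}/(5\ell^{3/8})$ for $\ell$ large, giving the second bound. The only delicate point is the bookkeeping of numerical constants to make sure that the deviation of $A$ from $1$ can be absorbed into the slack between, e.g., $2/(3(\sqrt{13}+1))$ and $1/10$; the substitution and appeal to Lemma \ref{lem:quadeq} are otherwise routine, so I do not expect a serious obstacle.
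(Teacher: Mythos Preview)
Your proof is correct and follows essentially the same approach as the paper: rescale by $\sqrt{1+2d\ell^{-3/4}}$ to reduce to the equation $r^2 - B/r^2 = 1$, apply Lemma~\ref{lem:quadeq} in each range, and absorb the factor $(1+2d\ell^{-3/4})$ into the slack in the numerical constants. The paper makes the last step explicit by fixing the threshold $2d\ell^{-3/4}\le 1/5$ and verifying $\frac{2}{3(\sqrt{13}+1)(6/5)^2}>\frac{1}{10}$ and $\frac{1}{3\sqrt{2}\sqrt{6/5}}>\frac{1}{5}$, but this is exactly the bookkeeping you allude to.
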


\begin{proof}
From \eqref{eq:pqeqs}, $v_x^2$ satisfies the equation 
\beqq
	v_x^2 - \frac{B}{v_x^2}= C 
	\qquad \text{where $B= x^2 \ell^{-3/2}$ and $C= 1+ 2d \ell^{-3/4}$.} 
\eeqq
Let $r= C^{-1/2} v_x$ and apply Lemma \ref{lem:quadeq} with $A= \frac{B}{C^2}$. 
Note that since $v_x>0$, we have $r>0$. Also note that $r=1+\delta_x$. 
Thus, Lemma \ref{lem:quadeq} implies that 
\beqq
	\delta_x \ge \frac{2 x^2 \ell^{-3/2}}{3(\sqrt{13}+1) (1+ 2d \ell^{-3/4})^2} 
	\qquad \text{for $|x|\le \sqrt{3} \ell^{3/4}(1+ 2d \ell^{-3/4})$}
\eeqq
and
\beqq
	\delta_x \ge \frac{|x|^{1/2} \ell^{-3/8}}{3\sqrt{2} (1+2d\ell^{-3/4} )^{1/2}} \qquad \text{for $|x|\ge  \ell^{3/4}(1+ 2d \ell^{-3/4}) $}.
\eeqq
We take $\ell$ large enough so that $2d \ell^{-3/4}\le \frac15$. 
The result follows by noting that $\frac{2}{3(\sqrt{13}+1) (6/5)^2} >\frac1{10}$ and $\frac{1}{3\sqrt{2}\sqrt{6/5}}> \frac15$.
\end{proof}

From the definition \eqref{eq:deltap}, 
\beq \label{eq:pbydel}
	v_x=   (1+ 2d \ell^{-3/4})^{1/2} (1+\delta_x) . 
\eeq
In \eqref{eq:Reff1bd1}, $\Re \left( \fff_1(x) \right)$ is a cubic function of $v_x$. 
We write the linear term of $v_x$ in terms of a linear term $\delta_x$ using \eqref{eq:pbydel}. 
For the cubic term of $v_x$, we note that since $(1+x)^c\ge 1+cx$ for all $x>0$ and $c\ge 1$, 
\beqq \begin{split}
	v_x^3 =&   (1+ 2d \ell^{-3/4})^{3/2} (1+\delta_x)^3 
	\ge  (1+ 3d \ell^{-3/4})(1+3\delta_x).
\end{split} \eeqq 
Thus, since $a>0$ and $d>0$, we find that 
\beq 
	\Re \left( \fff_1(x) \right) 
	\le - 6a (1+ 3d \ell^{-3/4}) \delta_x\ell^{3/2}  -  ( c   - 6 a d) (v_x-1) \ell^{3/4}  
	\le - 6a  \delta_x\ell^{3/2}  -  ( c   - 6 a d) (v_x-1) \ell^{3/4}  
\eeq
Since  $(1+x)^{1/2}\le 1+\frac12 x$ for all $x>0$, we see from \eqref{eq:pbydel} that $v_x \le (1+ d \ell^{-3/4})(1+\delta_x)$. Therefore, we find that
\beq 
	\Re \left( \fff_1(x) \right) 
	\le \left( - 6a  \delta_x\ell^{3/2} + | c   - 6 a d| \ell^{3/4}+ | c   - 6 a d| d \right) \delta_x + | c   - 6 a d| d.  
\eeq
Thus, since $a>0$, there is a constant $c_0\ge 1$ such that if $\ell \ge c_0$, then 
\beq \label{eq:alpha1es} \begin{split}
	\Re \left( \fff_1(x) \right) \le  -5a  \ell^{3/2} \delta_x + |c-6ad|d.  
\end{split} \eeq
Since $\Re(\ff(x))= \Re(\fff_1(x)) + 2bd$ from \eqref{eq:alpha2es}, \eqref{eq:alpha1es}, and Lemma \ref{lem:deltes} imply the following bound.

\begin{lemma} \label{cor:uppes0}
Uniformly for $(a, b, c, d)$ in a compact subset of $(0,\infty)\times \realR\times \realR\times [0, \infty)$, there are constants $C>0$ and $c_0>0$ such that if $\ell \ge c_0$, then 
\beq \label{eq:uppes1}
	| e^{ \ff(x) } | \le C e^{-\frac{a}{2}  x^2} \qquad \text{for $|x|\le  \sqrt{3} \ell^{3/4} $}
\eeq
and 
\beq \label{eq:uppes2}
	| e^{ \ff(x) } | \le C e^{- a \ell^{9/8}  \sqrt{|x|}}   \qquad \text{for $|x|\ge \frac65 \ell^{3/4}$.}
\eeq

\end{lemma}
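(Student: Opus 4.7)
The proof is essentially an assembly of ingredients already developed in this section, so the plan is simply to combine them cleanly and track the constants.

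The starting point is the decomposition \eqref{eq:alpha2es}, which gives
\[
\Re \ff(x) = \Re \fff_1(x) + 2bd,
\]
together with the bound \eqref{eq:alpha1es}, which says that for $\ell$ large enough (depending only on the compact subset of parameters),
\[
\Re \fff_1(x) \le -5a\,\ell^{3/2}\,\delta_x + |c - 6ad|\,d.
\]
Adding the $2bd$ term, we obtain a single inequality of the form
\[
\Re \ff(x) \le -5a\,\ell^{3/2}\,\delta_x + M(a,b,c,d),
\]
where $M(a,b,c,d) := 2bd + |c-6ad|d$ is uniformly bounded on any compact subset of $(0,\infty)\times\realR\times\realR\times[0,\infty)$.

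Next, I would feed the two regime-dependent lower bounds for $\delta_x$ supplied by Lemma~\ref{lem:deltes} into this inequality. In the first regime $|x|\le \sqrt{3}\,\ell^{3/4}$, the bound $\delta_x \ge x^2/(10\ell^{3/2})$ yields
\[
-5a\,\ell^{3/2}\,\delta_x \le -\tfrac{a}{2}\,x^2,
\]
so $\Re\ff(x) \le -\tfrac{a}{2}x^2 + M$, which gives \eqref{eq:uppes1} after setting $C = e^M$. In the second regime $|x|\ge \tfrac{6}{5}\ell^{3/4}$, the bound $\delta_x \ge |x|^{1/2}/(5\ell^{3/8})$ yields
\[
-5a\,\ell^{3/2}\,\delta_x \le -a\,\ell^{9/8}\,|x|^{1/2},
\]
so $\Re\ff(x) \le -a\,\ell^{9/8}\sqrt{|x|} + M$, giving \eqref{eq:uppes2}.

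Finally, the uniformity statement follows because the constant $c_0$ in Lemma~\ref{lem:deltes} and the constant $c_0$ producing \eqref{eq:alpha1es} both depend only on the compact parameter set, and $M(a,b,c,d)$ is continuous in $(a,b,c,d)$, hence bounded on that compact set. There is no real obstacle here: the two preceding lemmas do all the analytic work, and the only task is to combine them and verify that the exceptional cutoffs $\sqrt{3}\ell^{3/4}$ and $\tfrac{6}{5}\ell^{3/4}$ inherited from Lemma~\ref{lem:deltes} are carried through unchanged.
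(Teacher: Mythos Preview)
Your proof is correct and follows exactly the approach of the paper: the paper's proof is in fact even terser, amounting to a single sentence that cites \eqref{eq:alpha2es}, \eqref{eq:alpha1es}, and Lemma~\ref{lem:deltes}, while you have spelled out the arithmetic (which checks out: $5a\ell^{3/2}\cdot \tfrac{x^2}{10\ell^{3/2}}=\tfrac{a}{2}x^2$ and $5a\ell^{3/2}\cdot \tfrac{|x|^{1/2}}{5\ell^{3/8}}=a\ell^{9/8}|x|^{1/2}$).
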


\begin{cor} \label{cor:uppes}
Let $\ff(x)$ be the function defined in \eqref{eq:expfunt}. 
Uniformly for $(a, b, c, d)$ in a compact subset of $(0,\infty)\times \realR\times \realR\times [0, \infty)$, 
there are constants $c_0\ge 1$, $c_1>0$, and $c_2>0$ such that 
\beq \label{eq:uppesall}
	| e^{ \ff(x) } | \le c_1 e^{  - c_2 \sqrt{|x|}} \qquad \text{for all $x\in \realR$} 
\eeq
and  for all $\ell\ge c_0$. 
\end{cor}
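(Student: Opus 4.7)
The plan is to deduce Corollary~3.4 directly from Lemma~3.3 (\texttt{cor:uppes0}) by patching together the two regimes. The key observation is that $\sqrt{3} > 6/5$, so the intervals $\{|x| \le \sqrt{3}\,\ell^{3/4}\}$ and $\{|x| \ge \tfrac{6}{5}\,\ell^{3/4}\}$ overlap and together cover all of $\realR$. Hence every $x$ falls into at least one of the two cases already handled in the previous lemma, and no new analysis of $\ff$ is required.

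For $|x| \le \sqrt{3}\,\ell^{3/4}$, I would invoke the Gaussian bound \eqref{eq:uppes1} and then convert it to square-root decay by splitting at $|x|=1$. When $|x|\ge 1$ we have $x^2 \ge \sqrt{|x|}$, which gives $e^{-(a/2)x^2} \le e^{-(a/2)\sqrt{|x|}}$. When $|x|\le 1$, both $e^{-(a/2)x^2}$ and $e^{-c_2\sqrt{|x|}}$ lie between $e^{-a/2}$ and $1$, so the discrepancy is absorbed into the prefactor $c_1$.

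For $|x| \ge \tfrac{6}{5}\,\ell^{3/4}$, I would apply \eqref{eq:uppes2} directly. After enlarging $c_0$ so that $c_0\ge 1$, the factor $\ell^{9/8}$ in the exponent is bounded below by $1$, so $|e^{\ff(x)}| \le C e^{-a\sqrt{|x|}}$, which is already of the desired form. Combining the two cases yields \eqref{eq:uppesall} with $c_2$ chosen as a suitable positive constant depending only on $a$, and with $c_1$ absorbing the finitely many constants produced above.

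Uniformity over compact subsets of $(0,\infty)\times\realR\times\realR\times[0,\infty)$ is inherited from Lemma~3.3, since all the manipulations above — the comparison $x^2 \ge \sqrt{|x|}$ on $|x|\ge 1$, and the lower bound $\ell^{9/8}\ge 1$ — are independent of $(a,b,c,d)$, while the constants produced by Lemma~3.3 were already uniform on such compact sets. I do not anticipate any obstacle; this step is purely a bookkeeping consolidation designed to present a single clean estimate that can be quoted in the asymptotic analysis of Section~4.
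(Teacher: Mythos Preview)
Your proposal is correct and follows essentially the same approach as the paper: both patch together the two regimes of Lemma~\ref{cor:uppes0}, use $\ell^{9/8}\ge 1$ for the large-$|x|$ regime, and convert the Gaussian bound to square-root decay. The paper's only cosmetic difference is that it handles the small-$|x|$ regime in one line via the inequality $x^2+1\ge \sqrt{|x|}$ for all $x\in\realR$, rather than splitting at $|x|=1$ as you do.
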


\begin{proof}
The result follows from Lemma \ref{cor:uppes0} by noting $\ell^{9/8}  \ge 1$ and $x^2+1\ge \sqrt{|x|}$ for all $x\in \realR$. 
\end{proof}

\section{Asymptotic analysis} \label{sec:asymp}

We prove Proposition \ref{lm:main_contribution_largeL},  \ref{resultD1error}, and \ref{lm:error_estimate_largeL} in this section. 
The proofs are almost uniform for all three cases except that we need to add the restriction $p\ll \ell^{5/4}$ in the proof of 
Proposition~\ref{lm:error_estimate_largeL} for Case 1. 
The remaining situation for Case 1 is handled separately at the end of this section.

\subsection{Choice of contours}

It is convenient to introduce the notation 
\begin{equation}
\label{eq:rdefine}
    \ratio=   p \ell^{1/4}   . 
\end{equation}
Note that $\ratio\to \infty$ for Case 1, $\ratio$ is a constant for Case 2, and $\ratio\to 0$ for Case 3.

The contours for the integrals of \eqref{eq:PPdefns} are circles around the origin satisfying $0<|z_1|<\cdots <|z_m|<1$. 
We make the following specific choice of the radii. The choice is the same for all three Cases except in the last subsection which we change the analysis slightly. 
Let 
\beqq
	\rho_1>\cdots>\rho_m>0
\eeqq
be real numbers which we keep fixed. We choose the contours as  
\beq \label{eq:zellrho}
	z_i = e^{- \frac{\ell p}{2}-  \ratio \rho_i + \rmi \theta_i} , \qquad \theta_i\in (-\pi, \pi],
\eeq
for each $i=1, \cdots, m$. 
Throughout this section except for the last subsection \ref{sec:proof_estimate_remaining}, we assume that $z_i$ are given by the above equation. 
We write $\bfz=(z_1, \cdots, z_m)$.


\subsection{Bound of $\bC$} 
\label{sec:bound_of_C_bullet}

The function $\bC(\bfz)$ is given by the formula \eqref{eq:bulleCdeff}. 
For every $a>0$, polylogarithm functions satisfy 
\beq \label{eq:polylogbaes}
	| \Li_{a}(z) | = \left| \sum_{n=1}^\infty  \frac{z^n}{n^{a}} \right| \le \sum_{n=1}^\infty  |z|^n  \le 2|z| \qquad \text{for $|z|\le 1/2$.} 
\eeq
Thus, if $|z|\le 1/2$, then (see \eqref{eq:A12B})
\beq \label{eq:bound_A}
	|A_1(z)|=\left| -\frac{1}{\sqrt{2\pi}} \Li_{3/2}(z)\right| \le |z| \quad \text{and} 
	\quad |A_2(z)| =\left|  -\frac{1}{\sqrt{2\pi}} \Li_{5/2}(z) \right| \le |z|.
\eeq
Similarly, for $|z|, |z'|\le 1/2$, 
\beq \label{eq:bound_B}
	|B(z,z') | = \left| \frac{1}{4\pi}\sum_{k,k'= 1}^\infty \frac{z^k(z')^{k'}}{(k+k')\sqrt{kk'}} \right|
	\le  \frac{1}{4\pi}\sum_{k,k'= 1}^\infty |z|^k |z'|^{k'} \le |z||z'|.
\eeq 
We find the following bound. 

\begin{lemma}\label{cor:wCbound}
For $\bfz$ given in \eqref{eq:zellrho}, there is a constant $c>0$ such that 
\beqq
	| \bC(\bfz) -1 | \le c \ell p^{-1/2} e^{-\frac{\ell p}{2}}  e^{c \ell p^{-1/2}e^{-\frac{\ell p}{2}}} 
\eeqq
for all $\theta\in (-\pi, \pi]^m$ and $\ell , p>0$ satisfying $\ell p\ge 2$. 
Furthermore, $|\bC(\bfz)|\le 2$ and $\bC(\bfz)\to 1$  uniformly in $\theta\in (-\pi, \pi]^m$ eventually for all three Cases. 
\end{lemma}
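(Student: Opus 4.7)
\medskip

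\noindent\textbf{Proof plan for Lemma \ref{cor:wCbound}.}
The plan is to view $\bC(\bfz)$ as $e^{\Phi(\bfz)}$ where
\[
\Phi(\bfz) = \sum_{i=1}^m\left[\frac{\hite_i}{p^{1/2}}(A_1(z_i)-A_1(z_{i+1})) + \frac{\tau_i}{p^{3/2}}(A_2(z_i)-A_2(z_{i+1})) + 2B(z_i,z_i) - 2B(z_{i+1},z_i)\right],
\]
and then to show that $|\Phi(\bfz)|$ is exponentially small in $\ell p$, so that the bound on $|\bC(\bfz)-1|$ follows from the elementary inequality $|e^x-1|\le |x|e^{|x|}$.

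First, I would observe that under the assumption $\ell p\ge 2$, the choice $z_i=e^{-\ell p/2-\ratio\rho_i+\ii\theta_i}$ together with $\rho_i>0$ yields $|z_i|\le e^{-\ell p/2}\le e^{-1}<1/2$, so the estimates \eqref{eq:bound_A} and \eqref{eq:bound_B} apply uniformly. Using these estimates, each term is controlled by $|z_i|$ or $|z_i||z_{i+1}|$, hence by $e^{-\ell p/2}$. The largest coefficient is $\hite_i/p^{1/2}=(t_i\ell+h_i\ell^{1/4})/p^{1/2}$, which (with $t_i,h_i$ in compact sets and $\ell$ large) is $O(\ell p^{-1/2})$; the coefficient $\tau_i/p^{3/2}$ is of smaller order, and the terms $B(z_i,z_i), B(z_{i+1},z_i)$ are $O(e^{-\ell p})$, which is absorbed. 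Summing over $i=1,\dots,m$ gives a bound of the form $|\Phi(\bfz)|\le c\ell p^{-1/2}e^{-\ell p/2}$ for some constant $c>0$, uniform in $\theta\in(-\pi,\pi]^m$. Applying $|e^x-1|\le|x|e^{|x|}$ then yields the stated inequality.

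For the furthermore part, I would verify in each of the three Cases that $\ell p^{-1/2}e^{-\ell p/2}\to 0$, which in particular gives $|\bC(\bfz)|\le 2$ and $\bC(\bfz)\to 1$. In Case 1, $p\gg\ell^{-1/4}$ gives $\ell p\gg\ell^{3/4}\to\infty$, and the super-polynomial decay of $e^{-\ell p/2}$ dominates the polynomial factor $\ell p^{-1/2}\ll\ell^{9/8}$. In Case 2, $p=\ratio\ell^{-1/4}$ gives $\ell p=\ratio\ell^{3/4}\to\infty$ with the same conclusion. In Case 3, $p\gg\ell^{-1}\log\ell$ means $\ell p/\log\ell\to\infty$, so $e^{-\ell p/2}\le\ell^{-K}$ for any fixed $K$ eventually, which easily beats the polynomial factor $\ell p^{-1/2}\le\ell^{3/2}$.

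The only delicate point is keeping track of the $\ell^{1/4}$ factor coming from the $h_i$ contribution to $\hite_i$, which is dominated by the $t_i\ell$ piece; everything else is routine substitution of the explicit values of $|z_i|$ into the power-series tail bounds \eqref{eq:polylogbaes}. No subtle cancellation or contour deformation is required, so the proof is expected to be short and mechanical once the decomposition above is written down.
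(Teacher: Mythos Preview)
Your proposal is correct and follows essentially the same approach as the paper: write $\bC(\bfz)=e^{\Phi(\bfz)}$, bound $|\Phi(\bfz)|$ using the polylogarithm estimates \eqref{eq:bound_A}--\eqref{eq:bound_B} together with $|z_i|\le e^{-\ell p/2}$, and then apply $|e^w-1|\le |w|e^{|w|}$. Your case-by-case verification that $\ell p^{-1/2}e^{-\ell p/2}\to 0$ is a slight repackaging of the paper's argument (the paper factors this as $(\ell p^4)^{-1/2}(\ell p)^{3/2}e^{-\ell p/2}$ and treats Cases 1--2 together), but the substance is identical.
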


\begin{proof}
From \eqref{eq:zellrho}, $|z_i|\le e^{-\frac{\ell p}{2}}$. 
If $\ell p\ge 2$, then $|z_i|\le e^{-1}\le 1/2$. 
From the formula \eqref{eq:bulleCdeff} of $\bC(\bfz)$, the bounds \eqref{eq:bound_A} and \eqref{eq:bound_B}, and the choice of the parameters \eqref{eq:tagahi00}, we find, using  the inequality $|e^w-1| \le |w|  e^{|w|}$ for all complex number $w$,  that there is a constant $c>0$ so that 
\beqq
	|\bC(\bfz) -1 | \le c(\ell p^{-1/2} + p^{-3/2}) \big( \sum_{i=1}^m |z_i| \big)  e^{c (\ell p^{-1/2} + p^{-3/2}) \sum_{i=1}^m |z_i|} .
\eeqq
Since $\ell p\ge 2$, we see $\ell p^{-1/2} + p^{-3/2}\le \frac32 \ell p^{-1/2}$. 
Using $|z_i | \le e^{- \frac{\ell p}{2}}$, we obtain the bound after replacing the constant $c$ by $\frac{2c}{3m}$. 

Note that $\ell p^{-1/2}e^{-\frac{\ell p}{2}} \le \frac{1}{(\ell p^4)^{1/2}} (\ell p)^{3/2} e^{- \frac{\ell p}{2}}$. 
In all Cases, $\ell p\ge \log \ell \to \infty$. Hence, the term $(\ell p)^{3/2} e^{- \frac{\ell p}{2}}\to 0$. 
For Case 1 and 2, The term $(\ell p^4)^{1/2}$ is bounded below, and thus, $\ell p^{-1/2}e^{-\frac{\ell p}{2}}\to 0$.  
For Case 3, we have $\ell^{-1}\log \ell \ll p$. Thus, $\ell p^{-1/4} \ll \frac{\ell^{5/4}}{(\log \ell)^{1/4}}$ and $\ell p \ge 4 \log \ell$ eventually. 
Thus, $\ell p^{-1/2}e^{-\frac{\ell p}{2}} \le \frac{\ell^{5/4}}{(\log \ell)^4} e^{-2 \log \ell}\to 0$. 
Hence, $\bC(\bfz)\to 1$ for all three Cases, which also implies that $|\bC(\bfz)|\le 2$ eventually. 
\end{proof}

\subsection{The functions $\uu_i(k)$}

For $|z|<1$, a complex number $u$ is in the discrete set $\rmL_{z}= \{ u: e^{-u^2/2}=z, \, \Re(u)<0\}$ if and only if 
it is of the form $u= - \sqrt{ -2\log z + 4\pi \ii k}$ for some $k\in \intZ$. 
With  \eqref{eq:zellrho} in mind, define the function
\beq \label{eq:uellndfn}
	\uu_i(k)= \uu_i(k; \theta_i)=  - \sqrt{ \ell p +2\ratio \rho_i  -2 \rmi\theta_i  + 4\pi \ii k}, \qquad k\in \intZ
\eeq
for $i=1, \cdots, m$, where the branch of the square root is chosen so that $\Re (\uu_i(k))<0$. 
We also define 
\beq 
	\uu_i(\bfk^{(i)})= ( \uu_i(k_1^{(i)}), \cdots, \uu_{i}(k_{n_i}^{(i)})) \quad \text{for $\bfk^{(i)}= (k_1^{(i)}, \cdots, k_{n_i}^{(i)})\in \intZ^{n_i}$.} 
\eeq 
Furthermore, we write  
\beq \label{eq:UUdeffw}
	\UU(\bfk)= (\uu_1(\bfk^{(1)}), \cdots, \uu_m(\bfk^{(m)})) \quad \text{for $\bfk= (\bfk^{(1)}, \cdots, \bfk^{(m)})\in \intZ^{\bfn}
	= \intZ^{n_1}\times \cdots \times \intZ^{n_m}$} 
\eeq 
for $\bfn=(n_1, \cdots, n_m)$.

Using these notations, the functions 
in \eqref{eq:bulleD_bfz} become 
\beq \label{eq:DhatDinkj} \begin{split}
	\bD_{\bfn}(\bfz) = \sum_{\bfk, \hat\bfk \in \intZ^{\bfn}} \bs_{\bfn}(\bfk, \hat \bfk) 
	\qquad\text{and} \qquad 
	\hbD_{\bfn}(\bfz) = \sum_{\bfk, \hat\bfk \in \intZ^{\bfn}} \hbs_{\bfn}(\bfk, \hat \bfk)
\end{split} \eeq
with
\beq \label{eq:sdefhre} \begin{split} 
	&\bs_{\bfn}(\bfk, \hat \bfk):= H_{\bfn}(\UU(\bfk), \UU(\hat \bfk)) R_{\bfn}(\UU(\bfk), \UU(\hat \bfk)) E_{\bfn} (\UU(\bfk), \UU(\hat \bfk)) , \\
	&\hbs_{\bfn}(\bfk, \hat \bfk):= H_{\bfn}(\UU(\bfk), \UU(\hat \bfk)) \hat R_{\bfn}(\UU(\bfk), \UU(\hat \bfk)) E_{\bfn}(\UU(\bfk), \UU(\hat \bfk)) , 	
\end{split} \eeq
where by $\bfk\in \intZ^{\bfn}$, we mean that $\bfk= (\bfk^{(1)}, \cdots , \bfk^{(m)}) \in \intZ^{n_1}\times \cdots \times \intZ^{n_m}$.

\subsection{Bound of $H_{\bfn}$} 
\label{sec:bound_H_bfn}

The function $H_{\bfn}(U, \hat U)$ in \eqref{eq:Hdefff} involves the function
\beqq
	h(w, z )
	=  -\frac{1}{\sqrt{2\pi}} \int_{-\infty}^{w} \Li_{1/2}(z e^{(w^2-y^2)/2})\rmd y 
	=  -\frac{1}{\sqrt{2\pi}} \int_{-\infty}^{\Re (w)} \Li_{1/2}(z e^{(w^2-(x+\ii \Im(w))^2)/2})\rmd x
\eeqq
for $\Re(w)<0$. From \eqref{eq:polylogbaes}, we see that for $\Re(w)<0$ and $|z|\le 1/2$, 
\beq \label{eq:bound_h}
	| h(w, z ) |
	\le \frac{2}{\sqrt{2\pi}} \int_{-\infty}^{\Re(w)} |z|  e^{(\Re(w)^2-x^2)/2} \rmd x
	\le  |z|.
\eeq

\begin{lemma}\label{cor:qHbound}
For $\bfz$ given in \eqref{eq:zellrho}, we have  
\beqq
	|H_{\bfn}(\UU(\bfk), \UU(\hat \bfk)) -1 | \le 8  | \bfn| e^{-\frac{\ell p}{2}} e^{ 8   | \bfn| e^{-\frac{\ell p}{2}} } 
	\le 4 |\bfn| e^{4 |\bfn| }
\eeqq
for all $\bfn \in \N^m$, $\bfk, \hat \bfk\in \intZ^{\bfn}$, $\theta\in (-\pi, \pi]^m$, and $\ell, p>0$ satisfying $\ell p\ge 2$. 
As a consequence, 
\beqq
	|H_{\bfn}(\UU(\bfk), \UU(\hat \bfk))|\le 5 |\bfn| e^{4 |\bfn| }. 
\eeqq 
\end{lemma}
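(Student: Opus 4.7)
The plan is to take logarithms and reduce everything to the pointwise bound on $h$ provided by \eqref{eq:bound_h}. Since $H_{\bfn}(U, \hat U)$ is defined in \eqref{eq:Hdefff} as the exponential of a sum of terms of the form $2h_i - h_{i+1} - h_{i-1}$ evaluated at the entries of $U$ and $\hat U$, the natural approach is to bound the total exponent in absolute value and then apply the elementary inequality $|e^w - 1| \le |w|\, e^{|w|}$.

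First, I would observe that the entries $\uu_i(k)$ defined in \eqref{eq:uellndfn} satisfy $\Re(\uu_i(k)) < 0$ by construction, so \eqref{eq:bound_h} is applicable at each argument, provided $|z_i| \le 1/2$. Because $|z_i| \le e^{-\ell p/2}$ from \eqref{eq:zellrho}, the hypothesis $\ell p \ge 2$ gives $|z_i| \le e^{-1} \le 1/2$. Consequently $|h_i(\uu_j^{(i)})| \le |z_i| \le e^{-\ell p/2}$ for every $i$ and $j$, where $h_0 = h_{m+1} = 0$ by convention.

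Next, I would write $H_{\bfn}(\UU(\bfk), \UU(\hat\bfk)) = e^{S}$, where $S$ is the sum, over $i = 1,\dots,m$ and $j = 1,\dots,n_i$, of the six terms $2h_i - h_{i+1} - h_{i-1}$ evaluated at $\uu_i(k_j^{(i)})$ and at $\uu_i(\hat k_j^{(i)})$. The coefficients in each pair sum in absolute value to at most $2+1+1 = 4$, contributing $4e^{-\ell p/2}$ per pair and hence $8 e^{-\ell p/2}$ when both $U$ and $\hat U$ contributions are combined. Summing over $(i,j)$ yields
\begin{equation*}
|S| \;\le\; 8\,|\bfn|\, e^{-\ell p/2},
\end{equation*}
where $|\bfn| = n_1 + \cdots + n_m$. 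Applying $|e^S - 1| \le |S|\, e^{|S|}$ gives the first claimed bound.

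The second inequality $\le 4|\bfn|\, e^{4|\bfn|}$ follows from the first since $\ell p \ge 2$ forces $8 e^{-\ell p/2} \le 8/e < 4$, so $8|\bfn| e^{-\ell p/2} \le 4|\bfn|$ both in the prefactor and in the exponent of the exponential. Finally, the stated consequence $|H_{\bfn}| \le 5|\bfn|\, e^{4|\bfn|}$ comes from the triangle inequality $|H_{\bfn}| \le 1 + |H_{\bfn} - 1|$ together with $|\bfn| \ge 1$. There is no significant obstacle: the entire argument is an elementary combinatorial accounting combined with the uniform pointwise bound on $h$ that is already available in \eqref{eq:bound_h}.
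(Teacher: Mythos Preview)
Your proof is correct and is essentially the same as the paper's: both bound the exponent of $H_{\bfn}$ term-by-term via \eqref{eq:bound_h}, obtaining $|S|\le 8|\bfn|\,\max_i |z_i| \le 8|\bfn|\,e^{-\ell p/2}$, and then apply $|e^S-1|\le |S|\,e^{|S|}$ together with $\ell p\ge 2$ to conclude. The paper's write-up is simply a more compressed version of your argument.
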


\begin{proof}
Using the inequality $|e^w-1| \le |w|  e^{|w|}$ and the estimate \eqref{eq:bound_h},
\beqq
	|H_{\bfn}(U, \hat U) -1 | \le 8|\bfn|  ( \max_{1\le i\le m} |z_i| )  e^{ 8|\bfn|  ( \max_{1\le i\le m} |z_i| )}  \le 4 |\bfn| e^{4 |\bfn| }
\eeqq
for all $U, \hat U \in \rmL_{z_1}\times \cdots \times \rmL_{z_m}$ if $|z_1|, \cdots, |z_m|\le 1/2$. 
For $\bfz$ given in \eqref{eq:zellrho}, $|z_i | \le e^{- \frac{\ell p}{2}}\le 1/2$ for all $i$ if $\ell p\ge 2$. 
Inserting $U=\UU(\bfk)$ and $\hat U= \UU(\hat \bfk)$, we obtain the result. 
\end{proof}

\subsection{Bound and limits of $E_{\bfn}$} 
\label{sec:bound_E_bfn}

Recall from \eqref{eq:Edeffff} and \eqref{eq:tagahi00} that for $\bfn=(n_1, \cdots, n_m)$, 
\beqq
	E_{\bfn}(U, \hat U)
	=  \prod_{i=1}^m\prod_{j_i=1}^{n_i} E^{i,+}(u_{j_i}^{(i)})E^{i,-}(\hat u_{j_i}^{(i)})  
	\quad \text{where}\quad 
	E^{i, \pm} (s)= e^{-\frac{\Delta t_i}{3p^{3/2}} s^3  \pm \frac{\Delta x_i}{2 p \ell^{1/4}}  s^2 
	+ \frac{\ell \Delta t_i +  \ell^{1/4} \Delta h_i}{p^{1/2}} s}.
\eeqq
We compute the limit and bounds of $E_{\bfn}(\UU(\bfk), \UU(\hat \bfk))$ where $\UU(\bfk)$ is the function from \eqref{eq:UUdeffw}. 
For the limit, we only need the case when $\bfn=\bfone$, and thus we do not state the results when $\bfn\neq \bfone$.

Define
\beq
	\widetilde E^{i, \pm} (s) = E^{i, \pm} (s) e^{\frac{2\Delta t_i }{3} \ell^{3/2} +   (  \Delta h_i \mp  \frac{\Delta x_i}{2}) \ell^{3/4} }.
\eeq 
Then, 
\beq \label{eq:Ebtiwhtidgn}
	E_{\bfn}(\UU(\bfk), \UU(\hat \bfk)) 
	=
	e^{-\frac{4 }{3} \ell^{3/2} (\sum_{i=1}^m n_i \Delta t_i ) - 2 \ell^{3/4} (\sum_{i=1}^m n_i \Delta h_i) }
	\prod_{i=1}^m \prod_{j=1}^{n_j} 
	\widetilde E^{i, +}(\uu_i(k_j^{(i)})) \widetilde E^{i, -}(\uu_i(\hat k_j^{(i)})) 
\eeq
where $\uu_i(k)$ is the function from \eqref{eq:uellndfn}. 
When $\bfn=\bfone$, since $\sum_{i=1}^m \dDelta h_i= 0$ and $\sum_{i=1}^m \dDelta t_i=1$, this formula becomes   
\beq \label{eq:Ebtiwhtid}
	E_{\bfone}(\UU(\bfk), \UU(\hat \bfk)) 
	= e^{ - \frac{4 }{3} \ell^{3/2}} 
	\prod_{i=1}^m \widetilde E^{i, +}(\uu_i(k_1^{(i)})) \widetilde E^{i, -}(\uu_i(\hat k_1^{(i)})). 
\eeq

The function $\widetilde E^{i, \pm} (\uu_i(k))$ is expressible in terms of the function $\ff$ from \eqref{eq:expfunt}. 
Recall the function $\xi(x)$ in \eqref{eq:expfunt}, which contains the parameter $d$. Comparing with the formula \eqref{eq:uellndfn} of $\uu_i$, we find that 
\beq 
	\uu_i(k)=  - \sqrt{ \ell p } \big( 1-  \frac{1}{\ell^{3/4} } \xi\big(\frac{2\pi k - \theta_i}{  \ratio} \big) \big) 
	\qquad \text{with $d=  \rho_i $.} 
\eeq
A direct computation shows that 
\beq \label{eq:EiinG}
	\widetilde E^{i, \pm} (\uu_i(k))
	= e^{\ff\left( \frac{2\pi k - \theta_i}{  \ratio}  \right) }
\eeq
with the parameters 
\beq \label{eq:Ealphaparach}
	a= \frac{ \dDelta t_i}{3}, \qquad b= \pm \frac{\dDelta x_i}{2}, \qquad c=  \dDelta h_i, 
	\qquad \text{and} \qquad d=  \rho_i .
\eeq
Thus, the results from Subsection \ref{sec:asapre} are applicable. 

\bigskip

We first find a limit of $E_{\bfone}(\UU(\bfk), \UU(\hat \bfk))$. 
For $\bfy=(y_1,\cdots,y_m)\in \realR^m$, we use the notation $[\bfy]=([y_1],\cdots,[y_m])$.

\begin{lemma} [Limit of $E_{\bfn}$ when $\bfn=\bfone$] \label{resultElimit}
For Case 1, for every $\bfy, \hat \bfy \in \realR^m$,  
\beq \label{eq:Elimit1}
\begin{split}
	&e^{\frac{4}{3} \ell^{3/2}} E_{\bfone} \big(\UU ([\ratio \bfy]),  \UU( [\ratio \hat \bfy] ) \big) \\
	&\qquad \to \prod_{i=1}^m e^{ \dDelta t_i (\rho_i + 2\pi \ii y_i)^2 - (\dDelta h_i - \dDelta x_i)(\rho_i + 2\pi \ii y_i))+\dDelta t_i (\rho_i + 2\pi \ii \hat y_i))^2 -  (\dDelta h_i + \dDelta x_i)(\rho_i + 2\pi \ii \hat y_i) )} .
\end{split} \eeq
uniformly in $\theta\in (-\pi, \pi]^m$.
For Case 2, for every $\bfk, \bfk'\in \intZ^m$, 
\beq  \label{eq:Elimit2}
	e^{\frac{4}{3} \ell^{3/2}} E_{\bfone}(\UU(\bfk), \UU(\hat \bfk)) 
	\to 
	\prod_{i=1}^m e^{ \dDelta t_i  \xi_i(k_i)^2 -  (\dDelta h_i - \dDelta x_i)\xi_i(k_i)+ \dDelta t_i  \xi_i(\hat k_i)^2 -  (\dDelta h_i + \dDelta x_i)\xi_i(\hat k_i)} 
\eeq
uniformly in $\theta\in (-\pi, \pi]^m$, 
where 
\beq 
\label{eq:xindefn}
	\xi_i(k) =\rho_i +  \frac{1}{\ratio} (2\pi \rmi k - \rmi \theta_i ).
\eeq
For Case 3, if 
\beqq
	\theta_i= \ratio \varphi_i 
\eeqq 
for $i=1, \cdots, m$, 
then 
\beq \label{eq:Elimit3}
	e^{\frac{4}{3} \ell^{3/2}}E_{\bfone}(\UU(0), \UU(0)) 
	\to 
    \prod_{i=1}^m e^{2\dDelta t_i ( \rho_i - \ii \varphi_i)^2 - 2 \dDelta h_i( \rho_i - \ii \varphi_i)} 
\eeq
uniformly for $\varphi=(\varphi_1,\cdots,\varphi_m)$ in a compact subset of $\realR^m$. 
\end{lemma}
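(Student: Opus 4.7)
The strategy is to reduce $E_{\bfone}(\UU(\bfk), \UU(\hat\bfk))$ to a finite product of functions of the form $e^{\ff(\cdot)}$ and then apply the pointwise convergence in Lemma \ref{lem:ptwiselm} with the appropriate arguments in each of the three cases.

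First I would rewrite the left-hand side. Using the identity \eqref{eq:Ebtiwhtid}, one has
\beqq
   e^{\frac{4}{3}\ell^{3/2}} E_{\bfone}(\UU(\bfk), \UU(\hat\bfk))
   = \prod_{i=1}^{m} \widetilde E^{i,+}(\uu_i(k_1^{(i)})) \, \widetilde E^{i,-}(\uu_i(\hat k_1^{(i)})).
\eeqq
By formulas \eqref{eq:EiinG} and \eqref{eq:Ealphaparach}, each factor $\widetilde E^{i,\pm}(\uu_i(k))$ equals $\exp\bigl(\ff\bigl(\tfrac{2\pi k-\theta_i}{\ratio}\bigr)\bigr)$ with parameters $a=\dDelta t_i/3$, $b=\pm \dDelta x_i/2$, $c=\dDelta h_i$, $d=\rho_i$. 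The pointwise limit in Lemma \ref{lem:ptwiselm} then gives
\beqq
   \ff(x_\ell) \longrightarrow \dDelta t_i (\rho_i+\ii x)^2 - (\dDelta h_i \mp \dDelta x_i)(\rho_i+\ii x)
\eeqq
whenever $x_\ell \to x$ as $\ell\to\infty$, uniformly for $x$ in compact subsets of $\realR$ and uniformly for the parameters $(a,b,c,d)$ in compact subsets.

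For Case 1, I would choose the argument $x_\ell^{(i)} = (2\pi[\ratio y_i]-\theta_i)/\ratio$ for $k_1^{(i)}=[\ratio y_i]$, and similarly for $\hat k$. Since $\ratio=p\ell^{1/4}\to\infty$ while $\theta_i\in(-\pi,\pi]$ stays bounded, $x_\ell^{(i)}\to 2\pi y_i$ uniformly in $\theta_i$. Multiplying the limits of the $+$ and $-$ factors gives exactly the product on the right-hand side of \eqref{eq:Elimit1}. For Case 2, $\ratio$ is the fixed constant from the hypothesis and the argument $x^{(i)} = (2\pi k_i-\theta_i)/\ratio$ does not depend on $\ell$, so Lemma \ref{lem:ptwiselm} immediately produces the factor $\exp\bigl(\dDelta t_i \xi_i(k_i)^2 - (\dDelta h_i\mp \dDelta x_i)\xi_i(k_i)\bigr)$ with $\xi_i(k_i)=\rho_i+\ii x^{(i)}$, matching \eqref{eq:xindefn} and \eqref{eq:Elimit2}; uniformity in $\theta$ follows because $\theta$ ranges over the compact set $(-\pi,\pi]^m$ so the corresponding arguments lie in a compact set. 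For Case 3, with $\bfk=\hat\bfk=0$ and $\theta_i=\ratio\varphi_i$, the argument collapses to $x^{(i)} = -\varphi_i$, and the $+$ and $-$ contributions combine to
\beqq
   2\dDelta t_i(\rho_i-\ii\varphi_i)^2 - 2\dDelta h_i(\rho_i-\ii\varphi_i),
\eeqq
the $\pm\dDelta x_i$ terms cancelling because $k=\hat k=0$. This yields \eqref{eq:Elimit3}.

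There is no serious obstacle beyond bookkeeping: the key analytic fact, namely the pointwise convergence of $\ff$ together with its uniformity on compact sets, is already established in Lemma \ref{lem:ptwiselm}. The one small point to verify carefully is that in Case 1 the quantized argument $(2\pi[\ratio y_i]-\theta_i)/\ratio$ converges to $2\pi y_i$ \emph{uniformly} in $\theta_i\in(-\pi,\pi]$, which is immediate since both $\theta_i/\ratio$ and $(\ratio y_i - [\ratio y_i])/\ratio$ are $O(\ratio^{-1})$. Combining the uniform pointwise limit with the finite product structure delivers the three limits as stated.
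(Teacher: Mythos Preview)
Your proposal is correct and follows essentially the same approach as the paper: reduce via \eqref{eq:Ebtiwhtid} and \eqref{eq:EiinG}--\eqref{eq:Ealphaparach} to a finite product of factors $e^{\ff((2\pi k-\theta_i)/\ratio)}$, then apply the pointwise limit of Lemma~\ref{lem:ptwiselm} in each case. The paper's proof is nearly identical, and your extra remark on the uniformity in $\theta$ for Case~1 is a welcome clarification.
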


\begin{proof}
From \eqref{eq:Ebtiwhtid}, it is enough to compute the limits of $\widetilde E^{i, \pm} (\uu_i(k))= e^{\ff ( \frac{2\pi k - \theta_i}{  \ratio}  ) }$. 
We use Lemma  \ref{lem:ptwiselm}. From \eqref{eq:Ealphaparach}, $d=  \rho_i $.  
Recall that $\ratio \to \infty$ for Case 1, $\ratio$ is a constant for Case 2, and $\ratio\to 0$ for Case 3. 
Thus, for Case 1, 
\beqq
	d+ \ii \frac{2\pi [\ratio y] - \theta_i}{ \ratio}
	=  \rho_i +  \ii \frac{2\pi [\ratio y] - \theta_i}{  \ratio}
	\to  \rho_i + 2\pi \ii y , 
\eeqq
and hence Lemma \ref{lem:ptwiselm} yields the result \eqref{eq:Elimit1}. 
For Case 2,  $d+ \ii \frac{2\pi k - \theta_i}{ \ratio}=  \rho_i +  \ii \frac{2\pi k - \theta_i}{  \ratio}=  \xi_i(k) $ and we obtain \eqref{eq:Elimit2}. 
For Case 3, with $\theta_i = \ratio \varphi_i$, 
\beqq
	d - \ii \frac{ \theta_i}{ \ratio}
	=  \rho_i - \ii \varphi_i  . 
\eeqq
Thus, 
we obtain \eqref{eq:Elimit3}.  
\end{proof}

\medskip

We now find a uniform bound for $E_{\bfn}(\UU(\bfk), \UU(\hat \bfk))$. We start with the following result. 

\begin{lemma} \label{resultEunifbdgngn}
There are constants $c_0\ge 1$, $c_1>0$, and $\cstar>0$ such that 
\beq \label{eq:gnEbdfsall}
	 \left| E_{\bfn}(\UU(\bfk), \UU(\hat \bfk))  \right| 
	\prod_{i=1}^m e^{\frac{4 n_i \Delta t_i }{3} \ell^{3/2} + 2  n_i \Delta h_i \ell^{3/4}}
	\le c_1^{|\bfn|} 
	\prod_{i=1}^m \prod_{j=1}^{n_i} 
	e^{  - 2\cstar \sqrt{\frac{1}{\ratio } | k^{(i)}_j -\frac{\theta_i}{2\pi} |}
	- 2\cstar  \sqrt{\frac{1}{\ratio } | \hat k^{(i)}_j  -\frac{\theta_i}{2\pi} |}} 
\eeq
for all $\bfn\in \N^m$, $\bfk, \hat \bfk \in \intZ^{\bfn}$, $\theta\in (-\pi,\pi]^m$, and $\ell\ge c_0$. 
As a consequence,  
\beq \label{eq:gnEbdfssimpler0000}
	 \left| E_{\bfn}(\UU(\bfk), \UU(\hat \bfk))  \right| 
	\prod_{i=1}^m e^{\frac{4 n_i \Delta t_i }{3} \ell^{3/2} + 2 n_i \Delta h_i\ell^{3/4} }	
	\le c_1^{|\bfn|} 
	\prod_{i=1}^m \prod_{j=1}^{n_i} 
	e^{  - \cstar \sqrt{\frac{1}{\ratio } | k^{(i)}_j |}
	-  \cstar  \sqrt{\frac{1}{\ratio } | \hat k^{(i)}_j |}} .
\eeq 
\end{lemma}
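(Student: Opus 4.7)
The plan is to apply Corollary~\ref{cor:uppes} termwise to the product factorization of $E_{\bfn}$ given by \eqref{eq:Ebtiwhtidgn} and \eqref{eq:EiinG}.

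First, I would use \eqref{eq:Ebtiwhtidgn} to move the explicit exponential prefactor $\exp(-\tfrac{4}{3}\ell^{3/2}\sum_i n_i\Delta t_i - 2\ell^{3/4}\sum_i n_i\Delta h_i)$ to the left-hand side of \eqref{eq:gnEbdfsall}, so that the claim reduces to a product bound on the $2|\bfn|$ scalar factors $|\widetilde E^{i,\pm}(\uu_i(k))|$. Then, by the identification \eqref{eq:EiinG}, each such factor equals $|e^{\ff(y)}|$ evaluated at $y = (2\pi k - \theta_i)/\ratio$, with parameters $(a,b,c,d) = (\Delta t_i/3,\pm\Delta x_i/2,\Delta h_i,\rho_i)$ as in \eqref{eq:Ealphaparach}.

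The decisive observation is that these parameters range over a compact subset of $(0,\infty)\times\realR\times\realR\times[0,\infty)$: the $\rho_i$ are fixed positive reals; the gaps $\Delta t_i$ are bounded away from zero because $(t_1,\ldots,t_{m-1})$ lies in a compact subset of the open simplex $\{0<t_1<\cdots<t_{m-1}<1\}$; and $\Delta x_i,\Delta h_i$ stay in compact sets by the uniformity convention stated at the beginning of Section~\ref{sec:asymp}. Hence Corollary~\ref{cor:uppes} furnishes constants $c_0\ge 1$ and $c_1,c_2>0$, independent of the remaining variables, such that $|e^{\ff(y)}|\le c_1 e^{-c_2\sqrt{|y|}}$ for every $y\in\realR$ and every $\ell\ge c_0$. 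Substituting $|y| = 2\pi|k-\theta_i/(2\pi)|/\ratio$ and setting $\cstar := c_2\sqrt{2\pi}/2$, I obtain the per-factor estimate $|\widetilde E^{i,\pm}(\uu_i(k))|\le c_1 e^{-2\cstar\sqrt{|k-\theta_i/(2\pi)|/\ratio}}$; multiplying over the $2|\bfn|$ factors and absorbing $c_1^{2|\bfn|}$ by enlarging $c_1$ to $c_1^2$ yields \eqref{eq:gnEbdfsall}.

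For the consequence \eqref{eq:gnEbdfssimpler0000}, since $|\theta_i|\le\pi$ gives $|\theta_i/(2\pi)|\le 1/2$, the triangle inequality yields $|k-\theta_i/(2\pi)|\ge |k|/2$ whenever $k\ne 0$; hence $2\cstar\sqrt{|k-\theta_i/(2\pi)|/\ratio}\ge \sqrt{2}\,\cstar\sqrt{|k|/\ratio}\ge \cstar\sqrt{|k|/\ratio}$, and for $k=0$ the inequality is trivial because the right side vanishes. Applying this to each index $k_j^{(i)}$ and $\hat k_j^{(i)}$ immediately weakens \eqref{eq:gnEbdfsall} to the stated consequence. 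I anticipate no genuine obstacle: the lemma is essentially bookkeeping on top of the uniform pointwise estimate in Corollary~\ref{cor:uppes}, and the only point requiring attention is the compactness of the parameter set $(a,b,c,d)$, which ensures that the constants $c_0,c_1,c_2$ can be chosen uniformly in all configurations relevant to the theorems.
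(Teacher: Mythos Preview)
Your proposal is correct and follows essentially the same approach as the paper: apply Corollary~\ref{cor:uppes} to each factor $\widetilde E^{i,\pm}(\uu_i(k))=e^{\ff((2\pi k-\theta_i)/\ratio)}$ via \eqref{eq:Ebtiwhtidgn} and \eqref{eq:EiinG}, set $\cstar=c_2\sqrt{2\pi}/2$, and then use $|k-\theta_i/(2\pi)|\ge|k|/2$ to deduce \eqref{eq:gnEbdfssimpler0000}. The paper's proof is identical in substance, including the renaming of constants and the elementary inequality for the consequence.
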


\begin{proof}
Since $\widetilde E^{i, \pm} (\uu_i(k)) = e^{\ff ( \frac{2\pi k - \theta_i}{  \ratio} ) }$  with the parameters \eqref{eq:Ealphaparach}, Corollary \ref{cor:uppes} gives a bound: there are constants $c_0\ge 1$, $c_1>0$, and $c_2>0$ such that 
\beq \label{eq:uppesall_sp}
	  | \widetilde E^{i, \pm} (\uu_i(k)) | 
	   \le  c_1 e^{  - c_2 \sqrt{\left| \frac{2\pi k - \theta_i}{ \ratio} \right|}} 
\eeq
for all $k\in \intZ$, $\theta\in (-\pi, \pi]$, and $\ell\ge c_0$. 
Thus, from \eqref{eq:Ebtiwhtidgn}, we obtain the bound \eqref{eq:gnEbdfsall} where we replaced $c_1^{2}$ by $c_1$ and $c_2\sqrt{2\pi}$ by $2\cstar$. 
The bound \eqref{eq:gnEbdfssimpler0000} follows from \eqref{eq:gnEbdfsall} since
\beq
	\left| k - \frac{\theta}{2\pi} \right| \ge \frac{|k|}{2} \ge \frac{|k|}{4} 
\eeq
for every $k\in \intZ$ and $\theta\in (-\pi, \pi]$. 
\end{proof}

When $\bfn=\bfone$, due to \eqref{eq:Ebtiwhtid}, the above result implies the next bound. 

\begin{cor} [Bound of $E_{\bfn}$ for $\bfn=\bfone$] \label{resultEunifbd}
Suppose $\bfn=\bfone$. With the same constants $c_0\ge 1$, $c_1>0$, and $\cstar>0$ in Lemma \ref{resultEunifbdgngn}, 
\beq \label{eq:Ebdfs}
	e^{ \frac{4}{3} \ell^{3/2}} \left| E_{\bfone}(\UU(\bfk), \UU(\hat \bfk))  \right| 
	\le c_1^m \prod_{i=1}^m e^{  - 2\cstar \sqrt{\frac{1}{\ratio } | k_i -\frac{\theta_i}{2\pi} |}
	- 2\cstar  \sqrt{\frac{1}{\ratio } | \hat k_i -\frac{\theta_i}{2\pi} |}} 
\eeq
and
\beq \label{eq:Ebdfssimpler}
	e^{ \frac{4}{3} \ell^{3/2}} \left| E_{\bfone}(\UU(\bfk), \UU(\hat \bfk))  \right| 
	\le c_1^m \prod_{i=1}^m e^{  - \cstar \sqrt{\frac{| k_i |}{\ratio } }
	- \cstar  \sqrt{\frac{ | \hat k_i |}{\ratio }}}  
\eeq 
for all $\bfk, \hat \bfk \in \intZ^m$, $\theta\in (-\pi,\pi]^m$, and $\ell \ge c_0$. 
\end{cor}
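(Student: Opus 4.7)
The corollary is an immediate specialization of Lemma \ref{resultEunifbdgngn} to the case $\bfn = \bfone$, combined with the boundary conditions $t_m = 1$ and $h_m = 0$ fixed in Corollary \ref{cor:Pready}. First I would set every $n_i = 1$ in \eqref{eq:gnEbdfsall}. Then $|\bfn| = m$, the inner product over $j$ is trivial, and the prefactor on the LHS becomes
\[
\prod_{i=1}^m e^{\frac{4\dDelta t_i}{3}\ell^{3/2} + 2\dDelta h_i \ell^{3/4}} = \exp\Big(\tfrac{4}{3}\ell^{3/2}\sum_{i=1}^m \dDelta t_i + 2\ell^{3/4}\sum_{i=1}^m \dDelta h_i\Big).
\]
The telescoping identity $\sum_{i=1}^m \dDelta a_i = a_m$, applied with $t_m = 1$ and $h_m = 0$, collapses this expression to $e^{\frac{4}{3}\ell^{3/2}}$. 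Moving this factor to the LHS of \eqref{eq:gnEbdfsall} yields exactly \eqref{eq:Ebdfs} with the same constants $c_0$, $c_1$, $\cstar$ supplied by Lemma \ref{resultEunifbdgngn}.

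For the second bound \eqref{eq:Ebdfssimpler}, I would either apply the same $\bfn = \bfone$ specialization directly to \eqref{eq:gnEbdfssimpler0000}, or deduce it from \eqref{eq:Ebdfs} via the elementary inequality $|k - \theta/(2\pi)| \ge |k|/2$ valid for all $k \in \intZ$ and $\theta \in (-\pi,\pi]$ (trivial for $k=0$, and for $|k|\ge 1$ it follows from $|k-\theta/(2\pi)| \ge |k| - 1/2 \ge |k|/2$). This inequality is already invoked inside the proof of Lemma \ref{resultEunifbdgngn}, and here it converts $2\cstar\sqrt{|k-\theta/(2\pi)|/\ratio}$ into at least $\sqrt{2}\cstar\sqrt{|k|/\ratio} \ge \cstar\sqrt{|k|/\ratio}$.

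I do not anticipate any obstacle: the analytic content---namely the decay $|e^{\ff(x)}| \le c_1 e^{-c_2\sqrt{|x|}}$ supplied by Corollary \ref{cor:uppes} and assembled across all factors $\widetilde E^{i,\pm}(\uu_i(k))$ in Lemma \ref{resultEunifbdgngn}---is already in hand, and the corollary amounts to pure bookkeeping together with the telescoping identity. The reason for isolating the case $\bfn = \bfone$ is that it will provide the leading contribution to $\hat \PP_{m,1}$ in Proposition \ref{lm:main_contribution_largeL}, so having the bound in this clean form without the $(n_i!)^2$ combinatorics is convenient for the subsequent asymptotic analysis.
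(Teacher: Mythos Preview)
Your proposal is correct and follows essentially the same approach as the paper: the paper simply notes that when $\bfn=\bfone$, equation \eqref{eq:Ebtiwhtid} (which already encodes the telescoping $\sum_i \dDelta t_i = 1$, $\sum_i \dDelta h_i = 0$) together with Lemma \ref{resultEunifbdgngn} yields the corollary. Your explicit unpacking of the telescoping sums and of the inequality $|k-\theta/(2\pi)|\ge |k|/2$ matches the paper's reasoning exactly.
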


For the case when $\bfn\neq \bfone$, we have the following estimate. 
We use the fact that $t_1, \cdots, t_{m-1}$ are distinct. 

\begin{cor} [Bound of $E_{\bfn}$ for $\bfn\neq \bfone$] \label{lem:uniform_bound_E}
Let $\cstar>0$ be the constant from Lemma \ref{resultEunifbdgngn}. 
There are positive constants $c_0$, $\delta$, and $c_2$ such that 
\beq 
	e^{ \frac{4}{3} \ell^{3/2}} \left| E_{\bfn}(\UU(\bfk), \UU(\hat \bfk))  \right| 
	\le 
	e^{  -\frac{4\delta}{3} \ell^{3/2}-   c_2 |\bfn| \ell^{3/2} } 
	\prod_{i=1}^m \prod_{j=1}^{n_i} 
	e^{  - \cstar \sqrt{\frac{1}{\ratio } | k^{(i)}_j |}
	-  \cstar \sqrt{\frac{1}{\ratio } | \hat k^{(i)}_j |}} 
\eeq 
for all $\bfn\in \N^m\setminus \{\bfone\}$, $\bfk, \hat \bfk \in \intZ^{\bfn}$, $\theta\in (-\pi,\pi]^m$, and $\ell \ge c_0$. 
\end{cor}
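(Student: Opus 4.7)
The plan is to start from the bound of Lemma \ref{resultEunifbdgngn} and exploit the combinatorial fact that $\bfn\in\N^m\setminus\{\bfone\}$ forces $|\bfn|\ge m+1$, together with the positivity of $\eta := \min_{1\le i\le m}\dDelta t_i$ (which is strictly positive since $0=t_0<t_1<\cdots<t_{m-1}<t_m=1$ are distinct). Specifically, since every $n_i\ge 1$, writing $n_i=1+e_i$ with $e_i\ge 0$ gives
\[
	\sum_{i=1}^m n_i \dDelta t_i \;=\; 1 + \sum_{i=1}^m e_i\,\dDelta t_i \;\ge\; 1 + \eta\,(|\bfn|-m),
\]
and for $\bfn\ne \bfone$ one has $|\bfn|-m\ge 1$.

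Next I would rewrite the left-hand side of \eqref{eq:gnEbdfssimpler0000} to isolate the gain we need. Multiplying the bound \eqref{eq:gnEbdfssimpler0000} by $e^{\frac{4}{3}\ell^{3/2}}$ and bringing the $t$-factor to the right,
\[
	e^{\frac{4}{3}\ell^{3/2}}\bigl| E_{\bfn}(\UU(\bfk),\UU(\hat\bfk))\bigr|
	\;\le\; c_1^{|\bfn|}\,
	e^{\frac{4}{3}\ell^{3/2}\left(1-\sum_i n_i\dDelta t_i\right)
	-2\ell^{3/4}\sum_i n_i\dDelta h_i}
	\prod_{i,j}e^{-\cstar\sqrt{|k^{(i)}_j|/\ratio}-\cstar\sqrt{|\hat k^{(i)}_j|/\ratio}}.
\]
Using the lower bound $\sum_i n_i\dDelta t_i\ge 1+\eta(|\bfn|-m)$ and the trivial estimate $|\sum_i n_i\dDelta h_i|\le H|\bfn|$ with $H:=\max_i|\dDelta h_i|$, the prefactor exponent is dominated by
\[
	|\bfn|\log c_1 + 2H\,|\bfn|\,\ell^{3/4} - \tfrac{4\eta}{3}\ell^{3/2}(|\bfn|-m).
\]

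The key elementary inequality is the splitting
\[
	|\bfn|-m \;\ge\; \tfrac12 + \tfrac{|\bfn|}{2(m+1)}\qquad\text{whenever }|\bfn|\ge m+1,
\]
which follows from combining $|\bfn|-m\ge 1$ with $|\bfn|-m\ge |\bfn|/(m+1)$ (the latter rearranges to $|\bfn|\ge m+1$). Substituting this in, the exponent is bounded above by
\[
	|\bfn|\log c_1 + 2H|\bfn|\ell^{3/4} - \tfrac{2\eta}{3}\ell^{3/2} - \tfrac{2\eta}{3(m+1)}\ell^{3/2}|\bfn|.
\]
Since $\log c_1$ and $H\ell^{3/4}$ are both $o(\ell^{3/2})$, the linear-in-$|\bfn|$ sub-leading terms are absorbed by $\tfrac{\eta}{3(m+1)}\ell^{3/2}|\bfn|$ for $\ell$ large enough; this is where we use that the bound is required only eventually. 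Choosing $\delta=\eta/2$ and $c_2=\eta/(3(m+1))$ then produces
\[
	\text{exponent} \;\le\; -\tfrac{4\delta}{3}\ell^{3/2} - c_2|\bfn|\ell^{3/2},
\]
which is exactly the desired estimate.

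I do not foresee a genuine obstacle: the whole argument is a gap-exploitation after Lemma \ref{resultEunifbdgngn} does the real work. The only mildly delicate point is ensuring that the $|\bfn|\log c_1$ and $H|\bfn|\ell^{3/4}$ terms get absorbed not just into the $\ell^{3/2}$ piece but specifically into the $c_2|\bfn|\ell^{3/2}$ piece; this is why one splits $|\bfn|-m$ into a constant plus a term proportional to $|\bfn|$, rather than using either of the simpler bounds $|\bfn|-m\ge 1$ or $|\bfn|-m\ge |\bfn|/(m+1)$ alone.
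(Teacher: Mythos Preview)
Your proof is correct and follows essentially the same approach as the paper: both extract from $\sum_i n_i\dDelta t_i$ a decomposition of the form $1+\text{const}+\text{const}'\cdot|\bfn|$ using the positivity of $\eta=\min_i\dDelta t_i$, and then absorb the $c_1^{|\bfn|}$ and $2H|\bfn|\ell^{3/4}$ terms into the $|\bfn|\ell^{3/2}$ piece for large $\ell$. The only cosmetic difference is that the paper obtains the splitting via a scaling trick (choosing $c,\delta$ with $1+\eta=\frac{1+\delta}{1-c}$ so that $(1-c)\sum_i n_i\dDelta t_i\ge 1+\delta$ and then $\sum_i n_i\dDelta t_i\ge 1+\delta+c\eta|\bfn|$), whereas you use the explicit inequality $|\bfn|-m\ge \tfrac12+\tfrac{|\bfn|}{2(m+1)}$; both yield the same conclusion.
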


\begin{proof}
Since $\dDelta t_i$ are positive constants (recall that  $t_1, \cdots, t_{m-1}$ are distinct) that add up to $1$, we find that 
\beqq
	\sum_{i=1}^m n_i \dDelta t_i  
    =1+  \sum_{i=1}^m (n_i-1) \dDelta t_i   \ge  1+ \min_{1\le i\le m} \{ \dDelta t_i \}   \qquad \text{for every $\bfn \in \N^m\setminus \{\bfone\}$.}
\eeqq
Let $c$ and $\delta$ be any positive constants satisfying $1+ \min_{1\le i\le m} \{ \dDelta t_i \} = \frac{1+\delta}{1-c}$. Then, 
\beq \label{eq:delsumtno}
	(1-c)\sum_{i=1}^m n_i \dDelta t_i \ge 1 + \delta \qquad \text{for all $\bfn \in \N^m\setminus \{\bfone\}$.}
\eeq
This inequality implies that 
\beqq
	\sum_{i=1}^m n_i \dDelta t_i \ge 1 + \delta + c \sum_{i=1}^m n_i \dDelta t_i
	\ge 1+ \delta + c |\bfn| \min_{1\le i\le m} \{ \dDelta t_i \} 
\eeqq
for all $\bfn \in \N^m\setminus \{\bfone\}$. 
Thus,
\beqq\begin{split}
	&\log \left( c_1^{|\bfn|} \prod_{i=1}^m e^{-\frac{4 n_i \Delta t_i }{3} \ell^{3/2} - 2  n_i \Delta h_i \ell^{3/4} } \right) \\
	&\le -\frac{4 (1+\delta)}{3} \ell^{3/2} -   |\bfn| \ell^{3/2} \left( \frac{4c}3  \min_{1\le i\le m} \{ \dDelta t_i \} 
	- 2  \ell^{-3/4} \max_{1\le i\le m} |\Delta h_i|  - \ell^{-3/2} \log |c_1| 
	\right) .
\end{split} \eeqq
The last parenthesis term is larger than or equal to a positive constant $c_2$ if $\ell$ is large enough. 
Thus, we obtain the result from Lemma \ref{resultEunifbdgngn} after adjusting the constant $c_0$.  
\end{proof}

\subsection{Bounds and limits of $R_{\bfn}$ and $\hat R_{\bfn}$} \label{sec:Rbounds}

From \eqref{eq:Rdetfom}, 
\beq \label{eq:HRREte}
	| R_{\bfn}(\UU(\bfk), \UU(\hat \bfk))|
	= \prod_{i=1}^m \prod_{j_i=1}^{n_i} \frac{1}{ | \uu_i (k_{j_i}^{(i)}) \uu_i (\hat k_{j_i}^{(i)}) |} 
	\prod_{i=0}^{m} \left|  \Cd( U^{(i)}, -\hat U^{(i+1)} ; \hat U^{(i)}, - U^{(i+1)})  \right| 
\eeq
with $U^{(i)}= \uu_i(\bfk^{(i)})$ and $\hat U^{(i)}= \uu_i(\hat \bfk^{(i)})$ and the convention that $U^{(0)}=\hat U^{(0)}=U^{(m+1)}=\hat U^{(m+1)}=\emptyset$. 
Recall from \eqref{eq:Caudedef} that 
\beq \label{eq:Cdemt}
	\Cd(X; Y)= \det \left( \frac{1}{x_i + y_j} \right)_{i, j=1}^{a} 
	= \frac{\prod_{1\le i<j\le a} (x_j-x_i) (y_j-y_i)}{\prod_{i, j=1}^a (x_i + y_j)}
\eeq 
for $X=(x_1, \cdots, x_a)$ and $Y=(y_1, \cdots, y_a)$. 

\medskip

From the definition \eqref{eq:uellndfn}, we have a trivial bound
\beq \label{eq:ukiklb}
	|\uu_i(k)| \ge \sqrt{ \ell p}
\eeq
for all $i=1, \cdots, m$ and $k\in \intZ$.

To estimate 
\eqref{eq:HRREte}, we need the following lemmas. 

\begin{lemma}
For every $i, i'=1, \cdots, m$ and $k, k'\in \intZ$, 
\beq \label{eq:Rest3}
	| \uu_i(k)+\uu_{i'}(k')| \ge \sqrt{ \ell p}. 
\eeq
\end{lemma}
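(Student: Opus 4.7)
The plan is to exploit the sign convention on the real parts of the square roots defining $\uu_i(k)$. Write $\uu_i(k) = -a_i - \rmi b_i$ with $a_i > 0$ (this is possible by the branch choice $\Re(\uu_i(k)) < 0$) and $b_i \in \realR$. Squaring and comparing with \eqref{eq:uellndfn} gives
\begin{equation*}
    a_i^2 - b_i^2 = \ell p + 2\ratio \rho_i, \qquad 2 a_i b_i = 2\theta_i - 4\pi k.
\end{equation*}
Since $\ratio > 0$ and $\rho_i > 0$ in the set-up of the contour radii, the first identity yields $a_i^2 \ge \ell p + b_i^2 \ge \ell p$, and hence $a_i \ge \sqrt{\ell p}$ for every $i$ and $k$.

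With the same decomposition $\uu_{i'}(k') = -a_{i'} - \rmi b_{i'}$, both summands in $\uu_i(k) + \uu_{i'}(k')$ have strictly negative real parts, so
\begin{equation*}
    |\uu_i(k) + \uu_{i'}(k')| \ge \big|\Re(\uu_i(k) + \uu_{i'}(k'))\big| = a_i + a_{i'} \ge 2 \sqrt{\ell p} \ge \sqrt{\ell p},
\end{equation*}
which is the desired bound \eqref{eq:Rest3} (with a factor of $2$ to spare).

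There is essentially no obstacle here; the only ingredient beyond the definition is the observation that the positivity of $\ratio \rho_i$ forces the real-part magnitude of each $\uu_i(k)$ to already be at least $\sqrt{\ell p}$, so the combination of two such numbers with the same sign of real part automatically satisfies the same lower bound. Indeed this is a strengthening of the trivial bound \eqref{eq:ukiklb}, obtained by noting that $a_i$ alone — not just $\sqrt{a_i^2+b_i^2}$ — already exceeds $\sqrt{\ell p}$.
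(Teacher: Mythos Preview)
Your proof is correct. Both your argument and the paper's rely on the same underlying fact that $\Re(\uu_i(k)^2)=\ell p+2\ratio\rho_i>0$, but you extract the conclusion differently: the paper observes that this forces $\arg(-\uu_i(k))\in(-\pi/4,\pi/4)$ and then uses a polar-form estimate $|c+c'e^{\rmi(\varphi'-\varphi)}|\ge c\ge|\uu_i(k)|\ge\sqrt{\ell p}$, whereas you work in Cartesian coordinates and note directly that $|\Re(\uu_i(k))|=a_i$ satisfies $a_i^2=\ell p+2\ratio\rho_i+b_i^2\ge\ell p$. Your route is shorter and yields the sharper bound $2\sqrt{\ell p}$; the paper's polar argument only recovers $\sqrt{\ell p}$. (The sign in your imaginary-part identity should read $2a_ib_i=-2\theta_i+4\pi k$, but this plays no role in the bound.)
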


\begin{proof}
From the definition \eqref{eq:uellndfn}, $\Re( \uu_i(k)^2)>0$ and $\Re(\uu_i(k))<0$. Thus, $\arg(-\uu_i(k))\in (-\pi/4, \pi/4)$.
Using polar forms  $- \uu_i(k)=c e^{\ii\varphi}$ 
and $-\uu_{i'} (k') = c'e^{\ii \varphi'}$ for some $c, c'>0$ and $\varphi, \varphi'\in (-\pi/4, \pi/4)$, we find that
\beqq
	| \uu_i(k)+\uu_{i'}(k')| 
	= | c+ c' e^{\ii(\varphi'-\varphi)}| \ge |c+c'\cos(\varphi-\varphi')| 
	\ge c = | \uu_i(k) | \ge \sqrt{ \ell p}
\eeqq
for all $i, i'$ and $k, k'$. The last inequality is due to \eqref{eq:ukiklb}. 
\end{proof}

\begin{lemma}
We have 
\beq \label{eq:uikupe}
	| \uu_i(k)| \le 5 \sqrt{ \ell p}  + 5 \sqrt{|k|} 
\eeq
for all $i=1, \dots, m$, $k\in \intZ$, and $\ell, p>0$ satisfying $\ell^3\ge 16\rho_1^4$  and $\ell p \ge 1$. 
\end{lemma}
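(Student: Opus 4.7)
The plan is to apply the triangle inequality directly to the radicand in the definition~\eqref{eq:uellndfn}. Since $\uu_i(k)=-\sqrt{\ell p + 2\ratio\rho_i - 2\rmi\theta_i + 4\pi\rmi k}$ with $\ratio=p\ell^{1/4}$, the squared modulus equals the modulus of the radicand, and the elementary inequality $|a+b\rmi|\le |a|+|b|$ for real $a,b$ (valid here because $\ell p + 2\ratio\rho_i>0$) yields
\beqq
  |\uu_i(k)|^2 = \bigl|(\ell p + 2\ratio\rho_i) + \rmi(4\pi k - 2\theta_i)\bigr|
               \le (\ell p + 2\ratio\rho_i) + |4\pi k - 2\theta_i|.
\eeqq

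First I would control the real part using the hypothesis $\ell^3 \ge 16\rho_1^4$, which is equivalent to $2\rho_1\ell^{-3/4}\le 1$. Since $\rho_i\le \rho_1$ and $\ratio=p\ell^{1/4}$, this gives
\beqq
  \ell p + 2\ratio\rho_i \;\le\; \ell p\,\bigl(1 + 2\rho_1\ell^{-3/4}\bigr) \;\le\; 2\ell p.
\eeqq
The imaginary part is bounded by $4\pi|k|+2\pi$ using $|\theta_i|\le\pi$. Therefore $|\uu_i(k)|^2 \le 2\ell p + 4\pi|k| + 2\pi$.

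Finally I would apply the subadditivity $\sqrt{a+b}\le \sqrt{a}+\sqrt{b}$ for $a,b\ge 0$ and split into two cases. For $|k|\ge 1$, the inequality $2\pi\le 2\pi|k|$ gives $|\uu_i(k)|\le \sqrt{2\ell p} + \sqrt{6\pi|k|}$, and since $\sqrt{2}<5$ and $\sqrt{6\pi}<5$, the bound $|\uu_i(k)|\le 5\sqrt{\ell p}+5\sqrt{|k|}$ is immediate. For $k=0$, the hypothesis $\ell p\ge 1$ lets us write $\sqrt{2\pi}\le \sqrt{2\pi}\,\sqrt{\ell p}$, so $|\uu_i(0)|\le (\sqrt{2}+\sqrt{2\pi})\sqrt{\ell p} < 5\sqrt{\ell p}$, as required.

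No genuine obstacle arises; the statement is a routine triangle-inequality estimate. The two hypotheses play distinct roles: $\ell^3\ge 16\rho_1^4$ is used exactly to absorb the $2\ratio\rho_i$ shift into the leading $\ell p$ term, and $\ell p\ge 1$ is used to absorb the residual constants into the $5\sqrt{\ell p}$ term when $k=0$.
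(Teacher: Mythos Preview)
Your proof is correct and follows essentially the same approach as the paper: both use the hypothesis $\ell^3\ge 16\rho_1^4$ to absorb the $2\ratio\rho_i$ term into $\ell p$, bound the imaginary contribution by $4\pi|k|+2\pi$, and finish with subadditivity of roots. The only cosmetic difference is that the paper works with $|\uu_i(k)|^4=(\ell p+2\ratio\rho_i)^2+(4\pi k-2\theta_i)^2$ and then takes a fourth root, whereas you bound $|\uu_i(k)|^2=|a+\rmi b|\le |a|+|b|$ and take a square root; your route is marginally shorter.
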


\begin{proof}
If $\ell^3\ge 16\rho_1^4$, then $2\ratio \rho_i\le 2\ratio \rho_1 \le \ell p$ (recall \eqref{eq:rdefine}). Thus, (see \eqref{eq:uellndfn}) 
\beqq
	|\uu_i(k)|^4 =  ( \ell p+2\ratio \rho_i)^2 + (4\pi k-2\theta_i)^2 
	\le 4 (\ell p)^2 + (4\pi |k|+ 2\pi)^2
	\le 4 (\ell p)^2 + 32\pi^2 |k|^2+ 8\pi^2. 
\eeqq
for all $\theta_i\in (-\pi, \pi]$. Hence, for $\ell p\ge 1$, 
\beqq
	|\uu_i(k)|
	\le \left(  (4+8\pi^2) (\ell p)^2 + 32\pi^2 |k|^2 \right)^{1/4}
	\le (4+8\pi^2)^{1/4} \sqrt{ \ell p} + (32\pi^2)^{1/4}  \sqrt{|k|}. 
\eeqq
Since $(4+8\pi^2)^{1/4} \approx 3.01$ and $(32\pi^2)^{1/4} \approx 4.21$, we obtain the result. 
\end{proof}

\begin{lemma} \label{resultsumex}
\begin{enumerate}[(a)]
\item 
For every $r>0$,  $a\ge 0$ and $\epsilon>0$, 
\beq \label{eq:ss12} \begin{split}
	\frac1{r^a} \sum_{k=1}^\infty |k|^{a} e^{-\epsilon \sqrt{\frac{k}{r}}} 
	\le r \int_{\frac1{r}}^{\infty} y^a e^{-\epsilon \sqrt{\frac{y}{2}}} \dd y. 
\end{split} \eeq

\item 
Recall that $\ratio=  \ell^{1/4} p $. 
For every $\epsilon>0$ and $a\ge 0$, 
there is a positive constant $C=C(a, \epsilon)$ such that 
\beq \label{eq:estimate_sum_exp} \begin{split}
	\sum_{k=-\infty}^\infty   |\uu_{i}(k)|^a e^{-\epsilon \sqrt{\frac{|k|}{\ratio}}} 
	\le C (\ell p)^{\frac{a}{2}+1} 
\end{split} \eeq
for all $i=1, \dots, m$ and $\ell, p>0$ satisfying $\ell^3\ge 16\rho_1^4$ and $\ell p\ge 1$. 
\end{enumerate}
\end{lemma}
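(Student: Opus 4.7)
My plan for part (a) is a direct sum-to-integral comparison. For each $k\ge 1$ and $y\in[k/r,(k+1)/r]$ two pointwise inequalities hold: first, $y^a\ge (k/r)^a$ because $a\ge 0$ and $y\ge k/r$; second, $e^{-\epsilon\sqrt{y/2}}\ge e^{-\epsilon\sqrt{k/r}}$ is equivalent to $y\le 2k/r$, and this follows from $y\le (k+1)/r\le 2k/r$ precisely when $k\ge 1$. This is where the factor $\tfrac{1}{2}$ inside the square root on the right-hand side of \eqref{eq:ss12} is used crucially. Multiplying these two inequalities and integrating in $y$ gives $(k/r)^a e^{-\epsilon\sqrt{k/r}}\le r\int_{k/r}^{(k+1)/r} y^a e^{-\epsilon\sqrt{y/2}}\,\dd y$; summing over $k\ge 1$ and dividing by $r^a$ then yields \eqref{eq:ss12}.

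For part (b), I would combine \eqref{eq:uikupe} with part (a). The estimate $|\uu_i(k)|\le 5\sqrt{\ell p}+5\sqrt{|k|}$ (valid under the stated hypotheses) gives $|\uu_i(k)|^a\le C_a\bigl((\ell p)^{a/2}+|k|^{a/2}\bigr)$ for a constant depending only on $a$. Splitting the sum accordingly and using that the exponential weight $e^{-\epsilon\sqrt{|k|/\ratio}}$ depends on $k$ only through $|k|$, it suffices to control the two one-sided sums $\sum_{k\ge 1}e^{-\epsilon\sqrt{k/\ratio}}$ and $\sum_{k\ge 1}k^{a/2}e^{-\epsilon\sqrt{k/\ratio}}$. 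Applying part (a) with exponent $0$ and exponent $a/2$ respectively (and with $r=\ratio$), and noting that the residual integrals $\int_0^\infty y^{b}e^{-\epsilon\sqrt{y/2}}\,\dd y$ are finite and depend only on $b$ and $\epsilon$, these sums are bounded by $C_\epsilon\,\ratio$ and $C_{a,\epsilon}\,\ratio^{a/2+1}$, respectively.

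To finish, I would convert powers of $\ratio=\ell^{1/4}p$ to powers of $\ell p$. The hypothesis $\ell^3\ge 16\rho_1^4$ forces $\ell\ge c_0$ for a positive constant $c_0$ depending on $\rho_1$ (a fixed parameter), and combined with $\ell p\ge 1$ this gives $\ratio\le C''\ell p$, hence $1+C\ratio\le C'\ell p$ and $\ratio^{a/2+1}\le C''(\ell p)^{a/2+1}$. Adding the $k=0$ contribution $|\uu_i(0)|^a\le C(\ell p)^{a/2}\le C(\ell p)^{a/2+1}$ and collecting constants yields \eqref{eq:estimate_sum_exp}.

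The only issue worth flagging is the asymmetry between $\sqrt{k/r}$ in the sum and $\sqrt{y/2}$ in the integral in part (a): the factor $\tfrac{1}{2}$ is precisely what makes the comparison on $[k/r,(k+1)/r]$ work for every $k\ge 1$, so that taking the lower limit of integration to be $1/r$ (and nothing smaller) suffices. Beyond this, the argument is routine and the main obstacle is bookkeeping rather than any genuine analytic difficulty.
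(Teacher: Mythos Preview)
Your proposal is correct and follows essentially the same approach as the paper. For part (a) the paper compares $k^a e^{-\epsilon\sqrt{k/r}}$ to $x^a e^{-\epsilon\sqrt{x/(2r)}}$ on $[k,k+1]$ and then substitutes $y=x/r$, which is identical to your comparison on $[k/r,(k+1)/r]$ with the substitution done first; for part (b) both you and the paper plug in \eqref{eq:uikupe}, split into the $(\ell p)^{a/2}$ and $|k|^{a/2}$ pieces, apply part (a), and use $\ell^3\ge 16\rho_1^4$ to get $\ratio\le \ell p/(2\rho_1)$ (your remark ``dividing by $r^a$'' is superfluous since the factor is already present, but this is harmless).
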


\begin{proof}
(a) For $k\ge 1$, we have $k\ge \frac{k+1}{2}\ge \frac{x}{2}$ for all $x\in [k, k+1]$. 
Thus,  
\beqq \begin{split}
	\frac1{r^a} \sum_{k=1}^\infty k^{a} e^{-\epsilon \sqrt{\frac{k}{r}}} 
	\le  \frac{1}{r^a} \sum_{k=1}^\infty \int_{k}^{k+1} x^{a} e^{-\epsilon \sqrt{\frac{x}{2 r}}} \dd x
	= \frac{1}{r^a} \int_1^\infty x^{a} e^{-\epsilon \sqrt{\frac{x}{2r}}} \dd x
	= r \int_{\frac1{r}}^{\infty} y^a e^{-\epsilon \sqrt{\frac{y}{2}}} \dd y. 
\end{split} \eeqq

(b) 
The result (a) implies that 
\beqq
	\frac1{\ratio^a} \sum_{k=-\infty}^\infty |k|^{a} e^{-\epsilon \sqrt{\frac{|k|}{\ratio}}} 
	\le \delta_{a=0}+ 2 \ratio B_a
	\quad \text{where $B_a=  \int_0^\infty y^{a} e^{- \epsilon\sqrt{\frac{y}{2}}} \dd y$.} 
\eeqq
Hence, if $\ell^3\ge 16\rho_1^4$ and $\ell p\ge 1$, then \eqref{eq:uikupe} implies that 
\beqq
	\sum_{k=-\infty}^\infty   |\uu_{i}(k)|^a e^{-\epsilon \sqrt{\frac{|k|}{\ratio}}} 
	\le 5^a \sum_{k=-\infty}^\infty   (2^a (\ell p)^{a/2}  + 2^a |k|^{a/2} ) e^{-\epsilon \sqrt{\frac{|k|}{\ratio}}} 
	\le 	10^a \left( (\ell p)^{a/2}(1+ 2 \ratio B_0)  + 2 \ratio^{a/2+1} B_{a/2} \right) .
\eeqq
Since  $\ell^3\ge 16\rho_1^4$ implies that $\ratio \le \frac{\ell p}{2\rho_1}$,  
the above is bounded by a constant times $(\ell p)^{a/2+1}$ if $\ell^3\ge 16\rho_1^4$ and $\ell p\ge 1$.  
We thus obtain the result. 
\end{proof}

\begin{lemma} 
\label{lem:longdetra1}
For every $\epsilon>0$, there is a positive constant $C_0$ such that 
\beq \label{eq:longdetra1}
	\frac{\left| \Cd( \uu_i(\bfk), - \uu_{i'}(\hat \bfk') ; \uu_{i}(\hat \bfk) , -\uu_{i'}(\bfk') ) \right| }
	{\prod_{j=1}^{n} |u_i(\hat k_j)|\prod_{j=1}^{n'} |u_{i'}(k'_j)| }
	\prod_{j=1}^n e^{ -\epsilon \sqrt{ \frac1{\ratio} | k_{j}|} - \epsilon  \sqrt{\frac1{\ratio} | \hat k_{j}|} } \prod_{j=1}^{n'} e^{ - \epsilon \sqrt{ \frac1{\ratio} |k_{j}'|} -  \epsilon  \sqrt{ \frac1{\ratio} |\hat k_{j}'|} }
	\le  \left( \frac{C_0 (\ell p)^2}{\ratio^2} \right)^{\frac{n+n'}{2}} 
\eeq
for all  two distinct integers $i$ and $i'$ from $\{0, \cdots, m+1\}$,  
$n, n'\in\N$, $\bfk, \hat \bfk\in \intZ^{n}$, $\bfk', \hat \bfk'\in \intZ^{n'}$, 
and $\ell, p>0$ satisfying $\ell^3\ge 4\rho_1^4$ and $\ell p\ge 1$. 
\end{lemma}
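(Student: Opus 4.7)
My approach is to apply Hadamard's inequality column-by-column to the Cauchy matrix $M_{ab}=1/(X_a+Y_b)$, after first factoring along the block structure $X=(X',X'')=(\uu_i(\bfk),-\uu_{i'}(\hat\bfk'))$ and $Y=(Y',Y'')=(\uu_i(\hat\bfk),-\uu_{i'}(\bfk'))$. The Cauchy product formula gives
\begin{equation*}
|\Cd(X;Y)|=|\Cd(X';Y')|\cdot|\Cd(X'';Y'')|\cdot\mathsf{cross},
\end{equation*}
where $\mathsf{cross}$ collects all four-way mixed products. The same-type subdeterminants are controlled by Hadamard's inequality together with $|\uu_i(k)+\uu_i(k')|\ge\sqrt{\ell p}$ from \eqref{eq:Rest3}, yielding $|\Cd(X';Y')|\le (n/\ell p)^{n/2}$ and $|\Cd(X'';Y'')|\le (n'/\ell p)^{n'/2}$.

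The key new ingredient is the estimation of $\mathsf{cross}$, where all denominators have the form $\uu_i(k)-\uu_{i'}(k')$ with $i\ne i'$. I use the identity
\begin{equation*}
\uu_i(k)-\uu_{i'}(k')=\frac{\uu_i(k)^2-\uu_{i'}(k')^2}{\uu_i(k)+\uu_{i'}(k')}
\end{equation*}
and the direct computation from \eqref{eq:uellndfn},
\begin{equation*}
\uu_i(k)^2-\uu_{i'}(k')^2=-2\ratio(\rho_i-\rho_{i'})+2\ii(\theta_i-\theta_{i'})-4\pi\ii(k-k').
\end{equation*}
Since the real part is nonzero precisely because $i\ne i'$ (using $\rho_1>\cdots>\rho_m$), one obtains $|\uu_i(k)^2-\uu_{i'}(k')^2|\ge 2\ratio\rho_*$ with $\rho_*:=\min_{i\ne i'}|\rho_i-\rho_{i'}|>0$. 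This produces the $\ratio^{-2}$ factor per mixed pair in the final bound, via $|\uu_i(k)-\uu_{i'}(k')|^{-1}\le (|\uu_i(k)|+|\uu_{i'}(k')|)/(2\ratio\rho_*)$. The residual $|\uu_i|+|\uu_{i'}|$ factors are then handled by \eqref{eq:uikupe} rewritten through the elementary inequality $y\le C(\epsilon')e^{\epsilon' y}$ as $|\uu_i(k)|\le C\sqrt{\ell p}\,e^{\epsilon'\sqrt{|k|/\ratio}}$, so that by choosing $\epsilon'$ small relative to $\epsilon$ all $|\uu|$-growth is absorbed into the $e^{-\epsilon\sqrt{|k|/\ratio}}$ factors on the LHS.

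Finally, the $\prod|\uu_i(\hat k_j)|\prod|\uu_{i'}(k'_j)|$ normalization in the denominator of the LHS is recognized as $\prod_b|Y_b|$, which cancels against the natural $|Y_b|$ growth produced by the cross numerators, leaving the target bound $(C_0(\ell p)^2/\ratio^2)^{(n+n')/2}$ with $C_0$ depending only on $\rho_*$ and the fixed parameters.

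The main obstacle will be the combinatorial bookkeeping of powers. Applying Hadamard to the full $(n+n')\times(n+n')$ matrix produces a spurious $(n+n')^{(n+n')/2}$ factor that cannot be absorbed into $C_0^{(n+n')/2}$; this is why the block decomposition is essential, so that Hadamard is applied only on the smaller same-type sub-blocks. The critical step is verifying that the $4nn'$ numerator factors in $\mathsf{cross}$ combine with the $(2\ratio\rho_*)^{2nn'}$ denominator and the Hadamard-bounded sub-blocks to produce exactly $(n+n')/2$ powers of $(\ell p)^2/\ratio^2$ rather than more, which requires carefully tracking how each $|Y_b|$ factor is used and confirming that the exponentials from the pointwise bound on $|\uu_i(k)|$ fit into the $e^{-\epsilon\sqrt{|k|/\ratio}}$ budget uniformly in $n,n'$.
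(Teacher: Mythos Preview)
Your block decomposition does not give the power count you need, and the problem is not bookkeeping but an actual overcount that cannot be repaired within this scheme.

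Write out the Cauchy product formula explicitly. With $X=(X',X'')$, $Y=(Y',Y'')$ as you set them up, the cross factor is
\[
\mathsf{cross}=\prod_{a=1}^{n}\prod_{b=1}^{n'}\frac{(X''_b-X'_a)(Y''_b-Y'_a)}{(X'_a+Y''_b)(X''_b+Y'_a)}
=\prod_{a=1}^{n}\prod_{b=1}^{n'}\frac{(\uu_{i'}(\hat k'_b)+\uu_i(k_a))(\uu_{i'}(k'_b)+\uu_i(\hat k_a))}{(\uu_i(k_a)-\uu_{i'}(k'_b))(\uu_i(\hat k_a)-\uu_{i'}(\hat k'_b))}.
\]
There are $2nn'$ difference denominators, and your own bound $|\uu_i-\uu_{i'}|^{-1}\le (|\uu_i|+|\uu_{i'}|)/(2\ratio\rho_*)$ introduces $\ratio^{-2nn'}$. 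No other part of the decomposition produces positive powers of $\ratio$, so you are stuck with $((\ell p)^2/\ratio^2)^{nn'}$, not $((\ell p)^2/\ratio^2)^{(n+n')/2}$. Since $(\ell p)^2/\ratio^2=\ell^{3/2}\to\infty$ this is fatal already for $n=n'=2$. A second, equally fatal issue is your absorption scheme $|\uu_i(k)|\le C\sqrt{\ell p}\,e^{\epsilon'\sqrt{|k|/\ratio}}$: in the cross product each index $k_a$ appears in $n'$ separate factors, so you would need $n'\epsilon'\le\epsilon$ with $n'$ unbounded.

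The paper's proof takes the route you explicitly rejected: Hadamard on the \emph{full} $(n+n')\times(n+n')$ matrix. The point you missed is that the column weights $f_\qq=e^{-\epsilon\sqrt{|\hat k_\qq|/\ratio}}/|\uu_i(\hat k_\qq)|$ (respectively with $k'_\qq$) are folded into the matrix \emph{before} taking column norms. Each resulting column norm is then $\sum_\qq |f_\qq|^2/|X_\pp+Y_\qq|^2$; because the Cauchy determinant vanishes unless the $\hat k_\qq$ (resp.\ $k'_\qq$) are distinct integers, this sum is dominated by the full integer series $\sum_{k\in\intZ}|\uu_i(k)|^a e^{-2\epsilon\sqrt{|k|/\ratio}}$, which by Lemma~\ref{resultsumex}(b) is $O((\ell p)^{a/2+1})$ independently of $n,n'$. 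That is precisely what kills the spurious $(n+n')^{(n+n')/2}$ you were worried about and delivers the target exponent $(n+n')/2$ with a fixed $C_0$.
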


\begin{proof}
Since $\Cd$ is a Cauchy determinant \eqref{eq:Cdemt}, the left-hand side of \eqref{eq:longdetra1} is zero if two components of any of $\bfk, \bfk', \hat \bfk$, or $\hat \bfk'$ are equal. 
Thus, it is enough to consider the case that $\bfk, \bfk', \hat \bfk$, or $\hat \bfk'$ all have distinct components. Let $\epsilon>0$ be an arbitrary constant. 

For vectors $X=(x_1, \cdots, x_a)$ and $Y=(y_1, \cdots, y_a)$, and scalars $f_1, \cdots, f_a$, 
the Hadamard's inequality implies that 
\beqq
	\left| \Cd(X; Y) \prod_{\qq=1}^a f_\qq \right| 
	= \left| \det\left( \frac{f_\qq}{x_{\pp}+y_\qq} \right)_{\pp,\qq=1}^a \right|
	\le \prod_{\pp=1}^a \sqrt{\sum_{\qq=1}^a \frac{f_\qq^2}{(x_{\pp}+y_\qq)^2} }.
\eeqq
Thus,  
\beq	\label{eq:Kbdlng} \begin{split}
	& \frac{\left| \Cd( \uu_i(\bfk), - \uu_{i'}(\hat \bfk') ; \uu_{i}(\hat \bfk) , -\uu_{i'}(\bfk') ) \right| }
	{\prod_{j=1}^{n} |u_i(\hat k_j)|\prod_{j=1}^{n'} |u_{i'}(k_j')| }
	\prod_{j=1}^n e^{- \epsilon  \sqrt{\frac1{\ratio} | \hat k_{j}|}} \prod_{j'=1}^{n'} e^{ - \epsilon  \sum_{j=1}^{n'} \sqrt{\frac1{\ratio} |k_{j}'|} } \\	  
	&\le 
	 \prod_{\pp=1}^{n} \sqrt{ \sum_{\qq=1}^{n} \frac{ e^{- 2\epsilon \sqrt{\frac1{\ratio} | \hat k_{\qq}|}} }{ |u_{i}(k_{\pp}) + u_{i}(\hat k_\qq)|^2 |u_i(\hat k_\qq)|^2} 
	 + \sum_{\qq=1}^{n'} \frac{ e^{- 2\epsilon \sqrt{\frac1{\ratio} |  k_{\qq}'|}} }{ |u_{i}(k_{\pp}) - u_{i'}( k_\qq')|^2 |u_{i'}( k_\qq')|^2}  } \\
	&\quad \times 
	 \prod_{\pp=1}^{n'} \sqrt{ \sum_{\qq=1}^{n} \frac{ e^{- 2\epsilon \sqrt{\frac1{\ratio} | \hat k_{\qq} |}} }{ |u_{i'}(\hat k_{\pp}') - u_{i}(\hat k_\qq)|^2|u_i(\hat k_\qq)|^2} 
	 + \sum_{\qq=1}^{n'} \frac{ e^{- 2\epsilon \sqrt{ \frac1{\ratio} |  k_{\qq}' |}} }{ |u_{i'}(\hat k_{\pp}') + u_{i'}( k_\qq')|^2 |u_{i'}( k_\qq')|^2}  } .
\end{split} \eeq 
Consider the first sum. 
From \eqref{eq:ukiklb} and \eqref{eq:Rest3}, $|u_i(k)|\ge \sqrt{\ell p}$ and $|u_i(k)+u_{i'}(k')| \ge \sqrt{\ell p}$. 
Since we assume that the components of $\hat \bfk$ are distinct, the case $a=0$ of \eqref{eq:estimate_sum_exp} implies that there is a constant $C_1>0$ so that 
\beq \label{eq:uplub1}
	\sum_{\qq=1}^{n} \frac{ e^{- 2\epsilon \sqrt{\frac1{\ratio} | \hat k_{\qq}|}} }{ |u_{i}(k_{\pp}) + u_{i}(\hat k_\qq)|^2 |u_i(\hat k_\qq)|^2} \le 
	\frac{1}{(\ell p)^2} \sum_{j =1}^{n} e^{-  2\epsilon \sqrt{\frac1{\ratio} | \hat k_{j}|}}  	
	\le 
	\frac{1}{(\ell p)^2} \sum_{k =-\infty}^{\infty} e^{- 2\epsilon \sqrt{\frac{| k |}{\ratio} }}   \le \frac{C_1}{\ell p}
\eeq
$\ell^3\ge 16\rho_1^4$ and $\ell p\ge 1$. 
The same bound holds for the fourth sum. 
For the second sum, note that $\frac1{|a-b|^{2}}\le  \frac{2(1+|a|^2)(1+|b|^2)}{|a^2-b^2|^2}$ for all complex $a, b$. 
Since
\beq
\label{eq:Jan17_02}
	|u_{i}(k_{\pp})^2 - u_{i'}( k_\qq')^2|= | 2\ratio \rho_{i} + \ii (-2\theta_{i}+4\pi k_{\pp}) - 2\ratio \rho_{i'} - \ii (-2\theta_{i'}+4\pi k_{\qq}')  | 
	\ge 2\ratio | \rho_i - \rho_{i'}|, 
\eeq
the $a=0$ and $a=2$ cases of \eqref{eq:estimate_sum_exp} show that there is a constant $C_2>0$ such that 
\beq 
\label{eq:main_term_long_det}\begin{split}
	 &\sum_{\qq=1}^{n} \frac{ e^{- 2\epsilon \sqrt{\frac1{\ratio} |  k_{\qq}' |}} }{ |u_{i}(k_{\pp}) - u_{i'}( k'_\qq)|^2|u_{i'}( k'_\qq)|^2}
	 \le 
	  \frac{ (1+|u_{i}(k_{\pp})|^2)}{2\ratio^2 (\rho_i- \rho_{i'})^2 \ell p} 
	  \sum_{j=1}^{n} (1+|u_{i'}(k'_{j})|^2) e^{-  2\epsilon \sqrt{\frac1{\ratio} |  k_{j}'|}}  \\
	  &\qquad\qquad \qquad
	  \le 
	  \frac{ (1+|u_{i}(k_{\pp})|^2)}{2\ratio^2 (\rho_i- \rho_{i'})^2 \ell p} 
	  \sum_{k=-\infty}^{\infty} (1+|u_{i'}(k)|^2) e^{- 2\epsilon\sqrt{ \frac{|k|}{\ratio} }}  
	  \le   \frac{C_2 \ell p  }{\ratio^2}  (1+|u_{i}(k_{\pp})|^2)
\end{split} \eeq
if $\ell^3\ge 16\rho_1^4$  and $\ell p\ge 1$. 
The third sum is similar. 
Hence, the left-hand side of \eqref{eq:longdetra1} is bounded by 
\beq \label{eq:bnbnddtem2}
	 \prod_{j=1}^n e^{ -\epsilon \sqrt{ \frac1{\ratio} | k_{j}|}  } \prod_{j=1}^{n'} e^{ - \epsilon \sqrt{ \frac1{\ratio} |\hat k_{j}'|} }
	\left[ \prod_{\pp=1}^{n} 
	\sqrt{ \frac{C_1}{\ell p} +  \frac{C_2 \ell p  }{\ratio^2}  (1+|u_{i}(k_{\pp})|^2) } \right]
	 \left[ \prod_{\pp=1}^{n'} 
	 \sqrt{ \frac{C_1}{\ell p} +  \frac{C_2 \ell p }{\ratio^2}  (1+|u_{i'}(\hat k_{\pp}')|^2)} \right] .
\eeq
From \eqref{eq:uikupe}, since $e^{- 2\epsilon \sqrt{ \frac{|  k|}{\ratio}}} \le 1$ and $e^{- 2\epsilon \sqrt{ \frac{|  k|}{\ratio}}} \frac{|k|}{\ratio} \le \max \{ x e^{- 2\epsilon \sqrt{x}} : x\ge 0\} <\infty$, 
with additional constants $C_3$ and $C_4$, 
\beq \label{eq:abt}
	e^{- 2 \epsilon \sqrt{\frac1{\ratio} |  k_{\pp}|}} 
	\left( \frac{C_1}{\ell p} +   \frac{C_2 \ell p  }{\ratio^2}  (1+|u_{i}(k_{\pp})|^2) \right) 
	\le \frac{C_1}{\ell p} +   \frac{C_2 \ell p }{\ratio^2}
	+ \frac{C_3 (\ell p)^2}{\ratio^2} + \frac{C_4 \ell p}{\ratio}.
\eeq 
Since $\ell^3\ge 16\rho_1^4$  implies that $\frac{\ell p}{\ratio} \ge 2\rho_1$, 
there is a positive constant $C_0$ so that 
the right-side of \eqref{eq:abt} is bounded by $ \frac{C_0 (\ell p)^2}{\ratio^2}$ if $\ell^3\ge 16\rho_1^4$  and $\ell p\ge 1$. 
Hence, \eqref{eq:bnbnddtem2} is bounded by $(\frac{C_0 (\ell p)^2}{\ratio^2} )^{(n+n')/2}$. 
\end{proof}

\begin{cor}[Bound of $R_{\bfn}$ and $\hat R_{\bfn}$] \label{resultRRhatbd}
For every $\epsilon>0$, there is a positive constant $C_0$ such that 
\beq \label{eq:bound_R}
	| R_{\bfn}(\UU(\bfk), \UU(\hat \bfk))|
	\le \left( \frac{C_0 (\ell p)^2}{\ratio^2} \right)^{|\bfn|}
	\prod_{i=1}^m \prod_{j=1}^{n_i} 
	 e^{2\epsilon \sqrt{\frac1{\ratio} | k_{j}^{(i)}|}+ 2\epsilon \sqrt{ \frac1{\ratio} |\hat k_{j}^{(i)}|} }
\eeq
and
\beq \label{eq:bound_hatR}
	| \hat R_{\bfn}(\UU(\bfk), \UU(\hat \bfk))|
	\le |\bfn| (\ell p)^{1/2} \left( \frac{C_0 (\ell p)^2}{\ratio^2} \right)^{|\bfn|}
	\prod_{i=1}^m \prod_{j=1}^{n_i} 
	 e^{ 2\epsilon \sqrt{ \frac1{\ratio} | k_{j}^{(i)}|}+ 2\epsilon \sqrt{ \frac1{\ratio} |\hat k_{j}^{(i)}|} }
\eeq 
for all $\bfn\in \N^m$, $\bfk, \hat \bfk \in \intZ^{\bfn}$, and $\ell, p>0$ satisfying  $\ell^3\ge 16\rho_1^4$  and $\ell p\ge 1$. 

\end{cor}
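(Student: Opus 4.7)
The plan is to build \eqref{eq:bound_R} factor-by-factor from the product form \eqref{eq:HRREte}, and then deduce \eqref{eq:bound_hatR} by absorbing the extra sum $\sum_{j=1}^{n_m}(u_j^{(m)}+\hat u_j^{(m)})$ appearing in $\hat R_\bfn$ into the exponential weights.

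For \eqref{eq:bound_R}, I would apply Lemma \ref{lem:longdetra1} to each of the $m+1$ Cauchy determinants $\Cd(U^{(i)},-\hat U^{(i+1)};\hat U^{(i)},-U^{(i+1)})$ using the index pair $(i,i+1)$ with $n=n_i$, $n'=n_{i+1}$. For the interior indices $0<i<m$ this is a direct application. For the boundary indices $i=0,m$ one of $n,n'$ is zero and Lemma \ref{lem:longdetra1} does not literally apply, but the Hadamard estimate used inside its proof still does: the cross-index sums vanish, only the same-index Hadamard term survives, and the bound $|u+u'|\geq\sqrt{\ell p}$ from \eqref{eq:Rest3} gives at least as strong an estimate. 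The bookkeeping is then: setting $n_0=n_{m+1}=0$, each scalar $|u_i(\hat k_j^{(i)})|$ appears exactly once among the denominators $\prod|u_i(\hat k_j)|$ of the factor at index $i$, and each $|u_i(k_j^{(i)})|$ appears exactly once among the denominators $\prod|u_{i'}(k'_j)|$ of the factor at index $i-1$, reproducing the prefactor $\prod_{i,j}1/(|u_i(k_j^{(i)})u_i(\hat k_j^{(i)})|)$ in \eqref{eq:HRREte}. Each exponential weight $e^{-\epsilon\sqrt{|k_j^{(i)}|/\ratio}}$ (resp.\ $e^{-\epsilon\sqrt{|\hat k_j^{(i)}|/\ratio}}$) is contributed twice—once by factor $i-1$ and once by factor $i$—combining to $e^{-2\epsilon\sqrt{|k_j^{(i)}|/\ratio}}$. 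The constants multiply by the telescoping identity $\sum_{i=0}^m(n_i+n_{i+1})=2|\bfn|$ to give $(C_0(\ell p)^2/\ratio^2)^{|\bfn|}$. Rearranging yields \eqref{eq:bound_R}.

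For \eqref{eq:bound_hatR}, the triangle inequality and the definition of $\hat R_\bfn$ give
\[
|\hat R_\bfn(\UU(\bfk),\UU(\hat\bfk))|\le |R_\bfn(\UU(\bfk),\UU(\hat\bfk))|\cdot\sum_{j=1}^{n_m}\bigl(|u_m(k_j^{(m)})|+|u_m(\hat k_j^{(m)})|\bigr).
\]
I would invoke \eqref{eq:bound_R} with $\epsilon$ replaced by $\epsilon/2$, leaving an exponential budget of $e^{\epsilon\sqrt{\cdot}}$ on each factor. Bound \eqref{eq:uikupe} together with $\ell p\ge\ratio$ (which holds eventually, since $\ell p/\ratio=\ell^{3/4}$) gives $|u_m(k)|\le 5\sqrt{\ell p}(1+\sqrt{|k|/\ratio})$, and the elementary inequality $1+y\le C_\epsilon e^{(\epsilon/2)y}$ for $y\ge 0$ turns this into $|u_m(k)|\le C_\epsilon'\sqrt{\ell p}\,e^{(\epsilon/2)\sqrt{|k|/\ratio}}$. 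The sum of $2n_m$ such terms is at most $2n_m C_\epsilon'\sqrt{\ell p}$ times a maximum of exponentials, which is bounded by the product $\prod_{j=1}^{n_m}e^{(\epsilon/2)\sqrt{|k_j^{(m)}|/\ratio}+(\epsilon/2)\sqrt{|\hat k_j^{(m)}|/\ratio}}$ since each factor is at least one. Combining these pieces and using $n_m\le|\bfn|$ and $e^{\epsilon\sqrt{\cdot}}\cdot e^{(\epsilon/2)\sqrt{\cdot}}\le e^{2\epsilon\sqrt{\cdot}}$ gives \eqref{eq:bound_hatR} after absorbing the numerical constants into $C_0$.

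The main obstacle is purely organizational: keeping the multi-index accounting clean so that the denominators from the $m+1$ Hadamard bounds exactly reproduce $\prod_{i,j}|u_i(k_j^{(i)})u_i(\hat k_j^{(i)})|^{-1}$, the exponentials telescope to the claimed $e^{\pm 2\epsilon\sqrt{\cdot}}$ factors, and the boundary cases $i=0,m$—not literally covered by the statement of Lemma \ref{lem:longdetra1} since $\N$ excludes $0$—are handled through the same Hadamard estimate restricted to a single-index square Cauchy determinant. Once this bookkeeping is done correctly, no new analytic input is needed.
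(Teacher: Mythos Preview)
Your proposal is correct and follows essentially the same route as the paper: for \eqref{eq:bound_R} the paper also just says to insert Lemma~\ref{lem:longdetra1} into \eqref{eq:HRREte} (your bookkeeping and boundary-case remarks simply make this explicit), and for \eqref{eq:bound_hatR} the paper likewise bounds the extra sum $\sum_j(u_j^{(m)}+\hat u_j^{(m)})$ via \eqref{eq:uikupe} and absorbs the $\sqrt{|k|}$ terms into spare exponential weights before combining with \eqref{eq:bound_R} at a smaller $\epsilon$. One small wording issue: the inequality $\ell p\gtrsim\ratio$ you need is not ``eventual'' but follows directly from the hypothesis $\ell^3\ge 16\rho_1^4$, which gives $\ell p\ge 2\rho_1\ratio$; the constant $2\rho_1$ is then absorbed into $C_0$.
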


\begin{proof}
The bound \eqref{eq:bound_R} follows by inserting the estimate \eqref{eq:longdetra1} in the formula \eqref{eq:HRREte}. 
For the bound \eqref{eq:bound_hatR}, we need to modify the argument a little bit. Recall that $ \hat R_{\bfn}(\UU(\bfk), \UU(\hat \bfk))$ is equal to $R_{\bfn}(\UU(\bfk), \UU(\hat \bfk))$ times the sum $\sum_{j=1}^{n_m}(u_m(k^{(m)}_{j})+u_m(\hat k^{(m)}_{j}))$ . We may assume that $k_j^{(m)}$ are distinct for $1\le j\le n_m$, and $\hat k_j^{(m)}$ are also distinct since otherwise the left-hand side is zero. 
Using the lower bound $|u_i(k)|\le 5\sqrt{\ell p}+5\sqrt{k}$ in \eqref{eq:uikupe}, 
\beq
\label{eq:bound_extra_term} \begin{split}
	&\left|  \sum_{j=1}^{n_m}(u_m(k^{(m)}_{j})+u_m(\hat k^{(m)}_{j}))  \right| \\
	&\le \left(10n_m\sqrt{\ell p} + 5\sum_{j=1}^{n_m}\sqrt{k_j^{(m)}}e^{-\epsilon\sqrt{\frac{1}{\ratio} k_j^{(m)}}}
	+ 5\sum_{j=1}^{n_m}\sqrt{\hat k_j^{(m)}}e^{-\epsilon\sqrt{\frac{1}{\ratio} \hat k_j^{(m)}}}\right) 
	\prod_{j=1}^{n_m} 
	 e^{ \epsilon \sqrt{ \frac1{\ratio} | k_{j}^{(m)}|}+ \epsilon \sqrt{ \frac1{\ratio} |\hat k_{j}^{(m)}|} }\\
\end{split} \eeq
Note that the maximum of the function $\sqrt{x}e^{-\epsilon\sqrt{x/\ratio}}$ over $x\in[0,\infty]$ is $C\sqrt{\ratio} $. Here $C$ is a positive constant. Also note that $2\rho_1\ratio<\ell p$ by our assumption $\ell^3\ge 16\rho_1^4$. Hence the left hand side of~\eqref{eq:bound_extra_term} is bounded by a constant times $|\bfn| |\ell p|^{1/2} \prod_{i=1}^m \prod_{j=1}^{n_i} 
	 e^{ \epsilon \sqrt{ \frac1{\ratio} | k_{j}^{(i)}|}+ \epsilon \sqrt{ \frac1{\ratio} |\hat k_{j}^{(i)}|} }
$. Combining with the estimate \eqref{eq:bound_R} and adjusting the $\epsilon$ value accordingly we obtain \eqref{eq:bound_hatR}. 
\end{proof}

The exponential bounds of Corollary \ref{resultRRhatbd} are enough for $\bfn\neq\bfone$. 
However, for $\bfn=\bfone$, we need a stronger estimate. In the next lemma, we obtain a polynomial bound in this case.  
Note that when $\bfn=\bfone$, the product formula of the Cauchy determinant implies that 
\beq \label{eq:RRmmq} \begin{split}
       &R_{\bfone}(U, \hat U) = (-1)^m 
        \prod_{i=1}^m \frac{1}{(u_i+\ut_i)^2 u_i \ut_i } 
	\prod_{i=2}^m \frac{(u_i + \ut_{i-1})(\ut_i+u_{i-1})}{(u_i-u_{i-1})(\ut_i-\ut_{i-1})}
\end{split} \eeq
and $\hat R_{\bfone}(U, \hat U) = (u_m+\ut_m)  R_{\bfone}(U, \hat U)$. 
We insert $U= \UU(\bfk)$ and $\hat U= \UU(\hat \bfk)$ where $\bfk=(k_1, \cdots, k_m)\in \N^m$ and $\hat \bfk=(\hat k_1, \cdots, \hat k_m)\in \N^m$.

\begin{lemma}[Bound of $R_{\bfn}$ and $\hat R_{\bfn}$ for $\bfn=\bfone$] \label{lem:Rest}
There is a polynomial $P$ of $2m$ variables such that 
\beqq
	(\ell p)^{2}  \ratio^{2m-2} | R_{\bfone}(\UU(\bfk), \UU(\hat \bfk)) | 
	\le  | P \big( \frac{\bfk}{\ratio}, \frac{\hat{\bfk}}{\ratio} \big) |
	\quad \text{and} \quad 
	(\ell p)^{3/2} \ratio^{2m-2} | \hat R_{\bfone}(\UU(\bfk), \UU(\hat \bfk)) | 
	\le  | P \big( \frac{\bfk}{\ratio}, \frac{\hat{\bfk}}{\ratio} \big) |
\eeqq
for all $\bfk, \hat \bfk\in \intZ^m$, $\theta\in (-\pi,\pi]^m$, and $\ell, p$ satisfying  
 $\ell^3\ge 16\rho_1^4$  and $\ell p\ge 1$. 
\end{lemma}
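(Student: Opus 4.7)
The plan is to work directly with the product formula \eqref{eq:RRmmq}. Write $u_i = \uu_i(k_i)$ and $\ut_i = \uu_i(\hat k_i)$. Split $R_{\bfone}$ into the ``diagonal'' product $\prod_{i=1}^m 1/[(u_i+\ut_i)^2 u_i \ut_i]$ and the ``cross'' product $\prod_{i=2}^m (u_i+\ut_{i-1})(\ut_i+u_{i-1})/[(u_i-u_{i-1})(\ut_i-\ut_{i-1})]$. For the diagonal part I apply the uniform lower bounds $|u_i|, |\ut_i| \ge \sqrt{\ell p}$ from \eqref{eq:ukiklb} and $|u_i+\ut_i|\ge \sqrt{\ell p}$ from \eqref{eq:Rest3} to conclude that this product is bounded by $(\ell p)^{-2m}$.

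For the cross product I use two ingredients. To bound the numerator factors I rewrite the upper bound \eqref{eq:uikupe} as
\[
|\uu_i(k)| \le 5\sqrt{\ell p} + 5\sqrt{|k|} \le C\sqrt{\ell p}\left(1 + \sqrt{|k|/\ratio}\,\right),
\]
using $\sqrt{\ratio} \le C\sqrt{\ell p}$, which follows from $\ell^3\ge 16\rho_1^4$ (giving $2\rho_1\ratio\le \ell p$). Hence each of $|u_i+\ut_{i-1}|$ and $|\ut_i+u_{i-1}|$ is controlled by $C\sqrt{\ell p}$ times a polynomial in $\sqrt{|k_i|/\ratio}$, $\sqrt{|\hat k_{i-1}|/\ratio}$, etc. For the denominator I turn the difference into a ratio using
\[
\frac{1}{|u_i - u_{i-1}|} = \frac{|u_i + u_{i-1}|}{|u_i^2 - u_{i-1}^2|} \le \frac{|u_i+u_{i-1}|}{2\ratio|\rho_i-\rho_{i-1}|},
\]
where the last inequality is precisely \eqref{eq:Jan17_02}, and similarly for $|\ut_i-\ut_{i-1}|$. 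The $|u_i+u_{i-1}|$ factor that appears is then bounded above by the same estimate as before. Combining, each index $i\in\{2,\dots,m\}$ contributes a factor of size $C(\ell p)^2/\ratio^2$ multiplied by a polynomial in the rescaled variables $\sqrt{|k|/\ratio}$, $\sqrt{|\hat k|/\ratio}$.

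Multiplying the diagonal and cross estimates and absorbing the constant $1/(\rho_i-\rho_{i-1})^2$ into the overall constant gives
\[
|R_{\bfone}(\UU(\bfk), \UU(\hat\bfk))| \le \frac{C}{(\ell p)^2\,\ratio^{2m-2}}\, Q\bigl(\sqrt{|\bfk|/\ratio},\, \sqrt{|\hat\bfk|/\ratio}\,\bigr)
\]
for some polynomial $Q$. Since $\sqrt{x}\le 1+x$ for $x\ge 0$, the right-hand side is dominated by $\frac{1}{(\ell p)^2 \ratio^{2m-2}}|P(\bfk/\ratio, \hat\bfk/\ratio)|$ for a suitable polynomial $P$, which proves the first claim. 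For $\hat R_{\bfone}$, I use $\hat R_{\bfone} = (u_m+\ut_m) R_{\bfone}$ and bound $|u_m+\ut_m|\le C\sqrt{\ell p}(2+\sqrt{|k_m|/\ratio}+\sqrt{|\hat k_m|/\ratio})$; multiplying the previous bound by this and noting that $(\ell p)^{3/2}\cdot \sqrt{\ell p} = (\ell p)^2$ absorbs the extra $\sqrt{\ell p}$ gives the second inequality (after enlarging $P$).

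The only real technical point is making the cross-product estimate \emph{polynomial} rather than exponential: this forces us to avoid the Hadamard-inequality approach of Lemma~\ref{lem:longdetra1} and instead exploit the explicit $\bfn=\bfone$ product formula, using \eqref{eq:Jan17_02} to convert each difference $u_i - u_{i-1}$ into a ratio with a pure lower bound $2\ratio|\rho_i-\rho_{i-1}|$ in the denominator. Once that replacement is made, everything else is bookkeeping of powers of $\ell p$ and $\ratio$.
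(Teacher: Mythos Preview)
Your proposal is correct and follows essentially the same approach as the paper: both arguments work directly from the product formula \eqref{eq:RRmmq}, use the lower bounds \eqref{eq:ukiklb} and \eqref{eq:Rest3} on $|u_i|$ and $|u_i+\ut_i|$, the upper bound \eqref{eq:uikupe} on $|u_i|$, and the ratio trick $|u_i-u_{i-1}|^{-1}=|u_i+u_{i-1}|/|u_i^2-u_{i-1}^2|$ combined with the lower bound $|u_i^2-u_{i-1}^2|\ge 2\ratio|\rho_i-\rho_{i-1}|$ (this is exactly the paper's \eqref{eq:Rest4}). Your explicit remark that $\sqrt{x}\le 1+x$ converts the polynomial in $\sqrt{|k_i|/\ratio}$ into a polynomial in $|k_i|/\ratio$ makes transparent a step the paper leaves implicit.
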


\begin{proof} 
Recall the trivial bound $|\uu_i(k)|\ge \sqrt{\ell p}$ from  \eqref{eq:ukiklb} 
for all $i$ and $k$ and the bound from \eqref{eq:Rest3} that 
$| \uu_i(k)+\uu_{i'}(k')| \ge \sqrt{ \ell p}$ for all $i, i'$ and $k, k'$. 
The bound \eqref{eq:uikupe} implies that 
\beqq
	\big| \uu_i(k) \big| 
	 \le  
	 5 \sqrt{\ell p} \left( 1 + \sqrt{ \frac{|k|}{\ell p}  }  \right) 
     \le 5 \sqrt{\ell p} \left( 1 + \sqrt{ \frac{ |k|}{2\rho_1\ratio}  }  \right) 
\eeqq
if  $\ell^3\ge 16\rho_1^4$  and $\ell p \ge 1$. 
On the other hand, since $|\uu_i(k)^2- \uu_{i'}(k')^2| = | 2\ratio \rho_i  +\ii (- 2\theta_i + 2\pi k)  - 2\ratio \rho_{i'}  - \ii (- 2\theta_{i'} + 2\pi k')|\ge 2\ratio |\rho_i-\rho_{i'}|$, we have 
\beq \label{eq:Rest4}
	\left| \frac1{ \uu_i(k)- \uu_{i'}(k')} \right| 
	= \left| \frac{\uu_i(k)+ \uu_{i'}(k')}{\uu_i(k)^2- \uu_{i'}(k')^2} \right|
	\le \frac{|\uu_i(k)| + | \uu_{i'}(k')| } { 2\ratio  | \rho_i -\rho_{i'} |}
\eeq
for all $k, k'\in \N$, and $i\neq i'$. 
Inserting these estimates into \eqref{eq:RRmmq}, we obtain the desired inequalities. 
\end{proof} 

The proof shows that we also have the bound given by $P \big( \frac{\bfk}{\ell p} , \frac{\bfk}{\ell p} \big)$. 
For a later convenience, we replaced it by a less precise bound $P \big( \frac{\bfk}{\ratio}, \frac{\hat{\bfk}}{\ratio} \big)$. 

\bigskip

We also need pointwise limits of $\hat R_{\bfn}$ when $\bfn=\bfone$. 

\begin{lemma}[Limit of $\hat R_{\bfn}$ for $\bfn=\bfone$] \label{resultRlimit}
For Case 1, for every $\bfy, \hat \bfy\in \realR^m$, 
\beq \label{eq:Rlmt1} \begin{split}
	(-1)^{m-1} 2(\ell p)^{3/2}\ratio^{2m-2} \hat R_{\bfone} \big(\UU ([\ratio \bfy]),  \UU( [\ratio \hat \bfy] ) \big)
	\to 
	\prod_{i=2}^m 
	 \frac{1}{ (\rho_i + 2\pi \ii y_i - \rho_{i-1} - 2\pi \ii y_{i-1})(\rho_i + 2\pi \ii \hat y_i - \rho_{i-1}-  2\pi \ii \hat y_{i-1})}   
\end{split} \eeq
uniformly for $\theta\in (-\pi, \pi]^m$. For Case 2, for every $\bfk, \hat \bfk\in \intZ^n$,  
\beq \label{eq:Rlmt2} \begin{split}
	(-1)^{m-1} 2(\ell p)^{3/2} \ratio^{2m-2} \hat R_{\bfone}(\UU(\bfk), \UU(\hat \bfk'))
	\to 
	 \prod_{i=2}^m 
	 \frac{1}{ (\xi_i(k_i)-\xi_{i-1}(k_{i-1}))(\xi_i(\hat k_i)-\xi_{i-1}(\hat k_{i-1}))}  
\end{split} \eeq
uniformly for $\theta\in (-\pi, \pi]^m$, where $\xi_i(k) =\rho_i +  \frac{1}{\ratio} ( 2\pi \ii k - \rmi \theta_i)$ as in \eqref{eq:xindefn}. 
For Case 3, if 
\beqq
	\theta_i= \ratio \varphi_i, \qquad i=1, \cdots, m, 
\eeqq
then 
\beq \label{eq:Rlmt3} \begin{split}
	(-1)^{m-1} 2(\ell p)^{3/2} \ratio^{2m-2} \hat R_{\bfone}(\UU(\bfzero), \UU(\bfzero))
	\to 
	\prod_{i=2}^m 
	 \frac{1}{ (\rho_i-\ii\varphi_i - \rho_{i-1}+\ii\varphi_{i-1})^2}  
\end{split} \eeq
uniformly for $\varphi$ is a compact subset of $\realR^m$. 
\end{lemma}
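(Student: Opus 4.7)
The starting point is the closed-form expression
\[
R_{\bfone}(U,\hat U) \;=\; (-1)^m \prod_{i=1}^m \frac{1}{(u_i+\ut_i)^2\, u_i\,\ut_i}\prod_{i=2}^m \frac{(u_i+\ut_{i-1})(\ut_i+u_{i-1})}{(u_i-u_{i-1})(\ut_i-\ut_{i-1})}
\]
recorded in \eqref{eq:RRmmq}, together with $\hat R_{\bfone}=(u_m+\ut_m)R_{\bfone}$. The plan is to expand each factor to leading order using a uniform Taylor expansion of $\uu_i(k)$, substitute, and collect powers of $\ell p$ and $\ratio$.

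From the definition \eqref{eq:uellndfn}, one has $\uu_i(k)^2=-(\ell p+2\ratio\rho_i-2\ii\theta_i+4\pi\ii k)$. Since $\ratio/(\ell p)=\ell^{-3/4}\to 0$ in all three cases, for arguments that remain of order $\ratio$ (Case~1: $k=[\ratio y]$), of order $1$ (Case~2: $k$ fixed), or zero (Case~3), the square-root expansion gives
\[
\uu_i(k) \;=\; -\sqrt{\ell p}\;-\;\frac{\ratio\rho_i + 2\pi\ii k - \ii\theta_i}{\sqrt{\ell p}}\;+\;O\!\bigl((\ell p)^{-3/2}\bigr),
\]
uniformly over $\theta_i\in(-\pi,\pi]$ and over the appropriate compact set of $\bfy$ or $\varphi$. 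This is the only analytic input; everything else is algebra. Substituting yields $u_i+\ut_i,\,u_i+\ut_{i-1},\,\ut_i+u_{i-1}\sim -2\sqrt{\ell p}$ and $u_i\ut_i\sim \ell p$, while in the differences $u_i-u_{i-1}$ and $\ut_i-\ut_{i-1}$ the $-\sqrt{\ell p}$ terms cancel, leaving
\[
u_i-u_{i-1}\;\sim\;-\frac{\ratio}{\sqrt{\ell p}}\,\Lambda_i,\qquad \ut_i-\ut_{i-1}\;\sim\;-\frac{\ratio}{\sqrt{\ell p}}\,\hat\Lambda_i,
\]
where $\Lambda_i=(\rho_i-\rho_{i-1})+2\pi\ii(y_i-y_{i-1})$ in Case~1, $\Lambda_i=\xi_i(k_i)-\xi_{i-1}(k_{i-1})$ in Case~2, and $\Lambda_i=(\rho_i-\rho_{i-1})-\ii(\varphi_i-\varphi_{i-1})$ in Case~3 (with $\hat\Lambda_i$ the same expression in the hatted variables; in Case~3, $\hat\Lambda_i=\Lambda_i$).

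Assembling the three groups of factors, the product formula gives
\[
R_{\bfone}\;\sim\;\frac{(-1)^m}{(4\ell p\cdot \ell p)^m}\cdot\prod_{i=2}^m\frac{(-2\sqrt{\ell p})^2}{(\ratio/\sqrt{\ell p})^2\,\Lambda_i\hat\Lambda_i}\;=\;\frac{(-1)^m}{4\,(\ell p)^2\,\ratio^{2m-2}}\prod_{i=2}^m\frac{1}{\Lambda_i\hat\Lambda_i},
\]
and multiplying by $u_m+\ut_m\sim-2\sqrt{\ell p}$ produces
\[
\hat R_{\bfone}\;\sim\;\frac{(-1)^{m+1}}{2\,(\ell p)^{3/2}\,\ratio^{2m-2}}\prod_{i=2}^m\frac{1}{\Lambda_i\hat\Lambda_i},
\]
which, using $(-1)^{m-1}=(-1)^{m+1}$, is exactly the three identities \eqref{eq:Rlmt1}--\eqref{eq:Rlmt3}. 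The uniformity statements follow because the Taylor remainder in the displayed expansion of $\uu_i(k)$ is $O(\ell^{-3/4})$ uniformly on the relevant parameter set, and each algebraic factor is bounded away from zero on that set (for the numerator factors, by $\sqrt{\ell p}$; for the denominator differences, by $\Lambda_i,\hat\Lambda_i$ being bounded away from zero under the standing assumption $\rho_1>\cdots>\rho_m$). The only mild point to check is that in Case~2 the cancellation in $u_i-u_{i-1}$ requires the relative error estimate to survive division by the small quantity $\ratio/\sqrt{\ell p}$; this is automatic since the next-order remainder is $O((\ell p)^{-3/2})\ll \ratio/\sqrt{\ell p}$. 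There is no genuine obstacle in the argument; the work is purely bookkeeping of leading orders.
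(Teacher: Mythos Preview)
Your approach is correct and essentially the same as the paper's: both expand $\uu_i(k)=-\sqrt{\ell p}\bigl(1+O(\ell^{-3/4})\bigr)$ and feed this into the product formula \eqref{eq:RRmmq}. The only methodological difference is that for the small differences $u_i-u_{i-1}$ the paper uses the exact identity $\dfrac{1}{u_i-u_{i-1}}=\dfrac{u_i+u_{i-1}}{u_i^2-u_{i-1}^2}$ with $u_i^2-u_{i-1}^2=2\ratio\bigl(\xi_i(k_i)-\xi_{i-1}(k_{i-1})\bigr)$ computed exactly, whereas you obtain the same leading term by carrying the Taylor expansion one order further; the paper's route sidesteps the need to verify that the remainder survives the cancellation, but your verification is fine. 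Two small slips to fix: $\uu_i(k)^2$ equals $+\bigl(\ell p+2\ratio\rho_i-2\ii\theta_i+4\pi\ii k\bigr)$, not its negative; and in Case~1 the Taylor remainder is $O\bigl(\ratio^2(\ell p)^{-3/2}\bigr)$ rather than $O\bigl((\ell p)^{-3/2}\bigr)$, though this is still $o\bigl(\ratio/\sqrt{\ell p}\bigr)$ as you need.
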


\begin{proof}
From the definition \eqref{eq:uellndfn} of $u_i(k)$, 
\beqq 
	u_i(k) 
	=-\sqrt{\ell p +2\ratio \rho_i- 2 \ii\theta_i  +4\pi\ii k}
	=-\sqrt{\ell p +2\ratio \xi_i(k)}
	=-\sqrt{\ell p} \left( 1+\frac{2 \ratio}{\ell p}  \xi_i(k) \right)^{1/2}
\eeqq
using $\xi(k) =\rho_i +  \frac{1}{\ratio} ( 2\pi \ii k - \rmi \theta_i )$. 
Hence, for every $k$, 
\beqq 
	u_i (k)
	=-\sqrt{\ell p} \left( 1+ O\left( \frac{1}{\ell^{3/4}} \right)  \right) 
\eeqq
uniformly in $\theta_i\in (-\pi, \pi]$. Also for every $i\neq i'$ and $k, k'$, 
\beqq 
	\frac1{u_i (k) - u_{i'} (k')  } 
	= 	\frac{u_i (k) + u_{i'}(k') }{u_i (k)^2 - u_{i'} (k')^2 } 
	= \frac{-2\sqrt{\ell p} \left( 1+ O\left( \frac{1}{\ell^{3/4}} \right)  \right) }{2\ratio (\xi_i(k) - \xi_{i'}(k')) } 
\eeqq
uniformly in $\theta_i, \theta_{i'}\in (-\pi, \pi]$. 
Inserting them into $\hat R_{\bfone}(\UU(\bfk), \UU(\hat \bfk'))$ using the formula \eqref{eq:RRmmq}, 
we find that for every $\bfk, \bfk'\in \intZ$, 
\beq \label{eq:Rlmttm} \begin{split}
	(-1)^{m-1}  2(\ell p)^{3/2}  \ratio^{2m-2} \hat R_{\bfone}(\UU(\bfk), \UU(\hat \bfk'))
	= 
	\left[ \prod_{i=2}^m 
	 \frac{1}{ (\xi_i(k_i)-\xi_{i-1}(k_{i-1}))(\xi_i(k_i')-\xi_{i-1}(k_{i-1}'))} \right] 
	 \left( 1+ O\left( \frac{1}{\ell^{3/4}} \right)  \right)
\end{split} \eeq
uniformly for $\theta\in (-\pi, \pi]^m$. 
The result \eqref{eq:Rlmttm} implies \eqref{eq:Rlmt2} for Case 2. 
For Case 1, we have $\ratio \to \infty$, and thus, 
\beqq
	\xi_i([\ratio y]) = \rho_i +  \frac{- \rmi \theta_i  + 2\pi \rmi [\ratio y]}{\ratio} 
	\to \rho_i +2\pi \ii y
\eeqq
for every $y\in\realR$. Hence, \eqref{eq:Rlmttm} implies \eqref{eq:Rlmt1}.  
If $\theta_i=\ratio \varphi_i$, then 
\beqq
	\xi_i(0) =\rho_i +  \frac{- \rmi \theta_i}{\ratio}  = \rho_i -\ii \varphi_i. 
\eeqq
Thus, \eqref{eq:Rlmt3} follows from \eqref{eq:Rlmttm} after inserting $\bfk=\hat \bfk=\bfzero$. 
\end{proof}

\subsection{Proof of Proposition~\ref{lm:main_contribution_largeL} }  \label{sec:prooflimit}

We analyze 
\beq \label{eq:ChatD} \begin{split}
	\hat \PP_{m,1} &= (-1)^{m-1} \int_{(-\pi, \pi]^m}  \bC(\bfz) \hat D^{\bullet}_{\bfone}(\bfz) \bT_{\bfone}(\bfz) 
	\prod_{i=1}^m \frac{\rmd \theta_i}{2\pi}
\end{split}\eeq
where  $z_i= e^{- \frac{\ell p}{2}-  \ratio \rho_i + \rmi \theta_i}$, $\theta_i\in (-\pi, \pi]$, as given in recall \eqref{eq:zellrho}. 
From \eqref{eq:bfTdefff} when $\bfn=\bfone$, 
\beq \label{eq:tZonelim}
	\bT_{\bfone}(\bfz) =  \prod_{i=2}^m \left(1-\frac{z_{i-1}}{z_i} \right)  
	= \prod_{i=2}^m \left(1-\frac{e^{- \ratio  \rho_{i-1} + \rmi \theta_{i-1} }}{e^{- \ratio  \rho_{i} + \rmi \theta_{i}} } \right) .
\eeq 
Since $\rho_1>\cdots>\rho_m>0$, we find 
\beq \label{eq:theprodababd}
	| \bT_{\bfone}(\bfz)| \le 2^m .
\eeq

Recall from \eqref{eq:DhatDinkj} that
\beq \label{eq:hDdeffff} \begin{split}
	\hbD_{\bfone}(\bfz) = \sum_{\bfk, \hat\bfk \in \intZ^{m}} \hbs_{\bfone}(\bfk, \hat \bfk)
\end{split} \eeq
where 
$\hbs_{\bfone}(\bfk, \bfk')= H_{\bfone}(\UU(\bfk), \UU(\hat \bfk)) \hat R_{\bfone}(\UU(\bfk), \UU(\hat \bfk)) E_{\bfone}(\UU(\bfk), \UU(\hat \bfk))$. 
Lemma \ref{cor:qHbound}, Corollary \ref{resultEunifbd}, and Lemma \ref{lem:Rest} imply that 
there are constants $c_0\ge 1, c_*>0$ and a polynomial $P$ of $2m$ variables such that  
\beq \label{eq:HREbd}
\begin{split}
	& (\ell p)^{3/2}  \ratio^{2m-2} e^{ \frac{4}{3}\ell^{3/2}}
	\left| \hbs_{\bfone} (\bfk, \hat \bfk)  \right|  
	\le 
	| P \big( \frac{\bfk}{\ratio}, \frac{\hat{\bfk}}{\ratio} \big) |
	\prod_{i=1}^m e^{  - \cstar \sqrt{\frac{| k_i |}{\ratio } }
	- \cstar \sqrt{\frac{ | \hat k_i |}{\ratio }}} 
\end{split} \eeq
for all $\bfk, \hat \bfk\in \intZ^m$, $\theta\in (-\pi, \pi]^m$, and $\ell, p>0$ satisfying $\ell \ge c_0$ and $\ell p\ge 2$.

\subsubsection{Case 2}

For Case $2$, $\ratio=  \ell^{1/4} p $ is a constant. 
Thus, the right-hand side of \eqref{eq:HREbd} gives a uniform upper bound, independent of  $\ell$ and $p$, that is summable.  
Therefore, by the dominated convergence theorem, Lemma \ref{cor:qHbound} and equations \eqref{eq:Elimit2} and \eqref{eq:Rlmt2} imply that 
\beqq \begin{split}
	(-1)^{m-1} \frac{4}{\ratio^2} (\ell p)^{3/2} e^{ \frac{4}{3} \ell^{3/2}} \hat D^{\bullet}_1(\bfz) 
	&\to 
	\frac2{\ratio^{2m}} \sum_{\bfk, \hat \bfk\in \intZ^m}
	\prod_{i=2}^m  \frac{1}{ (\xi_i(k_i)-\xi_{i-1}(k_{i-1}))(\xi_i(\hat k_i)-\xi_{i-1}(\hat k_{i-1}))}    
	 \\
	&\qquad  \times  \prod_{i=1}^m e^{ \dDelta t_i  \xi_i(k_i))^2 - (\dDelta h_i - \dDelta x_i)\xi_i(k_i)}
\prod_{i=1}^me^{ \dDelta t_i  \xi_i(\hat k_i)^2 - (\dDelta h_i + \dDelta x_i)\xi_i(\hat k_i)}
\end{split}\eeqq
uniformly for $\theta\in (-\pi, \pi]^m$. 
Furthermore, the left-hand side is uniformly bounded in $\ell, p, \theta$. 
The limit factorizes to the product of two series, and we find from Definition \ref{defnS} that it is equal to
\beq \label{eq:Dlimmm2} \begin{split}
	S_{\ratio}(\bft,\bfh-\bfx; \bfw) S_{\ratio}(\bft,\bfh+\bfx; \bfw)  \quad \text{where} \quad w_i= e^{-\ratio \rho_i + \ii \theta_i} . 
\end{split}\eeq 

Consider the limit of \eqref{eq:ChatD}.
Lemma \ref{cor:wCbound} shows that $\bC(\bfz)\to 1$ uniformly in $\theta$. Thus, the above limit for  $\hat D^{\bullet}_1(\bfz)$  implies that 
\beqq 
\begin{split}
	\frac{4}{\ratio^2} (\ell p)^{3/2} e^{ \frac{4}{3} \ell^{3/2}}  
	\hat \PP_{m,1}
	\to \int_{(-\pi, \pi]^m}  
	S_{\ratio}(\bft,\bfh-\bfx; \bfw) S_{\ratio}(\bft,\bfh+\bfx; \bfw) 
	\prod_{i=2}^m \left(1-\frac{e^{- (r\rho_{i-1}-\rmi \theta_{i-1})}}{e^{- (r\rho_{i}-\rmi \theta_{i})} } \right) 
	\prod_{i=1}^m  \frac{\dd \theta_i}{2\pi}
\end{split}\eeqq 
where $w_i= e^{-\ratio \rho_i + \ii \theta_i}$. 
Changing the variables $\theta_i$ to $w_i$, this proves Proposition \ref{lm:main_contribution_largeL} for Case 2.

\subsubsection{Case 1}

For Case 1, we write the series \eqref{eq:hDdeffff} as an integral of a piecewise constant function, 
\beq \label{eq:haDasHRE2} \begin{split}
	\hat D^{\bullet}_{\bfone}(\bfz)
	&= \int_{\realR^m} \int_{\realR^m}
	\hbs([\bfy] , [\hat \bfy]) \dd \bfy \dd \hat \bfy 
	=  \ratio^{2m} \int_{\realR^m} \int_{\realR^m}
	\hbs([ \ratio \bfy ], [ \ratio \hat \bfy ] ) \dd \bfy \dd \hat \bfy .
\end{split} \eeq
Consider the bound \eqref{eq:HREbd} and insert $[ \ratio y_i]$ for  $k_i$ and
$[ \ratio y_i']$ for  $k_i'$. 
Since $\ratio \to \infty$ for Case 1, we may assume that $\ratio \ge 1$. Then 
\beqq
	\frac{|y|}{2}\le  \frac{ | [\ratio y] | }{\ratio}  \le 2|y| \qquad \text{for $|y|\ge 2$.} 
\eeqq
Thus, the estimate \eqref{eq:HREbd} implies an $\ratio$-independent upper bound,  
\beqq \label{eq:case1HREbd} \begin{split}
	& (\ell p)^{3/2}  \ratio^{2m-2} e^{ \frac{4}{3} \ell^{3/2}}
	\left| \hbs ([ \ratio \bfy ], [ \ratio \hat \bfy ] )  \right|  
	\le | \widetilde P(\bfy, \bfy') |
	\prod_{i=1}^m e^{  -  c_* \sqrt{ \frac{| y_i |}{2} } - c_* \sqrt{\frac{| y_i'|}{2} } }
\end{split} \eeqq
where $\widetilde{P}(\bfy, \bfy')$ is a polynomial of $\bfy, \bfy'\in \realR^m$ that does not depend on $\ratio$ and $\theta$. 
Therefore, the dominated convergence theorem, Lemma \ref{cor:qHbound}, and equations \eqref{eq:Rlmt1} and \eqref{eq:Elimit1} imply 
\beqq  \begin{split}
	&(-1)^{m-1} \frac{4}{\ratio^2} (\ell p)^{3/2} e^{ \frac{4}{3} \ell^{3/2}} 
	\hat D^{\bullet}_{\bfone}(\bfz) \\
	&\to \int_{\realR^m} \int_{\realR^m}
	\prod_{i=2}^m 
	 \frac{1}{ (\rho_i + 2\pi \ii y_i - \rho_{i-1} - 2\pi  \ii y_{i-1})(\rho_i + 2\pi  \ii y_i' - \rho_{i-1}-  2\pi \ii y_{i-1}')} \\
	&\qquad \qquad \times  \prod_{i=1}^m e^{ \dDelta t_i (\rho_i + 2\pi \ii y_i)^2 - (\dDelta h_i - \dDelta x_i)(\rho_i +2\pi  \ii y_i )}
	\prod_{i=1}^m e^{ \dDelta t_i (\rho_i + 2\pi \ii y_i')^2 - (\dDelta h_i + \dDelta x_i)(\rho_i +2\pi  \ii y_i')}\dd \bfy \dd \bfy' .
\end{split}\eeqq
Note that the $\bfy$-integrals and the $\bfy'$-integrals factorize. 
Changing the variables $\rho_i+ 2\pi  \ii y_i= \xi_i$, the $\bfy$-integral is equal to $(-1)^{m-1}S_\infty(\bft,\bfh-\bfx)/\sqrt{2}$ 
of Definition \ref{defnS}. Similarly, the $\bfy'$-integral is equal to $(-1)^{m-1}S_\infty(\bft,\bfh+\bfx)/\sqrt{2}$. 
Note that the order of the contours comes from condition $\rho_1>\cdots>\rho_m$. Thus, 
\beq \label{eq:D1c1001} 
\begin{split}
	&(-1)^{m-1} \frac{4}{\ratio^2} (\ell p)^{3/2} e^{ \frac{4}{3} \ell^{3/2}}  \hat D^{\bullet}(\theta) 
	\to S_\infty(\bft,\bfh-\bfx) S_\infty(\bft,\bfh+\bfx) 
\end{split}\eeq 
uniformly for $\theta\in (-\pi, \pi]^m$, and the limit does not depend on $\theta$.

From Lemma \ref{cor:wCbound}, $\bC(\bfz)\to 1$ uniformly in $\theta$. 
On the other hand, since $\ratio\to\infty$ for Case 1, 
\beqq
	\bT_{\bfone}(\bfz)
	= \prod_{i=2}^m \left(1-\frac{z_{i-1}}{z_i} \right)  
	= \prod_{i=2}^m \left(1-\frac{e^{- \ratio  \rho_{i-1} + \rmi \theta_{i-1} }}{e^{- \ratio  \rho_{i} + \rmi \theta_{i}} } \right) \to 1
\eeqq
uniformly in $\theta$ as well. 
Thus, 
\beqq  
\begin{split}
	\frac{4}{\ratio^2} (\ell p)^{3/2} e^{ \frac{4}{3} \ell^{3/2}}  \hat \PP_{m,1}\to  
	S_\infty(\bft,\bfh-\bfx) S_\infty(\bft,\bfh+\bfx) .
\end{split}\eeqq 
This proves Proposition \ref{lm:main_contribution_largeL} for Case 1.

\subsubsection{Case 3}

For Case 3, $\ratio \to 0$. 
We change the variables $\theta_i= \ratio \varphi_i$ so that \eqref{eq:ChatD} becomes 
\beq  \begin{split}
	\hat \PP_{m,1} =  (-1)^{m-1} \frac{\ratio^m}{(2\pi)^m} \int_{\realR^m}   \bC(\bfz) \hat D^{\bullet}_{\bfone}(\bfz) 
	\bT_{\bfone}(\bfz) 
	\prod_{i=1}^m  1_{(-\frac{\pi}{\ratio}, \frac{\pi}{\ratio}]} (\varphi_i)\dd \varphi_i, 
	\qquad
	z_i= e^{- \frac{\ell p}{2}-  \ratio \rho_i +  \ratio \rmi \varphi_i}.
\end{split}
\eeq 
Write 
\beq \label{eq:IIII3}
	 \frac{2}{\ratio}  (\ell p)^{3/2} e^{ \frac{4}{3} \ell^{3/2} } \hat \PP_{m,1} 
	= \sum_{\bfk, \hat\bfk \in \intZ^{m}} \int_{\realR^m} Q_{\ratio}(\varphi; \bfk, \hat\bfk) \prod_{i=1}^m \frac{\dd \varphi_i}{2\pi}
\eeq 
where
\beq \label{eq:I3Qr}
	Q_{\ratio}(\varphi; \bfk, \hat \bfk) 
	=
	2\ratio^{m-1} (-1)^{m-1} (\ell p)^{3/2} e^{ \frac{4}{3} \ell^{3/2}}
	\bC(\bfz)  \hbs_{\bfone}(\bfk, \hat \bfk)
	\bT_{\bfone}(\bfz) 
	\prod_{i=1}^m  1_{(-\frac{\pi}{\ratio}, \frac{\pi}{\ratio}]} (\varphi_i). 
\eeq

By Lemma \ref{cor:wCbound}, $\bC(\bfz)\to 1$ uniformly as $\ell\to\infty$ and $\ell p\gg \log\ell$. Thus, we may assume that $|\bC(\bfz)|\le 2$.  
For the term $\bT_{\bfone}(\bfz)$, the estimate \eqref{eq:theprodababd} is not enough for Case 3. We need a better estimate. 
For every $\varphi\in \realR^m$, 
\beq 
\label{eq:bT_limit_0}
\begin{split}
	\frac{\bT_{\bfone}(\bfz)}{\ratio^{m-1}}    
	= \frac1{\ratio^{m-1}} 
	\prod_{i=2}^m \left(1-\frac{e^{- \ratio \rho_{i-1} + \rmi \ratio \varphi_{i-1}}}{e^{- \ratio \rho_{i} + \rmi \ratio \varphi_{i}} } \right) 
	\to 
	 (-1)^{m-1} \prod_{i=2}^m  (\rho_i-\rmi\varphi_i -\rho_{i-1}+\rmi \varphi_{i-1}) . 
\end{split}\eeq
Since $|1-e^{w}|\le |w|$ for complex numbers $w$ satisfying $\Re(w)\le 0$, we also see that 
\beq \label{eq:Tbqqq} \begin{split}
	\frac{ |\bT_{\bfone}(\bfz) | }{\ratio^{m-1}} 
	\le
	\prod_{i=2}^m | \rho_{i-1}-\rho_i - \ii (\varphi_{i-1}-\varphi_i)| 
	\qquad \text{for all $\varphi \in \realR^m$.}
\end{split}\eeq 
Thus, 
\beq \label{eq:Tbig}
	\frac{|\bT_{\bfone}(\bfz) |}{\ratio^{m-1}} 
	\prod_{i=1}^m  1_{(-\frac{\pi}{\ratio}, \frac{\pi}{\ratio}]} (\varphi_i)
	\le \big( \rho_1-\rho_m + \frac{2\pi}{\ratio} \big)^{m-1}
\eeq
since $\rho_1>\cdots>\rho_m$. 
Using the estimate \eqref{eq:HREbd} for $\hbs_{\bfone}(\bfk, \hat \bfk)$, we find that 
\beq \label{eq:I3Qr0}
	|Q_{\ratio}(\varphi; \bfk, \hat \bfk) |
	\le 
	4\big( \rho_1-\rho_m + \frac{2\pi}{\ratio} \big)^{m-1}
	| P \big( \frac{\bfk}{\ratio}, \frac{\hat{\bfk}}{\ratio} \big) |
	\prod_{i=1}^m e^{  - \cstar \sqrt{\frac{| k_i |}{\ratio } } - \cstar \sqrt{\frac{ | \hat k_i |}{\ratio }}} 
	\prod_{i=1}^m  1_{(-\frac{\pi}{\ratio}, \frac{\pi}{\ratio}]} (\varphi_i)
\eeq
for all $\varphi\in \realR^m$. 

For the sum over $(\bfk, \hat\bfk) \neq (\bfzero, \bfzero)$ in \eqref{eq:IIII3}, we find, after integrating over $\varphi_i$s, that 
\beqq \begin{split}
	&\sum_{(\bfk, \hat\bfk) \in \intZ^{2m} \setminus \{(\bfzero, \bfzero)\}} \int_{\realR^m} | Q_{\ratio}(\varphi; \bfk, \hat\bfk)| \prod_{i=1}^m \frac{\dd \varphi_i}{2\pi} \\
	&\le 4 \big( \rho_1-\rho_m + \frac{2\pi}{\ratio} \big)^{m-1}
	\left( \frac{2\pi}{\ratio} \right)^m 
	\sum_{\bfk, \hat\bfk \in \intZ^{m}\setminus \{(\bfzero, \bfzero)\}} | P \big( \frac{\bfk}{\ratio}, \frac{\hat{\bfk}}{\ratio} \big) |
	\prod_{i=1}^m e^{  - \cstar \sqrt{\frac{| k_i |}{\ratio } } - \cstar \sqrt{\frac{ | \hat k_i |}{\ratio }}} . 
\end{split} \eeqq
Recall that $\ratio \to 0$ for Case 3. 
Lemma \ref{resultsumex} (a) implies that 
for any non-negative integer $\ell$, there is a constant $C'_\ell>0$ such that 
\beqq\begin{split}
	\sum_{k\in \intZ\setminus\{0\}} \big( \frac{|k|}{\ratio} \big)^{\ell} e^{-\cstar \sqrt{\frac{|k|}{\ratio}}} 
	\le 2\ratio \int_{\frac1{\ratio}}^{\infty} y^{\ell} e^{-\cstar  \sqrt{\frac{y}{2}}} \dd y
	\le \frac{C_\ell'}{\ratio^{\ell-1/2}}  e^{-  \frac{\cstar}{ \sqrt{2\ratio }}}     
\end{split} \eeqq
and 
\beqq \begin{split}
	\sum_{k\in \intZ} \big( \frac{|k|}{\ratio} \big)^{\ell} e^{- \cstar \sqrt{\frac{|k|}{\ratio}}} 
	\le 1+ 
    \frac{C_\ell'}{\ratio^{\ell-1/2}}  e^{-  \frac{\cstar}{ \sqrt{2\ratio }}} 
    \le 2
\end{split} \eeqq
for all small enough $\ratio>0$.
Therefore, there are a positive constant $C$ and a non-negative integer $n$ so that 
\beqq \begin{split}
	&\sum_{ (\bfk, \hat\bfk) \in \intZ^{2m}\setminus \{(\bfzero, \bfzero)\}} \int_{\realR^m} | Q_{\ratio}(\varphi; \bfk, \hat\bfk)| \prod_{i=1}^m \frac{\dd \varphi_i}{2\pi} 
	\le 
	 \frac{C}{\ratio^n}  e^{- \frac{\cstar}{ \sqrt{2\ratio}}} .
\end{split} \eeqq
Thus, the series tends to $0$ as $\ratio \to 0$. 

We now consider the term for $\bfk=\hat \bfk=\bfzero$ in \eqref{eq:IIII3},  $\int_{\realR^m} Q_{\ratio}(\varphi; \bfzero, \bfzero) \prod_{i=1}^m \frac{\dd \varphi_i}{2\pi}$. 
In the derivation of \eqref{eq:HREbd}, we used \eqref{eq:Ebdfssimpler}. 
We now use the bound \eqref{eq:Ebdfs} instead to find   
\beq  \label{eq:REFHema} \begin{split}
	&\ratio^{2m-2} 
	 (\ell p)^{3/2} e^{ \frac{4}{3} \ell^{3/2} }
	\left| \hbs_{\bfone}(\bfk, \hat \bfk)  \right|  
	\le 
	| P \big( \frac{\bfk}{\ratio}, \frac{\hat{\bfk}}{\ratio} \big) |
	\prod_{i=1}^m e^{  - 2\cstar \sqrt{\frac{1}{\ratio } | k_i -\frac{\ratio \varphi_i}{2\pi} |}
	- 2\cstar  \sqrt{\frac{1}{\ratio } | \hat k_i -\frac{\ratio \varphi_i}{2\pi} |}} .
\end{split} \eeq
Thus, when $\bfk=\hat \bfk=\bfzero$, there is a constant $C>0$ such that 
\beq	
	\ratio^{2m-2}  (\ell p)^{3/2} e^{ \frac{4}{3} \ell^{3/2} }
	\left| \hbs_{\bfone}(\bfzero, \bfzero)  \right|  
	\le C \prod_{i=1}^m e^{  - \frac{4\cstar}{\sqrt{2\pi}} \sqrt{|\varphi_i|} } .
\eeq	
Using this estimate in \eqref{eq:I3Qr}, and also using \eqref{eq:Tbqqq} and the fact that $|\bC(\bfz)|\le 2$, 
\beqq
	|Q_{\ratio}(\varphi; \bfzero, \bfzero) |\le  4C \prod_{i=2}^m  | \rho_{i-1}-\rho_i - \ii (\varphi_{i-1}-\varphi_i)| 
	\prod_{i=1}^m e^{  - \frac{4\cstar}{\sqrt{2\pi}} \sqrt{|\varphi_i|} } .
\eeqq
Since the upper bound is absolutely integrable and does not depend on $\ell , p$, we can apply the dominated converge theorem to evaluate the integral of $Q_{\ratio}(\varphi; \bfzero, \bfzero)$.  
Recall $\hbs_{\bfone}(\bfzero, \bfzero)= H_{\bfone}(\UU(\bfzero), \UU(\bfzero)) \hat R_{\bfone}(\UU(\bfzero), \UU(\bfzero)) E_{\bfone}(\UU(\bfzero), \UU(\bfzero))$ in~\eqref{eq:sdefhre}. 
Lemma~\ref{cor:qHbound} implies that $H_{\bfone}(\UU(\bfzero), \UU(\bfzero))\to 1$. 
Thus, ~\eqref{eq:Elimit3},~\eqref{eq:Rlmt3}, and~\eqref{eq:bT_limit_0} imply, also using $\bC(\bfz)\to 1$, that 
\beqq  \begin{split}
	\int_{\realR^m} Q_{\ratio}(\varphi; \bfzero, \bfzero) \prod_{i=1}^m \frac{\dd \varphi_i}{2\pi} 
	\to
	& \frac{(-1)^{m-1}}{(2\pi)^m} \int_{\realR^m}   
	\prod_{i=2}^m  \frac{1}{ \rho_i-\ii\varphi_i - \rho_{i-1}+\ii\varphi_{i-1}}  
	\prod_{i=1}^m e^{2\dDelta t_i ( \rho_i - \ii \varphi_i)^2 - 2 \dDelta h_i( \rho_i - \ii \varphi_i) )}
	\prod_{i=1}^m   \dd \varphi_i . 
\end{split}\eeqq 
The limit is $S_\infty(2\bft, 2\bfh)/\sqrt{2}$ in Definition \ref{defnS}.

Combining all together, we conclude that
$\frac{2\sqrt{2}}{\ratio}  (\ell p)^{3/2} e^{ \frac{4}{3} \ell^{3/2} } \hat \PP_{m,1} \to S_\infty(2\bft, 2\bfh)$. 
Thus, we proved Proposition \ref{lm:main_contribution_largeL} for Case 3.

\subsection{Proof of Proposition \ref{resultD1error}}

The formula of $\PP_{m,1}$ and $\hat \PP_{m,1}$ are similar: 
\beqq \begin{split}
	\PP_{m,1} = \frac{(-1)^{m-1}}{(2\pi \ii)^m}  \oint \cdots \oint  A_1(z_m) \bC(\bfz) \bD_{\bfone}(\bfz) \bT_{\bfone}(\bfz) \prod_{i=1}^m \frac{\rmd z_i}{z_i} 
\end{split}\eeqq
and 
\beqq
	\hat \PP_{m,1} = \frac{(-1)^{m-1}}{(2\pi \ii)^m} \oint \cdots \oint  \bC(\bfz) \hat D^{\bullet}_{\bfone}(\bfz) \bT_{\bfone}(\bfz) \prod_{i=1}^m \frac{\rmd z_i}{z_i}.
\eeqq
In the previous section on the analysis of $\hat \PP_{m,1}$, all upper bounds were obtained from absolute value estimates. 
In $\PP_{m,1}$, there is an additional decay factor due to (see  \eqref{eq:bound_A}) 
\beqq
	| A_1(z_m) |\le |z_m| \le e^{- \frac{\ell p}{2}} 
\eeqq
and the fact that $\ell p\to\infty$ for all three Cases. 
Furthermore, the term $D_{\bfone}(\bfz)$ involves $R^{\bullet}_{\bfone}(\UU(\bfk), \UU(\hat \bfk))$
while $\hat D_{\bfone}(\bfz)$ contains $\hat R^{\bullet}_{\bfone}(\UU(\bfk), \UU(\hat \bfk) )$. 
By Lemma \ref{lem:Rest}, we find that an estimate of $R^{\bullet}_{\bfone}(\UU(\bfk), \UU(\hat \bfk))$ is the $\frac1{(\ell p)^{1/2}}$ times the estimate of $\hat R^{\bullet}_{\bfone}(\UU(\bfk), \UU(\hat \bfk)) $. 
Thus, in  all estimates obtained in the last sections for $\hat D_{\bfone}(\bfz)$, we can multiply $\frac1{(\ell p)^{1/2}}$  to obtain an estimate for $D_{\bfone}(\bfz)$. 
Due to these two factors, since $|\hat \PP_{m,1}|$ is uniformly bounded in all three cases, we find that $|\PP_{m,1}|$ is of order $\frac{e^{- \frac{\ell p}{2}}}{(\ell p)^{1/2}}  $ is all three cases. 
This proves Proposition \ref{resultD1error}.

\subsection{Proof of Proposition \ref{lm:error_estimate_largeL} when $p\ll \ell^{5/4}$} \label{sec:prooferrors}

We prove Proposition \ref{lm:error_estimate_largeL} for Case 2 and 3 as well as Case 1 under the extra assumption that $p\ll \ell^{5/4}$ in this section
and prove remaining part of Case 1 in the next section. 
The assumption $p\ll \ell^{5/4}$ will be used only when we simplify~\eqref{eq:PPPane} at the very end of  the analysis. 

Recall \eqref{eq:DhatDinkj} and \eqref{eq:sdefhre}. 
Lemma \ref{cor:qHbound}, Corollary \ref{lem:uniform_bound_E}, and Corollary \ref{resultRRhatbd} imply a bound for $\bs_{\bfn}(\bfk, \hat\bfk)$ and  $\hbs_{\bfn}(\bfk, \hat\bfk)$. 
Let $\cstar>0$ be the constant from Lemma \ref{resultEunifbdgngn} that appears in Corollary \ref{lem:uniform_bound_E}.
When applying Corollary  \ref{resultRRhatbd}, we use the constant $\epsilon= \frac{\cstar}{2}$. 
Thus, we find that there are positive constants $c_0, c_2, \cstar, \delta$ and $C_0$ such that 
\beq \label{eq:Stt}
	e^{ \frac{4}{3} \ell^{3/2}}  |\bs_{\bfn}(\bfk, \hat\bfk)| 
	\le 5 |\bfn| e^{4 |\bfn| } 
	\left( \frac{C_0 (\ell p)^2}{\ratio^2} \right)^{|\bfn|}
	e^{  -\frac{4\delta}{3} \ell^{3/2} -   c_2 |\bfn| \ell^{3/2} } 
	\prod_{i=1}^m \prod_{j=1}^{n_i} 
	e^{  - \frac{\cstar}{2} \sqrt{\frac{1}{\ratio } | k^{(i)}_j |}
	-  \frac{\cstar}{2}  \sqrt{\frac{1}{\ratio } | \hat k^{(i)}_j |}} 
\eeq
for all $\bfn \in \N^m\setminus \{\bfone\}$, $\bfk, \hat \bfk\in \intZ^{\bfn}$, $\theta\in (-\pi, \pi]^m$, and $L,T>0$ satisfying $\ell \ge c_0$ and $\ell p\ge 2$. 
We also have a similar estimate for $\hbs_{\bfn}(\bfk, \hat \bfk)$ where we need to multiply $|\bfn|\ell p$ due to the difference between \eqref{eq:bound_hatR} and \eqref{eq:bound_R}. 

Consider the series \eqref{eq:DhatDinkj} which are sums over $\bfk, \hat \bfk\in \intZ^{\bfn}$.
Since $\bs_{\bfn}(\bfk, \hat\bfk) =\hbs_\bfn(\bfk, \hat\bfk) =0$ if two components of any one of $k_1, \cdots, k_m , \hat k_1, \cdots, \hat k_m$ are equal (due to the Cauchy determinants in $R_{\bfn}(\bfk, \hat \bfk)$), it is enough to take sums over indices of distinct components. 
Thus, noting $\sum_{i=1}^m n_i=|\bfn|$,  
\beqq \begin{split}
	e^{ \frac{4}{3} \ell^{3/2}} \bD_{\bfn}(\bfz) 
	\le 5|\bfn| e^{4 |\bfn| }  \left( \frac{C_0 (\ell p)^2}{\ratio^2} \right)^{|\bfn|}
	e^{  -\frac{4\delta}{3} \ell^{3/2}  -   c_2 |\bfn| \ell^{3/2}} 
	\left( \sum_{k=-\infty}^\infty e^{  - \frac{\cstar}{2} \sqrt{\frac{| k |}{\ratio } }} \right)^{2|\bfn|} . 
\end{split} \eeqq
The sum can be estimated using the $a=0$ case of \eqref{eq:estimate_sum_exp}, and we find that
\beq \label{eq:bDuhb} \begin{split}
	e^{ \frac{4}{3} \ell^{3/2}} \bD_{\bfn}(\bfz) 
	\le 5 |\bfn| 
	 \left( \frac{C_0 (\ell p)^4}{\ratio^2} \right)^{|\bfn|}
	e^{  -\frac{4\delta}{3} \ell^{3/2}  -   c_2 |\bfn| \ell^{3/2} } 
\end{split} \eeq
where the constant $C_0$ is modified from the last equation. 
We also have a similar estimate for $\hbD_{\bfn}(\bfz)$ where we need to multiply $|\bfn| \ell p$. 

From \eqref{eq:PPdefns}, 
\beqq \begin{split}
	|e^{ \frac{4}{3} \ell^{3/2}} \PP_{m,2} |& \le  \sum_{\bfn \in \N^m\setminus\{\bfone\} }  \frac1{(\bfn !)^2} \int_{(-\pi, \pi]^m} | A_1(z_m)\bC(\bfz) \bD_{\bfn}(\bfz)  \bT_{\bfn} (\bfz) |  
	\prod_{i=1}^m \frac{\rmd \theta_i }{2\pi}, \\
	| e^{ \frac{4}{3} \ell^{3/2}} \hat \PP_{m,2} | & \le \sum_{\bfn \in \N^m\setminus\{\bfone\} }  \frac1{(\bfn !)^2} 
	\int_{(-\pi, \pi]^m}|  \bC(\bfz) \hat D^{\bullet}_{\bfn}(\bfz)  \bT_{\bfn}(\bfz)  | \prod_{i=1}^m \frac{\rmd \theta_i}{2\pi},
\end{split}\eeqq
where $z_i= e^{-\frac{\ell p}{2} -\ratio \rho_i + \ii \theta_i}$. 
By \eqref{eq:bound_A}, $|A_1(z_m)|\le |z_m|\le 1$. 
By Lemma \ref{cor:wCbound}, $|\bC(\bfz)|\le 2$ for all three Cases eventually. 
Using the formula of $z_i$, since $\rho_1>\cdots>\rho_m$, we see that \eqref{eq:bfTdefff} satisfies 
\beq
\label{eq:bound_bT}
	| \bT_{\bfn}(\bfz) |
	=  
	\left| \prod_{i=2}^m \left(1-\frac{z_{i-1}}{z_i} \right)^{n_i}
	\left(1-\frac{z_{i}}{z_{i-1}} \right)^{n_{i-1}-1} \right|
	\le \prod_{i=2}^m 2^{n_i} (1+e^{\ratio (\rho_{i-1}-\rho_i)})^{n_{i-1} }
	\le 2^{2|\bfn|} e^{c' \ratio |\bfn|}
\eeq
where $c'=\max\{ \rho_{i-1}-\rho_i : 2\le i\le m\}>0$. Note that this estimate contains an exponential function and is very loose but it is sufficient when we assume that $p\ll \ell^{5/4}$. 

Thus, with a new positive constant $C_0$, 
\beq \label{eq:PPPane} \begin{split}
	|e^{ \frac{4}{3} \ell^{3/2}} \PP_{m,2} |& \le 
	e^{  -\frac{4\delta}{3} \ell^{3/2}} \sum_{\bfn \in \N^m\setminus\{\bfone\} }  \frac{|\bfn|}{(\bfn !)^2}  
	 \left( \frac{C_0 (\ell p)^4}{\ratio^2} \right)^{|\bfn|} e^{c' \ratio |\bfn|}
	e^{ -   c_2  |\bfn| \ell^{3/2}} .
\end{split}\eeq

Since we assume that $p\ll \ell^{5/4}$, we have $\ratio=p\ell^{1/4} \ll \ell^{3/2}$ and $(\ell p)^4/\ratio^2= p^2\ell^{7/2}\ll \ell^6$. 
Recall that $\ell\to\infty$ for all three Cases. 
Thus the sum on the right hand side of~\eqref{eq:PPPane} is convergent and uniformly bounded for all three cases. Note that $\frac{\ell}{p^{1/2}}\ll \ell^{3/2}$ since $\ell p\to\infty$.
This proves first result of Proposition \ref{lm:error_estimate_largeL}. 
An estimate of $\hat \PP_{m,2}$ is similar; the summand in \eqref{eq:PPPane} is multiplied by $|\bfn|\ell p$. 
This change does not affect the proof much and we obtain the second result of Proposition \ref{lm:error_estimate_largeL}.


\subsection{Proof of Proposition \ref{lm:error_estimate_largeL} when $p\gg \ell$}\label{sec:proof_estimate_remaining}

Case 1 is when $\ell^{-1/4} \ll p$ and $\log p\ll \ell^{3/2}$. 
We prove Proposition \ref{lm:error_estimate_largeL} for Case 1 when $p\ll \ell^{5/4}$ does not hold. 
The proof given here applies to the situation when $p$ and $\ell$ satisfy $p\gg \ell$ and $\log p\ll \ell^{3/2}$. 
Note that we have $\ell$ and $\ratio$ both tend to infinity in this case.

The main reason that we added the assumption  $p\ll \ell^{5/4}$ in the last section is the factor $e^{c'\ratio |\bfn|}$ in~\eqref{eq:PPPane} which comes from the estimate \eqref{eq:bound_bT} of $| \bT_{\bfn}(\bfz) |\le 2^{2|\bfn|} e^{c' \ratio |\bfn|}$. 
In order to improve this estimate, we modify the integral contours. 
In \eqref{eq:zellrho}, the contours were chosen as 
\beqq
	z_i = e^{- \frac{\ell p}{2}-  \ratio \rho_i + \rmi \theta_i} , \qquad \theta_i\in (-\pi, \pi],
\eeqq
where $\rho_1>\cdots>\rho_m>0$ were fixed numbers. In this section, we choose these numbers to be dependent on $\ratio$:  
\begin{equation}
	\rho_i=\rho_1-\frac{i-1}{\ratio} 
\end{equation}
for $1\le i\le m$, where $\rho_1$ is a a fixed positive number. 
With this change, the estimate~\eqref{eq:bound_bT} is changed to 
\beq \label{eq:bound_bT2}
	| \bT_{\bfn}(\bfz) |
	\le 2^{2|\bfn|} e^{c'|\bfn|}.
\eeq
The difference is that the exponent is changed from $c' \ratio |\bfn|$ to $c' |\bfn|$, which gives a much tighter bound. 
However, we need to check how other quantities in the estimate \eqref{eq:bound_bT} change due to the contour changes.

The estimates in Sections~\ref{sec:bound_of_C_bullet} and~\ref{sec:bound_H_bfn} are still valid without any change. 
For the estimates in Section~\ref{sec:bound_E_bfn}, note that $d= \rho_i = \rho_1 -\frac{i-1}{ \ratio}$  which depends on $\ratio$ but is close to the constant $ \rho_1 $. 
Since Corollary~\ref{cor:uppes} holds uniformly on $d$, Lemma~\ref{resultEunifbdgngn} and Corollary~\ref{lem:uniform_bound_E} still hold.
However, the estimates in Section \ref{sec:Rbounds} need some changes. 

Lemma~\ref{lem:longdetra1} is changed to the following estimate. 

\begin{lemma} 
\label{lem:longdetra2}
For every $\epsilon>0$, there is a positive constant $C_0$ such that 
\beq \label{eq:longdetra2}
	\frac{\left| \Cd( \uu_i(\bfk), - \uu_{i'}(\hat \bfk') ; \uu_{i}(\hat \bfk) , -\uu_{i'}(\bfk') ) \right| }
	{\prod_{j=1}^{n} |u_i(\hat k_j)|\prod_{j=1}^{n'} |u_{i'}(k'_j)| }
	\prod_{j=1}^n e^{ -\epsilon \sqrt{ \frac1{\ratio} | k_{j}|} - \epsilon  \sqrt{\frac1{\ratio} | \hat k_{j}|} } \prod_{j=1}^{n'} e^{ - \epsilon \sqrt{ \frac1{\ratio} |k_{j}'|} -  \epsilon  \sqrt{ \frac1{\ratio} |\hat k_{j}'|} }
	\le  C_0^{\frac{n+n'}{2}} 
\eeq
for all  two distinct integers $i$ and $i'$ from $\{0, \cdots, m+1\}$,  
$n, n'\in\N$, $\bfk, \hat \bfk\in \intZ^{n}$, $\bfk', \hat \bfk'\in \intZ^{n'}$, 
and $\ell, p>0$ satisfying $\ell^3\ge 4\rho_1^4$ and $\ell p\ge 1$. 
\end{lemma}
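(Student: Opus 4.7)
The plan is to repeat the proof of Lemma~\ref{lem:longdetra1} verbatim up to the point where the separation of the radii $\rho_i$ enters the analysis, then to substitute the new spacing. Since the components of $\bfk,\hat\bfk,\bfk',\hat\bfk'$ may be assumed distinct (the Cauchy determinant vanishes otherwise), I would apply Hadamard's inequality to the matrix $\bigl(1/((x_p+y_q)y_q)\bigr)_{p,q}$ exactly as in \eqref{eq:Kbdlng}, obtaining a product of square roots of row sums. Each row sum decomposes into a same-level part (controlled via $|x_p+y_q|\ge\sqrt{\ell p}$) and a cross-level part (controlled via the gap $|u_i(k_p)^2-u_{i'}(k'_q)^2|\ge 2\ratio|\rho_i-\rho_{i'}|$ from \eqref{eq:Jan17_02}).

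The decisive change occurs in this cross-level lower bound: with $\rho_i=\rho_1-(i-1)/\ratio$, one has $2\ratio|\rho_i-\rho_{i'}|=2|i-i'|$, which is constant in $\ratio$ rather than growing linearly as in the original proof. Substituting this into the analogues of \eqref{eq:main_term_long_det}--\eqref{eq:abt}, the factor $\ratio^2(\rho_i-\rho_{i'})^2$ is replaced by $(i-i')^2\ge 1$, and then, combining with the uniform bound $|u_i(k_p)|^2 e^{-2\epsilon\sqrt{|k_p|/\ratio}}\le C\ell p$ (valid because $\ell^3\ge 4\rho_1^4$ forces $\ratio\le \ell p/(2\rho_1)$), each row's contribution should collapse to a quantity bounded by a constant depending only on $m$, $\rho_1$, and $\epsilon$. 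Taking square roots and the product over all $n+n'$ rows would then yield the desired bound $C_0^{(n+n')/2}$.

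The main obstacle is verifying that the resulting per-row bracket is genuinely independent of $\ratio$: a naive substitution in \eqref{eq:main_term_long_det}--\eqref{eq:abt} produces a row bracket of order $C\ratio$, coming from the summation estimate $\sum_q e^{-2\epsilon\sqrt{|k_q|/\ratio}}\asymp\ratio$ in Lemma~\ref{resultsumex}, and this would give the unwanted growth $(C\ratio)^{(n+n')/2}$. To rule out this blow-up I would use a sharper envelope for the denominators $|y_q|^2$ that incorporates the $|k_q|$-growth from \eqref{eq:uikupe}, since then $\sum_q e^{-2\epsilon\sqrt{|k_q|/\ratio}}/|y_q|^2$ becomes of order $1$ rather than $\ratio/\ell p$; failing that, I would abandon Hadamard in favor of the explicit Cauchy product formula \eqref{eq:Cdemt}, to exploit the cancellation between the $2nn'$ cross-level numerator sums $u+\hat v$ (each of size $\sqrt{\ell p}$) and the $2nn'$ cross-level denominator differences $u-v$ (each of reciprocal size $\sqrt{\ell p}$), which together with $\prod_q|y_q|^{-1}\asymp(\ell p)^{-(n+n')/2}$ should produce an $\ell,p,\ratio$-uniform bound.
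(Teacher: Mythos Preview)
Your diagnosis of the obstacle is exactly right: substituting the new spacing $\ratio|\rho_i-\rho_{i'}|=|i-i'|$ into the chain \eqref{eq:main_term_long_det}--\eqref{eq:abt} leaves a row bracket of order $\ell p$ (equivalently, a power of $\ratio$), and the bound collapses. However, your proposed cure (a) does not attack the correct term. The blow-up does not come from the trivial estimate $|u_{i'}(k'_\qq)|^2\ge\ell p$; it comes from the inequality $\tfrac{1}{|a-b|^2}\le\tfrac{2(1+|a|^2)(1+|b|^2)}{|a^2-b^2|^2}$ used in \eqref{eq:main_term_long_det}, which inserts a spurious factor $(1+|u_i(k_\pp)|^2)(1+|u_{i'}(k'_\qq)|^2)$ growing like $(\ell p)^2$. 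Sharpening the $|y_\qq|^{-2}$ envelope cannot compensate for this growth, and citing \eqref{eq:uikupe} (an \emph{upper} bound on $|\uu_i(k)|$) for denominator control is the wrong direction anyway.

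The paper's fix is different and avoids that inequality altogether. For the cross-level sum in \eqref{eq:Jan17_01} one drops the exponential weight and splits the summation over $k'\in\intZ$ according to whether $|u_i(k_\pp)-u_{i'}(k')|\ge|u_{i'}(k')|$ or not. On the first piece the summand is dominated by $|u_{i'}(k')|^{-4}$, and since $|u_{i'}(k')|^4=(\ell p+2\ratio\rho_{i'})^2+(4\pi k'-2\theta_{i'})^2\ge 1+(4\pi k'-2\theta_{i'})^2$ the series $\sum_{k'}|u_{i'}(k')|^{-4}$ is bounded by an absolute constant. On the second piece one has $|u_i(k_\pp)+u_{i'}(k')|\le 3|u_{i'}(k')|$, so the summand is dominated by $9|u_i(k_\pp)^2-u_{i'}(k')^2|^{-2}$; with the new spacing this equals $9\bigl(4(i-i')^2+(4\pi(k_\pp-k')+2(\theta_{i'}-\theta_i))^2\bigr)^{-1}$, again summable uniformly. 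Hence each cross-level sum is bounded by an absolute constant $C_2$, the row bracket becomes $\sqrt{C_1/(\ell p)+C_2}$, and the product over the $n+n'$ rows gives \eqref{eq:longdetra2}. Your fallback (b) via the explicit Cauchy product would likely also succeed, but it is considerably heavier than this two-case split.
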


\begin{proof}
Recall that it is enough to consider the case when $\bfk, \bfk', \hat \bfk$, or $\hat \bfk'$ all have distinct components. 
In the proof of Lemma~\ref{lem:longdetra1}, the estimates~\eqref{eq:Kbdlng} and~\eqref{eq:uplub1} still hold. However, we need to change the estimate~\ref{eq:main_term_long_det}.  
Since the components of $k'_\qq$ are all distinct, we have  
\begin{equation} \label{eq:Jan17_01}
	\sum_{\qq=1}^{n} \frac{ e^{- 2\epsilon \sqrt{\frac1{\ratio} |  k_{\qq}' |}} }{ |u_{i}(k_{\pp}) - u_{i'}( k'_\qq)|^2|u_{i'}( k'_\qq)|^2}
	\le 
	\sum_{\qq=1}^{n} \frac{1}{|u_{i}(k_{\pp}) - u_{i'}( k'_\qq)|^2|u_{i'}( k'_\qq)|^2}\le \sum_{k'\in\intZ}\frac{1}{|u_{i}(k_{\pp}) - u_{i'}( k')|^2|u_{i'}( k')|^2}
\end{equation}
We split the last sum into two parts. 
The first part contains all $k'$ satisfying $|u_i(k_\pp)-u_{i'}(k')|\ge |u_{i'}(k')|$. This part is bounded by, recalling the definition of $u_{i'}(k')$ in~\eqref{eq:uellndfn},
\begin{equation}
	\sum_{k'}\frac{1}{|u_{i'}(k')|^4}
	= 
	\sum_{k'}\frac{1}{(\ell p+2\ratio\rho_{i'})^2 +(4\pi k'-2\theta_{i'})^2}\le \sum_{k'\in\intZ}\frac{1}{1+(4\pi k'-2\theta_{i'})^2}
\end{equation}
which is uniformly bounded by a constant. 
The second part of the sum 
contains all $k'$ satisfying $|u_i(k_\pp)-u_{i'}(k')|< |u_{i'}(k')|$. 
Noting the fact that $|u_{i}(k_{\pp}) + u_{i'}( k')|\le |u_{i}(k_{\pp}) -u_{i'}( k')|+2|u_{i'}(k')|\le 3|u_{i'}(k')|$, this part is bounded by 
\begin{equation}
	\sum_{k'}\frac{|u_{i}(k_{\pp}) + u_{i'}( k')|^2}{|(u_{i}(k_{\pp}))^2 - (u_{i'}( k'))^2|^2|u_{i'}( k')|^2}
	\le \sum_{k'}\frac{9}{|(u_{i}(k_{\pp}))^2 - (u_{i'}( k'))^2|^2}
\end{equation}
which is uniformly bounded by a constant since $i\ne i'$ and
\begin{equation}
\begin{split}
	|(u_{i}(k_{\pp}))^2 - (u_{i'}( k'))^2|^2&=4\ratio^2(\rho_i-\rho_{i'})^2+(4\pi (k_\pp-k')+2(\theta_{i'}-\theta_i))^2\\
	&=4(i-i')^2+(4\pi (k_\pp-k')+2(\theta_{i'}-\theta_i))^2
\end{split}
\end{equation}
by our choices of $\rho_i$ and $\rho_{i'}$.
Combing the above two parts, we obtain 
\begin{equation}
	\sum_{\qq=1}^{n} \frac{ e^{- 2\epsilon \sqrt{\frac1{\ratio} |  k_{\qq}' |}} }{ |u_{i}(k_{\pp}) - u_{i'}( k'_\qq)|^2|u_{i'}( k'_\qq)|^2}\le C_2, 
\end{equation}
which implies that the bound~\eqref{eq:bnbnddtem2} changes to 
\beq \label{eq:bnbnddtem3}
	 \prod_{j=1}^n e^{ -\epsilon \sqrt{ \frac1{\ratio} | k_{j}|}  } \prod_{j=1}^{n'} e^{ - \epsilon \sqrt{ \frac1{\ratio} |\hat k_{j}'|} }
	\left[ \prod_{\pp=1}^{n} 
	\sqrt{ \frac{C_1}{\ell p} +  C_2} \right]
	 \left[ \prod_{\pp=1}^{n'} 
	 \sqrt{ \frac{C_1}{\ell p} +  C_2} \right] . 
\eeq
We thus obtain \eqref{eq:longdetra2}.
\end{proof}

Using the above bound instead of Lemma~\ref{lem:longdetra2}, the same proof shows that Corollary~\ref{resultRRhatbd} changes to the following. 

\begin{cor}
\label{resultRRhatbd1}
For every $\epsilon>0$, there is a positive constant $C_0$ such that 
\beq 
	| R_{\bfn}(\UU(\bfk), \UU(\hat \bfk))|
	\le C_0^{|\bfn|}
	\prod_{i=1}^m \prod_{j=1}^{n_i} 
	 e^{2\epsilon \sqrt{\frac1{\ratio} | k_{j}^{(i)}|}+ 2\epsilon \sqrt{ \frac1{\ratio} |\hat k_{j}^{(i)}|} }
\eeq
and
\beq
	| \hat R_{\bfn}(\UU(\bfk), \UU(\hat \bfk))|
	\le |\bfn| (\ell p)^{1/2} C_0^{|\bfn|}
	\prod_{i=1}^m \prod_{j=1}^{n_i} 
	 e^{ 2\epsilon \sqrt{ \frac1{\ratio} | k_{j}^{(i)}|}+ 2\epsilon \sqrt{ \frac1{\ratio} |\hat k_{j}^{(i)}|} }
\eeq 
for all $\bfn\in \N^m$, $\bfk, \hat \bfk \in \intZ^{\bfn}$, and $\ell, p>0$ satisfying  $\ell^3\ge 16\rho_1^4$  and $\ell p\ge 1$. 
\end{cor}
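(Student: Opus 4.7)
My plan is to mimic the proof of Corollary~\ref{resultRRhatbd} verbatim, substituting Lemma~\ref{lem:longdetra2} in place of Lemma~\ref{lem:longdetra1} at the one step where a long-Cauchy-determinant bound is invoked. The crucial difference is that the new bound $C_0^{(n+n')/2}$ in \eqref{eq:longdetra2} no longer carries the explosive prefactor $((\ell p)^2/\ratio^2)^{(n+n')/2}$, which is exactly what allowed the $\ell p\gg \ratio^2$ regime (i.e.\ $p\gg \ell$) to be treated. Everything else in the argument is purely algebraic and independent of the specific choice of $\rho_1>\cdots>\rho_m$, so it carries over unchanged.

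First, I would recall the factorization \eqref{eq:HRREte} of $R_\bfn(\UU(\bfk),\UU(\hat\bfk))$ as a product of $m+1$ long Cauchy determinants (indexed by $i=0,\ldots,m$) divided by the product of $|\uu_i(\hat k_j^{(i)})|$ and $|\uu_i(k_j^{(i)})|$. The terms with $i=0$ and $i=m+1$ contribute trivially since $U^{(0)}=\hat U^{(0)}=U^{(m+1)}=\hat U^{(m+1)}=\emptyset$. For the remaining $m-1$ interior factors, Lemma~\ref{lem:longdetra2} with $\epsilon$ chosen as $\epsilon$ in the statement gives each a bound $C_0^{(n_i+n_{i+1})/2}$ after absorbing one copy of the exponential factor into the factor itself. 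Taking the product over $i$ yields $C_0^{|\bfn|}$, with the leftover exponential factors combining to exactly $\prod_{i,j} e^{2\epsilon\sqrt{|k_j^{(i)}|/\ratio}+2\epsilon\sqrt{|\hat k_j^{(i)}|/\ratio}}$, after relabeling $C_0$ appropriately to absorb a universal constant raised to a power linear in $|\bfn|$. This establishes the first bound.

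For the $\hat R_\bfn$ bound, I would follow exactly the same argument that upgraded \eqref{eq:bound_R} to \eqref{eq:bound_hatR} in the proof of Corollary~\ref{resultRRhatbd}. The factor $\sum_{j=1}^{n_m}(u_m(k_j^{(m)})+u_m(\hat k_j^{(m)}))$ that distinguishes $\hat R_\bfn$ from $R_\bfn$ is estimated via~\eqref{eq:bound_extra_term} using $|u_i(k)|\le 5\sqrt{\ell p}+5\sqrt{|k|}$ and $\sqrt{x}e^{-\epsilon\sqrt{x/\ratio}}\le C\sqrt{\ratio}$, which gives the prefactor $|\bfn|(\ell p)^{1/2}$ once we use $\ratio\le \ell p/(2\rho_1)$. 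Since this bookkeeping does not touch the Cauchy-determinant estimates, it applies without any modification under the new contour choice.

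I do not foresee a serious obstacle: the only verification needed is that Lemma~\ref{lem:longdetra2} has been established for precisely the new parameter values $\rho_i = \rho_1 - (i-1)/\ratio$ used here (which is where the denominators $4(i-i')^2$ came from in \eqref{eq:Jan17_01}), and that the $\ell^3\ge 16\rho_1^4$ hypothesis remains compatible. The one minor point to double-check is that replacing the $(\ell p)^2/\ratio^2$ prefactor by $C_0$ does not degrade the $\hat R_\bfn$ estimate in a way that breaks the subsequent application in Section~\ref{sec:proof_estimate_remaining}; but since the loss is strictly in our favor, the remainder of the asymptotic analysis that invokes this corollary goes through with the improved bound.
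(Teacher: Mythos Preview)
Your proposal is correct and matches the paper's own argument: the paper simply states that the same proof as for Corollary~\ref{resultRRhatbd} goes through with Lemma~\ref{lem:longdetra2} in place of Lemma~\ref{lem:longdetra1}, which is precisely the substitution you describe. The treatment of the extra factor in $\hat R_{\bfn}$ via \eqref{eq:bound_extra_term} is likewise identical.
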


We are ready to prove Proposition~\ref{lm:error_estimate_largeL} for Case 1 assuming $p\gg \ell$ and $\log p\ll \ell^{3/2}$. 
We follow the same analysis of subsection~\ref{sec:prooferrors} except that we replace Corollary~\ref{resultRRhatbd} by Corollary~\ref{resultRRhatbd1} and the inequality~\eqref{eq:bound_bT} by \eqref{eq:bound_bT2}. Then the inequality~\eqref{eq:Stt} is replaced by
\beq \label{eq:Stt2}
	e^{ \frac{4}{3} \ell^{3/2}}  |\bs_{\bfn}(\bfk, \hat\bfk)| 
	\le 5 |\bfn| e^{4 |\bfn| } 
	C_0^{|\bfn|}
	e^{  -\frac{4\delta}{3} \ell^{3/2} -   c_2 |\bfn| \ell^{3/2} } 
	\prod_{i=1}^m \prod_{j=1}^{n_i} 
	e^{  - \frac{\cstar}{2} \sqrt{\frac{1}{\ratio } | k^{(i)}_j |}
	-  \frac{\cstar}{2}  \sqrt{\frac{1}{\ratio } | \hat k^{(i)}_j |}} 
\eeq
and the inequality~\eqref{eq:bDuhb} is changed to 
\beq \label{eq:bDuhb2} \begin{split}
	e^{ \frac{4}{3} \ell^{3/2}} \bD_{\bfn}(\bfz) 
	\le 5 |\bfn| 
	 \left( C_0 (\ell p)^2 \right)^{|\bfn|}
	e^{  -\frac{4\delta}{3} \ell^{3/2}  -   c_2 |\bfn| \ell^{3/2} } . 
\end{split} \eeq
For the bounds of $|\hbs_{\bfn}(\bfk, \hat\bfk)| $ and $\hbD_{\bfn}(\bfz)$, we only need to multiply the bounds of $|\bs_{\bfn}(\bfk, \hat\bfk)| $ and $\bD_{\bfn}(\bfz) $ by a factor $|\bfn|(\ell p)^{1/2}$ due to Corollary~\ref{resultRRhatbd1}.
Finally, using~\eqref{eq:bound_bT2}, the inequality~\eqref{eq:PPPane} changes to 
\beq \label{eq:PPPane3} \begin{split}
	|e^{ \frac{4}{3} \ell^{3/2}} \PP_{m,2} |& \le  
	e^{  -\frac{4\delta}{3} \ell^{3/2}} \sum_{\bfn \in \N^m\setminus\{\bfone\} }  \frac{|\bfn|}{(\bfn !)^2}  
	 \left( C_0 (\ell p)^2 \right)^{|\bfn|} e^{c'  |\bfn|}
	e^{ -   c_2  |\bfn| \ell^{3/2}} .
\end{split}\eeq 
The sum is uniformly bounded provided $p\ll e^{c_2\ell^3/2}$, which holds since $\log p\ll \ell^{3/2}$. 
This proves the first part of Proposition \ref{lm:error_estimate_largeL}. The proof of the second part on $\hat\PP_{m,2}$ is similar.

\section{Proof of Proposition \ref{propSrpr}} \label{sec:pflastprop}

We first prove Proposition \ref{propSrpr} (a). We have the following lemma. 

\begin{lemma} \label{resultprobasic}
Let $\bfa \in \realR^m$ satisfy $0<a_1<\cdots<a_{m-1}<a_m$ and let $\bfb \in \realR^m$.
Then, for every $r>0$, 
\beq \label{eq:prin1}
\begin{split}
	&\frac{(-1)^{m-1}\sqrt{2}}{(2\pi \ii)^m} \int\cdots \int 
	\prod_{i=2}^m \frac{1-e^{r(\xi_i-\xi_{i-1})}}{\xi_i-\xi_{i-1}} \prod_{i=1}^m e^{ \dDelta a_i \xi_i^2 - \dDelta b_i \xi_i} \dd \xi_i  \\
	&= \prob\left( \sqrt{2}\bm(a_1)-b_1 \in [0,r), \cdots, \sqrt{2}\bm(a_{m-1}) -b_{m-1}\in [0,r)  \mid \sqrt{2}\bm(a_m)=b_m \right) \phi_{a_m}\left(\frac{b_m}{\sqrt{2}}\right)
\end{split} \end{equation}
where the contours are distinct vertical lines oriented upwards, $\bm$ is a standard Brownian motion, and $\phi_t(x)=\frac{1}{\sqrt{2\pi t}}e^{-\frac{x^2}{2t}}$.
\end{lemma}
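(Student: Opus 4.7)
\textbf{Proof plan for Lemma \ref{resultprobasic}.}
The plan is to exploit the elementary identity
\beq
\frac{1-e^{rz}}{z} = -\int_0^r e^{sz}\,\dd s,
\eeq
applied to each of the $m-1$ factors with $z = \xi_i - \xi_{i-1}$. This converts the left-hand side of \eqref{eq:prin1} into an integral over auxiliary variables $s_2,\ldots,s_m \in [0,r)$ of a product of one-dimensional Gaussian contour integrals in the $\xi_i$, which I will then recognize as the joint density of a rescaled Brownian motion.

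Concretely, after interchanging the $\xi$-contour integrations with the $s$-integrations (justified by Fubini, since $|e^{\dDelta a_i \xi_i^2}|$ has Gaussian decay on vertical contours and $[0,r)^{m-1}$ is compact), the exponent quadratic-linear in $\xi_i$ becomes $\dDelta a_i \xi_i^2 + (s_i - s_{i+1} - \dDelta b_i)\xi_i$ with the convention $s_1 = s_{m+1} = 0$. Applying the one-variable identity
\beq
\int_{c-\ii\infty}^{c+\ii\infty} e^{\alpha \xi^2 - \beta \xi}\, \frac{\dd \xi}{2\pi \ii}  = \frac{1}{\sqrt{4\pi \alpha}}\, e^{-\beta^2/(4\alpha)}, \qquad \alpha>0,
\eeq
to each of the $m$ variables yields
\beq
\prod_{i=1}^m \frac{1}{\sqrt{4\pi \dDelta a_i}}\, e^{-(\dDelta b_i - s_i + s_{i+1})^2/(4\dDelta a_i)}.
\eeq

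Setting $\tilde b_i := b_i + s_{i+1}$ for $1\le i\le m-1$ and $\tilde b_m := b_m$, one has $\dDelta \tilde b_i = \dDelta b_i - s_i + s_{i+1}$, so the above product is exactly the joint density at times $a_1<\cdots<a_m$ and values $\tilde b_1,\ldots,\tilde b_m$ of the rescaled Brownian motion $W(t) := \sqrt{2}\,\bm(t)$, whose transition density over time $\dDelta a$ is $\frac{1}{\sqrt{4\pi \dDelta a}}e^{-x^2/(4\dDelta a)}$. Changing variables from $s_{i+1}$ to $\tilde b_i$ converts the integral over $s_2,\ldots,s_m \in [0,r)$ into an integral over $\tilde b_i \in [b_i, b_i+r)$ for $1\le i\le m-1$ with $\tilde b_m = b_m$ fixed, which evaluates (by conditioning on $W(a_m)$) to $\prob(W(a_i)-b_i \in [0,r) \text{ for } 1\le i \le m-1 \mid W(a_m)=b_m)$ times the marginal density $p_{W(a_m)}(b_m) = \frac{1}{\sqrt{2}}\phi_{a_m}(b_m/\sqrt{2})$.

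Finally, I will assemble the constants: the prefactor $(-1)^{m-1}\sqrt{2}/(2\pi\ii)^m$ combines with the $(-1)^{m-1}$ coming from the $m-1$ sign flips in the first identity (giving $+1$) and with the $1/\sqrt{2}$ in the marginal density to leave $\phi_{a_m}(b_m/\sqrt{2})$, as claimed. No conceptual obstacle is expected; the main care needed is the sign and index bookkeeping, in particular verifying that the substitution $\tilde b_i = b_i + s_{i+1}$ (rather than $\tilde b_i = b_i - s_{i+1}$) produces the interval $[0,r)$ on the right-hand side, which is the most likely source of a computational slip.
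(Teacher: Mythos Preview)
Your proposal is correct and follows essentially the same route as the paper: both arguments recognize the integrand as the joint density of $\sqrt{2}\,\bm$ at the points $(\tilde b_1,\dots,\tilde b_m)$ and integrate $\tilde b_i$ over $[b_i,b_i+r)$ for $i\le m-1$. The only cosmetic difference is order of operations: the paper first records the Gaussian identity $\frac{1}{(2\pi\ii)^m}\int\cdots\int \prod_i e^{\dDelta a_i\xi_i^2-\dDelta y_i\xi_i}\,\dd\xi_i = \prod_i \frac{1}{\sqrt{2}}\phi_{\dDelta a_i}(\dDelta y_i/\sqrt{2})$ and then integrates $y_i$ from $b_i$ to $b_i+r$, whereas you first expand $\frac{1-e^{rz}}{z}=-\int_0^r e^{sz}\,\dd s$ and then perform the Gaussian integrals; the substitution $\tilde b_i=b_i+s_{i+1}$ shows these are the same computation.
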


\begin{proof}
From Gaussian integrals, 
\beqq
 	\frac{1}{(2\pi \ii)^m } \int\cdots \int  \prod_{i=1}^m e^{ \dDelta a_i \xi_i^2 - \dDelta y_i \xi_i} \dd \xi_i
	=  \prod_{i=1}^m \frac{e^{-\frac{ (\dDelta y_i)^2}{4 \dDelta a_i}} }{\sqrt{2\pi  \dDelta a_i}} = \prod_{i=1}^m \frac{1}{\sqrt{2}}\phi_{\dDelta a_i}\left(\frac{\dDelta y_i}{\sqrt2}\right)
\eeqq
for every $\bfy\in \realR^m$. The right hand side is the joint density of $(\bm(a_1),\cdots,\bm(a_m))$ at $(y_1/\sqrt{2},\cdots,y_m/\sqrt{2})$.
The equation \eqref{eq:prin1} follows by integrating $y_i$ from $b_i$ to $b_i+r$ for $i=1, \cdots, m-1$ and taking $y_m=b_m$.  
\end{proof}

If the contours are ordered as $\Re(\xi_1)>\cdots >\Re(\xi_m)$, then 
the left hand-side of \eqref{eq:prin1} converges, as  $r\to +\infty$, to 
\beqq  
\begin{split}
	&\frac{(-1)^{m-1}\sqrt{2}}{(2\pi \ii)^m} \int\cdots \int 
	\prod_{i=2}^m \frac{1}{\xi_i-\xi_{i-1}} \prod_{i=1}^m e^{ \dDelta a_i \xi_i^2 - \dDelta b_i \xi_i} \dd \xi_i	\\
\end{split} \eeqq
Thus, Proposition \ref{propSrpr} (a) follows. This computation is due to \cite[Lemma 3.4]{Liu-Wang22}. 

\bigskip 

We now prove Proposition \ref{propSrpr} (b).

\begin{proof}[Proof of Proposition \ref{propSrpr} (b)]
Denote the left-side of \eqref{eq:Srpb} by 
\beq 
	A:= \frac{1}{(2\pi \ii)^m}\oint\cdots \oint S_{\ratio}(\bfa, \bfc-\bfb; \bfw) S_{\ratio}(\bfa, \bfc+\bfb; \bfw) 
	\prod_{i=2}^{m} \left( 1-\frac{w_{i-1}}{w_i} \right)
	\prod_{i=1}^m \frac{\rmd w_i}{ w_i} 
\eeq 
where the contours are circles satisfying $0<|w_1|<\cdots < |w_m|<1$ and (recall \eqref{eq:Srdefnt}) 
\beqq
	S_{\ratio}(\bfa,\bfb; \bfw) = \frac{(-1)^{m-1}\sqrt{2}}{\ratio^m} \sum_{\xi_1,\cdots,\xi_m} 
	\prod_{i=2}^m \frac{1}{\xi_i-\xi_{i-1}} \prod_{i=1}^m e^{ \dDelta a_i \xi_i^2 - \dDelta b_i \xi_i} .
\eeqq
Since the sum is over the points $\xi_i$ satisfying $e^{-\ratio\xi_i}=w_i$, we see that 
$\prod_{i=2}^{m} \left( 1-\frac{w_{i-1}}{w_i} \right) = \prod_{i=2}^{m} \left( 1- e^{\ratio(\xi_i-\xi_{i-1})} \right). $
Thus, we can write $A$ as
\beq \label{eq:ASrSr}
	A= \frac1{(2\pi \ii)^m}\oint\cdots \oint S_{\ratio}(\bfa, \bfc-\bfb; \bfw) T_{\ratio}(\bfa, \bfc+\bfb; \bfw) 
	\prod_{i=1}^m \frac{\rmd w_i}{ w_i}
\eeq
where
\beq
	T_{\ratio}(\bfa, \bfb; \bfw) 
	= \frac{(-1)^{m-1}\sqrt{2}}{\ratio^m} \sum_{\xi_1,\cdots,\xi_m} 
	\prod_{i=2}^m \frac{1- e^{ \ratio(\xi_i-\xi_{i-1})}}{\xi_i-\xi_{i-1}} \prod_{i=1}^m e^{ \dDelta a_i \xi_i^2 - \dDelta b_i \xi_i}.
\eeq

Let $0<|w|<1$. 
Note that if $f(\xi)$ is a function that is analytic in a vertical strip 
$ p -2\delta < \Re(\xi)<  p +2\delta$ for some $\delta>0$, where $p=-\frac{\log |w|}{\ratio}$, and decays fast as $\Im(\xi)\to \pm \infty$ in the strip, 
then by the Cauchy residue theorem, 
\beqq
	\sum_{\xi: e^{-\ratio \xi}=w} f(\xi) = \frac1{2\pi \ii} \int_{p +\delta -\ii \infty}^{p +\delta +\ii \infty} \frac{-\ratio w f(\xi)}{e^{-\ratio\xi}-w} \dd \xi
	- \frac1{2\pi \ii}  \int_{p -\delta -\ii \infty}^{p -\delta +\ii \infty}  \frac{-\ratio w f(\xi)}{e^{-\ratio\xi}-w} \dd \xi . 
\eeqq
Thus for such $f$, we find, using the geometric series and moving the contours, that
\beqq
	\sum_{\xi: e^{- \ratio \xi}=w} f(\xi) 
	= \frac{\ratio}{2\pi \ii} \sum_{k=-\infty}^\infty \frac1{w^k} \int_{p+\ii\realR}  f(\xi) e^{-k \ratio \xi} \dd \xi 
\eeqq
with the contour oriented upwards. 
Extending the above formula in a natural way, we find that 
\beq
	S_{\ratio}( \bfa,\bfb; \bfw) = \frac{(-1)^{m-1}\sqrt{2}}{(2\pi \ii)^m} 
	\sum_{\bfn \in \intZ^m}  \frac1{w_1^{n_1}\cdots w_m^{n_m}}
	\int\cdots \int 
	\prod_{i=2}^m \frac{(-1)^{m-1}}{\xi_i-\xi_{i-1}} \prod_{i=1}^m e^{ \dDelta a_i \xi_i^2 - \dDelta b_i \xi_i }e^{- n_i \ratio \xi_i} \dd \xi_i 
\eeq
and
\beq
	T_{\ratio}( \bfa,\bfb; \bfw) = \frac{(-1)^{m-1}\sqrt{2}}{(2\pi \ii)^m} 
	\sum_{\bfn \in \intZ^m}  \frac1{w_1^{n_1}\cdots w_m^{n_m}}
	\int\cdots \int 
	\prod_{i=2}^m \frac{1- e^{r(\xi_i-\xi_{i-1})}}{\xi_i-\xi_{i-1}} \prod_{i=1}^m e^{ \dDelta a_i \xi_i^2 - \dDelta b_i \xi_i} e^{ - n_i \ratio \xi_i} \dd \xi_i 
\eeq
where the contours are vertical lines, oriented upwards, satisfying $\Re(\xi_1)>\cdots >\Re(\xi_m)$. 
The ordering of the contours follows from $|w_1|<\cdots<|w_m|$.

Change the summation index $\bfn$ to $\bfk$ by setting $n_i=k_1+\cdots + k_i$ for $i=1, \cdots, m$ so that  $k_i=\dDelta n_i$ (where $n_0:=0$.) 
Using Lemma \ref{resultprobasic} with $b_i$ replaced by $b_i+\ratio k_i$ and $c_i$ replaced by $c_i+ \ratio k_i$, we find that
\beq
	S_{\ratio}(\bfa,\bfb; \bfw) = \sum_{\bfk \in \intZ^m}  
	\frac{\prob\left( \bigcap_{i=1}^{m-1} \{\sqrt{2}\bm'_1(a_i) - b_i \ge  \ratio k_i \} \mid \sqrt{2}\bm'_1(a_m) =b_m + \ratio k_m  \right) \phi_{a_m}(\frac{b_m+ \ratio k_m}{\sqrt{2}}) }{w_1^{\dDelta k_1}\cdots w_m^{\dDelta k_m}}
\eeq
and
\beq
	T_{\ratio}( \bfa,\bfb; \bfw) = \sum_{\bfk \in \intZ^m}  
	\frac{\prob\left( \bigcap_{i=1}^{m-1} \{ \sqrt{2}\bm'_2(a_i)-b_i \in [\ratio k_i, \ratio (k_i+1))\} \mid  \sqrt{2}\bm'_2(a_m) =c_m + \ratio k_m   \right) \phi_{a_m}(\frac{c_m + \ratio k_m}{\sqrt{2}})}{w_1^{\dDelta k_1}\cdots w_m^{\dDelta k_m}}
\eeq
where $\bm'_1$ and $\bm'_2$ are independent Brownian motions. 

Inserting the above formulas into \eqref{eq:ASrSr} and computing the integrals, we obtain 
\beq \begin{split} 
	A= &\sum_{\bfk \in \intZ^m}  
	 \prob\left( \bigcap_{i=1}^{m-1} \{\sqrt{2}\bm'_1(a_i)- c_i+b_i \ge  - \ratio k_i \} \, \bigg|\,  \sqrt{2}\bm'_1(a_m)- c_m+b_m =  - \ratio k_m \right) \phi_{a_m}(\frac{c_m-b_m -\ratio k_m}{\sqrt{2}})\\
	& \times \prob\left( \bigcap_{i=1}^{m-1} \{ \sqrt{2}\bm'_2(a_i)- c_i - b_i \in [ \ratio k_i, \ratio (k_i+1))\} \, \bigg|\,   \sqrt{2}\bm'_2(a_m) - c_m- b_m = \ratio k_m  \right) \phi_{a_m}(\frac{c_m+b_m + \ratio k_m}{\sqrt{2}}).
\end{split} \eeq
Using the independence of $B_1'$ and $B_2'$, and noting $\phi_t((x-y)/\sqrt{2})\phi_t((x+y)/\sqrt{2})=\phi_t(x)\phi_t(y)$, 
\beq \begin{split} 
	A= &\phi_{a_m}(c_m) \sum_{\bfk \in \intZ^m}  
	 \prob\left( \bigcap_{i=1}^{m-1} E_{i,k_i} \, \bigg|\,  G_{k_m} \right)  \phi_{a_m} (b_m+ \ratio k_m) 
\end{split} \eeq
where
\beqq
    E_{i,k}= \{\sqrt{2}\bm'_1(a_i)- c_i+b_i \ge  - \ratio k \} \cap \{ \sqrt{2}\bm'_2(a_i)- c_i - b_i \in [ \ratio k, \ratio (k+1))\}
\eeqq
and
\beqq
    G_{k_m}= \{\sqrt{2}\bm'_1(a_m)- c_m+b_m = - \ratio k_m\} \cap \{\sqrt{2}\bm'_2(a_m) - c_m- b_m = \ratio k_m\} .
\eeqq
For each $i$, $E_{i,k}$, $k\in \intZ$, are mutually disjoint events. Thus, taking the sums over $k_1, \cdots, k_{m-1}$, 
\beq \begin{split} 
	A= &\phi_{a_m}(c_m) \sum_{k_m\in \intZ}  
	 \prob\left( \bigcap_{i=1}^{m-1} E_{i} \, \bigg|\,  G_{k_m} \right)  \phi_{a_m} (b_m+ \ratio k_m) , \qquad E_i= \bigcup_{k\in \intZ} E_{i,k} . 
\end{split} \eeq
Lemma \ref{lm:sum_inequalities_identity} below implies that 
\beqq
    E_i= \left\{ \frac{\bm'_1(a_i) + \bm'_2(a_i)}{\sqrt{2}} - c_i \ge \dist_\ratio \left( \big\{ \frac{\bm'_2(a_i) - \bm'_1(a_i)}{\sqrt{2}} - b_i \big\}_\ratio, \{0\}_\ratio \right) 
    \right\} 
\eeqq
Setting $\bm_1 = (\bm'_2 -\bm'_1)/\sqrt{2}$ and $\bm_2  = (\bm'_1 + \bm'_2)/\sqrt{2}$, which are two independent Brownian motions, and noting the fact that $\dist_r(\{x-y\}_\ratio, \{0\}_\ratio)=\dist_r(\{x\}_\ratio, \{y\}_\ratio)$, the above equation becomes 
\begin{equation}
\begin{split}
    A&= \phi_{a_m}(c_m) \\
    &\times \sum_{k_m\in\intZ } \prob\left( \bigcap_{i=1}^{m-1}
    \{\bm_2(a_i) -\dist_\ratio(\{\bm_1(a_i)\}_\ratio,\{b_i\}_\ratio)\ge c_i \} \mid
    \bm_2(a_m)=c_m, \bm_1(a_m)=b_m +\ratio k_m
    \right)\phi_{a_m}(b_m+\ratio k_m).
\end{split}
\end{equation}

Now, note that for a Brownian motion $\bm(t)$, 
\begin{equation}
\prob\left(\bm(t)=x+\ratio n \mid \{\bm(t)\}_\ratio=\{x\}_\ratio\right) =\frac{\phi_t(x+\ratio n)}{\sum_{k\in\intZ} \phi_t(x+\ratio k)} = \frac{\phi_t(x+\ratio n)}{\phi_t^{(\ratio)}(\{x\}_\ratio)}
\end{equation}
where $\phi_t^{(\ratio)}(\{x\}_\ratio)$ is defined in \eqref{eq:def_phi_ratio}.
Thus, 
\begin{equation}
    A= \phi_{a_m}(c_m)  \prob\left( \bigcap_{i=1}^{m-1}
    \{\bm_2(a_i) -\dist_\ratio(\{\bm_1(a_i)\}_\ratio,\{b_i\}_\ratio)\ge c_i \} \mid
    \bm_2(a_m)=c_m, \{\bm_1(a_m)\}_\ratio=\{b_m\}_\ratio
    \right)\phi_t^{(\ratio)}(\{b_m\}_\ratio).
\end{equation}
This completes the proof of Proposition \ref{propSrpr} (b).
\end{proof}

\begin{lemma}
\label{lm:sum_inequalities_identity}
Recall that $\{x\}_\ratio$ denotes the equivalence class of the real number $x$ in the quotient space $\Sr=\realR/ \ratio \intZ$. 
Recall the distance $\dist_\ratio(\{x\}_\ratio,\{y\}_\ratio) = \min_{k\in\intZ} |x-y-\ratio k|$ between equivalence classes is defined in \eqref{eq:distratiodef}. 
For every two real-valued random variables $X$ and $Y$, 
\begin{equation}
\label{eq:lm_sum_events}
    \bigcup_{k=-\infty}^\infty \left\{X-Y \ge -\ratio k, \,\,  X+Y \in [\ratio k,\ratio (k+1))\right\} = \left\{X \ge \dist_\ratio(\{Y\}_\ratio,\{0\}_\ratio)\right\}.
\end{equation}  
\end{lemma}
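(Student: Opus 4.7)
The plan is to rewrite the two inequalities defining the $k$-th set on the LHS of \eqref{eq:lm_sum_events} in a more symmetric form. First observe that the events on the LHS are pairwise disjoint in $k$, since the half-open intervals $[\ratio k,\ratio(k+1))$, $k\in\intZ$, partition $\realR$. For each outcome $\omega$ there is therefore a unique integer $k^*=\lfloor (X+Y)/\ratio\rfloor$, and $\omega$ belongs to the union iff the $k^*$-th event holds. The containment $X+Y\in[\ratio k^*,\ratio(k^*+1))$ is automatic from the choice of $k^*$, while the remaining inequality $X-Y\ge -\ratio k^*$ together with $X+Y\ge \ratio k^*$ is equivalent to $-X\le Y-\ratio k^*\le X$, i.e., $|Y-\ratio k^*|\le X$. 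Thus the LHS reduces to the single event $\{X\ge |Y-\ratio k^*|\}$.

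I will then establish the two inclusions. The containment LHS $\subseteq$ RHS is immediate because $|Y-\ratio k^*|\ge \min_{j\in\intZ}|Y-\ratio j|=\dist_\ratio(\{Y\}_\ratio,\{0\}_\ratio)$. For the reverse inclusion, suppose $X\ge\dist_\ratio(\{Y\}_\ratio,\{0\}_\ratio)$, so that there exists $j\in\intZ$ with $\ratio j\in [Y-X,Y+X]$. The goal is to show $\ratio k^*$ itself lies in this interval. The upper bound $\ratio k^*\le Y+X$ is automatic from the definition of $k^*$; for the lower bound, the chain $\ratio j\le Y+X<\ratio(k^*+1)$ forces $j\le k^*$, and hence $\ratio k^*\ge \ratio j\ge Y-X$. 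Therefore $|Y-\ratio k^*|\le X$, placing $\omega$ in the LHS event.

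I do not anticipate any substantive obstacle, since the proof is a short bookkeeping exercise in unpacking the definitions. The one point requiring care is the half-open convention $[\ratio k,\ratio(k+1))$: it is essential that $k^*$ be defined via the floor function, so that both the uniqueness of the $k$ covering a given $\omega$ and the strict inequality $X+Y<\ratio(k^*+1)$ (which is what drives $j\le k^*$ in the reverse inclusion) hold simultaneously.
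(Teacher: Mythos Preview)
Your proof is correct. It differs from the paper's in one organizational respect: the paper first uses the periodicity of both sides under $Y\mapsto Y\pm\ratio$ to reduce without loss of generality to $-\ratio/2\le Y<\ratio/2$, so that $\dist_\ratio(\{Y\}_\ratio,\{0\}_\ratio)=|Y|$, and then handles the forward inclusion by a case split on whether the witnessing $k$ is $0$ or satisfies $|k|\ge 1$. You instead work with general $Y$ throughout, identifying the unique candidate $k^*=\lfloor (X+Y)/\ratio\rfloor$ and showing directly that if \emph{any} $j$ satisfies $\ratio j\in[Y-X,Y+X]$ then $k^*$ does too, via the chain $\ratio j\le Y+X<\ratio(k^*+1)\Rightarrow j\le k^*$. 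Your route is slightly cleaner in that it avoids both the WLOG reduction and the case split; the paper's route has the small expository advantage that once $|Y|\le\ratio/2$ the inequalities are a bit more transparent. Both arguments are equally elementary and of comparable length.
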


\begin{proof}
Since both sides of the equation \eqref{eq:lm_sum_events} are unchanged if we replace $Y$ with $Y \pm \ratio$, we may assume, without loss of generality, 
that $-\ratio/2\le Y< \ratio/2$. Under this assumption, $\dist_\ratio(\{Y\}_\ratio,\{0\}_\ratio)=|Y|$.

Suppose that there exists a $k\in\intZ$ such that $X-Y\ge -\ratio k$ and $X+Y \in [\ratio k, \ratio(k+1))$. If $k=0$, then $X\ge Y$ and $X\ge -Y$. Thus $X\ge |Y|=\dist_\ratio(\{Y\}_\ratio,\{0\}_\ratio)$. If $|k|\ge 1$, we have $X\ge \max\{-\ratio k+Y, \ratio k -Y\} \ge \ratio/2 \ge   |Y|=\dist_\ratio(\{Y\}_\ratio,\{0\}_\ratio)$. 
Therefore, we find that the left hand side of \eqref{eq:lm_sum_events} is a sub-event of the right hand side.

Now we suppose that $X\ge \dist_\ratio(\{Y\}_\ratio,\{0\}_\ratio)=|Y|$. 
There is an integer $k$ such that $X+Y\in [\ratio k, \ratio(k+1))$. 
Note that $k\ge 0$ since $X+Y\ge 0$. Therefore $X-Y \ge 0 \ge -\ratio k$. This implies the right hand side of \eqref{eq:lm_sum_events} is a sub-event of the left hand side. 
Hence, the proof is complete. 
\end{proof}

\appendix

\section{Extension and continuity of the distribution functions $\tasepF_m$}\label{sec:appendix}

The limit result \eqref{eq:taseplimit} was proved in \cite{Baik-Liu19} for most but not all parameters. 
In this section, we first show that the convergence holds for all parameters. 
We then show that the limit functions are a consistent collection of multivariate cumulative distribution functions. 
We further show that they are continuous in all variables.

Let $\tasep(n,t)$ be the height function for the TASEP on the discrete ring of size $2a$ as in Section \ref{sec:pKPZfp}. 
For $T>0$, let 
\beqq
	\widetilde{\tasep}_T(\gamma, \tau):= \frac{\tasep(\gamma T^{2/3}, 2\tau T) - \tau T}{-T^{1/3}}, 
	\qquad (\gamma, \tau) \in \realR \times \realR_+
\eeqq
where the ring size is set as $(2a)^{3/2}=T$.\footnote{To be precise, we set $a= [T^{2/3}]/2$ since $a$ is a half-integer.} 
Let 
\beqq
	\realR_{+, \le}^m = \{ \tau= (\tau_1, \cdots, \tau_m)\in (0, \infty)^m : 0<\tau_1\le\cdots\le \tau_m\}. 
\eeqq
For $\tau \in \realR_{+, \le}^m$, define
\beqq
	\Omega_+^m(\tau) = \{ \hite=(\hite_1, \cdots, \hite_m)\in \realR^m : \text{$\hite_i< \hite_{i+1}$ if $\tau_i=\tau_{i+1}$}  \}. 
\eeqq
It was shown in \cite{Baik-Liu19} that for every $\gamma\in \realR^m$ and $\tau\in \realR^m_{+, \le}$, the limit 
\begin{equation}
\label{eq:taseplimit_app}
	\lim_{T\to \infty} \prob\bigg( 
            \bigcap_{i=1}^m 
            \big\{ \widetilde{\tasep}_T(\gamma_i, \tau_i) \le \hite_i  \big\}   \bigg)
	= \tasepF_m(\hite; \gamma, \tau)  \text{ converges if $\hite\in \Omega^m_+(\tau)$,} 
\end{equation}
as mentioned in \eqref{eq:proplusminus}.

When $m=1$, it was already shown in \cite{Baik-Liu18} that the one-point distribution $\tasepF_1$ is a distribution function and is continuous. 
We do not need the explicit form of $\tasepF_m$ for the first two results below. 

\medskip

We first show that the limit \eqref{eq:taseplimit_app} convergences for every $\hite\in \realR^m$. 
For $\tau \in \realR_{+, \le}^m$, define the set 
\beqq
	\Omega^m(\tau) := \overline{\Omega_+^m(\tau) } = \{ \hite=(\hite_1, \cdots, \hite_m)\in \realR^m : \text{$\hite_i\le  \hite_{i+1}$ if $\tau_i=\tau_{i+1}$}  \}. 
\eeqq

\begin{lemma} \label{prop:tasepF_extension}
Let $\gamma\in \realR^m$ and $\tau\in \realR^m_{+, \le}$. 
For every $\hat \hite\in \Omega^m(\tau)$, the limit 
\begin{equation} \label{eq:tasepF_aux02}
   	\lim_{\Omega^m(\tau)\ni  \hite \to  \hat \hite  } \tasepF_m(\hite; \gamma, \tau) 
\end{equation}
exists. Furthermore, if we denote the limit as $\tasepF_m(\hat \hite; \gamma, \tau)$, then 
\begin{equation}
\label{eq:tasepF_extension_prob}
	\lim_{T \to \infty} \prob\bigg( 
            \bigcap_{i=1}^m 
            \big\{  \widetilde{\tasep}_T(\gamma_i, \tau_i) \le \hat \hite_i  \big\}  \bigg)
	= \tasepF_m(\hat\hite, \gamma, \tau ).
\end{equation}
\end{lemma}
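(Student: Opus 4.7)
The plan is to combine the known convergence on the open set $\Omega_+^m(\tau)$ with the monotonicity of joint CDFs and the continuity of the one-point limits proved in \cite{Baik-Liu18}. Throughout, write $G_T(\hite) := \prob(\cap_{i=1}^m \{\widetilde{\tasep}_T(\gamma_i, \tau_i) \le \hite_i\})$, which is a joint CDF on $\realR^m$ for each $T$, so it is monotone in the coordinatewise partial order. Taking $T\to\infty$ inside $\Omega_+^m(\tau)$ with \eqref{eq:taseplimit_app} transfers this monotonicity to $\tasepF_m(\cdot;\gamma,\tau)$ on $\Omega_+^m(\tau)$.

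Given $\hat\hite\in\Omega^m(\tau)$, introduce explicit over- and under-approximations that land in $\Omega_+^m(\tau)$ for all small $\epsilon>0$. A convenient choice is $\hite^{(\epsilon)}_i := \hat\hite_i + \epsilon\, i$ and $\hite_{(\epsilon),i} := \hat\hite_i - \epsilon(m-i+1)$: whenever $\tau_i=\tau_{i+1}$ and $\hat\hite_i\le\hat\hite_{i+1}$, both perturbations preserve the strict inequality required for membership in $\Omega_+^m(\tau)$, and $\hite_{(\epsilon)} \le \hat\hite \le \hite^{(\epsilon)}$ componentwise with both converging to $\hat\hite$. Monotonicity of $G_T$ then gives $G_T(\hite_{(\epsilon)}) \le G_T(\hat\hite) \le G_T(\hite^{(\epsilon)})$, and sending $T\to\infty$ with \eqref{eq:taseplimit_app} yields
\beq \label{eq:sandwich_plan}
\tasepF_m(\hite_{(\epsilon)};\gamma,\tau) \le \liminf_T G_T(\hat\hite) \le \limsup_T G_T(\hat\hite) \le \tasepF_m(\hite^{(\epsilon)};\gamma,\tau).
\eeq

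To collapse \eqref{eq:sandwich_plan}, bound the gap between the outer terms at finite $T$ by the simple inclusion-exclusion estimate $G_T(\hite^{(\epsilon)})-G_T(\hite_{(\epsilon)}) \le \sum_{i=1}^m \prob(\widetilde\tasep_T(\gamma_i,\tau_i) \in (\hite_{(\epsilon),i}, \hite^{(\epsilon)}_i])$. Passing to $T\to\infty$ using the one-point convergence from \cite{Baik-Liu18} produces
\beqq
\tasepF_m(\hite^{(\epsilon)};\gamma,\tau) - \tasepF_m(\hite_{(\epsilon)};\gamma,\tau) \le \sum_{i=1}^m \bigl[\tasepF_1(\hite^{(\epsilon)}_i;\gamma_i,\tau_i) - \tasepF_1(\hite_{(\epsilon),i};\gamma_i,\tau_i)\bigr],
\eeqq
and the right-hand side tends to $0$ as $\epsilon\to 0^+$ by the continuity of $\tasepF_1$, which is also proved in \cite{Baik-Liu18}. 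By monotonicity, $\tasepF_m(\hite^{(\epsilon)};\gamma,\tau)$ is non-increasing in $\epsilon$ and $\tasepF_m(\hite_{(\epsilon)};\gamma,\tau)$ is non-decreasing, so they share a common limit, which we take as the definition of $\tasepF_m(\hat\hite;\gamma,\tau)$; at the same time the sandwich \eqref{eq:sandwich_plan} gives \eqref{eq:tasepF_extension_prob}.

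For the limit \eqref{eq:tasepF_aux02}, note that any $\hite\in\Omega^m(\tau)$ close enough to $\hat\hite$ can be sandwiched between $\hite_{(\epsilon)}$ and $\hite^{(\epsilon)}$ for a prescribed $\epsilon$. Applying $\tasepF_m$ (extended as above) and using monotonicity together with the vanishing gap shows $\tasepF_m(\hite;\gamma,\tau)\to \tasepF_m(\hat\hite;\gamma,\tau)$. The only nontrivial input in this whole argument is the continuity of the one-point distribution $\tasepF_1$; once that is invoked, the rest is monotone bookkeeping, so no serious obstacle is anticipated.
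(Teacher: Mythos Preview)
Your proposal is correct and follows essentially the same strategy as the paper: sandwich $\hat\hite$ between explicit perturbations lying in $\Omega_+^m(\tau)$, bound the resulting gap by a sum of one-point increments, and invoke the continuity of $\tasepF_1$ from \cite{Baik-Liu18}. The only cosmetic difference is where you place the gap estimate: you bound $G_T(\hite^{(\epsilon)})-G_T(\hite_{(\epsilon)})$ by a union bound at finite $T$ and then pass to the limit, whereas the paper telescopes $\tasepF_m(\hite'')-\tasepF_m(\hite')$ through the mixed vectors $(\hite'_1,\dots,\hite'_k,\hite''_{k+1},\dots,\hite''_m)$, which requires their particular perturbation to keep each mixed vector inside $\Omega_+^m(\tau)$; your route avoids that bookkeeping.
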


\begin{proof}
For $\epsilon>0$, let  
\beqq
	\hite'_i 
	=\hat\hite_i - (2-\frac{i}{m+1} ) \epsilon
	\quad \text{and} \quad 
	\hite''_i 
	=\hat\hite_i + (1+\frac{i}{m+1}) \epsilon
\eeqq
for $i=1, \cdots, m$. 
Then, $\hite', \hite'' \in \Omega^m_+(\tau)$.  
Furthermore, $(\hite'_1, \cdots, \hite_k', \hite_{k+1}'', \cdots, \hite_m)\in \Omega^m_+(\tau)$ for every $k$.

Let $\beta\in \Omega^m_+(\tau)$ be an arbitrary number satisfying $\sum_{i=1}^m|\hite_i-\hat\hite_i|<\epsilon$. 
Note that $\hite'_i \le \hite_i\le \hite''_i$. Thus, 
\beqq
	\tasepF_m(\hite'; \gamma, \tau) \le \tasepF_m(\hite ; \gamma, \tau) \le \tasepF_m(\hite'' ; \gamma, \tau)  
\eeqq
where we used the fact that being a limit of a distribution function, $\tasepF_m(\hite; \gamma, \tau) $ 
is a weakly increasing function of $\hite\in \Omega_+^m(\tau)$. 
From the monotonicity property again, as $\epsilon\downarrow 0$, $\tasepF_m(\hite'; \gamma, \tau)$  increases weakly and $\tasepF_m(\hite'' ; \gamma, \tau)$  decreases weakly. 
Therefore, the limit~\eqref{eq:tasepF_aux02} converges if we show that 
\begin{equation}
\label{eq:tasepF_bounds_diff}
	\lim_{\epsilon \downarrow 0} \tasepF_m(\hite'' ; \gamma, \tau)  - \tasepF_m(\hite' ; \gamma, \tau)  = 0. 
\end{equation}

For every $j$, from \eqref{eq:taseplimit_app},  
\beqq
\begin{split}
	&\tasepF_m(\hite'_1,\cdots,\hite'_{j-1},\hite''_{j},\hite''_{j+1},\cdots,\hite''_m; \gamma, \tau )
	 -\tasepF_m(\hite'_1,\cdots,\hite'_{j-1},\hite'_{j},\hite''_{j+1},\cdots,\hite''_m; \gamma, \tau )\\
	&= \lim_{T\to \infty} \prob \bigg( 
            \big\{
            \hite'_{j}<
            \widetilde{\tasep}_T(\gamma_{j} , \tau_j) \le  \hite''_{j}   \big\}
            \bigcap_{i=1}^{j-1}
            \big\{ 
             \widetilde{\tasep}_T(\gamma_{j} , \tau_j) \le \hite'_i 
           \big\} \bigcap_{i=j+1}^m
           \big\{ 
             \widetilde{\tasep}_T(\gamma_{j} , \tau_j)\le \hite''_{i} 
           \big\}
           \bigg)\\
    	&\le \lim_{T \to \infty} \prob\big( 
              \hite'_{j}<
              \widetilde{\tasep}_T(\gamma_{j} , \tau_j) \le \hite''_{j}  \big) 
           =\tasepF_1(\hite''_{j}; \gamma_{j},\tau_{j}) -\tasepF_1(\hite'_{j}; \gamma_{j},\tau_{j}).
\end{split}
\eeqq 
Summing over $j$, we obtain
\beqq
	\tasepF_m(\hite''_1,\cdots,\hite''_m; \gamma, \tau) 
	-\tasepF_m(\hite'_1,\cdots,\hite'_m; \gamma, \tau) \le \sum_{j=1}^{m} \left(\tasepF_1(\hite''_{j}; \gamma_{j},\tau_{j}) -\tasepF_1(\hite'_{j} \gamma_{j},\tau_{j})\right).
\eeqq 
Since the one-point distribution $\tasepF_1$ is continuous (see \cite{Baik-Liu18}), the right side converges to zero as $\epsilon\to 0$. The left-hand side is also nonnegative due to the monotonicity property of $\tasepF_m$. Thus we obtain \eqref{eq:tasepF_bounds_diff}, which implies the convergence of \eqref{eq:tasepF_aux02}. 

With the same notations as above, from the monotonicity of probabilities which holds for the parameters without any restrictions, 
\beqq
	\prob\bigg( 
            \bigcap_{i=1}^m 
            \big\{  \widetilde{\tasep}_T(\gamma_i, \tau_i) \le  \hite_i'  \big\}  \bigg) 
            \le \prob\bigg( 
            \bigcap_{i=1}^m 
            \big\{  \widetilde{\tasep}_T(\gamma_i, \tau_i) \le \hat \hite_i  \big\}  \bigg)
	\le 
	\prob\bigg( 
            \bigcap_{i=1}^m 
            \big\{  \widetilde{\tasep}_T(\gamma_i, \tau_i) \le  \hite_i''  \big\}  \bigg)
\eeqq
As $T\to \infty$, the lower bound tends to $\tasepF_m(\hite'; \gamma, \tau)$ and the upper bound tends to $\tasepF_m(\hite''; \gamma, \tau)$. 
If we let $\epsilon \downarrow 0$, then both of them converge to $\tasepF_m(\hat \hite; \gamma, \tau)$. This shows \eqref{eq:tasepF_extension_prob}. 
\end{proof}

\begin{cor}
For every $\gamma\in \realR^m$,  $\tau\in \realR_+^m$, and $\hite\in \realR^m$, the limit
\begin{equation}
\label{eq:tasepF_general_def}
	\tasepF_m(\hite; \gamma, \tau) :=
	\lim_{T \to \infty} \prob\bigg( \bigcap_{i=1}^m \big\{  \widetilde{\tasep}_T(\gamma_i, \tau_i) \le \hite_i  \big\}  \bigg)
\end{equation}
converges. The function $\tasepF_m(\hite; \gamma, \tau)$ is invariant under the permutations of the triples $(\hite_i,\gamma_i,\tau_i)$, $i=1, \cdots, m$. 
\end{cor}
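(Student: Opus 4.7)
The plan is to reduce the general case to the setting already handled by Lemma \ref{prop:tasepF_extension}, where $\tau$ is required to be in $\realR^m_{+,\le}$ and $\hite$ in $\Omega^m(\tau)$, by a sorting argument, and then to deduce permutation invariance almost for free.

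First I would observe that for any fixed $T$, the pre-limit probability
\begin{equation*}
\prob\bigg( \bigcap_{i=1}^m \big\{  \widetilde{\tasep}_T(\gamma_i, \tau_i) \le \hite_i  \big\}  \bigg)
\end{equation*}
is invariant under any permutation of the triples $(\hite_i,\gamma_i,\tau_i)$, since intersections of events are unordered. Thus the issue is only whether the limit exists, and any permutation invariance of the pre-limit passes automatically to the limit once existence is established.

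Next, given an arbitrary $(\gamma,\tau,\hite)\in\realR^m\times\realR^m_+\times\realR^m$, I would choose a permutation $\sigma$ of $\{1,\dots,m\}$ such that the sorted sequence $\widetilde\tau:=(\tau_{\sigma(1)},\dots,\tau_{\sigma(m)})$ is non-decreasing, and, breaking ties lexicographically by $\hite$, such that whenever $\widetilde\tau_i=\widetilde\tau_{i+1}$ one also has $\widetilde\hite_i\le\widetilde\hite_{i+1}$, where $\widetilde\hite:=(\hite_{\sigma(1)},\dots,\hite_{\sigma(m)})$ and $\widetilde\gamma:=(\gamma_{\sigma(1)},\dots,\gamma_{\sigma(m)})$. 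By construction $\widetilde\tau\in\realR^m_{+,\le}$ and $\widetilde\hite\in\Omega^m(\widetilde\tau)$, so Lemma \ref{prop:tasepF_extension} applies to $(\widetilde\gamma,\widetilde\tau,\widetilde\hite)$ and yields the existence of
\begin{equation*}
\lim_{T\to\infty}\prob\bigg(\bigcap_{i=1}^m\big\{\widetilde{\tasep}_T(\widetilde\gamma_i,\widetilde\tau_i)\le\widetilde\hite_i\big\}\bigg)=\tasepF_m(\widetilde\hite;\widetilde\gamma,\widetilde\tau).
\end{equation*}
Combined with the observation of the previous paragraph, the pre-limit for $(\gamma,\tau,\hite)$ equals this expression for every $T$, so the general limit \eqref{eq:tasepF_general_def} exists and equals $\tasepF_m(\widetilde\hite;\widetilde\gamma,\widetilde\tau)$.

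Finally, for permutation invariance of $\tasepF_m$ in the triples: if $\pi$ is any permutation, then applying the above construction to both $(\gamma,\tau,\hite)$ and $\pi(\gamma,\tau,\hite)$ produces the same sorted representative $(\widetilde\gamma,\widetilde\tau,\widetilde\hite)$ (up to a further permutation of indices whose $(\widetilde\tau_i,\widetilde\hite_i)$ coincide, which leaves the probability unchanged), so both sides define the same value. This argument is essentially routine once Lemma \ref{prop:tasepF_extension} is in hand; the only point to be careful about is the tie-breaking in the sort, which is why the sorting is done first by $\tau$ and then by $\hite$ so that the target vector lands in $\Omega^m(\widetilde\tau)$ rather than merely in its closure with an arbitrary ordering at ties.
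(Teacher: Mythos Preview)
Your proposal is correct and follows essentially the same approach as the paper's proof: observe that the pre-limit probability is permutation-invariant, choose a permutation that sorts the triples so that the result lies in $\realR^m_{+,\le}\times\Omega^m(\tau)$, and then invoke Lemma \ref{prop:tasepF_extension}. Your explicit tie-breaking rule (sort first by $\tau$, then by $\hite$) spells out what the paper leaves implicit when it asserts that ``there is at least one such permutation,'' and your handling of the well-definedness when several sorting permutations exist matches the paper's remark that different admissible permutations yield the same limit.
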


\begin{proof}
The probability $\prob( \cap_{i=1}^m \big\{  \widetilde{\tasep}_T(\gamma_i, \tau_i) \le \hite_i  \}  )$ is defined for every $(\gamma, \tau, \hite)\in \realR^m\times \realR^m_+\times \realR^m$, and is invariant under the permutations of the triples $(\hite_i,\gamma_i,\tau_i)$. 
For a permutation $\sigma \in S_m$, let $\gamma^\sigma, \tau^\sigma, \hite^\sigma$ be the parameters obtained from $\gamma, \tau, \hite$ applying the permutation $\sigma$ to the index. 
Let $\sigma$ be a permutation so that $\tau^\sigma\in \realR^m_{+, \le}$ and $\hite^\sigma\in \Omega^m_0(\tau^\sigma)$. 
There is at least one such permutation. 
By the last lemma, 
$\prob( \cap_{i=1}^m \big\{  \widetilde{\tasep}_T(\gamma_i, \tau_i) \le \hite_i  \}  ) = \prob( \cap_{i=1}^m \big\{  \widetilde{\tasep}_T(\gamma_i^\sigma, \tau_i^\sigma) \le \hite_i^\sigma  \}  )$ converges. 
If there are more than one permutation with the same property, then it is easy to check that they results in the same limit. 
The invariance under permutations follow easily. 
\end{proof}

Therefore, the convergence \eqref{eq:taseplimit} holds for all parameters, and we use the same notation $\tasepF_m(\hite; \gamma, \tau)$ for the limit.  
We now show that $\tasepF_m(\hite; \gamma, \tau)$ are a collection of consistent multivariate cumulative distribution functions. 
For the restricted parameters, this fact was proved in  \cite[Section 7]{Baik-Liu19}. Here, we prove it for all parameters.

\begin{prop}\label{prop:tasepF_consistence} 
\begin{enumerate}[(a)]
\item For every $m$ and $(\gamma, \tau)\in \realR^m\times \realR^m_+$, $\hite\mapsto \tasepF_m(\hite; \gamma, \tau)$ is a multivariate cumulative distribution function.  
\item 
Let $(\gamma, \tau, \hite)\in \realR^m\times \realR_+^m \times \realR^m$. 
For each $j=1, \cdots, m$, let $\gamma^{(j)}, \tau^{(j)}, \hite^{(j)}$ be the points in $\realR^{m-1}$ obtained from $\gamma, \tau, \hite$ by removing $\gamma_j, \tau_j, \hite_j$, respectively. Then, 
\beqq
	\lim_{\hite_j\to\infty} \tasepF_m(\hite; \gamma, \tau)
	= \tasepF_{m-1}(\hite^{(j)}; \gamma^{(j)}, \tau^{(j)})  .
\eeqq
\end{enumerate}
\end{prop}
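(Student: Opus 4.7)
The plan is to derive both claims by transferring CDF properties of the prelimit probability $P_T(\hite; \gamma, \tau) := \prob\big(\bigcap_{i=1}^m \{\widetilde{\tasep}_T(\gamma_i, \tau_i) \le \hite_i\}\big)$ to its pointwise limit $\tasepF_m(\hite;\gamma,\tau)$, whose convergence for all parameters was just secured in Lemma \ref{prop:tasepF_extension} and its corollary. The one-point distribution $\tasepF_1$ is already known to be a genuine, continuous CDF by \cite{Baik-Liu18}, and I will use this as the input for uniform tail control throughout.

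For part (a), I would check the CDF axioms in turn. Monotonicity in each $\hite_i$ and the rectangle positivity $\sum_{I\subseteq\{1,\ldots,m\}} (-1)^{|I|} P_T(\hite^I;\gamma,\tau) \ge 0$ (with $\hite^I_i = a_i$ for $i \in I$ and $\hite^I_i = b_i$ otherwise, $a_i \le b_i$) hold for every $T$ since $P_T$ is a genuine multivariate CDF, and being linear inequalities among finitely many values they pass to the pointwise limit. The lower normalization follows from the trivial bound $\tasepF_m(\hite;\gamma,\tau) \le \tasepF_1(\hite_j;\gamma_j,\tau_j) \to 0$ as $\hite_j \to -\infty$. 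For the upper normalization, a union bound on $P_T$ yields $1 - P_T(\hite;\gamma,\tau) \le \sum_{i=1}^m \prob(\widetilde{\tasep}_T(\gamma_i,\tau_i) > \hite_i)$, and sending $T\to\infty$ gives $1 - \tasepF_m(\hite;\gamma,\tau) \le \sum_{i=1}^m (1-\tasepF_1(\hite_i;\gamma_i,\tau_i))$, which vanishes as all $\hite_i \to \infty$. Right-continuity in each $\hite_i$ will be supplied by the separate continuity statement for $\tasepF_m$ proved elsewhere in this appendix.

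For part (b), the key identity is the exact marginalization at finite $T$,
\[
    \lim_{\hite_j \to \infty} P_T(\hite;\gamma,\tau) = P_T^{(j)}(\hite^{(j)};\gamma^{(j)},\tau^{(j)}),
\]
where $P_T^{(j)}$ denotes the analogous $(m-1)$-point prelimit probability. To interchange this with $T\to\infty$, I would use the uniform sandwich
\[
    0 \le P_T^{(j)}(\hite^{(j)};\gamma^{(j)},\tau^{(j)}) - P_T(\hite;\gamma,\tau) \le 1 - F_{T,1}(\hite_j;\gamma_j,\tau_j),
\]
where $F_{T,1}$ is the one-point prelimit CDF at time $T$. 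Sending $T\to\infty$ first and then $\hite_j \to \infty$, and invoking that $\tasepF_1$ is a CDF, yields the desired equality.

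The main obstacle is the a priori possibility that mass escapes to infinity in the limit $T\to\infty$: without uniform tail control, a pointwise limit of CDFs is only a sub-probability distribution. Both parts reduce this danger to tightness of the one-point family, which is already on hand from \cite{Baik-Liu18}. Once that is secured the argument is essentially bookkeeping, and no further analysis of the Fredholm determinant formulas of Section \ref{sec:formdf} is needed.
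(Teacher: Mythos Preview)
Your proposal is correct and follows essentially the same approach as the paper: both transfer the CDF inequalities from the prelimit $P_T$ to $\tasepF_m$ via the pointwise convergence in \eqref{eq:tasepF_general_def}, use the one-point bound $\tasepF_m\le \tasepF_1(\hite_j;\gamma_j,\tau_j)$ for the lower tail and the union bound $1-\tasepF_m\le \sum_j (1-\tasepF_1(\hite_j;\gamma_j,\tau_j))$ for the upper tail, and for (b) use the sandwich $0\le \tasepF_{m-1}-\tasepF_m\le 1-\tasepF_1(\hite_j;\gamma_j,\tau_j)$ followed by $\hite_j\to\infty$. Your explicit mention of rectangle positivity and right-continuity is a bit more thorough than the paper's terse treatment, but the underlying argument is the same.
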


\begin{proof}
(a) The equation \eqref{eq:tasepF_general_def} implies the monotone non-decreasing property. 
It also implies that $\tasepF_m(\hite; \gamma, \tau)\le \tasepF_1(\hite_j; \gamma_j,\tau_j)$ and $1-\tasepF_m(\hite; \gamma, \tau) \le \sum_{j=1}^{m} (1- \tasepF_1(\hite_j; \gamma_j,\tau_j))$. We thus find the correct limit properties as $\hite$ becomes small or large.

(b) From \eqref{eq:tasepF_general_def} again, 
\beqq \begin{split}
	0\le 
	 \tasepF_{m-1}(\hite^{(j)}; \gamma^{(j)}, \tau^{(j)}) -  \tasepF_m(\hite; \gamma, \tau)
	&= \lim_{T\to \infty} \prob\bigg( 
            \big\{ \widetilde{\tasep}_T(\gamma_j , \tau_j ) > \hite_j \big\}
            \bigcap_{\substack{1\le i\le m\\ i\ne j}} 
            \big\{  \widetilde{\tasep}_T(\gamma_j , \tau_j ) \le \hite_i   \big\}  \bigg) \\
         &\le  \lim_{T \to \infty} \prob \big( 
             \widetilde{\tasep}_T(\gamma_j , \tau_j ) > \hite_j \big)
            = 1-\tasepF_1(\hite_j; \gamma_j,\tau_j) . 
\end{split} \eeqq
The upper bound tends to $0$ as $\hite_j\to +\infty$ since $\tasepF_1$ is a distribution function \cite{Baik-Liu18}. 
\end{proof}

\medskip

The final result of this Section is the continuity of $\tasepF_m(\hite; \gamma, \tau)$. 
When $\hite\in \Omega^m_+(\tau)$, the limit $\tasepF_m(\hite; \gamma, \tau)$ for \eqref{eq:taseplimit_app} is given by the formula 
\begin{equation}
\label{eq:def_tasepF}
\begin{split}
	&\tasepF_m(\hite; \gamma, \tau)
	=\frac{1}{(2\pi \ii)^m } \oint \cdots \oint C(\bfz)D(\bfz) \prod_{i=1}^m \frac{\dd z_i}{z_i}
\end{split}
\end{equation}
where the integrand is same as that of the formula \eqref{eq:jointprointe} (with $p=1$) but the radii of the contour circles satisfy the reverse inequalities $0<|z_m|<\cdots<|z_1|<1$.  
From the formula of $C(\bfz)$ and $D(\bfz) $ in Section \ref{sec:formdf} (with $p=1$) and Lemma~\ref{prop:tasepF_extension}, 
$\tasepF_m(\hite; \gamma, \tau)$ is jointly continuous for $\gamma\in\realR^m$, $\tau \in \realR_{+, \le}^m$, and $\hite\in\Omega^m(\tau) $. 
Due to the invariance under permutations of the triples of the parameters, it is continuous on the set 
\beqq
	U_+^m:= 
	\realR^m\times \realR^m_+\times \realR^m 
	\setminus \{(\gamma, \tau, \hite): \hite_i=\hite_j \text { and } \tau_i=\tau_j \text{ for some } 1\le i<j\le m\}. 
\eeqq 
The next result shows that it is continuous in all of $\realR^m\times \realR^m_+\times \realR^m$. 

\begin{prop} \label{prop:tasepF_continuity}
The function $\tasepF_m(\hite; \gamma, \tau)$ is jointly continuous in $(\gamma, \tau, \hite)\in \realR^m\times \realR^m_+\times \realR^m$. 
\end{prop}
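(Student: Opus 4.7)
The plan is to extend the already-established continuity of $\tasepF_m$ on the open set $U_+^m$ to an arbitrary coincidence point $(\gamma^*, \tau^*, \hite^*)$, where some pair $i\neq j$ satisfies both $\tau^*_i=\tau^*_j$ and $\hite^*_i=\hite^*_j$. Using the invariance of $\tasepF_m$ under simultaneous permutations of the triples $(\gamma_i,\tau_i,\hite_i)$, I first reduce to the situation $\tau^*_1\le\cdots\le \tau^*_m$ with the entries of $\hite^*$ weakly increasing inside each maximal block of equal $\tau^*$-values, so that $\hite^*\in\Omega^m(\tau^*)$. It then suffices to show that $\tasepF_m(\hite^{(n)};\gamma^{(n)},\tau^{(n)})\to \tasepF_m(\hite^*;\gamma^*,\tau^*)$ for every sequence $(\gamma^{(n)},\tau^{(n)},\hite^{(n)})\to (\gamma^*,\tau^*,\hite^*)$.

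The core argument is a two-sided sandwich based on monotonicity. For small $\delta>0$, introduce perturbed height vectors $\hite^{*,+\delta}_i = \hite^*_i + i\delta$ and $\hite^{*,-\delta}_i = \hite^*_i - (m-i+1)\delta$. Because $\tasepF_m$ is non-decreasing in each coordinate of $\hite$ (inherited from the componentwise monotonicity of the finite-$T$ probability in~\eqref{eq:tasepF_general_def}), once $n$ is large enough that $|\hite^{(n)}_i - \hite^*_i|<\delta$ for every $i$, the componentwise inequalities $\hite^{*,-\delta}_i\le \hite^{(n)}_i\le \hite^{*,+\delta}_i$ yield
\beqq
	\tasepF_m(\hite^{*,-\delta};\gamma^{(n)},\tau^{(n)}) \le \tasepF_m(\hite^{(n)};\gamma^{(n)},\tau^{(n)}) \le \tasepF_m(\hite^{*,+\delta};\gamma^{(n)},\tau^{(n)}).
\eeqq
The coefficients $c^+_i=i$ and $c^-_i=m-i+1$ are pairwise distinct, so every coincidence at $\hite^*$ is broken and $(\gamma^*,\tau^*,\hite^{*,\pm\delta})\in U_+^m$ for all small $\delta$; joint continuity on $U_+^m$ thus gives $\tasepF_m(\hite^{*,\pm\delta};\gamma^{(n)},\tau^{(n)})\to \tasepF_m(\hite^{*,\pm\delta};\gamma^*,\tau^*)$ as $n\to\infty$. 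Sending $\delta\to 0$ afterwards uses Lemma~\ref{prop:tasepF_extension}: a direct check shows that consecutive entries of both $\hite^{*,+\delta}$ and $\hite^{*,-\delta}$ within any $\tau^*$-block differ by at least $\delta$, so both vectors lie in $\Omega^m(\tau^*)$ and approach $\hite^*\in\Omega^m(\tau^*)$ from inside that set, forcing both outer bounds to $\tasepF_m(\hite^*;\gamma^*,\tau^*)$.

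The only delicate point is the combinatorial design of the perturbation coefficients. They must be positive (to preserve the monotonicity sandwich for large $n$), pairwise distinct (so the perturbed triples lie in $U_+^m$), and ordered so that the induced shifts respect the weak ordering of $\hite^*$ inside each $\tau^*$-block (so Lemma~\ref{prop:tasepF_extension} applies in the final step). The two choices $c^+_i=i$ and $c^-_i=m-i+1$ meet all three requirements simultaneously, so I do not foresee any further obstacle.
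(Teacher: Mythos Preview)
Your argument is correct, and it takes a genuinely different route from the paper's proof. The paper proceeds by a coordinate-wise telescoping: for a nearby point $(\gamma',\tau',\hite')$ it replaces the triples $(\gamma_j,\tau_j,\hite_j)$ by $(\gamma'_j,\tau'_j,\hite'_j)$ one index at a time, and bounds each one-coordinate increment probabilistically by
\[
\tasepF_1(\hite'_j;\gamma'_j,\tau'_j)+\tasepF_1(\hite_j;\gamma_j,\tau_j)-2\tasepF_2(\hite'_j,\hite_j-\epsilon;\gamma'_j,\gamma_j,\tau'_j,\tau_j),
\]
so that continuity at a bad point is reduced to continuity of $\tasepF_1$ and of $\tasepF_2$ on $U_+^2$. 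Your proof instead freezes $\hite$ at the cleverly perturbed vectors $\hite^{*,\pm\delta}$, uses componentwise monotonicity of $\tasepF_m$ in $\hite$ to sandwich $\tasepF_m(\hite^{(n)};\gamma^{(n)},\tau^{(n)})$, invokes the already-known joint continuity on the open set $U_+^m$ (which contains $(\gamma^*,\tau^*,\hite^{*,\pm\delta})$) to pass $n\to\infty$, and then closes the sandwich with Lemma~\ref{prop:tasepF_extension} as $\delta\downarrow 0$. Both arguments are short; yours is perhaps the more transparent ``extend from an open set via monotone approximation'' scheme, while the paper's has the feature that it only ever appeals to the $m=1$ and $m=2$ cases, which could be advantageous in settings where full joint continuity on $U_+^m$ has not yet been established.
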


\begin{proof}
Let $(\gamma, \tau, \hite)$ be a point in $\realR^m\times \realR^m_+\times \realR^m$. 
For $(\gamma', \tau', \hite')\in \realR^m\times \realR^m_+\times \realR^m$ and $1\le j\le m$, let 
\beqq
	\gamma'_{(j)} = (\cdots, \gamma'_{j-1}, \tau'_j, \gamma_{j+1}, \cdots), 
	\qquad 
	\tau'_{(j)}= (\cdots, \tau'_{j-1}, \tau'_j, \tau_{j+1}, \cdots), 
	\qquad 
	 \hite'_{(j)} = (\cdots, \hite'_{j-1}, \hite'_j, \hite_{j+1}, \cdots). 
\eeqq
Fom \eqref{eq:tasepF_general_def}, for every $j$, 
\beqq
\begin{split}
	&  | \tasepF_m( \hite'_{(j)};  \gamma'_{(j)},  \tau'_{(j)}) - \tasepF_m( \hite'_{(j-1)};  \gamma'_{(j-1)},  \tau'_{(j-1)})  | \\
	&\le \lim_{T \to \infty} \prob \big( \widetilde{\tasep}_T(\gamma_j' , \tau_j' ) \le \hite'_j  ,  \,  \widetilde{\tasep}_T(\gamma_j , \tau_j ) > \hite_j \big) 
           + \prob \big(\widetilde{\tasep}_T(\gamma_j , \tau_j ) \le \hite_j  , \,  \widetilde{\tasep}_T(\gamma_j' , \tau_j') > \hite'_j  \big) \\
	&= \lim_{T \to \infty} \prob \big(    \widetilde{\tasep}_T(\gamma_j' , \tau_j' ) \le \hite'_j  \big) 
           + \prob \big(  \widetilde{\tasep}_T(\gamma_j , \tau_j ) \le \hite_j  \big)
           - 2   \prob \big( \widetilde{\tasep}_T(\gamma_j' , \tau_j' )\le \hite'_j, \,  \widetilde{\tasep}_T(\gamma_j , \tau_j ) \le \hite_j  \big) . 
\end{split}
\eeqq
Thus, 
\beqq
\begin{split}
  	| \tasepF_m( \hite'_{(j)};  \gamma'_{(j)},  \tau'_{(j)}) - \tasepF_m( \hite'_{(j-1)};  \gamma'_{(j-1)},  \tau'_{(j-1)})  | 
	&=\tasepF_1(\hite'_j; \gamma'_j,\tau'_j) + \tasepF_1(\hite_j; \gamma_j,\tau_j) 
	-2\tasepF_2(\hite'_j,\hite_j; \gamma'_j, \gamma_j,\tau'_j, \tau_j) \\
	&\le \tasepF_1(\hite'_j; \gamma'_j,\tau'_j) + \tasepF_1(\hite_j; \gamma_j,\tau_j) 
	-2\tasepF_2(\hite'_j,\hite_j - \epsilon; \gamma'_j, \gamma_j,\tau'_j, \tau_j)
\end{split}
\eeqq
for every $\epsilon>0$. 
If $(\gamma'_j, \tau'_j, \hite'_j)$ is close enough to $(\gamma_j, \tau_j, \hite_j)$, then $(\gamma_j', \gamma_j, \tau_j', \tau_j, \hite_j', \hite_j-\epsilon)\in U^2_+$. 
Thus, the continuity of $\tasepF_2$ on $U^2_+$ implies that 
\beqq
\begin{split}
	&\limsup_{(\gamma', \tau', \hite')\to (\gamma, \tau, \hite)} | \tasepF_m( \hite'_{(j)};  \gamma'_{(j)},  \tau'_{(j)}) - \tasepF_m( \hite'_{(j-1)};  \gamma'_{(j-1)},  \tau'_{(j-1)})  |\\
	&\quad \le 2\tasepF_1(\hite_j; \gamma_j,\tau_j) 
	-2\tasepF_2(\hite_j,\hite_j - \epsilon; \gamma_j, \gamma_j,\tau_j, \tau_j)  
	= 2\tasepF_1(\hite_j; \gamma_j,\tau_j) - 2\tasepF_1(\hite_j-\epsilon; \gamma_j,\tau_j) 
\end{split}
\eeqq 
where we used the fact that $\tasepF_2(a,b; \gamma,\gamma, \tau, \tau)=\tasepF_1(a; \gamma,\tau)$ if $a<b$, which follows from the definition of~\eqref{eq:tasepF_general_def}. 
Since the inequality holds for every $\epsilon>0$ and the one-point distribution function $\tasepF_1$ is continuous, we find that
\beqq
\begin{split}
	&\limsup_{(\gamma', \tau', \hite')\to (\gamma, \tau, \hite)} | \tasepF_m( \hite'_{(j)};  \gamma'_{(j)},  \tau'_{(j)}) - \tasepF_m( \hite'_{(j-1)};  \gamma'_{(j-1)},  \tau'_{(j-1)})  |
	= 0 . 
\end{split}
\eeqq 
Summing over $j$, we conclude that 
\beqq
	\limsup_{(\gamma', \tau', \hite')\to (\gamma, \tau, \hite)} \left|\tasepF_m(\hite'; \gamma', \tau') - \tasepF_m(\hite; \gamma, \tau)  \right|
	= 0, 
\eeqq 
proving the desired continuity. 
\end{proof}

\section{Formula of $D_{\bfn}(z)$} \label{sec:Dnformula}

We state the formula of $D(\bfz)$ given in \cite[Lemma 2.10]{Baik-Liu19} and show that it can be written as the form \eqref{eq:def_Dbfz} in Subsection \ref{sec:formdf}. 
It is enough to check it  when $p=1$ since the general $p$ case follows from the property~\eqref{eq:scalingprop}.

For complex vectors $W=(w_1, \cdots, w_n)$ and $W'=(w_1',\cdots, w_{n'}')$, we denote
\beqq
	\Delta(W)= \prod_{1\le i<j\le n} (w_j-w_i) \quad \text{and} \quad \Delta(W;W')= \prod_{i=1}^n \prod_{j=1}^{n'} (w_i-w_j') .
\eeqq
We also use the notation that for a function $g$ of a single variable and a vector $W=(w_1, \cdots, w_n)$, 
\beqq
	g(W) := \prod_{i=1}^n g(w_i) .
\eeqq

For $0<|z|<1$, the sets $\rmL_z$ and $\rmR_z$ are the discrete sets in the complex plane defined as
\beqq
	\rmL_z=\{w: e^{-w^2/2}=z, \, \Re(w)<0\} \quad \text{and} \quad \rmR_z=\{w: e^{-w^2/2}=z, \, \Re(w)>0\}. 
\eeqq
The series formula of $D(\bfz)$ given in \cite[Lemma 2.10]{Baik-Liu19} is 
\beqq
	D(\bfz) = \sum_{\bfn \in \{0, 1, \cdots\}^m } \frac{1}{(\bfn !)^2} D_{\bfn}(\bfz)
\eeqq
where for $\bfn=(n_1, \cdots, n_m)$ and $0<|z_1|<\cdots<|z_m|<1$, 
\beq \label{eq:D_bfzorig} \begin{split}
	D_{\bfn}(\bfz) 
	= \left(1-\frac{z_{i-1}}{z_i} \right)^{n_i}
	&\left(1-\frac{z_{i}}{z_{i-1}} \right)^{n_{i-1}}
	\sum_{\substack{U^{(i)}\in \rmL_{z_i}^{n_i}\\ V^{(i)}\in \rmR_{z_i}^{n_i} \\ i=1,\cdots,m}} 
	\prod_{i=1}^m \frac{\Delta(U^{(i)})^2 \Delta(V^{(i)})^2}{\Delta(U^{(i)};V^{(i)})^2} \hat f_i (U^{(i)}) \hat f_i (V^{(i)})\\
	&\qquad \times \prod_{i=2}^m \frac{\Delta(U^{(i)};V^{(i-1)})\Delta(V^{(i)};U^{(i-1)})}{\Delta(U^{(i)};U^{(i-1)})\Delta(V^{(i)};V^{(i-1)})}
	\frac{e^{-h(V^{(i)},z_{i-1})-h(V^{(i-1)},z_{i})}}{e^{h(U^{(i)},z_{i-1})+h(U^{(i-1)},z_{i})}}
\end{split} \eeq
with 
\beqq \begin{split}
	&h(w,z) =\begin{dcases}
-\frac{1}{\sqrt{2\pi}} \int_{-\infty}^w \Li_{1/2}(ze^{(w^2-y^2)/2})\rmd y, \qquad &\Re(w)<0,\\
-\frac{1}{\sqrt{2\pi}} \int_{-\infty}^{-w} \Li_{1/2}(ze^{(w^2-y^2)/2})\rmd y, \qquad &\Re(w)>0, 
\end{dcases}
\end{split}\eeqq 
\beqq \label{eq:ffuntion}
\begin{split}
	&f_i(w)
=\begin{dcases}
e^{-\frac{1}{3}(\tau_i-\tau_{i-1})w^3 +\frac{1}{2}(\gamma_i-\gamma_{i-1})w^2 + (\hite_i-\hite_{i-1})w},
	\qquad & \Re(w)<0,\\
e^{\frac{1}{3}(\tau_i-\tau_{i-1})w^3 -\frac{1}{2}(\gamma_i-\gamma_{i-1})w^2 - (\hite_i-\hite_{i-1})w},
	\qquad & \Re(w)>0,
\end{dcases}
\end{split}\eeqq 
and 
\beqq
	\hat f_i(w) = \frac{1}{w} f_i(w) e^{2h(w,z_i)}.
\eeqq

Note that $w\in \rmR_z$ if and only if $-w\in \rmL_z$. Thus, setting $\hat U^{(i)}= - V^{(i)}$, the sum in \eqref{eq:D_bfzorig} can be written as 
\beqq \begin{split}
	&
	\sum_{\substack{U^{(i)}, \hat U^{(i)} \in \rmL_{z_i}^{n_i} \\ i=1,\cdots,m}} 
	\prod_{i=1}^m \frac{\Delta(U^{(i)})^2 \Delta(- \hat U^{(i)})^2}{\Delta(U^{(i)};-\hat U^{(i)})^2} \hat f_i (U^{(i)}) \hat f_i (-\hat U^{(i)})\\
	&\qquad\qquad \times  \prod_{i=2}^m \frac{\Delta(U^{(i)}; -\hat U^{(i-1)})\Delta( -\hat U^{(i)};U^{(i-1)})}{\Delta(U^{(i)};U^{(i-1)})\Delta( -\hat U^{(i)}; -\hat U^{(i-1)})}
	\frac{e^{-h(-\hat U^{(i)},z_{i-1})-h(-\hat U^{(i-1)},z_{i})}}{e^{h(U^{(i)},z_{i-1})+h(U^{(i-1)},z_{i})}} .
\end{split} \eeqq
Since $h(-w,z)=h(w,z)$, after inserting the formula $\hat f_i(w) = \frac{1}{w} f_i(w) e^{2h(w,z_i)}$ and using the notation $E^{i, \pm}$ of \eqref{eq:Edeffff} instead of $f_i$, we can express the above sum as 
\beqq \begin{split}
	&
	\sum_{\substack{U^{(i)}, \hat U^{(i)} \in \rmL_{z_i}^{n_i} \\ i=1,\cdots,m}} 
	\prod_{i=1}^m 
	\frac{e^{2h(\hat U^{(i)},z_{i}) +2h(\hat U^{(i)},z_{i})}}{e^{h(U^{(i-1)},z_{i})+h(U^{(i+1)},z_{i})+ h(\hat U^{(i-1)},z_{i})+h(\hat U^{(i+1)},z_{i})}} 
	\prod_{i=1}^m \prod_{j=1}^{n_i} E^{i, +}(u^{(i)}_j) E^{i, -}( \hat u^{(i)}_j) \\
	&\qquad \qquad \times 
	\prod_{i=1}^m \prod_{j=1}^{n_i} \frac{-1}{u^{(i)}_j \hat u^{(i)}_j}  
	\prod_{i=1}^m \frac{\Delta(U^{(i)})^2 \Delta(- \hat U^{(i)})^2}{\Delta(U^{(i)};-\hat U^{(i)})^2} 
	\prod_{i=2}^m \frac{\Delta(U^{(i)}; -\hat U^{(i-1)})\Delta( -\hat U^{(i)};U^{(i-1)})}{\Delta(U^{(i)};U^{(i-1)})\Delta( -\hat U^{(i)}; -\hat U^{(i-1)})}
\end{split} \eeqq
where we set $U^{(0)}= \hat U^{(0)}= U^{(m+1)} = \hat U^{(m+1)}=\emptyset$ so that $e^{h(U^{(0)}; z_1)}=1$, and so on. 
The product involving the function $h$ is $H_{\bfn}(U, \hat U)$ of \eqref{eq:Hdefff} 
and the next product involving $E^{i, \pm}$ is $E_{\bfn}(U, \hat U)$ of \eqref{eq:Edeffff}. 
Finally, again with the convention $U^{(0)}= \hat U^{(0)}= U^{(m+1)} = \hat U^{(m+1)}=\emptyset$, we have 
\beqq \begin{split}
	&\prod_{i=1}^m \frac{\Delta(U^{(i)})^2 \Delta(- \hat U^{(i)})^2}{\Delta(U^{(i)};-\hat U^{(i)})^2} 
	\prod_{i=2}^m \frac{\Delta(U^{(i)}; -\hat U^{(i-1)})\Delta( -\hat U^{(i)};U^{(i-1)})}{\Delta(U^{(i)};U^{(i-1)})\Delta( -\hat U^{(i)}; -\hat U^{(i-1)})} \\
	&= 
	\prod_{i=1}^{m+1} \frac{\Delta(U^{(i-1)}) \Delta(- \hat U^{(i-1)}) \Delta(U^{(i)}; -\hat U^{(i-1)})\Delta( -\hat U^{(i)};U^{(i-1)}) \Delta(U^{(i)}) \Delta(- \hat U^{(i)})}{ \Delta(U^{(i-1)};-\hat U^{(i-1)}) \Delta(U^{(i)};U^{(i-1)})\Delta( -\hat U^{(i)}; -\hat U^{(i-1)}) \Delta(U^{(i)};-\hat U^{(i)}) } \\
	&= 
 (-1)^{n_1+\cdots+n_m}\prod_{i=1}^{m+1} \Cd( U^{(i-1)}, -\hat U^{(i)} ; \hat U^{(i-1)}, - U^{(i)}) 
\end{split} \eeqq
in terms of the Cauchy determinant \eqref{eq:Caudedef}. 
This is a factor of $R_{\bfn}(U, \hat U)$ of \eqref{eq:Rdetfom}, and we thus find that $D_{\bfn}(\bfz)$ is equal to the form \eqref{eq:def_Dbfz} 
in Subsection \ref{sec:formdf}.


\def\cydot{\leavevmode\raise.4ex\hbox{.}}

\end{document}